\pdfoutput=1
\documentclass[a4paper,11pt]{amsart}
\usepackage{amsmath,amssymb,url,stmaryrd}
\usepackage[all]{xy}
\usepackage[pdftex]{graphicx}
\usepackage[pdftex]{color}
\usepackage[top=30truemm,bottom=30truemm,left=25truemm,right=25truemm]{geometry}

\newtheorem{Thm}{Theorem}[section]
\newtheorem{Cor}[Thm]{Corollary}
\newtheorem{Prop}[Thm]{Proposition}
\newtheorem{Lem}[Thm]{Lemma}
\newtheorem*{Thm*}{Theorem}

\theoremstyle{definition}
\newtheorem{Def}[Thm]{Definition}

\newtheorem{Ex}[Thm]{Example}

\theoremstyle{remark}
\newtheorem{Rem}[Thm]{Remark}

\makeatletter

\@addtoreset{equation}{section}
\makeatother

\DeclareMathOperator{\gcdsf}{\mathsf{gcd}}
\DeclareMathOperator{\module}{\mathsf{mod}}

\DeclareMathOperator{\silt}{\mathsf{silt}}
\DeclareMathOperator{\twoipresilt}{\mathsf{2-ipresilt}}
\DeclareMathOperator{\twopresilt}{\mathsf{2-presilt}}
\DeclareMathOperator{\twosilt}{\mathsf{2-silt}}
\DeclareMathOperator{\smc}{\mathsf{smc}}
\DeclareMathOperator{\twosmc}{\mathsf{2-smc}}
\DeclareMathOperator{\tstr}{\mathsf{t-str}}
\DeclareMathOperator{\inttstr}{\mathsf{int-t-str}}

\DeclareMathOperator{\ftors}{\mathsf{f-tors}}
\DeclareMathOperator{\ftorf}{\mathsf{f-torf}}
\DeclareMathOperator{\tors}{\mathsf{tors}}

\DeclareMathOperator{\wide}{\mathsf{wide}}
\DeclareMathOperator{\Fac}{\mathsf{Fac}}

\DeclareMathOperator{\supp}{\mathsf{supp}}
\DeclareMathOperator{\Sub}{\mathsf{Sub}}

\DeclareMathOperator{\brick}{\mathsf{brick}}
\DeclareMathOperator{\sbrick}{\mathsf{sbrick}}
\DeclareMathOperator{\fLsbrick}{\mathsf{f_L-sbrick}}
\DeclareMathOperator{\fRsbrick}{\mathsf{f_R-sbrick}}

\DeclareMathOperator{\inj}{\mathsf{inj}}
\DeclareMathOperator{\proj}{\mathsf{proj}}

\DeclareMathOperator{\Hom}{\mathsf{Hom}}

\DeclareMathOperator{\End}{\mathsf{End}}

\DeclareMathOperator{\add}{\mathsf{add}}

\DeclareMathOperator{\Ker}{\mathsf{Ker}}
\DeclareMathOperator{\Image}{\mathsf{Im}}
\DeclareMathOperator{\Coker}{\mathsf{Coker}}

\DeclareMathOperator{\dimension}{\mathsf{dim}}
\DeclareMathOperator{\dimv}{\underline{\mathsf{dim}}}
\DeclareMathOperator{\Mat}{\mathsf{Mat}}

\DeclareMathOperator{\red}{\mathsf{red}}

\newcommand{\Q}{\mathbb{Q}}
\newcommand{\R}{\mathbb{R}}
\newcommand{\Z}{\mathbb{Z}}

\newcommand{\calC}{\mathcal{C}}
\newcommand{\calD}{\mathcal{D}}
\newcommand{\calF}{\mathcal{F}}
\newcommand{\calH}{\mathcal{H}}
\newcommand{\calS}{\mathcal{S}}
\newcommand{\calT}{\mathcal{T}}
\newcommand{\calU}{\mathcal{U}}
\newcommand{\calV}{\mathcal{V}}
\newcommand{\calW}{\mathcal{W}}
\newcommand{\calX}{\mathcal{X}}

\newcommand{\ovcalT}{\overline{\calT}}
\newcommand{\ovcalF}{\overline{\calF}}

\newcommand{\rmb}{\mathrm{b}}

\newcommand{\sfD}{\mathsf{D}}
\newcommand{\sfF}{\mathsf{F}}
\newcommand{\sfK}{\mathsf{K}}

\newcommand{\sfT}{\mathsf{T}}

\newcommand{\Wall}{\mathsf{Wall}}
\newcommand{\Cone}{\mathsf{Cone}}
\newcommand{\Chamber}{\mathsf{Chamber}}
\newcommand{\TF}{\mathsf{TF}}

\newcommand{\vecc}{\boldsymbol{c}}
\newcommand{\vecd}{\boldsymbol{d}}

\renewcommand{\gcd}{\gcdsf}
\renewcommand{\mod}{\module}

\renewcommand{\Gamma}{\varGamma}
\renewcommand{\Delta}{\varDelta}
\renewcommand{\epsilon}{\varepsilon}
\renewcommand{\Theta}{\varTheta}
\renewcommand{\Lambda}{\varLambda}
\renewcommand{\Pi}{\varPi}
\renewcommand{\Sigma}{\varSigma}
\renewcommand{\phi}{\varphi}
\renewcommand{\Omega}{\varOmega}

\renewcommand{\Im}{\Image}
\renewcommand{\dim}{\dimension}

\newcommand{\ang}[2]{\langle #1, #2 \rangle}

\begin{document}

\title{The wall-chamber structures of the real Grothendieck groups}
\keywords{Grothendieck groups; stability conditions; torsion pairs; silting theory}
\author{Sota Asai} 
\address{Department of Pure and Applied Mathematics, Graduate School of Information Science and Technology, Osaka University, 1-5 Yamadaoka, Suita-shi, Osaka-fu, 565-0871, Japan}
\email{s-asai@ist.osaka-u.ac.jp}

\begin{abstract}
For a finite-dimensional algebra $A$ over a field $K$
with $n$ simple modules, 
the real Grothendieck group 
$K_0(\proj A)_\R:=K_0(\proj A) \otimes_\Z \R \cong \R^n$ 
gives stability conditions of King.
We study the associated wall-chamber structure of 
$K_0(\proj A)_\R$ by using the Koenig--Yang correspondences in silting theory.
First, we introduce an equivalence relation on $K_0(\proj A)_\R$
called TF equivalence by using numerical torsion pairs
of Baumann--Kamnitzer--Tingley.
Second, we show that the open cone in $K_0(\proj A)_\R$ spanned by 
the g-vectors of each 2-term silting object gives a TF equivalence class, and 
this gives a one-to-one correspondence between the basic 2-term silting objects and 
the TF equivalence classes of full dimension.
Finally, we determine the wall-chamber structure of $K_0(\proj A)_\R$
in the case that $A$ is a path algebra of an acyclic quiver.
\end{abstract}

\maketitle

\tableofcontents

\section{Introduction}

It is well-known that 
projective modules and simple modules are fundamental and important objects in 
the representation theory of a ring $A$.
This paper is devoted to study mutual relationship between these modules
in the case that $A$ is a finite-dimensional algebra over a field $K$.
In this setting, there are only finitely many isomorphism classes 
$S_1,\ldots,S_n$ of simple $A$-modules 
in the category $\mod A$ of finite-dimensional $A$-modules.
They bijectively correspond to the isomorphism classes
$P_1, \ldots, P_n$ of indecomposable projective $A$-modules
in the category $\proj A$ of finitely generated projective $A$-modules 
via taking the projective covers $P_i \to S_i$.
Moreover, $\Hom_A(P_i,S_j) \ne 0$ holds if and only if $i=j$.

Such relationship between projective modules and simple modules has been 
extended to derived categories.
As a generalization of progenerators and classical tilting modules, 
Keller--Vossieck \cite{KV} introduced \textit{silting objects} (Definition \ref{Def_silt})
of the perfect derived category $\sfK^\rmb(\proj A)$.
Then, Koenig--Yang \cite{KY} found that the silting objects 
have one-to-one correspondences with many important notions,
including the bounded \textit{t-structures} with length heart (Definition \ref{Def_tstr})
and the \textit{simple-minded collections} (Definition \ref{Def_smc}) 
in the bounded derived category $\sfD^\rmb(\mod A)$.
These bijections are collectively called the \textit{Koenig--Yang correspondences},
and have been developed by many authors such as \cite{BY, AIR, Asai, MS}.

The Koenig--Yang correspondences can be studied from the point of view of    
the \textit{Grothendieck groups} $K_0(\proj A)$ and $K_0(\mod A)$
and the Euler form.
The \textit{Euler form} is 
a $\Z$-bilinear form 
\begin{align*}
\ang{?}{!} \colon K_0(\proj A) \times K_0(\mod A) \to \Z
\end{align*} 
defined by
\begin{align*}
\ang{T}{X} :=\sum_{k \in \Z}(-1)^k \dim_K \Hom_{\sfD^\rmb(\mod A)}(T,X[k]).
\end{align*}
for every $T \in \sfK^\rmb(\proj A)$ and
$X \in \sfD^\rmb(\mod A)$.
With respect to the Euler form, the families $(P_i)_{i=1}^n$ and $(S_i)_{i=1}^n$ are dual bases
of $K_0(\proj A)$ and $K_0(\mod A)$ in the following sense:
\begin{align*}
\ang{P_i}{S_j} = \begin{cases}
\dim_K \End_{\sfD^\rmb(\mod A)}(S_j) & (i=j) \\
0 & (i \ne j)
\end{cases}.
\end{align*}

For example, \cite{KR, DF, DIJ} studied 
the \textit{g-vector} $[U]=[U^0]-[U^{-1}] \in K_0(\proj A)$ 
of a 2-term presilting object $U=(U^{-1} \to U^0)$ in $\sfK^\rmb(\proj A)$
by using the presentation space $\Hom_A(U^{-1},U^0)$.
Moreover, Aihara--Iyama \cite{AI} showed that
the g-vectors of the indecomposable direct summands of every silting object 
give a $\Z$-basis of the Grothendieck group $K_0(\proj A)$,
and Koenig--Yang \cite{KY} showed that this basis is dual to 
the $\Z$-basis of $K_0(\mod A)$ given by the corresponding simple-minded collection.

Each $\theta \in K_0(\proj A)$ gives a \textit{stability condition} for modules in $\mod A$
in the sense of King \cite{King} via the Euler form.
Stability conditions play an important role in many aspects,
including the construction of moduli spaces of modules in geometric invariant theory \cite{King},
the detailed study of crystal bases of quantum groups 
from preprojective algebras \cite{BKT},
and the investigation of scattering diagrams and quivers with potentials 
in cluster theory \cite{Bridgeland}.

In this paper, we consider the real Grothendieck groups 
\begin{align*}
K_0(\proj A)_\R:=K_0(\proj A) \otimes_\Z \R
\quad \text{and} \quad K_0(\mod A)_\R:=K_0(\mod A) \otimes_\Z \R.
\end{align*}
The stability condition given by each $\theta \in K_0(\proj A)_\R$ is 
nothing but a collection of linear inequalities; 
namely, a module $M \in \mod A$ is said to be \textit{$\theta$-semistable}
if $\theta(M)=0$ and $\theta(X) \ge 0$ for all quotient modules $X$ of $M$.
The subcategory $\calW_\theta \subset \mod A$ of $\theta$-semistable modules 
is a \textit{wide subcategory} of $\mod A$, 
that is, a subcategory closed under taking kernels, cokernels and extensions.
In particular, $\calW_\theta$ is an abelian length category,
and the simple objects in $\calW_\theta$ are precisely the $\theta$-stable modules.
Every simple object $S$ in $\calW_\theta$ is a \textit{brick},
that is, the endomorphism ring $\End_A(S)$ is a division $K$-algebra.

Each nonzero module $M$ gives the rational polyhedral cone 
$\Theta_M \subset K_0(\proj A)_\R$ called the \textit{wall}
consisting of $\theta \in K_0(\proj A)_\R$ such that $M$ is $\theta$-semistable.
The subsets $\Theta_M$ for all $M$ give a \textit{wall-chamber structure} 
in $K_0(\proj A)_\R$ studied in \cite{BST,Bridgeland}.

In this paper, we study the wall-chamber structure of $K_0(\proj A)_\R$
by using the two \textit{numerical torsion pairs} 
$(\ovcalT_\theta,\calF_\theta)$ and $(\calT_\theta,\ovcalF_\theta)$
for each $\theta \in K_0(\proj A)_\R$ introduced by \cite{BKT},
which are defined by linear inequalities in a similar way to stability conditions.

Our first aim in this paper 
is to investigate the wall-chamber structure of $K_0(\proj A)_\R$
via the numerical torsion pairs.
For this purpose,
we define an equivalence relation on $K_0(\proj A)_\R$ as follows:
we say that $\theta$ and $\theta'$ are \textit{TF equivalent}
if $(\ovcalT_\theta,\calF_\theta)=(\ovcalT_{\theta'},\calF_{\theta'})$ and 
$(\calT_\theta,\ovcalF_\theta)=(\calT_{\theta'},\ovcalF_{\theta'})$.
It is easily seen that any TF equivalence class is convex in $K_0(\proj A)_\R$.
The following first main result of this paper characterizes 
the TF equivalence classes in terms of the walls $\Theta_M$.

\begin{Thm}[Theorem \ref{Thm_W_constant}]\label{Thm_W_constant_intro}
Let $\theta,\theta' \in K_0(\proj A)_\R$ be distinct elements.
Then the following conditions are equivalent.
\begin{itemize}
\item[(a)]
The elements $\theta$ and $\theta'$ are TF equivalent.
\item[(b)]
For any $\theta'' \in [\theta,\theta']$, 
the $\theta''$-semistable subcategory $\calW_{\theta''}$ is constant.
\item[(c)]
There does not exist a brick $S$ such that 
$[\theta,\theta'] \cap \Theta_S$ has exactly one element.
\end{itemize}
\end{Thm}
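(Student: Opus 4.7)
The plan is to prove the cycle (a) $\Rightarrow$ (b) $\Rightarrow$ (c) $\Rightarrow$ (a); the first two implications are direct and the third is the substantive one.

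For (a) $\Rightarrow$ (b), I combine the convexity of TF equivalence classes (noted in the introduction) with the identity $\calW_{\theta''} = \ovcalT_{\theta''} \cap \ovcalF_{\theta''}$, which is immediate from the definitions of the numerical torsion pairs and of $\theta''$-semistability (a quotient of $M$ by $N$ has non-negative value precisely when $N$ has non-positive value). Convexity forces every $\theta'' \in [\theta,\theta']$ to be TF equivalent to $\theta$, so both numerical torsion pairs, and hence their intersection $\calW_{\theta''}$, are constant along the segment. For (b) $\Rightarrow$ (c), I argue contrapositively: if $[\theta,\theta'] \cap \Theta_S = \{\theta_0\}$ for some brick $S$, then $S \in \calW_{\theta_0}$ while $S \notin \calW_{\theta''}$ for every $\theta'' \in [\theta,\theta'] \setminus \{\theta_0\}$, contradicting (b).

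For (c) $\Rightarrow$ (a) I argue by contrapositive. Assume $\theta$ and $\theta'$ are not TF equivalent. By symmetry between the two numerical torsion pairs (the case $\calT_\theta \ne \calT_{\theta'}$ being handled dually with submodules in place of quotients), I may assume $\ovcalT_\theta \ne \ovcalT_{\theta'}$, so there is $M \in \ovcalT_\theta \setminus \ovcalT_{\theta'}$. Parametrize $\theta_t := (1-t)\theta + t\theta'$ for $t \in [0,1]$. Since $\theta_t(Y)$ depends only on $\dimv Y$ and the quotients of $M$ realize only finitely many dimension vectors, the set $\{t \in [0,1] : M \in \ovcalT_t\}$ is an intersection of finitely many closed half-intervals, hence a closed sub-interval $[0,b] \subsetneq [0,1]$. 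Some quotient dimension vector $d$ attains $\theta_b(d) = 0$ with $\theta_t(d) < 0$ for $t$ slightly above $b$; a quotient $Y$ of $M$ with $\dimv Y = d$ then lies in $\ovcalT_{\theta_b}$ (its quotients being quotients of $M$) and satisfies $\theta_b(Y) = 0$, so $Y \in \calW_{\theta_b}$, while $\theta_t(Y)$ is non-constant in $t$.

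To produce a brick witness, I apply Jordan--H\"older in the abelian length category $\calW_{\theta_b}$: the simple composition factors $X_1,\dots,X_k$ of $Y$ are $\theta_b$-stable and hence bricks lying in $\calW_{\theta_b}$. By the additivity $\theta_t(Y) = \sum_i \theta_t(X_i)$ and the non-constancy of $\theta_t(Y)$, at least one factor $X := X_i$ has $\theta_t(X) \not\equiv 0$; being linear in $t$ and vanishing at $t = b$, the function $\theta_t(X)$ vanishes only at $b$, so $\theta_t \notin \Theta_X$ for $t \ne b$, while $X \in \calW_{\theta_b}$ gives $\theta_b \in \Theta_X$. Hence $[\theta,\theta'] \cap \Theta_X = \{\theta_b\}$, contradicting (c). The main obstacle is precisely this last extraction: neither $M$ nor $Y$ need be bricks, and the Jordan--H\"older reduction inside the wide subcategory $\calW_{\theta_b}$, together with the linearity of $\theta_t$ and the finiteness of the relevant dimension vectors, is what makes it possible to isolate a brick whose wall is crossed transversally by $[\theta,\theta']$.
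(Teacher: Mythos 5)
Your proof is correct and the cycle (a) $\Rightarrow$ (b) $\Rightarrow$ (c) $\Rightarrow$ (a) is the right shape, but the key implication (c) $\Rightarrow$ (a) is carried out by a genuinely different mechanism than the paper's. The paper argues as follows: from $\ovcalT_\theta \ne \ovcalT_{\theta'}$ it applies its Lemma on coincidence of torsion pairs to conclude $\ovcalT_\theta \cap \calF_{\theta'} \ne \{0\}$ (or the symmetric intersection), picks a nonzero module $S$ of minimal dimension in that intersection, and invokes a result of Demonet--Iyama--Reading--Reiten--Thomas (\cite[Lemma 3.8]{DIRRT}) to conclude that such a minimal $S$ is automatically a brick; it then shows directly that the unique $\theta''$ on the segment with $\theta''(S)=0$ lies in $\Theta_S$, using minimality to control the proper quotients of $S$. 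Your argument instead starts from any $M \in \ovcalT_\theta \setminus \ovcalT_{\theta'}$, locates the critical parameter $b$ where $M$ exits $\ovcalT_{\theta_t}$ via the finiteness of quotient dimension vectors, exhibits a quotient $Y$ lying in $\calW_{\theta_b}$ with $\theta_t(Y)$ non-constant, and then extracts a brick by passing to the Jordan--H\"older factors of $Y$ in the wide subcategory $\calW_{\theta_b}$. This avoids the DIRRT minimality-implies-brick lemma at the cost of doing a short composition-series argument; it is more self-contained and also makes the geometry transparent (you see the critical $b$ explicitly rather than deducing it from $\theta(S)\ge 0 > \theta'(S)$). Both approaches are correct; the paper's is shorter because it outsources the brick production to a cited lemma, while yours is more elementary. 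One small stylistic note: it would be cleaner to state explicitly that the chosen quotient $Y$ is nonzero (which follows since $\theta_t(Y)$ is non-constant in $t$), and that $Y\in\ovcalF_{\theta_b}$ follows from $Y\in\ovcalT_{\theta_b}$ together with $\theta_b(Y)=0$, so that $Y\in\calW_{\theta_b}$; you implicitly use this but do not spell it out.
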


Theorem \ref{Thm_W_constant_intro} gives the following easy observation.
We write $\Wall:=\bigcup_{M \in \mod A \setminus \{0\}} \Theta_M$,
and consider the complement $K_0(\proj A)_\R \setminus \overline{\Wall}$
of the closure $\overline{\Wall}$.
We call each connected component of $K_0(\proj A)_\R \setminus \overline{\Wall}$
a \textit{chamber} of the wall-chamber structure of $K_0(\proj A)_\R$,
and define $\Chamber(A)$ as the set of chambers.
In view of Theorem \ref{Thm_W_constant_intro}, we know that
any chamber is contained in some TF equivalence class of dimension $n$.
Conversely, we can check that,
if $[\theta]$ is a TF equivalence class of dimension $n$, 
then $[\theta]^\circ$ is a chamber.
Let $\TF_n(A)$ denote the set of TF equivalence classes of dimension $n$
in $K_0(\proj A)_\R$.
Then, we get the following result.

\begin{Cor}[Corollary \ref{Cor_TF_chamber}]\label{Cor_TF_chamber_intro}
There exists a bijection
\begin{align*}
\Chamber(A) &\to \TF_n(A) \\
C &\mapsto \textup{(the TF equivalence class containing $C$)}.
\end{align*}
\end{Cor}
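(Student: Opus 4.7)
The plan is to establish two complementary facts that together yield the bijection: each chamber lies in a single TF equivalence class of dimension $n$, and the interior of each class in $\TF_n(A)$ is a chamber.

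\textbf{Chambers sit in a single TF class.} Fix a chamber $C$ and $\theta_0\in C$. Since $C$ is open, choose an open ball $B\subset C$ centred at $\theta_0$. For any $\theta_1\in B$, the segment $[\theta_0,\theta_1]\subset B\subset C\subset K_0(\proj A)_\R\setminus\overline{\Wall}$ meets no wall $\Theta_S$, so (c)$\Rightarrow$(a) of Theorem \ref{Thm_W_constant_intro} makes $\theta_0$ and $\theta_1$ TF equivalent. Hence $B$ is contained in the TF class $[\theta_0]$, so $C\cap[\theta_0]$ is open in $C$; the analogous statement for every other TF class exhibits $C$ as a disjoint union of open subsets, so connectedness of $C$ forces it to lie in a single TF class. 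This class has dimension $n$ because $C$ is a non-empty open subset of $K_0(\proj A)_\R$.

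\textbf{The interior of an $n$-dimensional TF class is a chamber.} Let $[\theta]\in\TF_n(A)$. Since $[\theta]$ is convex of dimension $n$, its interior $[\theta]^\circ$ is non-empty, convex and hence connected. The key point is $[\theta]^\circ\cap\Wall=\emptyset$. Assume $\theta_0\in[\theta]^\circ\cap\Wall$ and pick an open ball $B\subset[\theta]^\circ$ around $\theta_0$; for each $\theta'\in B$, (a)$\Rightarrow$(b) of Theorem \ref{Thm_W_constant_intro} forces $\calW_{\theta'}=\calW_{\theta_0}$. Since $\theta_0\in\Wall$, the length category $\calW_{\theta_0}$ is non-zero and thus admits a simple object $S$; then $S\in\calW_{\theta'}$ yields $\ang{\theta'}{S}=0$ for every $\theta'\in B$, so the linear functional $\ang{-}{S}$ on $K_0(\proj A)_\R$ vanishes on the open set $B$ and therefore identically, contradicting the fact that the non-zero module $S$ has non-zero class in $K_0(\mod A)$. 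Hence $[\theta]^\circ$ is disjoint from $\Wall$ and, being open, also from $\overline{\Wall}$, so it sits in some chamber $C$. By the first part, $C$ lies in some TF class $[\theta']$, which must equal $[\theta]$ as it contains $[\theta]^\circ$; openness of $C$ then gives $C\subset[\theta]^\circ$, and hence $C=[\theta]^\circ$.

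These two parts together show that the map $\Chamber(A)\to\TF_n(A)$ is well-defined, and the second part simultaneously gives its surjectivity and injectivity via the inverse $[\theta]\mapsto[\theta]^\circ$. The only genuinely non-formal step is the claim $[\theta]^\circ\cap\Wall=\emptyset$, which is where Theorem \ref{Thm_W_constant_intro}(b) is essential; everything else reduces to routine point-set and convex topology.
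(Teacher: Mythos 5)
Your proof is correct and follows essentially the same strategy as the paper: first show that every chamber lies in a single TF equivalence class of dimension $n$, then show that for any $E\in\TF_n(A)$ the interior $E^\circ$ is a chamber, and conclude that $E\mapsto E^\circ$ is a two-sided inverse. The only noticeable variation is in the step showing that a full-dimensional TF class avoids $\Wall$: you argue locally at a hypothetical point $\theta_0\in [\theta]^\circ\cap\Wall$, using the constancy of $\calW_{\theta'}$ on a neighbourhood together with the non-degeneracy of the Euler form, whereas the paper argues globally that $E\not\subset\Theta_M$ (since $\Theta_M\subset\Ker\ang{?}{M}$ has dimension at most $n-1$) and then invokes the ``all-or-nothing'' alternative of Theorem \ref{Thm_W_constant} (condition (d)) to get $E\cap\Theta_M=\emptyset$. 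Both reduce to the same underlying fact that $[M]\ne0$ in $K_0(\mod A)$, so the distinction is cosmetic.
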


Our next aim is to prove that a chamber is actually a TF equivalent class;
in other words, all TF equivalent classes of dimension $n$ are open.
For this purpose, we use the Koenig--Yang correspondences.
To each 2-term presilting object $U$ in $\sfK^\rmb(\proj A)_\R$,
we associate the cones $C(U)$ and $C^+(U)$ in $K_0(\proj A)_\R$
given by
\begin{align*}
C(U)   &:= \{ a_1[U_1]+\cdots+a_m[U_m] \mid a_1,\ldots,a_m \in \R_{\ge 0} \}, \\
C^+(U) &:= \{ a_1[U_1]+\cdots+a_m[U_m] \mid a_1,\ldots,a_m \in \R_{> 0} \},
\end{align*}
following Demonet--Iyama--Jasso \cite{DIJ}.
The following second main result of this paper, 
based on Yurikusa's work \cite{Yurikusa}, 
shows that each 2-term presilting object $U$ gives a TF equivalence class $C^+(U)$.
\begin{Thm}[Theorem \ref{Thm_presilt_TF}]
Let $U \in \twopresilt A$.
Then, the subset $C^+(U) \subset K_0(\proj A)_\R$ is a TF equivalence class,
and any $\theta \in C^+(U)$ satisfies
\begin{align*}
(\ovcalT_\theta, \calF_\theta) = 
({{}^\perp H^{-1}(\nu U)}, \Sub H^{-1}(\nu U)), \quad
(\calT_\theta, \ovcalF_\theta) = 
(\Fac H^0(U), H^0(U)^\perp).
\end{align*}
\end{Thm}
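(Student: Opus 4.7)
The plan is to split the proof into two steps: (1) verify the explicit formulas for the two BKT numerical torsion pairs at every $\theta$ in the open cone $C^+(U)$; (2) deduce via Theorem~\ref{Thm_W_constant_intro} that $C^+(U)$ is exactly a TF equivalence class, not merely contained in one.

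For step~(1), write $U = \bigoplus_{i=1}^m U_i$ into indecomposable summands and take $\theta = \sum_{i=1}^m a_i [U_i]$ with all $a_i > 0$. For any $M \in \mod A$,
\[
\theta(M) = \sum_{i=1}^m a_i \ang{U_i}{M}
= \sum_{i=1}^m a_i \bigl(\dim_K \Hom(U_i, M) - \dim_K \Hom(U_i, M[1])\bigr).
\]
Using the 2-term structure $U_i = (U_i^{-1} \to U_i^0)$, the first summand is computed directly as $\dim_K \Hom_A(H^0(U_i), M)$, while the second is related, via the Nakayama functor $\nu$ and Auslander--Reiten duality, to $\dim_K \Hom_A(M, H^{-1}(\nu U_i))$. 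Following Yurikusa~\cite{Yurikusa}, these identifications translate the inequalities defining $\calT_\theta$, $\ovcalT_\theta$, $\calF_\theta$, and $\ovcalF_\theta$ (namely, that every quotient or submodule has $\theta$-value of a prescribed sign) into the membership conditions for $\Fac H^0(U)$, ${}^\perp H^{-1}(\nu U)$, $\Sub H^{-1}(\nu U)$, and $H^0(U)^\perp$ respectively. The strict positivity $a_i > 0$ is essential: it prevents cancellation between different summands of $U$ and ensures that a vanishing $\theta(M')$ forces $\ang{U_i}{M'} = 0$ for each $i$ individually.

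For step~(2), once every $\theta \in C^+(U)$ is known to give the same pair of torsion pairs, $C^+(U)$ lies in a single TF equivalence class $[\theta]$. To show $[\theta] = C^+(U)$, I plan to invoke Theorem~\ref{Thm_W_constant_intro}(c). Take any $\theta' \in K_0(\proj A)_\R \setminus C^+(U)$; the segment $[\theta, \theta']$ must then exit $C^+(U)$ through some bounding face $\{a_i = 0\}$. Using the Koenig--Yang correspondences, I associate to each indecomposable summand $U_i$ a brick $S_i$ --- the indecomposable appearing in the simple-minded collection dual to $U$ (after a suitable reduction to the silting case) --- and aim to show that $\Theta_{S_i}$ contains the face $\{a_i = 0\} \cap \overline{C^+(U)}$ but is transverse to $[\theta, \theta']$, so $[\theta, \theta'] \cap \Theta_{S_i}$ consists of exactly one point. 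By Theorem~\ref{Thm_W_constant_intro}(c) this prevents TF equivalence of $\theta$ and $\theta'$.

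The main obstacle is controlling these boundary-witnessing bricks $S_i$ in the presilting (rather than silting) case, where $m$ may be strictly less than $n$ and $C^+(U)$ sits as a proper face of a larger cone; the naive bijection between summands $U_i$ and bricks $S_i$ via silting--smc duality does not apply directly here and must be replaced by a reduction argument, such as silting reduction along $U$ or passage to an appropriate wide subcategory. Combined with the explicit torsion-pair formulas from step~(1), this will yield the identification of $C^+(U)$ as a full TF equivalence class.
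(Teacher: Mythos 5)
Step (1) of your proposal agrees with the paper: the explicit torsion-pair formulas at any $\theta\in C^+(U)$ are precisely Yurikusa's Proposition~\ref{Prop_Yurikusa}, which is what the paper cites. Step (2) diverges from the paper, and as written it has a genuine gap.

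You want to show that no $\theta'\notin C^+(U)$ is TF equivalent to a fixed $\theta\in C^+(U)$ by producing a brick $S$ with $[\theta,\theta']\cap\Theta_S$ a single point. The trouble is that $\Theta_S$ is a convex cone, so the intersection $[\theta,\theta']\cap\Theta_S$ is a convex subset of the segment: empty, a point, or a full subinterval. Nothing in your setup prevents the interval case --- e.g.\ if $\theta'$ itself lies in the interior of $\Theta_S$, the segment enters $\Theta_S$ at the face of $C(T)$ and stays there. So the exactly-one-point claim, which is what condition (c) of Theorem~\ref{Thm_W_constant_intro} requires, is not justified. You could switch to condition (d) (the intersection is empty or all of $[\theta,\theta']$), but then you still need to know the candidate brick is semistable at the exit point but not at $\theta$, which already requires identifying the relevant brick; and in the presilting case $m<n$ you also have the second case you acknowledge --- the segment may leave the linear span of $C^+(U)$ immediately, in which case there is no ``face $\{a_i=0\}$'' to exit through and your reduction is entirely unsketched. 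In short: the boundary-brick strategy needs essentially all the machinery you are trying to prove.

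The paper's argument for step (2) is direct and avoids this. Assume $(\ovcalT_\theta,\calF_\theta)=(\ovcalT_U,\calF_U)$ and $(\calT_\theta,\ovcalF_\theta)=(\calT_U,\ovcalF_U)$. Take the Bongartz completion $T=\bigoplus_{i=1}^n T_i\in\twosilt A$ of $U$ and the dual 2-term simple-minded collection $\calX=\{X_i\}_{i=1}^n$ with $\ang{T_i}{X_j}=\delta_{ij}\dim_K\End(X_j)$. Write $\theta=\sum a_i[T_i]$. By Lemma~\ref{Lem_modA_[1]} and Proposition~\ref{Prop_smc_sbrick}, each $X_i$ is either a module in $\ovcalT_T=\ovcalT_U=\ovcalT_\theta$ (giving $\theta(X_i)\ge 0$, i.e.\ $a_i\ge 0$) or a shifted module with $X_i[-1]\in\calF_T=\calF_U=\calF_\theta$ (giving $\theta(X_i[-1])<0$, i.e.\ $a_i>0$). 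Hence $\theta\in C(T)$, so $\theta\in C^+(U')$ for a unique direct summand $U'$ of $T$; Proposition~\ref{Prop_Yurikusa} then forces $\ovcalT_{U'}=\ovcalT_U$ and $\calT_{U'}=\calT_U$, and Lemma~\ref{Lem_two_Bongartz_inj}(2) gives $U'\cong U$. This handles silting and presilting uniformly and never needs a geometric statement about how a segment meets a wall. I would recommend replacing your step (2) with this dual-basis sign check.
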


In particular, the correspondence $U \mapsto C^+(U)$ gives an injection from
the set $\twopresilt A$ of basic 2-term presilting objects in $\sfK^\rmb(\proj A)$
to the set of TF equivalence classes.
Moreover, if $T$ belongs to the set $\twosilt A$ of basic 2-term silting objects 
in $\sfK^\rmb(\proj A)$,
then the TF equivalence class $C^+(T)$ is an open set of $K_0(\proj A)_\R$;
hence $C^+(T)$ is a chamber by Corollary \ref{Cor_TF_chamber_intro}.
This recovers the injection $\twosilt A \to \Chamber(A)$ given by 
Br\"{u}stle--Smith--Treffinger \cite{BST}.
We have shown that this map is actually a bijection as follows.
We set
\begin{align*}
K_0(\proj A)_\Q &:= \{ a_1[P_1]+\cdots+a_n[P_n] \mid a_1,\ldots,a_n \in \Q \}, \\
C^+(T)_\Q &:= C^+(T) \cap K_0(\proj A)_\Q.
\end{align*}

\begin{Thm}[Theorem \ref{Thm_chamber_cone}]\label{Thm_chamber_cone_intro}
We have equations
\begin{align*}
\coprod_{T \in \twosilt A} C^+(T) &= K_0(\proj A) \setminus \overline{\Wall}, \\
\coprod_{T \in \twosilt A} C^+(T)_\Q &= K_0(\proj A)_\Q \setminus \Wall.
\end{align*}
In particular, there exists a bijection
\begin{align*}
\twosilt A \to \Chamber(A), \qquad T \mapsto C^+(T). 
\end{align*}
Therefore, all chambers are TF equivalence classes, so $\TF_n(A)=\Chamber(A)$.
\end{Thm}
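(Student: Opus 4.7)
The easy half is already extracted from the preceding theorem and Corollary \ref{Cor_TF_chamber_intro} in the discussion above: for $T\in\twosilt A$ the cone $C^+(T)$ is open (the g-vectors of the indecomposable summands of $T$ form a basis of $K_0(\proj A)_\R$ by Aihara--Iyama), is a TF class, and hence coincides with its own interior, which is a chamber, so $C^+(T)\subseteq K_0(\proj A)_\R\setminus\overline{\Wall}$. Distinct $T\in\twosilt A$ yield distinct torsion pairs $(\Fac H^0(T), H^0(T)^\perp)$ via Koenig--Yang/Adachi--Iyama--Reiten, hence distinct TF classes, and so the cones $C^+(T)$ are pairwise disjoint. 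The content of the theorem is therefore surjectivity: every $\theta\in K_0(\proj A)_\R\setminus\overline{\Wall}$ must lie in $C^+(T)$ for some $T\in\twosilt A$.

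To prove surjectivity I would reduce to an integer representative of the TF class of $\theta$ and then build $T$ from it. By Corollary \ref{Cor_TF_chamber_intro}, the chamber $C$ containing $\theta$ is the interior of an $n$-dimensional TF class $[\theta]$. Since the inequalities defining the numerical torsion pairs $(\ovcalT_\theta,\calF_\theta)$ and $(\calT_\theta,\ovcalF_\theta)$ are linear and homogeneous in $\theta$, every TF class is a convex cone, so $C$ is an open convex cone. Density of $K_0(\proj A)_\Q$ together with scaling invariance of $C$ then produces an integer $\theta_0\in K_0(\proj A)\cap C$, which in particular satisfies $\theta_0\notin\Wall$. The crucial step is to manufacture $T\in\twosilt A$ with $\theta_0\in C^+(T)$: here one uses that $\theta_0\notin\Wall$ forces the semistable subcategory $\calW_{\theta_0}$ to vanish, which in turn forces $\ovcalT_{\theta_0}=\calT_{\theta_0}$ to be a functorially finite torsion class. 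By Adachi--Iyama--Reiten this torsion class is of the form $\Fac H^0(T)$ for a unique basic $T\in\twosilt A$, and the preceding theorem then identifies $C^+(T)$ with the TF class of $\theta_0$, giving $\theta\in[\theta_0]=C^+(T)$.

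The main obstacle is precisely this last inference: passing from $\theta_0\notin\Wall$ to the functorial finiteness of the numerical torsion class, which is what allows the Koenig--Yang/Adachi--Iyama--Reiten dictionary to produce a silting object. This is presumably where Yurikusa's framework, already invoked in the preceding theorem, does the heavy lifting. Once surjectivity is in hand, the rational equality follows from the same construction applied to an integer $\theta_0\in K_0(\proj A)\setminus\Wall$ directly (the opposite inclusion is immediate from $C^+(T)\cap\Wall=\emptyset$). Finally, $T\mapsto C^+(T)\colon\twosilt A\to\Chamber(A)$ is a bijection by injectivity and surjectivity, and $\TF_n(A)=\Chamber(A)$ follows since each $n$-dimensional TF class now equals an open cone $C^+(T)$.
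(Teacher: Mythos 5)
Your outline of the easy directions and the reduction to a rational representative is correct and matches the paper's architecture: the disjointness of the cones $C^+(T)$, their openness, the density argument producing $\theta_0\in K_0(\proj A)_\Q$ in the chamber (so that $\calW_{\theta_0}=\{0\}$), and the final deduction that the chamber must equal $C^+(T)$ by Theorem \ref{Thm_presilt_TF}.

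However, there is a genuine gap at exactly the step you flag as ``crucial.'' You assert that $\calW_{\theta_0}=\{0\}$ ``forces $\ovcalT_{\theta_0}=\calT_{\theta_0}$ to be a functorially finite torsion class,'' and then say this is ``presumably where Yurikusa's framework \ldots does the heavy lifting.'' This is both unproved and misattributed. The equality $\ovcalT_{\theta_0}=\calT_{\theta_0}$ is immediate from $\calW_{\theta_0}=\{0\}$, but it does \emph{not} by itself yield functorial finiteness; one has to produce the silting object. Moreover, Yurikusa's result (Proposition \ref{Prop_Yurikusa}) runs in the opposite direction — it shows that $\theta\in C^+(U)$ implies the numerical torsion pairs equal those of $U$ — and plays no role in manufacturing $T$ from $\theta_0$. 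The actual argument in the paper is Proposition \ref{Prop_Bridgeland}: one forms the Happel--Reiten--Smal\o{} tilted $t$-structure attached to the torsion pair $(\ovcalT_{\theta_0},\calF_{\theta_0})$, and uses the \emph{rationality} of $\theta_0$ to show its heart is a length category (every nonzero heart object $X$ satisfies $\theta_0(X)\ge q$ for a fixed $q\in\Q_{>0}$, which bounds the length of any filtration); then Proposition \ref{Prop_inttstr_ftors} from Br\"{u}stle--Yang converts this intermediate $t$-structure with length heart into a $2$-term silting object $T$ with $\ovcalT_T=\ovcalT_{\theta_0}$, and Theorem \ref{Thm_presilt_TF} gives $\theta_0\in C^+(T)$. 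Without this length-heart argument (or a substitute, such as the semibrick construction the author sketches in the Remark after the theorem, relying on Theorem \ref{Thm_widely_gen}), your proof does not go through.

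A smaller point: you need the rational case proved first to run the approximation argument for the real equation; the paper does this explicitly, proving the $\Q$-equality and then using a convex neighborhood plus Theorem \ref{Thm_W_constant} to transport a nearby rational $\theta'$'s silting object to the original $\theta$. Your writeup blends these, but the underlying logic is the same, so this is only a matter of presentation.
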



In Section 4,
we describe how TF equivalence classes change under \textit{$\tau$-tilting reduction}
introduced in \cite{Jasso, DIRRT}. 
For a fixed 2-term presilting object $U$,
they constructed a certain finite-dimensional algebra $B$ 
and bijections between the subset $\twosilt_U A \subset \twosilt A$ of
basic 2-term presilting objects containing $U$ as a direct summand
and the set $\twosilt B$.
More explicitly, they obtained a bijection
\begin{align*}
\red:=\Hom_{\sfK^\rmb(\proj A)}(T,?)/[U] \colon \twosilt_U A \to \twosilt B,
\end{align*}
where $T$ is the \textit{Bongartz completion} of $U$ 
and $B:=\End_{\sfK^\rmb(\proj A)}(T)/[U]$.
It is easily extended to a bijection 
\begin{align*}
\red:=\Hom_{\sfK^\rmb(\proj A)}(T,?)/[U] \colon \twopresilt_U A \to \twopresilt B,
\end{align*}
between the 2-term presilting objects.
They also showed that $\calW_U:={{}^\perp H^{-1}(\nu U)} \cap H^0(U)^\perp$
is a wide subcategory of $\mod A$, and that there exists
a category equivalence 
\begin{align*}
\phi:=\Hom_A(T,?) \colon \calW_U \to \mod B.
\end{align*}

We would like to know the wall-chamber structure of $K_0(\proj B)_\R$
in this situation.
For this purpose, we define an open neighborhood $N_U$ of
$[U] \in K_0(\proj A)_\R$ by
\begin{align*}
N_U:=\{ \theta \in K_0(\proj A)_\R \mid
H^0(U) \in \calT_\theta, \ H^{-1}(\nu U) \in \calF_\theta \}.
\end{align*}
Clearly, $N_U$ is a union of some TF equivalence classes in $K_0(\proj A)_\R$.
We prove that 
the local wall-chamber structure of $N_U \subset K_0(\proj A)_\R$ 
around $[U]$ recovers
the whole wall-chamber structure of $K_0(\proj B)_\R$ 
via the linear projection $\pi \colon K_0(\proj A)_\R \to K_0(\proj B)_\R$
given by
\begin{align*}
\pi(\theta):=\sum_{i=1}^m \frac{\theta(X_i)}{d_i}[P^B_i],
\end{align*}
where $X_1,X_2,\ldots,X_m$ are the simple objects of $\calW_U$,
$d_i:=\dim_K \End_A(X_i)$, and $P_i^B$ is the projective cover of 
the simple $B$-module $\phi(X_i)$ for each $i$.

\begin{Thm}[Theorem \ref{Thm_N_U_TF_union}]
Let $U \in \twopresilt A$. Then, we have the following properties.
\begin{itemize}
\item[(1)]
For any $\theta \in N_U$ and $M \in \calW_U$,
the wall $\Theta_{\phi(M)}$ coincides with $\pi(\Theta_M \cap N_U)$.
\item[(2)]
The linear map $\pi$ induces a bijection 
\begin{align*}
\{\textup{TF equivalence classes in $N_U$}\} & \to 
\{\textup{TF equivalence classes in $K_0(\proj B)_\R$}\}, \\
[\theta] &\mapsto \pi([\theta]).
\end{align*}
\item[(3)]
We have the following commutative diagram:
\begin{align*}
\begin{xy}
(  0,  8) *+{\twopresilt_U A}="1",
( 80,  8) *+{\twopresilt B}="2",
(  0, -8) *+{\{\textup{TF equivalence classes in $N_U$}\}}="3",
( 80, -8) *+{\{\textup{TF equivalence classes in $K_0(\proj B)_\R$}\}}="4".
\ar_{C^+} "1";"3"
\ar_{C^+} "2";"4"
\ar^{\red}_{\cong} "1";"2"
\ar^{\pi}_{\cong} "3";"4"
\end{xy}.
\end{align*}
\end{itemize}
\end{Thm}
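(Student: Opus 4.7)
The plan is to establish a dictionary between stability/TF data on $\calW_U \subset \mod A$ and on $\mod B$ via the equivalence $\phi$, and then lift this dictionary to TF equivalence classes and to presilting cones.

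The first step is a key compatibility identity: for every $M \in \calW_U$ and every $\theta \in K_0(\proj A)_\R$ one has $\theta(M) = \pi(\theta)(\phi(M))$. Indeed, it suffices to verify this when $M = X_i$ is a simple object of $\calW_U$, in which case $\phi(X_i)$ is the simple $B$-module $S_i^B$ with projective cover $P_i^B$ and $\dim_K \End_B(S_i^B) = \dim_K \End_A(X_i) = d_i$; the definition of $\pi$ and the Euler-form duality then give $\pi(\theta)(S_i^B) = (\theta(X_i)/d_i)\cdot d_i = \theta(X_i)$. The identity extends to all of $\calW_U$ by additivity on short exact sequences, since $\calW_U$ is an abelian length category and $\phi$ is exact.

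For part (1), I would use this identity together with the defining condition of $N_U$. The inclusion $\theta \in N_U$ forces $H^0(U) \in \calT_\theta$ and $H^{-1}(\nu U) \in \calF_\theta$, which (combined with the description of $\calW_U$ as ${}^\perp H^{-1}(\nu U) \cap H^0(U)^\perp$) implies that any quotient $M \twoheadrightarrow X$ of $M \in \calW_U$ fits into a short exact sequence $0 \to X' \to X \to X'' \to 0$ with $X' \in \calT_\theta$ outside $\calW_U$ and $X'' \in \calW_U$, and that the ``bad'' part $X'$ already satisfies $\theta(X') \geq 0$ automatically. So $\theta$-semistability of $M$ in $\mod A$ reduces to non-negativity on quotients staying inside $\calW_U$, which via $\phi$ matches $\pi(\theta)$-semistability of $\phi(M)$ in $\mod B$. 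Thus $\theta \in \Theta_M$ iff $\pi(\theta) \in \Theta_{\phi(M)}$ whenever $\theta \in N_U$. Surjectivity of $\pi|_{N_U}$ onto $K_0(\proj B)_\R$, which I would derive by translating an arbitrary $\eta \in K_0(\proj B)_\R$ via a suitable $\R_{>0}$-combination of $g$-vectors of $U$ until one lands inside $N_U$, then yields the equality $\Theta_{\phi(M)} = \pi(\Theta_M \cap N_U)$.

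For part (2), I would combine part (1) with the brick-wall characterisation of TF equivalence in Theorem \ref{Thm_W_constant_intro}: TF classes are the connected components obtained by removing the union of brick walls. Since the simple objects of $\calW_U$ are exactly the bricks of $\mod A$ lying in $\calW_U$, and $\phi$ sends them bijectively to bricks of $\mod B$, while any brick of $\mod A$ outside $\calW_U$ contributes a wall disjoint from $N_U$ (again by the $N_U$ condition), the local wall structure in $N_U$ pulls back exactly the global wall structure in $K_0(\proj B)_\R$. Because $\pi$ is an affine-linear surjection whose fibres are parallel to the subspace spanned by the $g$-vectors of summands of $U$ and this subspace is contained in a single TF class, this yields the claimed bijection.

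For part (3), I would unwind the definition of $C^+$ together with the description of $\red$: if $V = U \oplus V' \in \twopresilt_U A$ with $V' = V_1' \oplus \cdots \oplus V_r'$, then $\red(V)$ has summands $\red(V_i') = \Hom_{\sfK^\rmb(\proj A)}(T, V_i')$, and a computation of $g$-vectors shows $\pi([V_i']) = [\red(V_i')]$ while $\pi([U_j]) = 0$ for summands $U_j$ of $U$. Hence $\pi(C^+(V)) = C^+(\red(V))$, which (using parts (1)--(2) to identify both with TF classes) gives the desired commutativity. The principal obstacle will be part (1), specifically the control of quotients of $M$ that leave $\calW_U$: one must exploit the numerical torsion pair condition defining $N_U$ in a sufficiently fine way to ensure these quotients are automatically $\theta$-nonnegative, and to verify the converse direction $\Theta_{\phi(M)} \subset \pi(\Theta_M \cap N_U)$ via a careful choice of preimage.
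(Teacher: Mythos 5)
Your proposal is correct and reaches the same conclusions, but by a noticeably different route in parts (2) and (3). Your compatibility identity $\theta(M)=\pi(\theta)(\phi(M))$ is the paper's Lemma \ref{Lem_p(N_U)}(2), and surjectivity of $\pi|_{N_U}$ is Lemma \ref{Lem_p(N_U)}(1) (the paper uses openness of $N_U$ together with $N_U=\R_{>0}\cdot N_U$; your translation of an arbitrary preimage by large multiples of $[U]$ achieves the same). For part (1), the paper first proves the full torsion-class correspondences $\phi(\ovcalT_\theta\cap\calW_U)=\ovcalT_{\pi(\theta)}$, $\phi(\calF_\theta\cap\calW_U)=\calF_{\pi(\theta)}$, etc.\ as Lemma \ref{Lem_p(N_U)}(3), then deduces $\phi(\calW_\theta)=\calW_{\pi(\theta)}$; you argue directly at the level of semistability by decomposing an arbitrary quotient $X$ of $M$ with respect to $(\calT_U,\ovcalF_U)$, which works since the torsion part lies in $\calT_U\subset\calT_\theta$ and the torsion-free part is a quotient of $M$ staying inside the wide subcategory $\calW_U$. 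Your worry about the ``converse direction'' is also unfounded: once the equivalence and surjectivity of $\pi|_{N_U}$ are in hand, any preimage in $N_U$ serves. For part (2), the paper invokes the lattice isomorphism of \cite[Theorem 3.12]{Jasso} via Lemma \ref{Lem_p(N_U)}(4), whereas you appeal to the brick-wall characterisation of Theorem \ref{Thm_W_constant}; that alternative is sound provided you observe that $N_U$ is convex, that bricks outside $\calW_U$ give walls disjoint from $N_U$, and that $\phi$ restricts to a bijection on \emph{bricks} of $\calW_U$ (not merely on its simple objects, as you wrote --- the brick-wall criterion needs all bricks). For part (3), the paper argues through Proposition \ref{Prop_Jasso_silt} and the torsion-class equalities without any $g$-vector computation, while you compute $\pi([V_i'])=[\red(V_i')]$ directly; that is feasible but requires the extra verification $\ang{V_i'}{X_j}=\ang{\red(V_i')}{S_j^B}$, which the paper's route avoids. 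None of these differences are gaps, but the paper's argument for (2) and (3) is tighter because it routes everything through torsion-class data rather than through walls and $g$-vectors.
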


As an application of this theorem, we give a simple proof of
the following characterization of $\tau$-tilting finiteness by the cones $C(T)$
for 2-term silting objects $T$.
Recall that $A$ is said to be \textit{$\tau$-tilting finite} 
if $\#(\twosilt A)<\infty$ \cite{DIJ,AIR}.

\begin{Thm}[Theorem \ref{Thm_finite_cover_eq}]
The algebra $A$ is $\tau$-tilting finite 
if and only if $K_0(\proj A)_\R=\bigcup_{T \in \twosilt A} C(T)$.
\end{Thm}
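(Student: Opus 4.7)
The plan is to prove the two implications separately, with the forward direction reducing to a short closure argument and the backward direction proceeding by induction on the number $n$ of simple $A$-modules via the reduction theorem.

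For the forward direction, assume $A$ is $\tau$-tilting finite. Then $\twosilt A$ is finite, so $\bigcup_T C(T)$ is a finite union of closed cones, hence closed. By the Demonet--Iyama--Jasso equivalence of $\tau$-tilting finiteness with $\#\brick A < \infty$, the set $\Wall = \bigcup_{S \in \brick A} \Theta_S$ is a finite union of rational polyhedral cones each of codimension at least $1$ (being contained in the hyperplane $\{\theta(S)=0\}$), so $\overline{\Wall}$ has empty interior in $K_0(\proj A)_\R$. Theorem \ref{Thm_chamber_cone} then identifies $K_0(\proj A)_\R \setminus \overline{\Wall}$ with $\coprod_T C^+(T) \subseteq \bigcup_T C(T)$, and this subset is open and dense. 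A closed set containing a dense subset coincides with the ambient space, completing this direction.

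For the backward direction I would argue the contrapositive by induction on $n$. The base case $n=1$ is vacuous since any algebra with one simple is $\tau$-tilting finite. For $n \ge 2$, assuming $A$ is $\tau$-tilting infinite, the core step is to locate a nonzero indecomposable $U \in \twopresilt A$ such that the reduction $B := \End_{\sfK^\rmb(\proj A)}(T_U)/[U]$, which has $n-1$ simples (where $T_U$ is the Bongartz completion of $U$), remains $\tau$-tilting infinite. The inductive hypothesis then produces $\bar\theta \in K_0(\proj B)_\R \setminus \bigcup_{T' \in \twosilt B} C(T')$, and the surjectivity of the linear projection $\pi$ (a consequence of Theorem \ref{Thm_N_U_TF_union}(2)) permits the choice of $\theta \in N_U$ with $\pi(\theta) = \bar\theta$. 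Supposing for contradiction that $\theta \in C(T)$ for some $T \in \twosilt A$, one has $\theta \in C^+(U')$ for some presilting summand $U'$ of $T$; Theorem \ref{Thm_presilt_TF} together with $\theta \in N_U$ yields $H^0(U) \in \Fac H^0(U')$ and $H^{-1}(\nu U) \in \Sub H^{-1}(\nu U')$, so $U$ is a summand of $U'$ and hence of $T$, putting $U' \in \twopresilt_U A$. Theorem \ref{Thm_N_U_TF_union}(3) then gives $\bar\theta = \pi(\theta) \in C^+(\red U') \subseteq C(\red T)$, contradicting the choice of $\bar\theta$.

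The main obstacle is producing the indecomposable $U$ whose reduction $B$ stays $\tau$-tilting infinite; without this the induction stalls. To secure it I would argue by a second contrapositive: if every reduction $B_U$ of $A$ by a nonzero indecomposable 2-term presilting were $\tau$-tilting finite, then applying the already-proved forward direction to each $B_U$ and pulling back through Theorem \ref{Thm_N_U_TF_union}(3) would yield $N_U \subseteq \bigcup_{T \in \twopresilt_U A} C(T)$ with only finitely many $T$'s in each union. Combined with the cover $K_0(\proj A)_\R \setminus \overline{\Wall} = \coprod_T C^+(T)$ from Theorem \ref{Thm_chamber_cone}, one should exhibit $K_0(\proj A)_\R$ as a finite union $\bigcup_T C(T)$, contradicting $\#\twosilt A = \infty$. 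The delicate technical point, which I would address using Theorem \ref{Thm_W_constant} by traversing segments from non-chamber TF classes into adjacent chambers and detecting the bricks crossed, is the verification that the open sets $N_U$ (taken over indecomposable $U$) together with the chambers genuinely exhaust $K_0(\proj A)_\R$.
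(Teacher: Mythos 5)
Your forward direction is valid but takes a different route from the paper. You invoke Theorem \ref{Thm_chamber_cone} to identify $K_0(\proj A)_\R \setminus \overline{\Wall}$ with $\coprod_T C^+(T)$, note that $\tau$-tilting finiteness makes $\brick A$ finite so that $\overline{\Wall}=\Wall$ is a finite union of codimension~$\geq 1$ cones with empty interior, and conclude by density plus closedness of $\bigcup_T C(T)$. The paper's Proposition \ref{Prop_finite_cover} instead argues more elementarily: it removes the codimension-$2$ skeleton $F_2=\bigcup_{|U|=n-2}C(U)$, shows $F_1\setminus F_2$ is simultaneously open (via the $C'(U)$ for $|U|=n-1$) and closed in the connected space $K_0(\proj A)_\R\setminus F_2$, and concludes $F_1\setminus F_2$ is everything. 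Your argument is shorter but leans on the full strength of Theorem \ref{Thm_chamber_cone}, whereas the paper's is self-contained given only that $\twosilt A$ is finite.

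Your backward direction, however, has a genuine gap, and the inductive strategy cannot be repaired as stated. The reduction step requires a nonzero indecomposable $U \in \twopresilt A$ whose reduction $B_U$ is again $\tau$-tilting infinite, but $\tau$-tilting infiniteness does \emph{not} descend to reductions. The $2$-Kronecker algebra $A=K(1\rightrightarrows 2)$ is $\tau$-tilting infinite with $n=2$, yet every reduction $B_U$ at an indecomposable $U$ has exactly one simple module and is therefore local and automatically $\tau$-tilting finite. So the sought-after $U$ simply does not exist, and the induction stalls permanently at $n=2$. Your ``second contrapositive'' fix for locating $U$ fails for the same reason: for the Kronecker algebra all $B_U$ are $\tau$-tilting finite and yet the cover fails (the ray $\R_{\geq 0}([P_1]-[P_2])$ is omitted), so ``all $B_U$ finite $\Rightarrow$ cover'' is simply false. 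The deeper reason is the ``delicate technical point'' you flagged: the sets $N_U$ over indecomposable $U$ need not cover $K_0(\proj A)_\R\setminus\{0\}$ when $A$ is $\tau$-tilting infinite. For $\theta\in N_U$ one has $\calW_\theta\subset\calW_U\simeq\mod B_U$, and for the Kronecker the limiting ray has $\calW_\theta$ the entire regular component, which cannot embed into any $\mod B_U$ with one simple. The paper avoids all of this by \emph{using} the cover hypothesis: from $K_0(\proj A)_\R=\bigcup_T C(T)$ it deduces directly that $K_0(\proj A)_\R\setminus\{0\}=\bigcup_{U\in\twoipresilt A}N_U$, extracts a finite subcover by compactness of the unit sphere, and then shows any indecomposable $V\in\twopresilt A$ has $[V]\in N_U$ for some $U$ in the finite subcover, forcing $V\cong U$. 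This is the argument you should adopt; the induction on $n$ is a dead end.
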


Note that the ``only if'' part follows from \cite{DIJ}.
The ``if'' part was conjectured by Demonet \cite{Demonet}
and a different proof was given by Zimmermann--Zvonareva \cite{ZZ}.

Finally, we give a combinatorial method 
to obtain the wall-chamber structure of $K_0(\proj A)_\R$
in the case that $A$ is the path algebra of an acyclic quiver $Q$
over an algebraically closed field $K$.
For each nonzero dimension vector $\vecd$,
there exists a module $M$
which gives the largest wall $\Theta_M$ with respect to inclusion 
among all modules whose dimension vectors are $\vecd$ \cite{Schofield}.
We write this largest wall $\Theta_{\vecd}$.
Then, the wall $\Theta_{\vecd}$ can be determined inductively in the following 
way.

\begin{Thm}[Theorem \ref{Thm_wall_path}]
Let $Q$ be an acyclic quiver and 
$\vecd=(d_i)_{i \in Q_0} \in (\Z_{\ge 0})^{Q_0}$ 
be a nonzero dimension vector,
and set $\supp \vecd:=\{i \in Q_0 \mid d_i \ne 0\}$.
Then, $\Theta_{\vecd}$ is given as follows.
\begin{itemize}
\item[(1)]
If $\# \supp \vecd = 1$ and $k \in \supp {\vecd}$,
then $\Theta_{\vecd}=\bigoplus_{i \ne k} \R[P_i]$.
\item[(2)]
Assume that $\# \supp \vecd = 2$
and that the full subquiver of $Q$ corresponding to $\supp \vecd \subset Q$ is 
\begin{align*}
\begin{xy}
( 0, 0)*+{k}="1",
(20, 0)*+{l}="2",
(10, 0)*+{\vdots},
\ar@<4mm> "1";"2"
\ar@<2mm> "1";"2"
\ar@<-4mm> "1";"2"
\end{xy}
\quad \textup{($m$ arrows)}
\end{align*} 
with $k,l \in \supp \vecd$ and $m \in \Z_{\ge 0}$.
We define $a,b \in \Z_{\ge 0}$ by $a:=d_k/\gcd(d_k,d_l)$ and $b:=d_l/\gcd(d_k,d_l)$.
Then, 
\begin{align*}
\Theta_{\vecd}=\begin{cases}
(\bigoplus_{i \ne k,l} \R[P_i]) \oplus \R_{\ge 0}(b[P_k]-a[P_l]) 
& (a^2+b^2-mab \le 1) \\
(\bigoplus_{i \ne k,l} \R[P_i])
& (\textup{otherwise})
\end{cases}.
\end{align*}
\item[(3)]
If $\supp \vecd \ge 3$, then 
$\Theta_{\vecd}$ is the smallest polyhedral cone of 
$K_0(\proj A)_\R$ containing
\begin{align*}
\bigcup_{0 < \vecc < \vecd} 
(\Theta_{\vecc} \cap \Theta_{\vecd-\vecc}).
\end{align*}
\end{itemize}
\end{Thm}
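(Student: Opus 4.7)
The plan is to induct on $\sum_{i \in Q_0} d_i$. A preliminary reduction: writing $S := \supp \vecd$, any module of dimension $\vecd$ is supported on $S$, and both $\Theta_{\vecd}$ and the right-hand side of the asserted formula decompose as a direct sum of the analogous expression computed inside the full subquiver $Q_S$ of $Q$ on $S$ with the orthogonal summand $\bigoplus_{i \notin S}\R[P_i]$. Hence we may assume $\supp \vecd = Q_0$.

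Case (1) is essentially tautological: the only module of dimension $d_k$ at the unique vertex $k$ is $S_k^{d_k}$, whose every quotient is isomorphic to $S_k^j$ for some $j$; so $\theta$-semistability collapses to $\theta(S_k)=0$, yielding the hyperplane $\bigoplus_{i \ne k}\R[P_i]$.

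Case (2) amounts, after the reduction, to a case analysis on the $m$-Kronecker quiver $K(m)$ on $\{k,l\}$. I would use Schofield's criterion (or Kac's theorem for $K(m)$) to establish that a brick of dimension $\gcd(d_k,d_l)\cdot(a,b)$ exists iff $a^2+b^2-mab \le 1$, and when such a brick exists the largest wall $\Theta_{\vecd}$ is given by the positive ray $\R_{\ge 0}(b[P_k]-a[P_l])$ inside the $(k,l)$-plane plus the orthogonal summand; otherwise every $\vecd$-dimensional module decomposes in a way that forces $\Theta_{\vecd}$ to collapse to the coordinate subspace $\bigoplus_{i \ne k, l}\R[P_i]$. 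The positive ray (rather than a full line) arises from the quotient positivity $\theta(X)\ge 0$: the quotient slopes $b'/a'$ of the generic brick all satisfy $b'/a' \le b/a$, which pins $\theta$ to one side of the line $d_k\theta(S_k)+d_l\theta(S_l)=0$.

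Case (3) is the main step and the principal obstacle. The easy inclusion $\bigcup_{0<\vecc<\vecd}(\Theta_{\vecc}\cap\Theta_{\vecd-\vecc})\subseteq\Theta_{\vecd}$ follows because, given $\theta$ common-semistable for both $\vecc$ and $\vecd-\vecc$, a generic extension $0\to N\to M\to L\to 0$ with $\dimv N=\vecc$, $\dimv L=\vecd-\vecc$ produces a $\theta$-semistable module of dimension $\vecd$. The reverse inclusion would combine two ingredients: (i) for a Schur root $\vecd$, $\Theta_{\vecd}$ is a rational polyhedral cone whose interior is the locus of $\theta$-stability of the generic module $M$ of dimension $\vecd$ and on whose boundary $M$ becomes strictly semistable; and (ii) any Jordan--H\"older filtration of $M$ in the wide category $\calW_\theta$ then produces a non-trivial decomposition $\vecd=\vecc+(\vecd-\vecc)$ placing $\theta$ in $\Theta_{\vecc}\cap\Theta_{\vecd-\vecc}$. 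Together these show every boundary point of $\Theta_{\vecd}$ lies in the union, so the closed cone $\Theta_{\vecd}$ is the polyhedral hull thereof. The hypothesis $\#\supp\vecd\ge 3$ is what ensures genuine polyhedrality (ruling out the isolated-ray behaviour of Case (2)); verifying carefully that every facet of $\Theta_{\vecd}$ is captured by some pair $(\vecc,\vecd-\vecc)$ is where I expect the bulk of the technical work to lie.
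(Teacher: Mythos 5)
Your treatment of cases (1) and (2) follows the same route as the paper's Lemma \ref{Lem_wall_init}, which is obtained from the Kronecker-quiver Example \ref{Ex_Kronecker} together with Lemma \ref{Lem_strongly_convex}. For case (3) you have the right key idea --- boundary points of $\Theta_{\vecd}$ force the generic module to become strictly $\theta$-semistable and hence to decompose in $\calW_\theta$ --- but you take a genuinely different route from the paper. The paper proves Proposition \ref{Prop_wall_recurrence} by first passing to a single module: Lemma \ref{Lem_max_wall} realizes $\Theta_{\vecd}$ as $\Theta_M$ for one generic $M$, and the substance is the purely module-theoretic Proposition \ref{Prop_wall_ext}, which applies to any nonzero $M$ with $\#\supp M \ge 3$, not just bricks. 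Two things you assert without argument are each proved there: the geometric fact that a strongly convex polyhedral cone of dimension $\ge 2$ is the polyhedral hull of its boundary is Lemma \ref{Lem_boundary_gen} (this is exactly where $\#\supp\vecd \ge 3$ enters, after the sincere reduction of Lemma \ref{Lem_strongly_convex}), and the step ``boundary point $\Rightarrow$ nontrivial composition series of $M$ in $\calW_\theta$'' is made rigorous by Lemma \ref{Lem_face}, which matches the faces of $\Theta_M$ with the sets $\supp_\theta M$ of composition factors in $\calW_\theta$.

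The place where your sketch, as written, actually leaves a gap is the Schur-root restriction in ingredient (i). The paper warns after Lemma \ref{Lem_face} that $\Theta_M = \Theta_{M\oplus M}$, so ``interior of the wall equals the $\theta$-stability locus'' cannot hold for arbitrary $\vecd$; it does hold for an indivisible Schur root because the generic module $M$ is then a brick and any composition factor of $M$ in $\calW_\theta$ for interior $\theta$ has dimension vector proportional to $\vecd$, forcing the trivial filtration. When $\vecd$ has a nontrivial canonical decomposition $\vecd = \bigoplus_i \vecc_i$, you would need a separate argument. That case turns out to be easy ($\Theta_{\vecd} = \bigcap_i \Theta_{\vecc_i}$ already equals $\Theta_{\vecc_1}\cap\Theta_{\vecd-\vecc_1}$, so the hull statement is immediate), but you should say so; the paper's module-level Proposition \ref{Prop_wall_ext} avoids this case split entirely by never characterizing the interior as a stability locus.
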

As an example of the theorem above,
we give the wall-chamber structure of $K_0(\proj A)_\R$
in the case that $Q$ is the wild quiver $1 \rightrightarrows 2 \to 3$
in Example \ref{Ex_pptx1}.
Since $A$ is hereditary,
there exists a canonical isomorphism $K_0(\proj A)_\R \cong K_0(\mod A)_\R$,
and under this identification,
the chambers in $K_0(\proj A)_\R$ coincide 
with the diagram in $K_0(\mod A)_\R$ 
given in \cite[Example 11.3.9]{DW}.

\subsection{Notation}

Throughout this paper, $K$ is a field and $A$ is a finite-dimensional $K$-algebra.
Unless otherwise stated, all algebras and modules are finite-dimensional.

We set $\proj A$ as the category of finitely generated projective right $A$-modules,
and $P_1,\ldots,P_n$ as all the non-isomorphic indecomposable projective modules 
in $\proj A$.
Similarly, we write $\mod A$ for the category of finite-dimensional right $A$-modules,
and let $S_1,\ldots,S_n$ be all the non-isomorphic simple modules in $\mod A$.
We may additionally assume 
that $S_i$ is the top of $P_i$ for each $i \in \{1,\ldots,n\}$.

The symbol $\sfK^\rmb(\proj A)$ denotes the homotopy category of 
the bounded complex category of $\proj A$,
and $\sfD^\rmb(\mod A)$ stands for the derived category of 
the bounded complex category of $\mod A$.
Both categories are triangulated categories,
and their shifts are denoted by $[1]$.

Any subcategory appearing in this paper is a full subcategory,
and is assumed to be closed under isomorphism classes.


\section{TF equivalence on stability conditions}

\subsection{The wall-chamber structures}

We start by recalling the definition of Grothendieck groups.
Let $\calC$ be an exact category or a triangulated category,
then the \textit{Grothendieck group} $K_0(\calC)$ is the quotient group 
of the free abelian group on the set of isomorphism classes $[X]$ of $\calC$
by the relations $[X]-[Y]+[Z]=0$ 
for all admissible short exact sequences $0 \to X \to Y \to Z \to 0$
or all triangles $X \to Y \to Z \to X[1]$ in $\calC$.

It is well-known that the Grothendieck group $K_0(\proj A)$ has a $\Z$-basis 
$(P_i)_{i=1}^n$ given by all the non-isomorphic indecomposable projective modules,
and that it is canonically isomorphic to $K_0(\sfK^\rmb(\proj A))$.
Similarly, $K_0(\mod A)$ is also a free abelian group of rank $n$, and
the family $(S_i)_{i=1}^n$ of all the non-isomorphic simple modules is a $\Z$-basis 
of $K_0(\mod A)$.
The Grothendieck group $K_0(\mod A)$ can be canonically identified with 
$K_0(\sfD^\rmb(\mod A))$; see \cite{Happel} for details.

For these Grothendieck groups $K_0(\proj A)$ and $K_0(\mod A)$,
we consider a non-degenerate $\Z$-bilinear form 
$\ang{?}{!} \colon K_0(\proj A) \times K_0(\mod A) \to \Z$ 
called the \textit{Euler form} defined by 
\begin{align*}
\ang{T}{X} :=\sum_{k \in \Z}(-1)^k \dim_K \Hom_{\sfD^\rmb(\mod A)}(T,X[k])
\end{align*}
for $T \in \sfK^\rmb(\proj A)$ and $X \in \sfD^\rmb(\mod A)$.
The families $(P_i)_{i=1}^n$ and $(S_i)_{i=1}^n$ 
give dual bases of $K_0(\proj A)$ and $K_0(\mod A)$
with respect to the Euler form in the following sense:
\begin{align*}
\ang{P_i}{S_j} = \begin{cases}
\dim_K \End_{\sfD^\rmb(\mod A)}(S_j) & (i=j) \\
0 & (i \ne j)
\end{cases}.
\end{align*}

In this paper, we consider the real Grothendieck groups 
\begin{align*}
K_0(\proj A)_\R:=K_0(\proj A) \otimes_\Z \R \quad \text{and} \quad
K_0(\mod A)_\R:=K_0(\mod A) \otimes_\Z \R.
\end{align*}
Then, they are identified with the Euclidean space $\R^n$
as topological spaces and vector spaces.
The Euler form is
naturally extended to an $\R$-bilinear form
$\ang{?}{!} \colon K_0(\proj A)_\R \times K_0(\mod A)_\R \to \R$.
We regard each $\theta \in K_0(\proj A)_\R$ as an $\R$-linear form 
$\ang{\theta}{?} \colon K_0(\mod A)_\R \to \R$;
in other words, we set $\theta(M):=\ang{\theta}{M}$.
For each $\theta \in K_0(\proj A)_\R$,
King \cite{King} associated a \textit{stability condition} as follows.

\begin{Def}
\cite[Definition 1.1]{King}
Let $\theta \in K_0(\proj A)_\R$.
\begin{itemize}
\item[(1)]
A module $M \in \mod A$ is said to be $\theta$-\textit{semistable} if 
\begin{itemize}
\item
$\theta(M)=0$, and 
\item
for any quotient module $X$ of $M$, we have $\theta(X) \ge 0$.
\end{itemize}
We define the $\theta$-\textit{semistable subcategory} $\calW_\theta$ 
as the full subcategory consisting of all the $\theta$-semistable modules in $\mod A$.
\item[(2)]
A module $M \in \mod A$ is said to be $\theta$-\textit{stable} if 
\begin{itemize}
\item
$M \ne 0$,
\item
$\theta(M)=0$, and 
\item
for any nonzero proper quotient module $X$ of $M$, 
we have $\theta(X) > 0$.
\end{itemize}
\end{itemize}
\end{Def}

The $\theta$-semistable subcategory $\calW_\theta$ 
is a \textit{wide subcategory} of $\mod A$,
that is, a full subcategory closed under kernels, cokernels, and 
extensions in $\mod A$.
In particular, $\calW_\theta$ is an abelian category,
so all its simple objects are bricks.
Here, a module $S \in \mod A$ is called a \textit{brick} if
its endomorphism ring $\End_A(S)$ is a division ring.
We write $\brick A$ for the set of isomorphism classes of bricks in $\mod A$.
By definition, we obtain the following property.

\begin{Lem}\label{Lem_simple_stable}
Let $\theta \in K_0(\proj A)_\R$ and 
$M \in \calW_\theta$, then $M$ is a simple object in $\calW_\theta$ 
if and only if $M$ is $\theta$-stable.
\end{Lem}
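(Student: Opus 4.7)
The plan is to handle the two implications directly from the definitions, using that $\calW_\theta$ is wide and hence abelian, and that $\theta$ is additive on short exact sequences in $\mod A$.

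For the implication ``$\theta$-stable $\Rightarrow$ simple in $\calW_\theta$'', I would argue by contradiction. Suppose $M$ is $\theta$-stable and admits a nonzero proper subobject $N$ inside $\calW_\theta$. Because $N \in \calW_\theta$ we have $\theta(N)=0$, so additivity of $\theta$ gives $\theta(M/N) = \theta(M) - \theta(N) = 0$. But $M/N$ is a nonzero proper quotient of $M$ in $\mod A$, so $\theta$-stability forces $\theta(M/N) > 0$, a contradiction. Hence $M$ has no such $N$ and is simple in $\calW_\theta$.

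For the converse, I would prove the contrapositive: assume $M \in \calW_\theta$ is not $\theta$-stable and show $M$ is not simple in $\calW_\theta$. By the assumption there is a nonzero proper quotient $q \colon M \twoheadrightarrow X$ in $\mod A$ with $\theta(X) \le 0$; combined with the semistability inequality $\theta(X) \ge 0$ this forces $\theta(X)=0$. The key step is then to observe $X \in \calW_\theta$, which reduces to checking that every quotient $Y$ of $X$ has $\theta(Y) \ge 0$, and this is immediate since any such $Y$ is also a quotient of $M \in \calW_\theta$. Thus $X$ is a nonzero proper quotient of $M$ in the abelian category $\calW_\theta$, so $M$ cannot be simple there.

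I do not expect a serious obstacle; the whole argument is a direct unwinding of the definition of $\theta$-semistability plus additivity of $\theta$. The one point that requires a moment's thought is ensuring that ``proper quotient in $\mod A$'' and ``proper quotient in the abelian category $\calW_\theta$'' align in the forward direction, which is precisely what the verification $X \in \calW_\theta$ provides.
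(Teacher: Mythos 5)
Your proof is correct, and you should note that the paper itself supplies no proof of this lemma (it is preceded only by the remark ``By definition, we obtain the following property''), so there is no argument of the author's to compare against; your write-up is simply the direct unwinding the paper left to the reader. Both directions are sound: in the forward direction you correctly use that a subobject $N$ of $M$ in $\calW_\theta$ is in particular a submodule in $\mod A$ (since $\calW_\theta$ is closed under kernels, mono in $\calW_\theta$ agrees with mono in $\mod A$), so $\theta(M/N)=\theta(M)-\theta(N)=0$ contradicts stability; and in the converse you correctly observe that the quotient $X$ with $\theta(X)=0$ is itself $\theta$-semistable because all of its quotients are quotients of $M$, and that surjections in $\mod A$ between objects of $\calW_\theta$ remain epimorphisms (indeed cokernel maps) in $\calW_\theta$ because $\calW_\theta$ is closed under kernels and cokernels. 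The only point you silently skip is the degenerate case $M=0$: your contrapositive argument produces a nonzero proper quotient, which does not exist when $M=0$; but in that case $M$ is neither simple nor $\theta$-stable (the definition of $\theta$-stable explicitly requires $M\neq 0$), so the equivalence holds vacuously, and it would be cleanest to dispose of this case in one line before running the contrapositive.
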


To investigate semistable subcateories, 
we associate a wall for each nonzero module in $\mod A$
as Br\"{u}stle--Smith--Treffinger \cite[Definition 3.2]{BST} and 
Bridgeland \cite[Definition 6.1]{Bridgeland}.

\begin{Def}
For any nonzero module $M \in \mod A \setminus \{ 0 \}$, we set 
\begin{align*}
\Theta_M:=\{ \theta \in K_0(\proj A)_\R \mid M \in \calW_\theta \},
\end{align*}
and call $\Theta_M$ the \textit{wall} associated to the module $M$.
\end{Def}

These walls define a \textit{wall-chamber structure} of $K_0(\proj A)_\R$.
Clearly, 
we have $\Theta_{M_1 \oplus M_2}=\Theta_{M_1} \cap \Theta_{M_2}$ 
for any $M_1, M_2 \in \mod A \setminus \{ 0 \}$,
so we sometimes consider the walls only for indecomposable modules.
We here give an easy example.

\begin{Ex}
Let $A$ be the path algebra $K(1 \to 2)$.
The indecomposable $A$-modules are $S_1,S_2,P_1$,
and the corresponding walls are
$\Theta_{S_1}=\R[P_2]$, $\Theta_{S_2}=\R[P_1]$
and $\Theta_{P_1}=\R_{\ge 0}([P_1]-[P_2])$,
since there exists a short exact sequence $0 \to S_2 \to P_1 \to S_1 \to 0$.
These walls are depicted as follows:
\begin{align*}
\includegraphics[width=5.7cm]{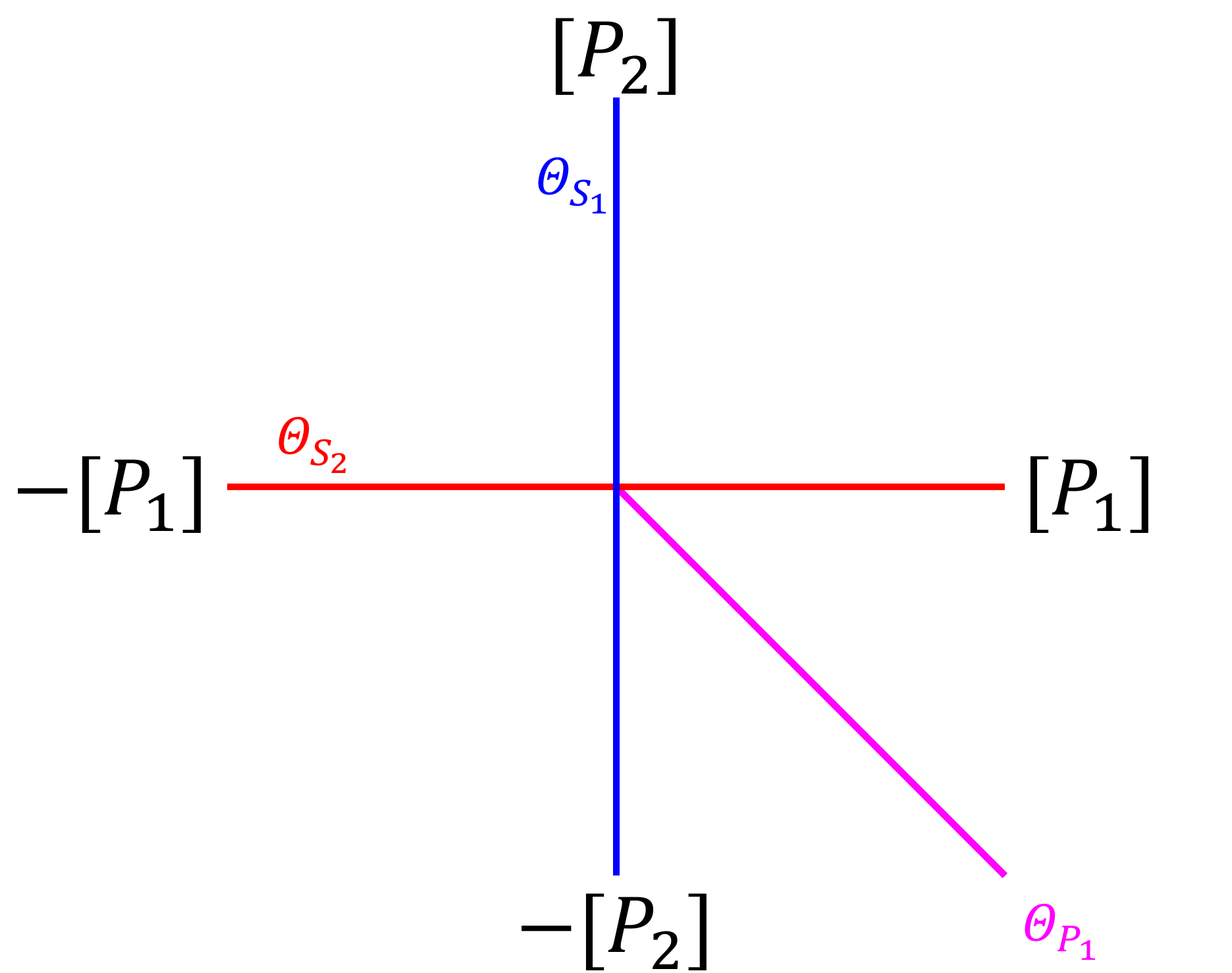}
\end{align*}
\end{Ex}

To investigate the walls $\Theta_M$ more geometrically,
we here cite some basic notions and properties on rational polyhedral cones
in a Euclidean space from \cite[Section 1.2]{Fulton}.

Let $F$ be a free abelian group of finite rank,
and set $F^*:=\Hom_\Z(F,\Z)$.
Then, we have two finite-dimensional $\R$-vector spaces
$V:=F \otimes_\Z \R$ and $V^*:=F^* \otimes_\Z \R \cong \Hom_\R(V,\R)$.
A subset $D \subset V$ is called a \textit{polyhedral cone}
if there exist finitely many elements $v_1,\ldots,v_m \in V$ 
such that 
\begin{align*}
D=\left\{ \sum_{i=1}^m r_i v_i \mid r_i \in \R_{\ge 0} \right\}.
\end{align*}
A polyhedral cone $D$ is said to be \textit{rational} 
if we can take $v_1,\ldots,v_m$ above 
so that $v_1,\ldots,v_m \in F \otimes_\Z \Q$.

For a polyhedral cone $D$ in $V$, 
we define the \textit{dual} $D^\vee \subset V^*$ of $D$ by
\begin{align*}
D^\vee:=\{ u \in V^* \mid \text{for all $v \in D$, $\ang{u}{v} \ge 0$} \},
\end{align*}
where $\ang{u}{v}:=u(v)$.
Then, $D^\vee$ is a polyhedral cone in $V^*$, that is,
there exist finitely many elements $u_1,\ldots,u_m \in V^*$ 
such that
\begin{align*}
D^\vee=\left\{ \sum_{i=1}^m r_i u_i \mid r_i \in \R_{\ge 0} \right\}.
\end{align*}
Moreover, if $D$ is rational, then $D^\vee$ is rational.

We can consider the dual polyhedral cone $C^\vee$ in $V$ 
of a polyhedral cone $C$ in $V^*$ in a similar way,
and then, $(C^\vee)^\vee$ coincides with $C$.

Let $C$ be a polyhedral cone in $V^*$.
A subset $C' \subset C$ is called a \textit{face} if 
there exists some $v \in C^\vee$ such that $C'=C \cap \Ker \ang{?}{v}$,
or equivalently, if
$C'$ admits finitely many elements $v_1,v_2,\ldots,v_m \in C^\vee$
which satisfy $C'=C \cap \left(\bigcap_{i=1}^m \Ker \ang{?}{v_i}\right)$.
Any face of a (rational) polyhedral cone is a (rational) polyhedral cone again.
We define the \textit{dimension} $\dim C$ of the polyhedral cone $C$
as the dimension $\dim_\R (\R \cdot C)$ 
of the $\R$-vector subspace $\R \cdot C \subset V^*$ spanned by $C$.
We say that a polyhedral cone $C$ is 
\textit{strongly convex} if the vector space $C \cap (-C)$ is $\{0\}$.

By setting $F:=K_0(\mod A)$, 
we get $F^* \cong K_0(\proj A)$ via the Euler form.
For each $M \in \mod A \setminus \{0\}$,
consider the rational polyhedral cone $D_M$ in $K_0(\mod A)_\R$ generated by the set 
\begin{align*}
\{[X] \mid \text{$X$ is a quotient module of $M$} \} \cup \{ -[M] \},
\end{align*}
then the wall $\Theta_M$ coincides with the dual $(D_M)^\vee$,
so $\Theta_M$ is a rational polyhedral cone in $K_0(\proj A)_\R$.
A finite set $\{X_1,X_2,\ldots,X_m\}$ of quotient modules of $M$
gives a face 
$\Theta_M \cap \left( \bigcap_{i=1}^m \Ker \ang{?}{X_i} \right)$,
and we can check that all faces of $\Theta_M$ are obtained in this way.
Since $\Theta_M \subset \Ker \ang{?}{M}$,
we have $\dim \Theta_M \le n-1$.

From now on, we will characterize some conditions on $\Theta_M$ 
as a polyhedral cone 
in terms of representation theoritic properties of $M$.

We first consider the question when $\Theta_M$ is strongly convex.
The answer is given by the sincerity of the module $M$.
We say that $M \in \mod A$ is \textit{sincere} 
if $\supp M=\{1,2,\ldots,n\}$, 
where we set
\begin{align*}
\supp M:=\{ i \in \{1,2,\ldots,n\} \mid 
\text{$S_i$ appears in a composition series of $M$ in $\mod A$} \}.
\end{align*}

\begin{Lem}\label{Lem_strongly_convex}
Let $M \in \mod A \setminus \{ 0 \}$, and set 
$H_1:=\bigoplus_{i \in \supp M} \R[S_i]$ and 
$H_2:=\bigoplus_{j \notin \supp M} \R[S_j]$.
Then, we have the following assertions.
\begin{itemize}
\item[(1)]
We have $\Theta_M \cap (-\Theta_M)=H_2$.
\item[(2)]
The wall $\Theta_M$ is strongly convex if and only if $M$ is sincere.
\item[(3)]
The wall $\Theta_M$ coincides with 
$(\Theta_M \cap H_1) \oplus H_2$,
and $\Theta_M \cap H_1$ is a strongly convex polyhedral cone in $H_1$.
\end{itemize}
\end{Lem}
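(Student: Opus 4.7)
The plan is to reduce everything to the dual-basis identity $\ang{P_i}{S_j}=\delta_{ij}\dim_K\End_A(S_j)$, which says that writing $\theta=\sum_{i=1}^n a_i[P_i]$ we have $\theta(S_j)=a_j\dim_K\End_A(S_j)$; in particular $\theta(S_j)=0$ iff $a_j=0$. Thus the coordinate subspace $H_2$ is precisely the locus of $\theta \in K_0(\proj A)_\R$ vanishing on every $S_j$ with $j \in \supp M$, and $H_1$ is its complement.

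For (1), the key observation is that every $S_j$ with $j\in\supp M$ appears as the top of some quotient of $M$: choose a composition series of $M$ passing through a copy of $S_j$ and take the quotient of $M$ by the submodule directly below that copy. Now if $\theta\in\Theta_M\cap(-\Theta_M)$, then both $\theta$ and $-\theta$ are nonnegative on every quotient of $M$, so $\theta(X)=0$ for every quotient $X$ of $M$. Applied to the quotients realizing each such $S_j$ at the top, this forces $a_j=0$ for every $j\in\supp M$, i.e.\ $\theta\in H_2$. The converse is immediate: any $\theta\in H_2$ vanishes on every composition factor $S_j$ of $M$, hence by additivity of $\ang{\theta}{?}$ on short exact sequences it vanishes on $M$ and on every subquotient of $M$, so trivially $\theta,-\theta\in\Theta_M$.

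Part (2) then falls out instantly: $\Theta_M$ is strongly convex iff $\Theta_M\cap(-\Theta_M)=\{0\}$ iff $H_2=\{0\}$ iff $\supp M=\{1,\ldots,n\}$. For (3), I would use the direct sum decomposition $K_0(\proj A)_\R=H_1\oplus H_2$ and split $\theta=\theta_1+\theta_2$ accordingly. Every quotient $X$ of $M$ satisfies $\supp X\subseteq\supp M$, so $\theta_2(X)=0$; hence the semistability constraints $\theta(M)=0$ and $\theta(X)\ge 0$ depend only on $\theta_1$, giving $\theta\in\Theta_M$ iff $\theta_1\in\Theta_M\cap H_1$, which is the asserted decomposition. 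Strong convexity of $\Theta_M\cap H_1$ inside $H_1$ then follows from $(\Theta_M\cap H_1)\cap(-(\Theta_M\cap H_1))=\Theta_M\cap(-\Theta_M)\cap H_1=H_2\cap H_1=\{0\}$ by part (1).

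There is no genuine obstacle here; the whole argument is a matter of unwinding definitions once one has the single representation-theoretic input that each $S_j$ with $j\in\supp M$ arises as a quotient of a quotient of $M$. That observation is exactly what translates the support data of $M$ into coordinate constraints on $\theta$, and from it the three claims follow in parallel.
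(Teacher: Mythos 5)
Your argument for (1) contains a false claim: it is not true that every $S_j$ with $j \in \supp M$ appears as the top of some quotient of $M$. For example, with $A = K(1 \to 2)$ and $M = P_1$, we have $\supp M = \{1,2\}$, but the top of every nonzero quotient of $P_1$ is $S_1$ (the top of a quotient of $M$ is always a quotient of $\mathrm{top}(M)$), so $S_2$ never arises as a top. The quotient $M/M_{k-1}$ you construct from a composition series $M_0 \subset \cdots \subset M_l$ with $M_k/M_{k-1} \cong S_j$ has $S_j$ as its smallest nonzero submodule in the filtration, not as its top, and in any case $\theta(M/M_{k-1})=0$ by itself does not yield $\theta(S_j)=0$. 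The fix is immediate, however, and is exactly what the paper does: once you know $\theta$ vanishes on every quotient of $M$, apply this to both $M/M_{k-1}$ and $M/M_k$ and use additivity of $\ang{\theta}{?}$ to get $\theta(M_k/M_{k-1})=0$ for every $k$, whence $\theta$ kills all composition factors and lies in $H_2$. With that correction, your (1) coincides in substance with the paper's.

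Your proof of (3) is correct and takes a genuinely different, and in my view cleaner, route than the paper. The paper passes to the quotient algebra $A/\langle e\rangle$ for a suitable idempotent $e$, observes that $M$ is sincere over $A/\langle e\rangle$, and applies (2) there. You instead decompose $\theta = \theta_1 + \theta_2$ along $H_1 \oplus H_2$, note that $\theta_2$ vanishes on every quotient $X$ of $M$ because $\supp X \subseteq \supp M$, and conclude $\theta \in \Theta_M \iff \theta_1 \in \Theta_M \cap H_1$; strong convexity of $\Theta_M \cap H_1$ then drops out of (1) by intersecting with $H_1$. This avoids the detour through $A/\langle e\rangle$ entirely. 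Part (2) is identical in both proofs.
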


\begin{proof}
(1)
We first show $\Theta_M \cap (-\Theta_M) \subset H_2$.
Assume $\theta \in \Theta_M \cap (-\Theta_M)$,
and take a composition series 
$0 = M_0 \subset M_1 \subset \cdots \subset M_l = M$ in $\mod A$.
Since $\theta \in \Theta_M \cap (-\Theta_M)$,
we have $\theta(M_k) \le 0$ and $-\theta(M_k) \le 0$ for all $k$,
so $\theta(M_k)=0$ for any $k$.
Therefore, $\theta(M_k/M_{k-1})=0$ holds for all $k \in \{1,\ldots,l\}$,
which clearly implies $\theta \in H_2$.
The converse inclusion is obvious.

(2)
This straightly follows from (1).

(3)
The first statement is clear.
We take the idempotent $e \in A$ such that
$S_i e = 0$ if and only if $i \in \supp M$,
then $M$ is a sincere $A/\langle e \rangle$-module.
We define the wall $\Theta_M^{A/\langle e \rangle}$ 
associated to $M \in \mod A/\langle e \rangle$ 
in $K_0(\mod A/\langle e \rangle)_\R$,
which is strongly convex by (2).
Under the canonical inclusion $K_0(\mod A/\langle e \rangle)_\R
\to K_0(\mod A)_\R$, the image of
$\Theta_M^{A/\langle e \rangle} \subset K_0(\mod A/\langle e \rangle)_\R$ 
is $\Theta_M \cap H_1$.
Thus, $\Theta_M \cap H_1$ is a strongly convex polyhedral cone in $H_1$.
\end{proof}

We next focus on the faces of the polyhedral cone 
$\Theta_M$ in $K_0(\proj A)_\R$.
For $M \in \mod A \setminus \{ 0 \}$ and $\theta \in \Theta_M$,
we define 
\begin{align*}
\supp_\theta M:=\{
S \in \brick A \mid \text{$S$ is a simple object 
appearing in a composition series of $M$ in $\calW_\theta$} \}.
\end{align*}

\begin{Lem}\label{Lem_face}
Let $M \in \mod A \setminus \{ 0 \}$ and $\theta \in \Theta_M$,
and set $H:=\Ker \ang{?}{\supp_\theta M} \subset K_0(\proj A)_\R$.
Then, we have the following properties.
\begin{itemize}
\item[(1)]
The element $\theta$ belongs to the interior of $\Theta_M \cap H$ in $H$,
and $\Theta_M \cap H$ is the smallest face of $\Theta_M$ containing $\theta$.
\item[(2)]
Let $\theta' \in \Theta_M$. 
Then, $\theta' \in \Theta_M \cap H$ holds 
if and only if $\supp_\theta M \subset \calW_{\theta'}$.
\item[(3)]
Let $\theta' \in \Theta_M$ and set $H':=\Ker \ang{?}{\supp_{\theta'} M}$.
Then, $H=H'$ holds if and only if $\supp_\theta M = \supp_{\theta'} M$.
\end{itemize}
\end{Lem}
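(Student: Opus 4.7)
The plan is to exploit the polarity $\Theta_M = D_M^\vee$ and the observation that any quotient module $X$ of $M$ with $\theta(X) = 0$ automatically lies in $\calW_\theta$, since its own quotients are also quotients of $M$ and hence have non-negative $\theta$-value. For Part (1), I would identify the smallest face of $\Theta_M$ containing $\theta$, by standard polyhedral-cone theory, as $\Theta_M \cap L^\perp$, where $L$ is the $\R$-span of the classes $[X]$ for quotient modules $X$ of $M$ satisfying $\theta(X) = 0$. I would then show $L$ equals the $\R$-span of $\{[S] : S \in \supp_\theta M\}$: the forward inclusion is by decomposing each such $X$ in $K_0(\calW_\theta)$ into composition factors lying in $\supp_\theta M$, while the reverse follows from $[S] = [M/M_{k-1}] - [M/M_k]$ for a composition series $0 = M_0 \subset \cdots \subset M_l = M$ in $\calW_\theta$, noting that $\theta(M/M_k) = 0$ for all $k$ because every composition factor is $\theta$-annihilated. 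This identifies the minimal face as $\Theta_M \cap H$, and the interior statement follows since the affine hull of any face of a polyhedral cone coincides with the annihilator of the constraints active on that face.

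For Part (2), the ``if'' direction takes any quotient $Y = M/N$ of $M$ and filters it by $(N+M_i)/N$; the $i$-th subquotient $(N+M_i)/(N+M_{i-1})$ is a quotient in $\mod A$ of $S_i := M_i/M_{i-1} \in \supp_\theta M \subset \calW_{\theta'}$, so $\theta'(Y) = \sum_i \theta'((N+M_i)/(N+M_{i-1})) \ge 0$, and together with $\theta'(M) = 0$ (from $\theta' \in H$) this gives $\theta' \in \Theta_M \cap H$. For the ``only if'', fix $S = M_i/M_{i-1} \in \supp_\theta M$ and any quotient $X = S/T$ in $\mod A$; lifting $T$ to $\tilde{T}$ with $M_{i-1} \subset \tilde{T} \subset M_i$ identifies $X$ with $M_i/\tilde{T}$, and the short exact sequence $0 \to X \to M/\tilde{T} \to M/M_i \to 0$ yields $\theta'(X) = \theta'(M/\tilde{T}) - \theta'(M/M_i) \ge 0$, using $\theta'(M/\tilde{T}) \ge 0$ (as $M/\tilde{T}$ is a quotient of $M$) and $\theta'(M/M_i) = \theta'(M) - \theta'(M_i) = 0$ (since $\theta'(M_i) = \sum_{j \le i} \theta'(S_j) = 0$).

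The ``if'' direction of Part (3) is trivial, but the converse is subtle because $H = H'$ records only that the $\R$-spans of the two brick sets agree, not the sets themselves, and this is where the main difficulty lies. Applying Part (2) in both directions gives $\supp_\theta M \subset \calW_{\theta'}$ and $\supp_{\theta'} M \subset \calW_\theta$, so each $S \in \supp_\theta M$ admits a $\calW_{\theta'}$-composition series writing $[S] = \sum_T c_{S,T}[T]$, and each $T \in \supp_{\theta'} M$ has $[T] = \sum_{S'} d_{T,S'}[S']$, with all coefficients non-negative integers. Writing $[M]$ in two ways as a sum of simples of $\calW_{\theta'}$ and invoking injectivity of $K_0(\calW_{\theta'}) \hookrightarrow K_0(\mod A)$ forces every $T$ with $c_{S,T} > 0$ to lie in $\supp_{\theta'} M$. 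Substituting and using linear independence in $K_0(\calW_\theta) \hookrightarrow K_0(\mod A)$ yields $\sum_T c_{S,T} d_{T,S'} = \delta_{S,S'}$; the diagonal identity $\sum_T c_{S,T} d_{T,S} = 1$ over non-negative integers pins down a unique $T_0$ with $c_{S,T_0} = d_{T_0,S} = 1$, and the vanishing of the off-diagonal sums then forces $d_{T_0,S'} = 0$ for $S' \ne S$, giving $[T_0] = [S]$. Since $T_0$ is simple in the abelian category $\calW_{\theta'}$ and $S \in \calW_{\theta'}$ shares its class, $S$'s $\calW_{\theta'}$-composition series has length one, so $S \cong T_0 \in \supp_{\theta'} M$, and symmetry concludes. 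The main obstacle is exactly this final step: upgrading an equality of $\R$-spans to an equality of actual bricks requires exploiting the positivity of the composition-factor data in both wide subcategories simultaneously.
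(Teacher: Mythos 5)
Your Parts (1) and (2) are correct. For Part (1) you take a more abstract route than the paper, invoking the standard polyhedral-cone duality statement that the smallest face of $\Theta_M = D_M^\vee$ containing $\theta$ is $\Theta_M \cap F_\theta^\perp$ where $F_\theta$ is the face of active constraints, together with the dimension formula $\dim F + \dim F^\diamond = n$ to get the interior statement in $H$; the paper instead verifies directly that $\theta$ lies in the interior (using that stability is an open condition, so each $X \in \supp_\theta M$ stays in $\calW_{\theta''}$ for $\theta''$ near $\theta$ in $H$) and shows $\Theta_M\cap H$ is a face cut out by the quotient modules $M/M_i$. Both are fine, and the observation that $L$ is spanned by $\supp_\theta M$ is the same. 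Your Part (2) is essentially the paper's argument with the roles of $X$ and its quotient $Y$ relabeled.

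However, Part (3) contains a genuine gap. You invoke ``injectivity of $K_0(\calW_{\theta}) \hookrightarrow K_0(\mod A)$'' (and implicitly the analogous statement for $\calW_{\theta'}$) to pass from the identity $[S] = \sum_{S'}(\sum_T c_{S,T}d_{T,S'})[S']$ in $K_0(\mod A)$ to $\sum_T c_{S,T}d_{T,S'} = \delta_{S,S'}$, and again when concluding $S \cong T_0$ from $[S]=[T_0]$. That injectivity is false in general: the simple objects of $\calW_\theta$, i.e.\ the $\theta$-stable bricks, can have linearly dependent classes in $K_0(\mod A)$. For the Kronecker quiver $1 \rightrightarrows 2$ and $\theta = [P_1]-[P_2]$, every regular brick $R_\lambda$ of dimension vector $(1,1)$ is $\theta$-stable and $[R_\lambda] = [S_1]+[S_2]$; taking $M = R_\lambda \oplus R_\mu$ with $\lambda \ne \mu$ gives $\supp_\theta M = \{R_\lambda, R_\mu\}$ with $[R_\lambda]=[R_\mu]$, so the $[S']$'s are not independent and the matrix argument collapses.

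Moreover, if you try to repair the argument, you are led straight back to the paper's much shorter proof, which avoids $K_0$ entirely. Fix $S \in \supp_\theta M$ and pick a simple quotient $T_0$ of $S$ in $\calW_{\theta'}$. As a composition factor of $S$ in $\calW_{\theta'}$ (hence of $M$), $T_0 \in \supp_{\theta'} M$, so $T_0 \in \calW_\theta$ by Part (2) applied with the roles of $\theta$ and $\theta'$ exchanged. The surjection $S \twoheadrightarrow T_0$ lives in $\calW_\theta$ (its kernel is in $\calW_\theta$ because $\calW_\theta$ is wide), and since $S$ is simple in $\calW_\theta$ and $T_0 \ne 0$, this forces $S \cong T_0$, so $S$ is simple in $\calW_{\theta'}$. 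Running this over all $S$ shows a $\calW_\theta$-composition series of $M$ is also a $\calW_{\theta'}$-composition series, whence $\supp_\theta M = \supp_{\theta'} M$. The positivity you correctly identified as the key is exploited here through the simplicity of $S$ in $\calW_\theta$, not through a $K_0$-linear-independence claim that can fail.
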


\begin{proof}
(1)
Clearly, we have $\theta \in \Theta_M \cap H$.
For each $X \in \supp_\theta M$, 
there exists an open subset $N_X \subset K_0(\proj A)_\R$ 
such that $\theta \in N_X$ by Lemma \ref{Lem_simple_stable}. 
Then, $\theta \in (\bigcap_{X \in \supp_\theta M} N_X) \cap \Theta_M \cap H$ holds.
Since $\supp_\theta M$ is a finite set,
$\bigcap_{X \in \supp_\theta M} N_X$ is an open subset of $K_0(\proj A)_\R$.
Therefore, $\theta$ belongs to the interior of $\Theta_M \cap H$ in $H$.

Next, we show that $\Theta_M \cap H$ is a face of $\Theta_M$.
Take a composition series $0 \subset M_0 \subset M_1 \subset \cdots \subset M_l = M$
in $\calW_\theta$,
then $\supp_\theta M = \{M_i/M_{i-1} \mid i \in \{1,2,\ldots,l\} \}$.
Thus, for each $\theta' \in \Theta_M$,
the condition $\theta(\supp_\theta M)=0$ holds if and only if
$\theta(M/M_i)=0$ for all $i \in \{0,1,\ldots,l-1\}$,
so we get $\Theta_M \cap H = \bigcap_{i=0}^{l-1} (\Theta_M \cap \Ker \ang{?}{M/M_i})$.
Since $M/M_i$ is a quotient module of $M$,
the subset $\Theta_M \cap H$ is a face of $\Theta_M$.

These two facts yield that
$\Theta_X \cap \Ker \ang{?}{\supp_\theta M}$ is 
the smallest face containing $\theta$.

(2)
The ``if'' part is obvious, so we consider the ``only if'' part.
Let $\theta' \in \Theta_M \cap H$ and $X \in \supp_\theta M$.
It suffices to show $X \in \calW_{\theta'}$.
First, $\theta'(X)=0$ follows from the definition of $H$.
Next, assume that $Y$ is a quotient module of $X$.
Consider the composition series in the proof of (1), 
then we can take $i \in \{1,2,\ldots,l\}$ such that $X \cong M_i/M_{i-1}$.
It follows that there exists a quotient module $N$ of $M$ admitting
a short exact sequence $0 \to Y \to N \to M/M_i \to 0$.
Since $\theta' \in \Theta_M$, we have $\theta'(N) \ge 0$.
Moreover, $\theta'(M/M_i)=0$ because $\theta' \in H$.
Thus, $\theta'(Y)=\theta'(N)\ge 0$.
Consequently, $X \in \calW_{\theta'}$.

(3)
We get the ``if'' part straightforwardly.
Conversely, assume $H=H'$.
For any $X \in \supp_\theta M$, we get $X \in \calW_{\theta'}$ by (2)
and $\theta' \in \Theta_M \cap H' = \Theta_M \cap H$.
Take a nonzero quotient module $Y$ of $X$ which is simple in $\calW_{\theta'}$.
Then, by (2) again, $Y \in \calW_\theta$.
Since $X$ is a simple object in $\calW_\theta$ and $Y$ is a quotient module of $X$,
we have $Y=X$.
Therefore, $X$ is a simple object of $\calW_{\theta'}$.
This property holds for all $X \in \supp_\theta M$,
so a composition series of $M$ in $\calW_\theta$ is 
a composition series of $M$ in $\calW_{\theta'}$.
Thus, $\supp_\theta M=\supp_{\theta'} M$.
\end{proof}

%

The properties above yield that $\dim \Theta_M=n-1$
if and only if there exists $\theta \in \Theta_M$ such that
any $X \in \supp_\theta M$ satisfies $[X] \in \Q[M]$.
We remark that these conditions are not equivalent to that 
$M$ admits $\theta \in \Theta_M$ 
such that $M$ is a simple object in $\calW_\theta$,
because $\Theta_M=\Theta_{M \oplus M}$ holds for any $M \in \mod A \setminus \{ 0 \}$.

Moreover, the following property tells us that 
the dimension of every maximal wall with respect to inclusion is $n-1$
and that such a wall is always realized by a brick.

\begin{Prop}\label{Prop_max_wall_brick}
Let $M \in \mod A \setminus \{ 0 \}$.
Then, there exists $S \in \brick A$ such that $\Theta_S \supset \Theta_M$
and that $\dim \Theta_S=n-1$.
\end{Prop}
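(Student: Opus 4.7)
The plan is to pick an element $\theta$ in the relative interior of the rational polyhedral cone $\Theta_M$ and show that every simple object $S$ of $\calW_\theta$ appearing as a composition factor of $M$ (i.e., every $S \in \supp_\theta M$) satisfies both $\Theta_S \supset \Theta_M$ and $\dim \Theta_S = n-1$. Such a $\theta$ exists because every nonempty polyhedral cone has a nonempty relative interior, consisting precisely of the points whose smallest containing face is the whole cone. Note that any such $S$ is automatically a brick, since it is a simple object of the abelian length category $\calW_\theta$.

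Set $H := \Ker \ang{?}{\supp_\theta M}$. By Lemma \ref{Lem_face}(1), $\Theta_M \cap H$ is the smallest face of $\Theta_M$ containing $\theta$, so our choice of $\theta$ forces $\Theta_M \cap H = \Theta_M$; in other words, $\Theta_M \subset H$. Given any $\theta' \in \Theta_M$, we then have $\theta' \in \Theta_M \cap H$, and Lemma \ref{Lem_face}(2) yields $S \in \supp_\theta M \subset \calW_{\theta'}$, i.e., $\theta' \in \Theta_S$. Hence $\Theta_M \subset \Theta_S$, and in particular $\theta \in \Theta_S$.

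For the dimension statement, observe that since $S$ is simple in $\calW_\theta$, the only composition series of $S$ in $\calW_\theta$ is trivial, so $\supp_\theta S = \{S\}$ and trivially $[S] \in \Q[S]$. By the characterization of walls of dimension $n-1$ recorded just after Lemma \ref{Lem_face}, this forces $\dim \Theta_S = n-1$, which completes the argument.

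The only genuine subtlety lies in the very first step: one needs a relative interior point of $\Theta_M$ whose smallest containing face is $\Theta_M$ itself. This is a standard property of polyhedral cones, and Lemma \ref{Lem_face}(1) is already phrased so as to package exactly the identification of that smallest face in our setting; after this, the proof is essentially bookkeeping with parts (1) and (2) of Lemma \ref{Lem_face} and the dimension characterization that follows it.
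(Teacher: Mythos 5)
Your proof is correct and follows essentially the same approach as the paper for the inclusion $\Theta_S \supset \Theta_M$: choose $\theta$ in the relative interior of $\Theta_M$ (equivalently, not on any proper face), apply Lemma \ref{Lem_face}(1) to conclude $\Theta_M \cap H = \Theta_M$, then Lemma \ref{Lem_face}(2) to get $\supp_\theta M \subset \calW_{\theta'}$ for all $\theta' \in \Theta_M$, and pick any $S \in \supp_\theta M$, which is automatically a brick. It is worth noting that the paper's printed proof actually stops after establishing $\Theta_S \supset \Theta_M$ and leaves the claim $\dim \Theta_S = n-1$ unaddressed; your closing paragraph fills that gap correctly by observing that $S$ is simple in $\calW_\theta$, hence $\supp_\theta S = \{S\}$, so the dimension criterion stated just after Lemma \ref{Lem_face} (``$\dim \Theta_N = n-1$ iff there is $\theta \in \Theta_N$ with $[X] \in \Q[N]$ for all $X \in \supp_\theta N$'') applies trivially.
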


\begin{proof}
Take $\theta \in \Theta_M$ such that $\theta$ does not belong to 
any proper subface of $\Theta_M$, and set $H:=\Ker \ang{?}{\supp_\theta M}$.
By Lemma \ref{Lem_face} (1), $\Theta_M \cap H$ is the smallest face 
containing $\theta$, but it must be $\Theta_M$ itself.
Thus, we get $\Theta_M \cap H = \Theta_M$.
This and Lemma \ref{Lem_face} (2) imply that every $\theta' \in \Theta_M$ satisfies
$\supp_\theta M \subset \calW_{\theta'}$.
Now, we take $S \in \supp_\theta M$, 
then $S \in \calW_{\theta'}$ holds for all $\theta' \in \Theta_M$,
and this means $\Theta_S \supset \Theta_M$.
\end{proof}

Next, we show that the wall $\Theta_M$ can be given in terms of the walls $\Theta_{M'}$
for other modules $M'$ such that $\dim_K M'<\dim_K M$.
This will be crucial in Section \ref{Sec_path}
to determine the wall-chamber structure for path algebras inductively.
For $M',M'' \in \mod A$, set $M'*M''$ as the collection of modules $M \in \mod A$
admitting a short exact sequence $0 \to M' \to M \to M'' \to 0$.

\begin{Prop}\label{Prop_wall_ext}
Suppose that $M \in \mod A$ and that $\# \supp M \ge 3$.
Then, $\Theta_M$ is the smallest polyhedral cone of 
$K_0(\proj A)_\R$ containing
\begin{align*}
\bigcup_{\begin{smallmatrix}
M',M'' \in \mod A \setminus \{0\}\\
M \in M'*M''
\end{smallmatrix}}(\Theta_{M'} \cap \Theta_{M''}).
\end{align*}
\end{Prop}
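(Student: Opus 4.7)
I would denote by $C$ the polyhedral cone on the right-hand side. The inclusion $C \subseteq \Theta_M$ is immediate: if $\theta \in \Theta_{M'} \cap \Theta_{M''}$ with $M \in M' * M''$, then $M', M'' \in \calW_\theta$, and since $\calW_\theta$ is closed under extensions, $M \in \calW_\theta$, so $\theta \in \Theta_M$. The real work is the reverse inclusion $\Theta_M \subseteq C$, which I would reduce using the decomposition $\Theta_M = (\Theta_M \cap H_1) + H_2$ from Lemma \ref{Lem_strongly_convex}(3), where $H_1 := \bigoplus_{i \in \supp M} \R[P_i]$ and $H_2 := \bigoplus_{j \notin \supp M} \R[P_j]$.

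For the lineality direction $H_2 \subseteq C$: since $\# \supp M \ge 3$, the module $M$ has composition length $\ge 3$, so it admits some nontrivial short exact sequence $0 \to M' \to M \to M'' \to 0$ with both summands nonzero. For any $j \notin \supp M$, every composition factor of $M'$ or $M''$ lies in $\supp M$, so $\ang{P_j}{X} = 0$ for every quotient $X$ of either, yielding $\pm [P_j] \in \Theta_{M'} \cap \Theta_{M''} \subseteq C$, whence $H_2 \subseteq C$.

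For the strongly convex part $\Theta_M \cap H_1$, I would split by dimension. If $\dim(\Theta_M \cap H_1) \ge 2$, every extremal ray $\rho$ of $\Theta_M \cap H_1$ gives a proper face $\rho + H_2$ of $\Theta_M$; for $\theta$ in its relative interior, Lemma \ref{Lem_face}(1) identifies this face with $\Theta_M \cap \Ker \ang{?}{\supp_\theta M}$, and properness forces $\supp_\theta M \neq \{M\}$, so the composition length of $M$ in $\calW_\theta$ is $\ge 2$. The first step of such a composition series yields a short exact sequence $0 \to M_1 \to M \to M/M_1 \to 0$ in $\mod A$ with both $M_1, M/M_1 \in \calW_\theta$ nonzero, so $\theta \in \Theta_{M_1} \cap \Theta_{M/M_1} \subseteq C$. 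Since $C$ is closed, the whole face $\rho + H_2$ lies in $C$, and the conic hull over the extremal rays recovers $\Theta_M \cap H_1$. In the remaining case $\dim(\Theta_M \cap H_1) \le 1$, write the (possibly trivial) generator as $\theta_0$. If $M$ were $\theta_0$-stable, every proper-quotient inequality defining $\Theta_M$ would be strict at $\theta_0$, so a whole neighborhood of $\theta_0$ in $\Ker \ang{?}{M}$ would lie in $\Theta_M$, forcing $\dim \Theta_M = n-1$; but here $\dim \Theta_M \le 1 + (n - \#\supp M) \le n - 2$, a contradiction. So $M$ is not $\theta_0$-stable, the same extension argument applies at $\theta_0$, and $\R_{\ge 0} \theta_0 \subseteq C$.

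The main obstacle --- and the place where the hypothesis $\# \supp M \ge 3$ is indispensable --- is this last sub-case: without that bound a $\theta_0$-stable brick whose wall is a single ray would give $\theta_0$ outside every $\Theta_{M'} \cap \Theta_{M''}$ (since all proper quotients are strictly $\theta_0$-positive), and the statement can genuinely fail, as for $M = [1,2]$ in the path algebra of $1 \to 2 \to 3$, where $\Theta_M$ is two-dimensional but the right-hand-side union spans only $H_2$.
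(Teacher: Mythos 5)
Your proof is correct, and the essential engine --- for $\theta$ in a proper face of $\Theta_M$, Lemma \ref{Lem_face} forces $\supp_\theta M \ne \{M\}$, hence $M$ is not simple in $\calW_\theta$, which produces the required short exact sequence with both ends in $\calW_\theta$ --- is exactly the paper's. The organization differs in two places that are worth noting. First, the paper passes to the sincere case (so $H_2 = 0$) and then invokes Lemma \ref{Lem_boundary_gen}, the fact that a strongly convex polyhedral cone of dimension $\ge 2$ is the smallest polyhedral cone containing its boundary; you instead keep the decomposition $\Theta_M = (\Theta_M \cap H_1) \oplus H_2$ throughout, prove $H_2 \subseteq C$ directly by showing $\pm[P_j] \in \Theta_{M'} \cap \Theta_{M''}$ for a single nontrivial extension, and then generate $\Theta_M \cap H_1$ from its extremal rays rather than from its boundary. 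This sidesteps Lemma \ref{Lem_boundary_gen} entirely, trading it for the standard fact that a strongly convex polyhedral cone is the conic hull of its one-dimensional faces; both routes import comparable background on cones. Second, your separate treatment of the case $\dim(\Theta_M \cap H_1) \le 1$, using the dimension bound $\dim \Theta_M \le 1 + (n - \#\supp M) \le n-2$ to rule out $\theta_0$-stability, is a clean way of packaging what in the paper is absorbed into the ``$\dim \Theta_M \le n-2$'' branch. (One small slip: in Lemma \ref{Lem_strongly_convex} the paper writes $\R[S_i]$, but as its own proof makes plain this must mean $\R[P_i]$ inside $K_0(\proj A)_\R$; you silently correct this.) Your closing paragraph, exhibiting $M = P_1/P_3$ over $K(1\to 2\to 3)$ to show the hypothesis $\#\supp M \ge 3$ is sharp, is not in the paper and is a genuinely useful addition.
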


To prove this, we need the following geometrical property.

\begin{Lem}\label{Lem_boundary_gen}
Let $C \subset \R^m$ be a strongly convex polyhedral cone,
and assume $\dim C \ge 2$.
We define $\partial C$ as the boundary of $C$ in $\R^m$.
Then, $C$ coincides with the smallest polyhedral cone in $\R^m$ containing 
$\partial C$. 
\end{Lem}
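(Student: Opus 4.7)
The plan is to split the argument into two cases depending on whether $C$ is full-dimensional in $\R^m$. If $\dim C < m$, then $C$ has empty interior in $\R^m$; since a polyhedral cone is closed, this gives $\partial C = C$, and the conclusion holds trivially with $C$ itself as the smallest polyhedral cone containing $\partial C$. So the whole content of the lemma is in the case $\dim C = m$.

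In this substantive case I would invoke the Minkowski--Weyl theorem: because $C$ is a strongly convex polyhedral cone of dimension $m$, it admits finitely many extreme ray generators $v_1,\ldots,v_k$ with
\begin{align*}
C = \sum_{i=1}^k \R_{\ge 0} v_i.
\end{align*}
Consequently, any convex cone containing $\{v_1,\ldots,v_k\}$ already contains all of $C$. It therefore suffices to show that $v_i \in \partial C$ for every $i$: granted this, any polyhedral cone $D \supset \partial C$ must contain each $v_i$ and hence contain $C$; combined with the obvious fact that $C$ itself is a polyhedral cone containing $\partial C$, this identifies $C$ as the smallest such.

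To verify $v_i \in \partial C$, I would use that $\R_{\ge 0} v_i$ is a one-dimensional face of $C$. Since $\dim C = m \ge 2$, this face is proper, so its dimension is strictly less than $m$; hence it has empty interior in $\R^m$ and is contained in $\partial C = C \setminus \mathrm{int}(C)$.

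The only real obstacle is sorting out the dimension dichotomy cleanly; once separated into the two cases the argument reduces to the standard extreme-ray description of a strongly convex polyhedral cone. The hypothesis $\dim C \ge 2$ is essential precisely because when $\dim C = m = 1$ the unique extreme ray coincides with $C$ itself and does not lie in $\partial C = \{0\}$, so the dichotomy above breaks down exactly there.
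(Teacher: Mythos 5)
Your proof is correct, but it takes a genuinely different route from the paper. You invoke the Minkowski--Weyl description of a strongly convex polyhedral cone by its extreme ray generators $v_1,\ldots,v_k$, reduce to the full-dimensional case, and observe that each extreme ray is a proper face and hence lies in $\partial C$; since these generators span $C$ as a cone, any polyhedral cone containing $\partial C$ contains $C$. The paper instead proceeds topologically: it uses $\dim C\ge 2$ (hence $m\ge 2$) to conclude the unit sphere $\Sigma$ is connected, and strong convexity to see that $(C\cap\Sigma)\sqcup(-C\cap\Sigma)$ is a disjoint union of proper closed subsets, so there is a direction $v_0\notin C\cup(-C)$. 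For each interior point $v$ of $C$, the line $v+\R v_0$ meets $C$ in a bounded closed interval whose two endpoints lie in $\partial C$, and $v$ is a convex (in fact conical) combination of those endpoints. Your argument is shorter but leans on the structure theory of polyhedral cones, whereas the paper's argument is essentially elementary and would apply verbatim to any closed strongly convex cone, not just polyhedral ones. One small imprecision in your write-up: having empty interior in $\R^m$ does not by itself place a set inside $\partial C$; the correct reason a proper face $F$ lies in $\partial C$ when $\dim C=m$ is that $F=C\cap H$ for a supporting hyperplane $H$, and $H$ cannot meet $\mathrm{int}(C)$, so $F\cap\mathrm{int}(C)=\emptyset$. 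This is a standard fact and not a gap, but the sentence as written conflates ``$F$ has empty interior'' with ``$F$ misses the interior of $C$.''
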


\begin{proof}
First, consider the canonical sphere
\begin{align*}
\Sigma:=\{(a_1,\ldots,a_m) \in \R^m \mid a_1^2+\cdots+a_m^2=1 \}.
\end{align*}
By the assumption $\dim C \ge 2$, we have $m \ge 2$,
so $\Sigma$ is connected.
On the other hand, $(C \cap \Sigma) \cup (-C \cap \Sigma)$ is a disjoint union
of two proper closed subsets of $\Sigma$,
since $C$ is strongly convex.
Therefore, the disjoint union $(C \cap \Sigma) \cup (-C \cap \Sigma)$
cannot be equal to the connected space $\Sigma$.
Thus, we can take $v_0 \in \Sigma$ satisfying $v_0 \notin C \cup (-C)$.

Now, let $v$ be in the interior of $C$ in $\R^m$. 
It is enough to find $u_1,u_2 \in \partial C$ such that $v=u_1+u_2$.
We define a subset $\Gamma:=\{ r \in \R \mid v+rv_0 \in C \}$ of $\R$.
Clearly, $\Gamma$ is convex in $\R$,
and $0 \in \R$ is in the interior of $\Gamma$ in $\R$,
since $C$ is a neighborhood of $v$ in $\R^m$.
Also, $\Gamma$ is bounded from above; 
otherwise, $(1/r)v+v_0 \in C$ holds for all $r>0$, 
so we get $v_0 \in C$ because $C$ is closed in $\R^m$, 
but it contradicts the choice of $v_0$.
Similarly, $\Gamma$ is bounded from below.
Thus, $\Gamma$ is actually a bounded closed interval $[r_1,r_2]$ with $r_1<0<r_2$,
and $v+r_1v_0, v+r_2v_0 \in \partial C$.
Then, the equation
\begin{align*}
v=\frac{r_2}{r_2-r_1}(v+r_1v_0) + \frac{-r_1}{r_2-r_1}(v+r_2v_0)
\end{align*}
implies that $v$ is a sum of two points in $\partial C$.
\end{proof}

We apply Lemma \ref{Lem_boundary_gen} by setting $m:=n-1$ to prove Proposition \ref{Prop_wall_ext}.

\begin{proof}[Proof of Proposition \ref{Prop_wall_ext}]
Set $H_1:=\bigoplus_{i \in \supp M} \R[S_i]$ and 
$H_2:=\bigoplus_{j \notin \supp M} \R[S_j]$.
Then, $\Theta_M=(\Theta_M \cap H_1) \oplus H_2$ and
$\Theta_{M'} \cap \Theta_{M''}=(\Theta_{M'} \cap \Theta_{M''} \cap H_1) \oplus H_2$
hold for $M',M'' \in \mod A \setminus \{0\}$ satisfying $M \in M'*M''$.
Thus, we may assume that $M$ is sincere.
By Lemma \ref{Lem_strongly_convex} (3), $\Theta_M$ is strongly convex.
Then, $n \ge 3$ follows by assumption.
We write $C$ for the smallest polyhedral cone of 
$K_0(\proj A)_\R$ containing
\begin{align*}
C':=\bigcup_{\begin{smallmatrix}
M',M'' \in \mod A \setminus \{0\} \\
M \in M'*M''
\end{smallmatrix}}
(\Theta_{M'} \cap \Theta_{M''}).
\end{align*}

We first show $C \subset \Theta_M$.
Since $\Theta_M$ is a polyhedral cone, it suffices to check $C' \subset \Theta_M$.
Assume that $\theta \in C'$, then we can take $M', M'' \in \mod A \setminus \{0\}$
satisfying $\theta \in \Theta_{M'} \cap \Theta_{M''}$ and $M \in M'*M''$.
By definition, $M',M'' \in \calW_\theta$, 
which implies $M \in M'*M'' \subset \calW_\theta$.
Thus, $\theta \in \Theta_M$, and we have $C' \subset \Theta_M$; hence $C \subset \Theta_M$.

Therefore, it remains to show the converse $\Theta_M \subset C$.


We set $\partial \Theta_M$ as the boundary of $\Theta_M$ 
in the hyperplane $\Ker \ang{?}{M}$.

We claim that $\Theta_M$ is the smallest polyhedral cone containing $\partial \Theta_M$.
If $\dim \Theta_M \le n-2$, it is clear.
If $\dim \Theta_M = n-1$,
then $\dim \Theta_M=n-1 \ge 2$ and $\Theta_M$ is strongly convex,
so Lemma \ref{Lem_boundary_gen} implies that $\Theta_M$ gives the claim.

Thus, it suffices to check $\partial \Theta_M \subset C$.
Let $\theta \in \partial \Theta_M$.
Then, $\theta$ belongs to some face $F \subset \Theta_M$ with $\dim F \le n-2$,
so $M$ is not a simple object in $\calW_\theta$ by Lemma \ref{Lem_face}.
As in the previous case, we can show $\partial \Theta_M \subset C$,
and $\Theta_M \subset C$.

We have obtained the assertion $\Theta_M=C$ as desired.
\end{proof}

\subsection{TF equivalence}

To investigate the walls $\Theta_M$ more, we will use numerical torsion pairs,
so we here shortly recall the definition of \textit{torsion pairs}.
Let $\calT,\calF$ be two full subcategories of $\mod A$,
then the pair $(\calT,\calF)$ is called \textit{a torsion pair} in $\mod A$
if $\Hom_A(\calT,\calF)=0$ holds and every $M \in \mod A$ admits a short exact sequence
$0 \to M' \to M \to M'' \to 0$ with $M' \in \calT$ and $M'' \in \calF$.
A full subcategory $\calT \subset \mod A$ is called a \textit{torsion class}
if $\calT$ admits $\calF \subset \mod A$ such that $(\calT,\calF)$ 
is a torsion pair in $\mod A$,
and this condition is equivalent to that $\calT$ is closed under taking
extensions and quotient modules.
Dually, \textit{torsion-free classes} in $\mod A$ are defined,
and they are precisely the full subcategories closed under taking
extensions and submodules.

Now, we associate two torsion classes and two torsion-free classes
to each $\theta \in K_0(\proj A)_\R$ 
as in Baumann--Kamnitzer--Tingley \cite{BKT}.
See also \cite[Lemma 6.6]{Bridgeland}.

\begin{Def}\cite[Subsection 3.1]{BKT}
Let $\theta \in K_0(\proj A)_\R$.
Then, we define \textit{numerical torsion classes} $\ovcalT_\theta$ and $\calT_\theta$ as
\begin{align*}
\ovcalT_\theta &:= \{M \in \mod A \mid 
\text{for any quotient module $X$ of $M$, $\theta(X) \ge 0$} \}, \\
\calT_\theta &:= \{M \in \mod A \mid 
\text{for any quotient module $X \ne 0$ of $M$, $\theta(X) > 0$} \}.
\end{align*}
Dually, we define \textit{numerical torsion-free classes} 
$\ovcalF_\theta$ and $\calF_\theta$ as
\begin{align*}
\ovcalF_\theta &:= \{M \in \mod A \mid 
\text{for any submodule $X$ of $M$, $\theta(X) \le 0$} \}, \\
\calF_\theta &:= \{M \in \mod A \mid 
\text{for any submodule $X \ne 0$ of $M$, $\theta(X) < 0$} \}.
\end{align*}
\end{Def}

Clearly, 
$\ovcalT_\theta \supset \calT_\theta$ 
and $\ovcalF_\theta \supset \calF_\theta$ hold, 
and their ``differences'' are expressed by the $\theta$-semistable subcategory
$\calW_\theta=\ovcalT_\theta \cap \ovcalF_\theta$.
Thus, the three conditions 
$\ovcalT_\theta=\calT_\theta$, $\ovcalF_\theta=\calF_\theta$ and 
$\calW_\theta=\{0\}$ are all equivalent.
The numerical torsion(-free) classes form torsion pairs in $\mod A$ as follows.

\begin{Prop}\cite[Proposition 3.1]{BKT}
For $\theta \in K_0(\proj A)_\R$,
the pairs $(\ovcalT_\theta,\calF_\theta)$ and $(\calT_\theta,\ovcalF_\theta)$ 
are torsion pairs in $\mod A$.
\end{Prop}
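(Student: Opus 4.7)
The plan is to verify the two torsion-pair axioms separately, using additivity of $\theta$ on short exact sequences throughout. First, I would observe that $\ovcalT_\theta$ is closed under quotients trivially, and under extensions: given $0 \to M' \to M \to M'' \to 0$ with $M', M'' \in \ovcalT_\theta$ and a quotient $X$ of $M$, write $Y$ for the image of $M'$ in $X$; then $Y$ is a quotient of $M'$, $X/Y$ is a quotient of $M''$, and additivity gives $\theta(X) = \theta(Y) + \theta(X/Y) \ge 0$. Dually, $\ovcalF_\theta$ is closed under submodules and extensions. Since $\mod A$ is a length category, this already exhibits $\ovcalT_\theta$ as a torsion class and $\ovcalF_\theta$ as a torsion-free class, each admitting its canonical partner; what remains is to identify these partners as $\calF_\theta$ and $\calT_\theta$ respectively.

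For $(\ovcalT_\theta, \calF_\theta)$, the Hom-vanishing is immediate: the image of any $f \colon M \to N$ with $M \in \ovcalT_\theta$ and $N \in \calF_\theta$ is simultaneously a quotient of $M$ (so $\theta(\Image f) \ge 0$) and a submodule of $N$ (so $\theta(\Image f) < 0$ if nonzero), forcing $f = 0$. For the canonical sequence, I would define $t(M)$ as the sum of all submodules of $M$ lying in $\ovcalT_\theta$; by closure under extensions and quotients together with finite length, $t(M)$ itself lies in $\ovcalT_\theta$ and is maximal with this property. The crux is then to show $M/t(M) \in \calF_\theta$: given a nonzero submodule $X \subseteq M/t(M)$, maximality of $t(M)$ forces $X \notin \ovcalT_\theta$, so $X$ admits a quotient $X \twoheadrightarrow X''$ with $\theta(X'') < 0$. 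Writing $X'$ for the kernel (again a submodule of $M/t(M)$), an induction on $\dim_K X$ yields either $X' = 0$ or $\theta(X') < 0$, and in both cases $\theta(X) = \theta(X') + \theta(X'') < 0$.

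The second pair $(\calT_\theta, \ovcalF_\theta)$ then follows by a formally dual argument, with the torsion-free class $\ovcalF_\theta$ in the leading role and the same additivity-plus-induction trick applied to quotients rather than submodules. I expect the main obstacle to be precisely this inductive step: closure under extensions and quotients produces the canonical sequence $0 \to t(M) \to M \to M/t(M) \to 0$ almost for free, but upgrading the resulting Hom-vanishing on $M/t(M)$ into the \emph{strict} inequality $\theta(X) < 0$ on every nonzero submodule (and dually $\theta(X) > 0$ on every nonzero quotient) requires combining maximality, the existence of a single ``bad'' sub-quotient produced by non-membership in $\ovcalT_\theta$, and additivity of $\theta$ in a careful dimension induction.
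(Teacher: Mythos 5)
The paper does not give its own proof of this proposition---it is cited from \cite[Proposition 3.1]{BKT}---so there is no in-paper argument to compare against. Your proof is correct and self-contained, and it follows the natural strategy one would expect from a proof of this statement: additivity of $\theta$ on short exact sequences gives closure of $\ovcalT_\theta$ under quotients and extensions (hence it is a torsion class in the length category $\mod A$); the trace $t(M)$ of $\ovcalT_\theta$ in $M$ lands in $\ovcalT_\theta$; Hom-vanishing $\Hom_A(\ovcalT_\theta,\calF_\theta)=0$ is immediate from the sign clash on the image; and the remaining point---that $M/t(M)$ lies in $\calF_\theta$---is handled by maximality of $t(M)$ together with a dimension induction on nonzero submodules.

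Two small remarks on presentation rather than substance. First, the sentence ``an induction on $\dim_K X$ yields either $X'=0$ or $\theta(X')<0$'' conflates the trivial case distinction ($X'=0$ or not) with the genuinely inductive step (if $X'\ne 0$, then $\dim_K X'<\dim_K X$ because $\theta(X'')<0$ forces $X''\ne 0$, so the inductive hypothesis applies); spelling out that $X''\ne 0$ is what keeps the dimension strictly decreasing would make the induction airtight. Second, you do not need to verify separately that $\calF_\theta$ is closed under submodules and extensions: once you have the two torsion-pair axioms (Hom-vanishing and the existence of the canonical short exact sequence for every $M$), those closure properties follow automatically, since then $\calF_\theta=\ovcalT_\theta^\perp$. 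Neither point affects the validity of the argument.
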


Therefore, $\ovcalT_\theta$ and $\calF_\theta$ determine each other,
and so do $\calT_\theta$ and $\ovcalF_\theta$.
By using numerical torsion(-free) classes,
we introduce an equivalence relation on $K_0(\proj A)_\R$ as follows.

\begin{Def}
Let $\theta$ and $\theta'$ be elements in $K_0(\proj A)_\R$.
We say that $\theta$ and $\theta'$ are \textit{TF equivalent} if both
$\ovcalT_\theta=\ovcalT_{\theta'}$ and $\ovcalF_\theta=\ovcalF_{\theta'}$ hold.
We define $[\theta] \subset K_0(\proj A)_\R$ as the TF equivalence class of $\theta$.
\end{Def}

Now, we give an example on TF equivalence classes.

\begin{Ex}
Let $A$ be the path algebra $K(1 \to 2)$.
Since the Auslander--Reiten quiver of $\mod A$ is 
\begin{align*}
\begin{xy}
( 0,-4) *+{S_2} = "1",
( 8, 4) *+{P_1} = "2",
(16,-4) *+{S_1} = "3",
\ar "1";"2"
\ar "2";"3"
\end{xy},
\end{align*}
we can express an additive full subcategory $\calC$ of $\mod A$ 
by writing $\bullet$ or $\circ$
instead of each $*$ in the diagram
$\begin{xy}
( 0,-1) *{*} = "1",
( 2, 1) *{*} = "2",
( 4,-1) *{*} = "3",
\end{xy}$:
each $*$ corresponds to the module in the same place in the Auslander--Reiten quiver,
and $\bullet$ means that the module belongs to $\calC$ and $\circ$ means not.
For example,
$\begin{xy}
( 0,-1) *{\circ} = "1",
( 2, 1) *{\bullet} = "2",
( 4,-1) *{\bullet} = "3",
\end{xy}$ 
denotes $\add\{P_1,S_1\}$.

Under this notation, 
$\ovcalT_\theta$ and $\ovcalF_\theta$ for $\theta \in K_0(\proj A)_\R$
are as in the following pictures, respectively.
Each domain contains a line or a point in its boundary 
if it is described by a solid line or a black point,
and does not contain if it is denoted by a dotted line or a white point.

\begin{align*}
\includegraphics[width=5.7cm]{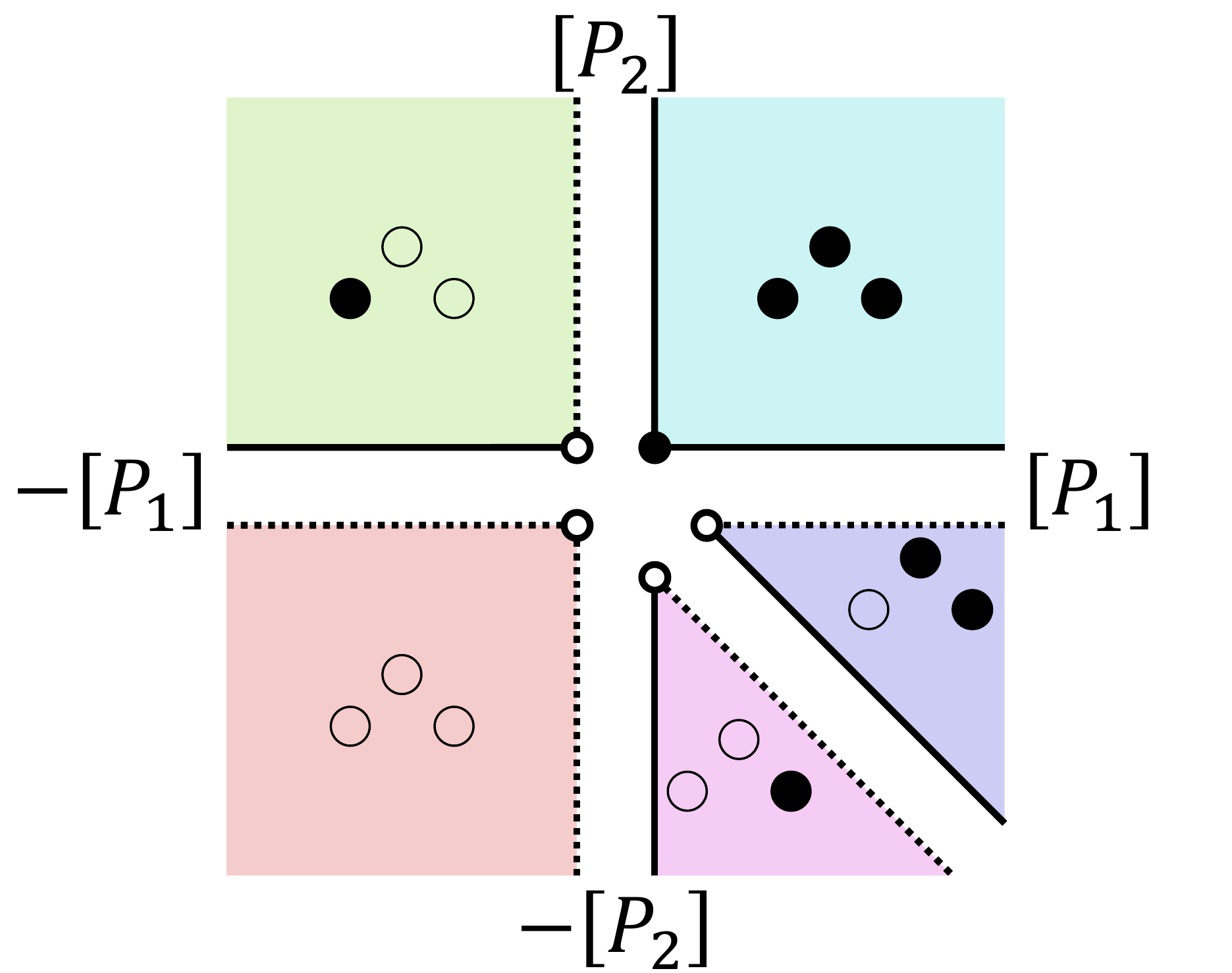} \qquad
\includegraphics[width=5.7cm]{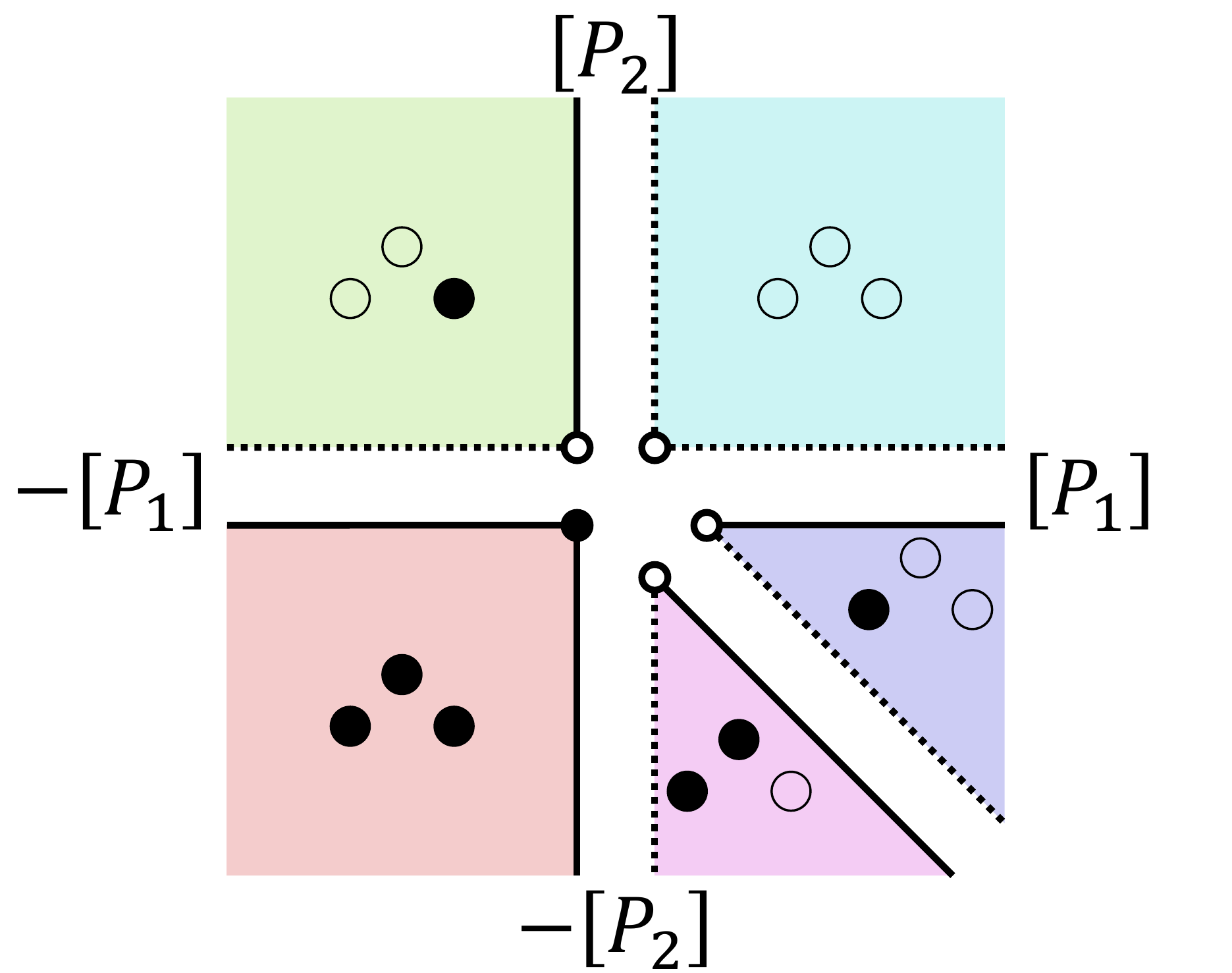}
\end{align*}
Therefore, $K_0(\proj A)_\R$ is divided to eleven TF equivalence classes,
which are the origin, the five half-lines without the origin, 
and the five colored open domains in the following picture:
\begin{align*}
\includegraphics[width=5.7cm]{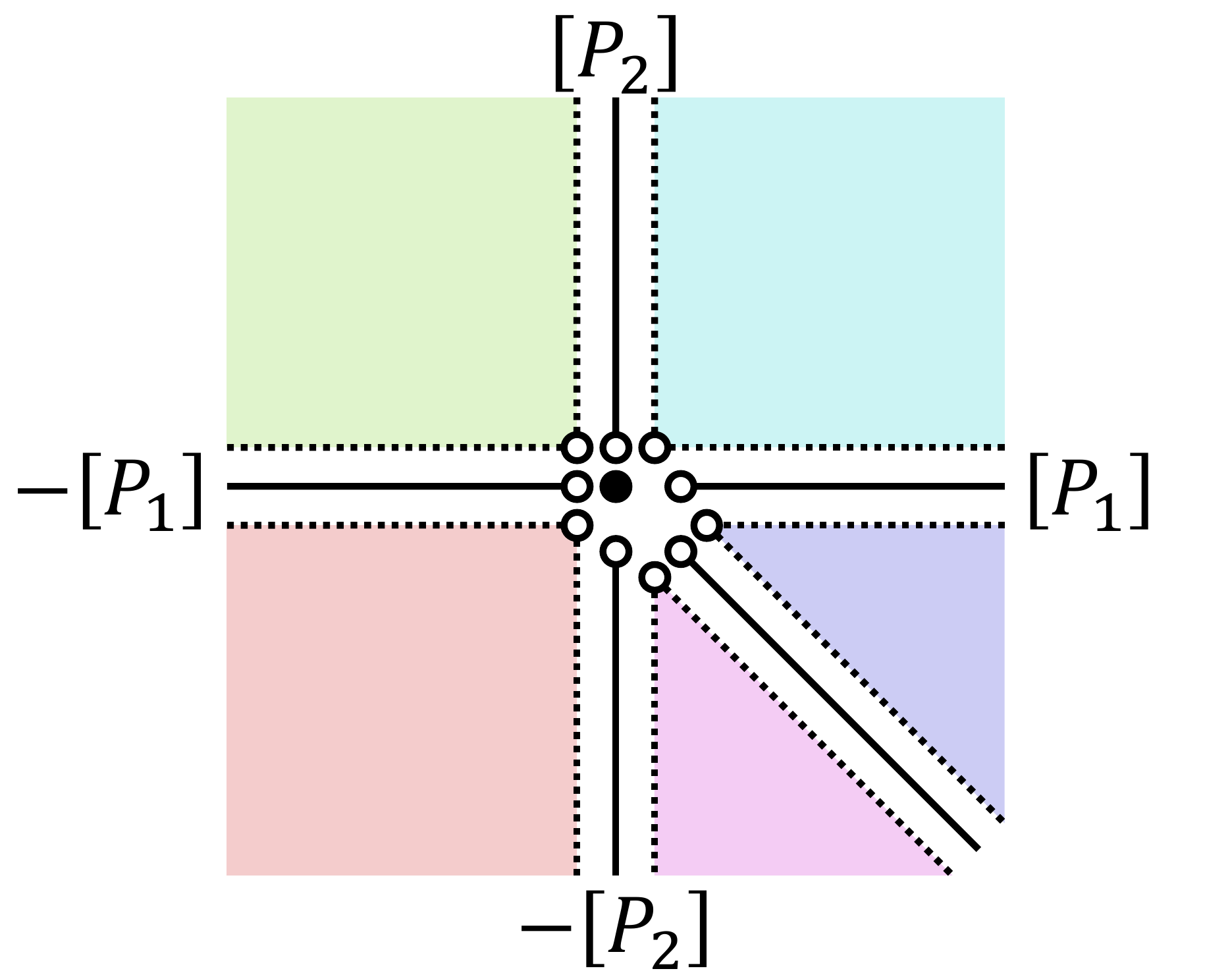}
\end{align*}

\end{Ex}

The following property is easily deduced, but important.
We write 
\begin{align*}
[\theta,\theta'] := \{ (1-r)\theta+r\theta' \mid r \in [0,1] \}
\end{align*}
for the line segment between $\theta,\theta' \in K_0(\proj A)_\R$.

\begin{Lem}\label{Lem_TF_convex}
In $K_0(\proj A)_\R$, any TF equivalence class is convex.
\end{Lem}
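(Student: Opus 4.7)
The plan is to fix TF equivalent $\theta,\theta' \in K_0(\proj A)_\R$ and, for every $r \in [0,1]$, show that $\theta_r := (1-r)\theta + r\theta'$ is TF equivalent to $\theta$. Concretely, I will prove $\ovcalT_{\theta_r}=\ovcalT_\theta$ and $\ovcalF_{\theta_r}=\ovcalF_\theta$ separately, using the two numerical torsion pairs $(\ovcalT_\theta,\calF_\theta)$ and $(\calT_\theta,\ovcalF_\theta)$ that come equipped with the TF equivalence data.

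For the inclusion $\ovcalT_\theta \subseteq \ovcalT_{\theta_r}$, take $M \in \ovcalT_\theta = \ovcalT_{\theta'}$ and any quotient $X$ of $M$. Then $\theta(X) \ge 0$ and $\theta'(X) \ge 0$, so by linearity $\theta_r(X) = (1-r)\theta(X)+r\theta'(X) \ge 0$, giving $M \in \ovcalT_{\theta_r}$. The analogous argument with inequalities reversed shows $\ovcalF_\theta \subseteq \ovcalF_{\theta_r}$. These directions are the routine ones.

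The interesting direction is the reverse: $\ovcalT_{\theta_r} \subseteq \ovcalT_\theta$. Here I use the torsion pair structure. Given $M \in \ovcalT_{\theta_r}$, decompose along the torsion pair $(\ovcalT_\theta,\calF_\theta)$ as a short exact sequence $0 \to M' \to M \to M'' \to 0$ with $M' \in \ovcalT_\theta$ and $M'' \in \calF_\theta = \calF_{\theta'}$. If $M'' \ne 0$, then taking $M''$ itself as a nonzero submodule, the defining strict inequality for $\calF_\theta$ and $\calF_{\theta'}$ yields $\theta(M'')<0$ and $\theta'(M'')<0$; since $1-r, r \ge 0$ and at least one of them is positive, we get $\theta_r(M'')<0$. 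But $M''$ is a quotient of $M \in \ovcalT_{\theta_r}$, forcing $\theta_r(M'') \ge 0$, a contradiction. Hence $M'' = 0$ and $M = M' \in \ovcalT_\theta$. The dual argument, decomposing along $(\calT_\theta,\ovcalF_\theta)$ and using the strict inequalities defining $\calT_\theta = \calT_{\theta'}$, gives $\ovcalF_{\theta_r} \subseteq \ovcalF_\theta$.

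The only subtlety to watch is the boundary cases $r=0,1$, where one of the coefficients vanishes; in those cases $\theta_r$ equals $\theta$ or $\theta'$ and the conclusion is immediate from the hypothesis, so the strict-inequality step is only needed when both $1-r$ and $r$ are positive, which is exactly when it is available. Combining the two pairs of inclusions yields $\ovcalT_{\theta_r}=\ovcalT_\theta$ and $\ovcalF_{\theta_r}=\ovcalF_\theta$, so $\theta_r \in [\theta]$, proving convexity of the TF equivalence class.
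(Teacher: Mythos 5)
Your argument is correct, and the core mechanism is the same as the paper's: linearity of $\theta \mapsto \theta(X)$ gives the forward inclusions, and the torsion-pair structure closes the loop. The paper is more economical: it shows only $\ovcalT_\theta \subseteq \ovcalT_{\theta_r}$ and $\calF_\theta \subseteq \calF_{\theta_r}$ by the linearity computation, and then appeals to the standard fact that if two torsion pairs are contained componentwise then they coincide (a torsion class determines its torsion-free complement, so $\ovcalT_\theta \subseteq \ovcalT_{\theta_r}$ already forces $\calF_{\theta_r} \subseteq \calF_\theta$, and the other containment gives equality). You instead re-derive this uniqueness directly by decomposing $M \in \ovcalT_{\theta_r}$ along $(\ovcalT_\theta,\calF_\theta)$ and ruling out a nonzero torsion-free part $M''$ via the strict inequalities defining $\calF_\theta = \calF_{\theta'}$. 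Both routes are valid; yours is more self-contained and keeps the numerical inequalities front and center, while the paper's is shorter because it treats the final step as a black-box fact about torsion pairs. One small remark: the strict-inequality step actually works for all $r \in [0,1]$, not only $r \in (0,1)$, since $\theta(M'')<0$ and $\theta'(M'')<0$ both hold and at least one of $1-r, r$ is positive; the separate treatment of the boundary cases is therefore unnecessary (though harmless).
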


\begin{proof}
Assume that $\theta$ and $\theta'$ in $K_0(\proj A)_\R$ are TF equivalent 
and that $\theta'' \in [\theta,\theta']$.
By definition, 
$\ovcalT_{\theta''}$ contains $\ovcalT_\theta \cap \ovcalT_{\theta'}$,
which is equal to $\ovcalT_\theta$ by assumption.
Similarly, $\calF_{\theta''}$ contains $\calF_\theta \cap \calF_{\theta'}=\calF_\theta$.
Thus, the torsion pair $(\ovcalT_{\theta''}, \calF_{\theta''})$
must coincide with $(\ovcalT_\theta,\calF_\theta)$.
We can also prove that 
$(\calT_{\theta''}, \ovcalF_{\theta''})=(\calT_\theta,\ovcalF_\theta)$
in the same way.
Therefore, $\theta''$ is also TF equivalent to $\theta$.
\end{proof}

Each TF equivalence class is not a closed subset in general,
but the closure enjoys the following nice property.

\begin{Lem}
Let $\theta,\theta' \in K_0(\proj A)_\R$.
Then, $\theta'$ belongs to the closure $\overline{[\theta]}$
if and only if 
$\ovcalF_\theta \subset \ovcalF_{\theta'}$ and $\ovcalT_\theta \subset \ovcalT_{\theta'}$.
\end{Lem}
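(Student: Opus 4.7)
The plan is to handle the two directions of the equivalence separately. For the forward implication, I would pass to the limit in the defining inequalities. For the reverse, I would construct an explicit line segment from $\theta$ to $\theta'$ that stays inside $[\theta]$ except possibly at $\theta'$, namely $\theta_r := (1-r)\theta' + r\theta$ for $r \in (0,1]$, and show each $\theta_r$ is TF equivalent to $\theta$; since $\theta_r \to \theta'$ as $r \to 0^+$, this places $\theta'$ in $\overline{[\theta]}$.

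The forward direction is routine: choose a sequence $(\theta_k) \subset [\theta]$ with $\theta_k \to \theta'$. For $M \in \ovcalT_\theta = \ovcalT_{\theta_k}$, every quotient $X$ of $M$ satisfies $\theta_k(X) \ge 0$; letting $k \to \infty$ yields $\theta'(X) \ge 0$, hence $M \in \ovcalT_{\theta'}$. The inclusion $\ovcalF_\theta \subset \ovcalF_{\theta'}$ is entirely dual.

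For the reverse direction, since in any torsion pair each class determines its partner, it is enough to prove $\ovcalT_{\theta_r} = \ovcalT_\theta$ and $\ovcalF_{\theta_r} = \ovcalF_\theta$ for $r \in (0,1]$. The easy inclusions $\ovcalT_\theta \subset \ovcalT_{\theta_r}$ and $\ovcalF_\theta \subset \ovcalF_{\theta_r}$ follow immediately from the hypotheses: if $M \in \ovcalT_\theta$ and $X$ is a quotient of $M$, then $\theta(X) \ge 0$ by definition and $\theta'(X) \ge 0$ by the assumption $\ovcalT_\theta \subset \ovcalT_{\theta'}$, so the convex combination $\theta_r(X) \ge 0$; the $\ovcalF$-side is symmetric.

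The main obstacle is the reverse inclusions, where both hypotheses must be used simultaneously through a torsion decomposition argument. Suppose for contradiction that some $M \in \ovcalT_{\theta_r}$ admits a quotient $X$ with $\theta(X) < 0$. Then $\theta_r(X) \ge 0$ forces $r < 1$ and $\theta'(X) > 0$. I would decompose $X$ with respect to the torsion pair $(\calT_\theta, \ovcalF_\theta)$ as
\begin{align*}
0 \to X^t \to X \to X^f \to 0, \qquad X^t \in \calT_\theta, \ X^f \in \ovcalF_\theta.
\end{align*}
Since $X^f$ is a quotient of $M \in \ovcalT_{\theta_r}$, we have $\theta_r(X^f) \ge 0$; on the other hand $X^f \in \ovcalF_\theta \subset \ovcalF_{\theta'}$ (this is where the second hypothesis enters) gives $\theta(X^f) \le 0$ and $\theta'(X^f) \le 0$. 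Combining these with $r, 1-r > 0$ forces $\theta(X^f) = \theta'(X^f) = 0$. If $X^t = 0$ then $\theta(X) = \theta(X^f) = 0$, contradicting $\theta(X) < 0$; otherwise $X^t \in \calT_\theta$ is a nonzero quotient of itself, whence $\theta(X^t) > 0$ and $\theta(X) = \theta(X^t) > 0$, again a contradiction. Thus $\ovcalT_{\theta_r} = \ovcalT_\theta$. The equality $\ovcalF_{\theta_r} = \ovcalF_\theta$ follows by the dual argument, decomposing an offending submodule using $(\ovcalT_\theta, \calF_\theta)$ and applying the hypothesis $\ovcalT_\theta \subset \ovcalT_{\theta'}$.
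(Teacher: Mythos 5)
Your proof is correct, but the reverse direction takes a detour that the paper avoids by a more careful choice of inclusions. You prove the "easy" inclusions $\ovcalT_\theta \subset \ovcalT_{\theta_r}$ and $\ovcalF_\theta \subset \ovcalF_{\theta_r}$, which do not by themselves force equality of the numerical torsion pairs (they live in two different torsion pairs, $(\ovcalT,\calF)$ and $(\calT,\ovcalF)$), so you are forced to add the torsion-decomposition argument to prove the hard reverse inclusions $\ovcalT_{\theta_r}\subset\ovcalT_\theta$ and $\ovcalF_{\theta_r}\subset\ovcalF_\theta$. The paper instead targets the two components of a single torsion pair: it shows directly that $\ovcalT_\theta\subset\ovcalT_{\theta''}$ (same as yours) and that $\calF_\theta\subset\calF_{\theta''}$, the latter via $\calF_{\theta''}\supset\calF_\theta\cap\ovcalF_{\theta'}\supset\calF_\theta\cap\ovcalF_\theta=\calF_\theta$, using the hypothesis $\ovcalF_\theta\subset\ovcalF_{\theta'}$. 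Once both components of the torsion pair $(\ovcalT_\theta,\calF_\theta)$ are contained in those of $(\ovcalT_{\theta''},\calF_{\theta''})$, the antitone nature of torsion pairs forces equality with no further argument, and the other pair $(\calT,\ovcalF)$ is handled symmetrically. Your argument is a valid alternative and does showcase the torsion-decomposition technique, but the paper's choice of which two inclusions to establish eliminates the need for that extra step entirely.
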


\begin{proof}
The ``only if'' part follows from the definition.

Conversely, assume
$\ovcalF_\theta \subset \ovcalF_{\theta'}$ and $\ovcalT_\theta \subset \ovcalT_{\theta'}$.
In this case, we can show that any $\theta'' \in [\theta,\theta'] \setminus \{\theta'\}$
satisfies $\ovcalT_{\theta''} \supset \ovcalT_\theta \cap \ovcalT_{\theta'}
=\ovcalT_\theta$ and
$\calF_{\theta''} \supset \calF_\theta \cap \ovcalF_{\theta'} \supset 
\calF_\theta \cap \ovcalF_{\theta} = \calF_\theta$.
Thus, we have $(\ovcalT_{\theta''}, \calF_{\theta''})=(\ovcalT_\theta,\calF_\theta)$.
Similarly, $(\calT_{\theta''}, \ovcalF_{\theta''})=(\calT_\theta,\ovcalF_\theta)$ holds.
Therefore, every $\theta'' \in [\theta,\theta'] \setminus \{\theta'\}$ 
belongs to the TF equivalence class $[\theta]$,
so $\theta'$ is in the closure $\overline{[\theta]}$.
\end{proof}

Moreover, we are able to characterize TF equivalence
in terms of the walls $\Theta_M$ as follows.

\begin{Thm}\label{Thm_W_constant}
Let $\theta,\theta' \in K_0(\proj A)_\R$ be distinct elements.
Then, the following conditions are equivalent.
\begin{itemize}
\item[(a)]
The elements $\theta$ and $\theta'$ are TF equivalent.
\item[(b)]
Any $\theta'' \in [\theta,\theta']$ is TF equivalent to $\theta$.
\item[(c)]
For any $\theta'' \in [\theta,\theta']$, 
the $\theta''$-semistable subcategory $\calW_{\theta''}$ is constant.
\item[(d)]
For any module $M$, we have $[\theta,\theta'] \cap \Theta_M = \emptyset$ or 
$[\theta,\theta'] \subset \Theta_M$.
\item[(e)]
There does not exist $S \in \brick A$ such that 
$[\theta,\theta'] \cap \Theta_S$ has exactly one element.
\end{itemize}
\end{Thm}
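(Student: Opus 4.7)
The plan is to prove the cyclic chain (a)$\Leftrightarrow$(b)$\Rightarrow$(c)$\Rightarrow$(d)$\Rightarrow$(e)$\Rightarrow$(a). The equivalence (a)$\Leftrightarrow$(b) is immediate from Lemma~\ref{Lem_TF_convex}: convexity of the TF class $[\theta]$ gives (a)$\Rightarrow$(b), and the converse holds by specialising $\theta''=\theta'$. For (b)$\Rightarrow$(c), since $\calW_{\theta''} = \ovcalT_{\theta''} \cap \ovcalF_{\theta''}$, if both numerical classes are constant along $[\theta,\theta']$, so is $\calW_{\theta''}$. For (c)$\Rightarrow$(d), the condition $\theta'' \in \Theta_M$ unwinds by definition to $M \in \calW_{\theta''}$, which is constant on the segment if (c) holds. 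Finally (d)$\Rightarrow$(e) is trivial, since bricks are particular modules.

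The substantive content is (e)$\Rightarrow$(a), which I would establish by contrapositive. Assume $\theta$ and $\theta'$ are not TF equivalent; by the symmetry between torsion and torsion-free classes I may assume $\ovcalT_\theta \ne \ovcalT_{\theta'}$ (the other case is dual, with submodules in place of quotient modules). After possibly swapping $\theta$ and $\theta'$, pick $M \in \ovcalT_\theta \setminus \ovcalT_{\theta'}$, parametrise the segment by $\theta_r := (1-r)\theta + r\theta'$ for $r \in [0,1]$, and set
\begin{align*}
r^* := \sup\{ r \in [0,1] \mid M \in \ovcalT_{\theta_r}\}.
\end{align*}
Writing this set as the intersection, over all quotients $X$ of $M$, of the closed affine half-spaces $\{r : \theta_r(X) \ge 0\}$, one sees that it is a closed sub-interval $[0,r^*]$ with $0 \le r^* < 1$ and $M \in \ovcalT_{\theta_{r^*}}$.

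The key step is to locate a single quotient witnessing the transition at $r^*$. Since dimension vectors of quotients of $M$ are bounded above by $\dimv M$, only finitely many dimension vectors arise, and a pigeonhole argument along any sequence $r_k \downarrow r^*$ produces a fixed dimension vector $d$ realised by quotients $X_k$ of $M$ with $\theta_{r_k}(d) < 0$. Choosing any quotient $X$ of $M$ with $\dimv X = d$, the containment $X \in \ovcalT_{\theta_{r^*}}$ (inherited from $M$) forces $\theta_{r^*}(X) \ge 0$, so affineness of $r \mapsto \theta_r(X)$ gives $\theta_{r^*}(X) = 0$ and $\theta_r(X) < 0$ for all $r > r^*$. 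Hence $X \in \calW_{\theta_{r^*}}$, and applying additivity of $\theta_r$ along a composition series of $X$ in the abelian category $\calW_{\theta_{r^*}}$ produces a simple factor $S$ with $\theta_r(S) < 0$ for $r$ slightly greater than $r^*$. Since $S$ is simple in a wide subcategory it is a brick by Lemma~\ref{Lem_simple_stable}, the affine function $r \mapsto \theta_r(S)$ is non-constant with unique zero $r^*$, and $\theta_{r^*} \in \Theta_S$; therefore $[\theta, \theta'] \cap \Theta_S = \{\theta_{r^*}\}$ is a singleton, contradicting (e).

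The main obstacle I foresee is the pigeonhole/compactness step yielding a single quotient $X$ that realises the transition: one has to simultaneously preserve $X \in \ovcalT_{\theta_{r^*}}$ and the existence of some $r > r^*$ with $\theta_r(X) < 0$, and then extract a simple factor of $X$ in $\calW_{\theta_{r^*}}$ whose zero still isolates $r^*$ from the right rather than being identically zero on the segment. Everything else is a direct combination of affine linear algebra on $[\theta,\theta']$ with the abelian structure of the wide subcategory $\calW_{\theta_{r^*}}$.
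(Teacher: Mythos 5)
Your proof is correct, but the contrapositive (e)$\Rightarrow$(a) follows a genuinely different route from the paper. The paper first applies Lemma~\ref{Lem_TF_coincide_num} (via Lemma~\ref{Lem_TF_coincide}) to reduce the failure of TF equivalence to a nontriviality statement $\ovcalT_\theta \cap \calF_{\theta'} \neq \{0\}$, then picks a nonzero module $S$ of minimal $K$-dimension in that intersection, invokes \cite[Lemma~3.8]{DIRRT} to conclude $S$ is a brick, and uses minimality to show all proper quotients of $S$ remain in $\ovcalT_{\theta'}$, so that $S \in \calW_{\theta''}$ at the unique $\theta'' \in [\theta,\theta']$ with $\theta''(S)=0$. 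You instead take any $M \in \ovcalT_\theta \setminus \ovcalT_{\theta'}$, parametrise the segment, locate the transition parameter $r^*$ as a supremum, pigeonhole twice (first on dimension vectors of quotients of $M$ along a sequence $r_k \downarrow r^*$ to find a quotient $X$ with $\theta_{r^*}(X)=0$ and $\theta_r(X)<0$ for $r>r^*$, then on simple factors of $X$ in $\calW_{\theta_{r^*}}$), and produce the brick as a simple object of the wide subcategory $\calW_{\theta_{r^*}}$. Both are valid; the paper's argument is shorter and purely module-theoretic, outsourcing the brick step to \cite{DIRRT} and the coincidence criterion for torsion pairs, while yours is self-contained with respect to those ingredients (relying instead on the standard fact that simple objects of a wide subcategory are bricks — which is stated in the text preceding Lemma~\ref{Lem_simple_stable} rather than in that lemma itself) at the cost of the more elaborate supremum-plus-pigeonhole bookkeeping. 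One small expository point: in your final step it is worth making explicit that since each simple factor $S_i$ lies in $\calW_{\theta_{r^*}}$, every affine function $r \mapsto \theta_r(S_i)$ vanishes at $r^*$, and because their sum is strictly negative on $(r^*,1]$ at least one of them must be strictly decreasing rather than identically zero; this is exactly what rules out the failure mode you flag as the ``main obstacle.''
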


To prove this, we also prepare a fact on coincidence of torsion pairs.

\begin{Lem}\label{Lem_TF_coincide}
Let $(\calT,\calF)$ and $(\calT',\calF')$ be torsion pairs in $\mod A$.
Then, $(\calT,\calF)$ and $(\calT',\calF')$ coincide 
if and only if $\calT \cap \calF'=\calT' \cap \calF=\{ 0 \}$.
\end{Lem}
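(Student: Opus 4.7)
The plan is to handle the two directions separately. For the ``only if'' direction, I will observe that if the torsion pairs coincide then $\calT \cap \calF' = \calT \cap \calF$, and any module $M$ belonging to both $\calT$ and $\calF$ must satisfy $\mathrm{id}_M \in \Hom_A(\calT,\calF) = 0$, forcing $M = 0$.

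For the substantial ``if'' direction, the strategy is to prove $\calT = \calT'$; then $\calF = \calF'$ will follow since a torsion pair is determined by its torsion class. To prove $\calT \subset \calT'$, I would take $M \in \calT$ and apply the canonical decomposition coming from the torsion pair $(\calT',\calF')$, obtaining a short exact sequence $0 \to M' \to M \to M'' \to 0$ with $M' \in \calT'$ and $M'' \in \calF'$. Since $\calT$ is a torsion class and hence closed under quotient modules, $M''$ also lies in $\calT$, so $M'' \in \calT \cap \calF' = \{0\}$. This forces $M \cong M' \in \calT'$. The reverse inclusion $\calT' \subset \calT$ follows by a completely symmetric argument, invoking the hypothesis $\calT' \cap \calF = \{0\}$ instead.

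Finally, from $\calT = \calT'$ I would conclude $\calF = \calF'$ either by invoking the standard fact that $\calF$ is recovered as $\{N \in \mod A \mid \Hom_A(\calT, N) = 0\}$, or by running the dual argument on torsion-free classes: given $N \in \calF$, the decomposition $0 \to N' \to N \to N'' \to 0$ with $N' \in \calT'$ and $N'' \in \calF'$ has $N' \in \calF$ because $\calF$ is closed under submodules, whence $N' \in \calT' \cap \calF = \{0\}$ and $N \cong N'' \in \calF'$. There is no substantive obstacle in the argument; the entire proof rests on the single structural observation that the torsion decomposition of an object inside the other torsion class is concentrated in the torsion part, and vice versa for the torsion-free case.
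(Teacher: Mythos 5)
Your proof is correct, but it takes a slightly different route from the paper's. For the ``if'' direction, the paper observes that $\calT \cap \calF' = \{0\}$ forces $\Hom_A(\calT,\calF') = 0$ (since the image of any map $\calT \to \calF'$ is a quotient of a torsion object and a submodule of a torsion-free one, hence lies in $\calT \cap \calF'$), and then uses the characterization $\calF = \{N : \Hom_A(\calT,N)=0\}$ to conclude $\calF' \subset \calF$, with the reverse inclusion symmetric. You instead take $M \in \calT$, apply the canonical torsion decomposition with respect to $(\calT',\calF')$, and use closure of $\calT$ under quotients to kill the torsion-free part. Both arguments are standard and of roughly equal length; the paper's is marginally slicker in that it reaches $\calF = \calF'$ directly via Hom-orthogonality without invoking the decomposition triangle, while yours is perhaps more hands-on and makes the role of the canonical exact sequence explicit. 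Either is a perfectly acceptable proof of the lemma.
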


\begin{proof}
If $(\calT,\calF)=(\calT',\calF')$,
then we clearly have $\calT \cap \calF'=\calT' \cap \calF=\{ 0 \}$.

Conversely, we assume $\calT \cap \calF'= \{0\}$ and $\calT' \cap \calF=\{ 0 \}$.
Then, $\Hom_A(\calT,\calF')=0$ and $\Hom_A(\calT',\calF)=0$ hold,
and they imply $\calF' \subset \calF$ and $\calF \subset \calF'$, respectively.
Therefore, we get $\calF=\calF'$ and $\calT=\calT'$.
\end{proof}

The following criterion is obtained by simply
applying Lemma \ref{Lem_TF_coincide} to the numerical torsion pairs
$(\ovcalT_\theta,\calF_\theta)$ and $(\calT_\theta,\ovcalF_\theta)$.

\begin{Lem}\label{Lem_TF_coincide_num}
Let $\theta, \theta' \in K_0(\proj A)_\R$.
Then the following assertions hold.
\begin{itemize}
\item[(a)]
The torsion classes $\ovcalT_\theta$ and $\ovcalT_{\theta'}$ coincide
if and only if 
$\ovcalT_\theta \cap \calF_{\theta'}=\ovcalT_{\theta'} \cap \calF_\theta = \{ 0 \}$.
\item[(b)]
The torsion-free classes $\ovcalF_\theta$ and $\ovcalF_{\theta'}$ coincide
if and only if 
$\calT_\theta \cap \ovcalF_{\theta'}=\calT_{\theta'} \cap \ovcalF_\theta = \{ 0 \}$.
\end{itemize}
\end{Lem}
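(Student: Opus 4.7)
The plan is to deduce both assertions as direct consequences of Lemma \ref{Lem_TF_coincide} applied to the pairs of numerical torsion pairs, using the elementary fact that a torsion pair is determined by its torsion class (equivalently, by its torsion-free class). This observation is what allows us to pass from the stated coincidence of just the torsion classes (or just the torsion-free classes) to the coincidence of the full torsion pairs, where Lemma \ref{Lem_TF_coincide} becomes applicable.

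For assertion (a), I would first note that by \cite[Proposition 3.1]{BKT} (cited just above), both $(\ovcalT_\theta,\calF_\theta)$ and $(\ovcalT_{\theta'},\calF_{\theta'})$ are torsion pairs in $\mod A$. Since a torsion pair is determined by its torsion class, the equality $\ovcalT_\theta=\ovcalT_{\theta'}$ is equivalent to the equality of the full torsion pairs $(\ovcalT_\theta,\calF_\theta)=(\ovcalT_{\theta'},\calF_{\theta'})$. Then Lemma \ref{Lem_TF_coincide}, with $(\calT,\calF):=(\ovcalT_\theta,\calF_\theta)$ and $(\calT',\calF'):=(\ovcalT_{\theta'},\calF_{\theta'})$, gives exactly that this coincidence is equivalent to
\begin{align*}
\ovcalT_\theta \cap \calF_{\theta'}=\ovcalT_{\theta'} \cap \calF_\theta=\{0\}.
\end{align*}

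Assertion (b) is proved in the same manner by substituting the other pair of numerical torsion pairs. Namely, $(\calT_\theta,\ovcalF_\theta)$ and $(\calT_{\theta'},\ovcalF_{\theta'})$ are again torsion pairs by \cite[Proposition 3.1]{BKT}, so $\ovcalF_\theta=\ovcalF_{\theta'}$ is equivalent to the equality $(\calT_\theta,\ovcalF_\theta)=(\calT_{\theta'},\ovcalF_{\theta'})$ of the whole torsion pairs. Applying Lemma \ref{Lem_TF_coincide} with $(\calT,\calF):=(\calT_\theta,\ovcalF_\theta)$ and $(\calT',\calF'):=(\calT_{\theta'},\ovcalF_{\theta'})$ then yields the claimed criterion $\calT_\theta \cap \ovcalF_{\theta'}=\calT_{\theta'} \cap \ovcalF_\theta=\{0\}$.

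There is essentially no obstacle: the entire content is bookkeeping once one recognizes which pair of torsion pairs to feed into Lemma \ref{Lem_TF_coincide}. The only subtlety worth stating explicitly is the fact that a torsion class uniquely determines its torsion-free class (namely $\calF=\{X\in\mod A\mid \Hom_A(\calT,X)=0\}$), which justifies reducing equality of torsion classes to equality of full torsion pairs; without this step the criterion in Lemma \ref{Lem_TF_coincide} could not be invoked.
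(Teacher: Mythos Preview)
Your proof is correct and matches the paper's approach exactly: the paper simply states that the lemma is obtained by applying Lemma~\ref{Lem_TF_coincide} to the numerical torsion pairs $(\ovcalT_\theta,\calF_\theta)$ and $(\calT_\theta,\ovcalF_\theta)$. You have spelled out the one implicit step (a torsion pair is determined by either half), which is precisely what makes that application work.
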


Now, we can prove Theorem \ref{Thm_W_constant}.

\begin{proof}[Proof of Theorem \ref{Thm_W_constant}]
$\text{(a)} \Rightarrow \text{(b)}$ follows from Lemma \ref{Lem_TF_convex}.

$\text{(b)} \Rightarrow \text{(c)}$ and $\text{(c)} \Rightarrow \text{(d)}$ 
are clear by definition.

$\text{(d)} \Rightarrow \text{(e)}$ is also obvious, since $\theta \ne \theta'$.

$\text{(e)} \Rightarrow \text{(a)}$:
We assume that $\theta$ and $\theta'$ are not TF equivalent and 
will find some $S \in \brick A$ such that 
$[\theta,\theta'] \cap \Theta_S$ has exactly one element.

Then, since $\theta$ and $\theta'$ are not TF equivalent, 
$\ovcalT_\theta \ne \ovcalT_{\theta'}$ 
or $\ovcalF_\theta \ne \ovcalF_{\theta'}$ holds.
We only consider the case $\ovcalT_\theta \ne \ovcalT_{\theta'}$,
because a similar proof works in the other case.

In this case, we have 
$\ovcalT_\theta \cap \calF_{\theta'} \ne \{0\}$ or 
$\ovcalT_{\theta'} \cap \calF_\theta \ne \{0\}$
from Lemma \ref{Lem_TF_coincide_num}.
By exchanging $\theta$ and $\theta'$,
we may assume $\ovcalT_\theta \cap \calF_{\theta'} \ne \{0\}$.
We can take a nonzero module $S \in \ovcalT_\theta \cap \calF_{\theta'}$ such that
$\dim_K S \le \dim_K M$ holds for all nonzero modules 
$M \in \ovcalT_\theta \cap \calF_{\theta'}$.
Then, $S$ is a brick by \cite[Lemma 3.8]{DIRRT}.

Since $S \in \ovcalT_\theta \cap \calF_{\theta'}$,
we have $\theta(S) \ge 0$ and $\theta'(S) < 0$.
Thus, there uniquely exists $\theta'' \in [\theta,\theta']$
such that $\theta''(S)=0$, 
and it suffices to show $\theta'' \in \Theta_S$.

By minimality,
any proper nonzero quotient module of $S$ must belong to $\ovcalT_{\theta'}$.
We have $S \in \ovcalT_\theta \cap \ovcalT_{\theta'}$,
and this clearly implies $S \in \ovcalT_{\theta''}$.
Therefore, $S \in \calW_{\theta''}$, 
which means that $\theta'' \in \Theta_S$.
\end{proof}

In general, there may exist infinitely many TF equivalence classes.

\begin{Ex}
Let $A$ be the path algebra of the $m$-Kronecker quiver:
\begin{align*}
\begin{xy}
( 0, 0)*+{1}="1",
(20, 0)*+{2}="2",
(10, 0)*+{\vdots},
\ar@<4mm> "1";"2"
\ar@<2mm> "1";"2"
\ar@<-4mm> "1";"2"
\end{xy}
\quad \text{($m$ arrows)}.
\end{align*}
In Example \ref{Ex_Kronecker} later, 
we give the wall-chamber structure of $K_0(\proj A)_\R$,
and this and Theorem \ref{Thm_W_constant} 
tell us the cardinality of the set of TF equivalence classes as follows:
\begin{itemize}
\item
if $m=0,1$, then only finitely many TF equivalence classes exist;
\item
if $m=2$, then the set of TF equivalence classes is infinite and countable;
\item
if $m \ge 3$, then there exist uncountably many TF equivalence classes.
\end{itemize}
\end{Ex}

As an application of Theorem \ref{Thm_W_constant},
We consider the chambers in $K_0(\proj A)_\R$.
Here, a \textit{chamber} means a connected component of the open subset 
$K_0(\proj A)_\R \setminus \overline{\Wall}$,
where $\Wall:=\bigcup_{M \in \mod A \setminus \{0\}} \Theta_M$ is 
the union of all walls $\Theta_M$.
We set $\Chamber(A)$ as the set of all chambers in $K_0(\proj A)_\R$.
By Theorem \ref{Thm_W_constant}, any chamber $C$ is a subset of
some TF equivalence class of dimension $n$ (that is,
the interior of the TF equivalence class is nonempty),
because any two elements $\theta,\theta' \in C$ have a polyline in $C$ connecting them.
This correspondence gives a map $\Chamber(A) \to \TF_n(A)$,
where $\TF_n(A)$ denotes the set of
all TF equivalence classes of dimension $n$ in $K_0(\proj A)_\R$.
Actually, we can prove that this map is a bijection
as follows.

\begin{Cor}\label{Cor_TF_chamber}
We have mutually inverse bijections
\begin{align*}
\Chamber(A) &\leftrightarrow \TF_n(A) \\
C &\mapsto \textup{(the TF equivalence class containing $C$)}, \\
E^\circ &\mapsfrom E,
\end{align*}
where $E^\circ$ means the interior of $E$ in $K_0(\proj A)_\R$.
\end{Cor}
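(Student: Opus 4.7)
The plan is to check that both maps are well-defined and then verify they are mutual inverses, using Theorem \ref{Thm_W_constant} as the decisive tool. For the forward map, I will fix a chamber $C$ and any two points $\theta,\theta' \in C$. Since $C$ is an open connected subset of $K_0(\proj A)_\R$, and hence path-connected by polylines, I can join $\theta$ and $\theta'$ by a polyline inside $C \subset K_0(\proj A)_\R \setminus \overline{\Wall}$; each segment of this polyline is disjoint from every wall $\Theta_M$. The implication $\text{(d)}\Rightarrow\text{(a)}$ of Theorem \ref{Thm_W_constant} then makes the two endpoints of every segment TF equivalent, and transitivity places all of $C$ into a single TF class $E$. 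Since $C$ is nonempty and open in $K_0(\proj A)_\R \cong \R^n$, it has dimension $n$, so $E \in \TF_n(A)$.

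For the reverse map, I will fix $E \in \TF_n(A)$ and show that $E^\circ$ is a chamber. The key step is to prove $E^\circ \cap \overline{\Wall} = \emptyset$, which, since $E^\circ$ is open, reduces to proving $E^\circ \cap \Wall = \emptyset$. Suppose to the contrary that $\theta \in E^\circ \cap \Theta_M$ for some nonzero $M$. By Proposition \ref{Prop_max_wall_brick} there exists a brick $S$ with $\Theta_M \subset \Theta_S$ and $\dim \Theta_S = n-1$; in particular $\theta \in \Theta_S$. Because $E^\circ$ is a full-dimensional open neighborhood of $\theta$ and $\Theta_S$ has codimension one, I can choose $\theta' \in E^\circ$ with $\theta' \ne \theta$ and $\theta' \notin \Theta_S$. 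Then $[\theta,\theta'] \cap \Theta_S = \{\theta\}$ has exactly one element, contradicting condition $\text{(e)}$ of Theorem \ref{Thm_W_constant} applied to the TF equivalent pair $\theta,\theta'$. Hence $E^\circ$ is disjoint from $\Wall$, and by openness also from $\overline{\Wall}$. Lemma \ref{Lem_TF_convex} shows $E$ is convex, so $E^\circ$ is convex and therefore connected, making it a nonempty open connected subset of $K_0(\proj A)_\R \setminus \overline{\Wall}$ and thus contained in a unique chamber.

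To finish, I will verify the two maps invert each other. Starting from a chamber $C$ with associated TF class $E$, the inclusion $C \subset E$ and the openness of $C$ give $C \subset E^\circ$; by the previous paragraph $E^\circ$ is contained in some chamber $C'$, and $C \subset E^\circ \subset C'$ forces $C = C'$ by disjointness of chambers, so $E^\circ = C$. Conversely, given $E \in \TF_n(A)$, the chamber $E^\circ$ lies in $E$, so the TF class assigned to this chamber must be $E$ itself. The main obstacle of the whole argument is the proof that $E^\circ \cap \overline{\Wall} = \emptyset$: the trick will be to replace an arbitrary wall membership by a codimension-one brick wall via Proposition \ref{Prop_max_wall_brick} and then exploit the full-dimensional room inside $E^\circ$ to produce a perturbation $\theta'$ violating condition $\text{(e)}$ of Theorem \ref{Thm_W_constant}.
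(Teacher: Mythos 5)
Your proof is correct in its overall architecture and its forward direction and mutual-inverse check match the paper (which establishes the well-definedness of $C \mapsto E$ via polylines in the paragraph preceding the corollary). However, your central step — showing $E^\circ$ avoids $\overline{\Wall}$ — takes a genuinely different route from the paper's, and that route has a small imprecision.

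The paper argues more globally: since $E$ is a single TF equivalence class, Theorem~\ref{Thm_W_constant}(c) makes $\calW_\theta$ constant on $E$; if $\calW_\theta \ne \{0\}$ for some $\theta \in E$, then picking $0 \ne M \in \calW_\theta$ would force $E \subset \Theta_M$, which is impossible because $\dim \Theta_M \le n-1 < n = \dim E$. This yields the stronger statement $E \cap \Wall = \emptyset$, and openness of $E^\circ$ then gives $E^\circ \cap \overline{\Wall} = \emptyset$ directly, with no perturbation. Your approach instead localizes at a putative $\theta \in E^\circ \cap \Theta_M$, upgrades $M$ to a brick $S$ via Proposition~\ref{Prop_max_wall_brick}, and perturbs to violate condition (e). Both strategies work, but the paper's is shorter since it exploits the constancy of $\calW$ rather than producing an explicit violating brick-wall.

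The imprecision in your perturbation: choosing $\theta' \in E^\circ$ with $\theta' \ne \theta$ and $\theta' \notin \Theta_S$ does not by itself guarantee $[\theta,\theta'] \cap \Theta_S = \{\theta\}$. Since $\Theta_S$ is a convex closed cone, $[\theta,\theta'] \cap \Theta_S$ is a (possibly degenerate) subsegment starting at $\theta$; it could have positive length if $\theta$ lies in the relative interior of $\Theta_S$ inside the hyperplane $\Ker\ang{?}{S}$ and $\theta'$ also lies in that hyperplane. The fix is easy and available to you: since $E^\circ$ is open and $\Ker\ang{?}{S}$ is a proper hyperplane, you can choose $\theta' \in E^\circ$ with $\theta'(S) \ne 0$; then $[\theta,\theta'] \cap \Ker\ang{?}{S} = \{\theta\}$, which forces $[\theta,\theta'] \cap \Theta_S = \{\theta\}$. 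With that adjustment your argument is complete; note also that once you require $\theta'(S)\ne 0$, the fact that $\dim\Theta_S = n-1$ plays no further role, so the full strength of Proposition~\ref{Prop_max_wall_brick} is not actually needed — any brick $S\in\calW_\theta$ would do.
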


\begin{proof}
First, we must show that the second map $E \mapsto E^\circ$ is well-defined.
Let $E \in \TF_n(A)$.

Then, $E$ cannot be contained in any wall $\Theta_M$ for $M \in \mod A \setminus \{0\}$,
so Theorem \ref{Thm_W_constant} implies that 
$\calW_\theta=\{0\}$ holds for all $\theta \in E$.
Thus, $E \subset K_0(\proj A)_\R \setminus \Wall$.

Since $E^\circ$ is an open subset, 
we get $E^\circ \subset K_0(\proj A)_\R \setminus \overline{\Wall}$.
Since $E$ is convex in $K_0(\proj A)_\R$, so is $E^\circ$.
Thus, $E^\circ$ is contained in a connected component $C$ of 
$K_0(\proj A)_\R \setminus \overline{\Wall}$.
By definition, $C \in \Chamber(A)$.

The chamber $C$ is contained in some TF equivalence class,
which must be $E$, since $\emptyset \ne E^\circ \subset C$.
Thus, we have $E^\circ \subset C \subset E$.
Because $C$ is an open set,
we get $E^\circ=C \in \Chamber(A)$.

Now, we can easily check that the two maps are mutually inverse.
\end{proof}

\subsection{Numerical torsion(-free) classes are widely generated}

We end this section by considering the relationship between
numerical torsion pairs and widely generated torsion(-free) classes.
Let us write $\wide A$ for the set of wide subcategories
and $\tors A$ for the set of torsion classes in $\mod A$.
It is easy to see that every $\calW \in \wide A$ has 
the smallest torsion class $\sfT(\calW)$ containing $\calW$.
As in \cite{AP}, a torsion class $\calT \in \tors A$ is said to be 
\textit{widely generated} if there exists a wide subcategory $\calW \in \wide A$ 
such that $\calT=\sfT(\calW)$.
Similarly, the notion of widely generated torsion-free classes is defined.

Widely generated torsion classes have been characterized in several ways
\cite{AP,BCZ,MS}. 
In our context, the following one is crucial.

\begin{Prop}\label{Prop_MS}\cite[Proposition 3.3]{MS}
Let $\calT$ be a torsion class in $\mod A$.
Then, $\calT$ is a widely generated torsion class if and only if
$\calT=\sfT(\alpha(\calT))$,
where 
\begin{align*}
\alpha(\calT):=\{ M \in \calT \mid \text{
for any $f \colon N \to M$ with $N \in \calT$, $\Ker f \in \calT$} \} \in \wide A.
\end{align*}
\end{Prop}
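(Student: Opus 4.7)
The plan is to handle the two directions separately. The backward direction is immediate: if $\calT = \sfT(\alpha(\calT))$ then $\calT$ is widely generated by definition, since $\alpha(\calT) \in \wide A$. For the forward direction, suppose $\calT = \sfT(\calW)$ for some $\calW \in \wide A$. The inclusion $\sfT(\alpha(\calT)) \subset \calT$ is automatic from $\alpha(\calT) \subset \calT$, so the whole question reduces to proving $\calW \subset \alpha(\calT)$, which would give $\calT = \sfT(\calW) \subset \sfT(\alpha(\calT))$ and the desired equality.

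The crux is therefore the following technical claim: for every $M \in \calW$ and every $f \colon N \to M$ with $N \in \sfT(\calW)$, we have $\Ker f \in \sfT(\calW)$. My approach is to first identify $\sfT(\calW) = \Filt(\Fac \calW)$; the inclusion $\supset$ uses only the torsion-class axioms, and $\subset$ follows because $\Filt(\Fac \calW)$ is closed under quotients by pushing a filtration through any surjection. Then every $N \in \sfT(\calW)$ admits a filtration $0 = N_0 \subset N_1 \subset \cdots \subset N_k = N$ with each $N_i / N_{i-1}$ a quotient of some object of $\calW$, and I induct on $k$. For $k = 1$ choose a surjection $\pi \colon W \twoheadrightarrow N$ with $W \in \calW$; wideness of $\calW$ gives $\Ker(f \pi) \in \calW$, and then $\Ker f = \pi(\Ker(f \pi))$ is a quotient of this, hence lies in $\sfT(\calW)$. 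For the inductive step use a short exact sequence $0 \to N_1 \to N \to N' \to 0$ with $N_1 \in \Fac \calW$ and $N'$ of filtration length $k-1$: the image $\Im(f|_{N_1})$ equals the image of the composition $W_1 \twoheadrightarrow N_1 \to M$ and so lies in $\calW$ by wideness, whence $M/\Im(f|_{N_1}) \in \calW$. The induced map $\bar f \colon N' \to M/\Im(f|_{N_1})$ then fits into an exact sequence $0 \to \Ker(f|_{N_1}) \to \Ker f \to \Ker \bar f \to 0$ (a short diagram chase), and both outer terms lie in $\sfT(\calW)$ by the inductive hypothesis applied at targets $M$ and $M/\Im(f|_{N_1})$ respectively.

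The step I anticipate will be most delicate is the separate assertion, built into the statement, that $\alpha(\calT)$ is itself a wide subcategory. Closure under kernels is immediate by restricting test maps through the inclusion, and closure under extensions is a routine snake-lemma argument. Closure under cokernels is the subtle point: given $f \colon M \to M'$ in $\alpha(\calT)$ and a test map $g \colon N \to \Coker f$ with $N \in \calT$, I would form the pullback $\tilde N := N \times_{\Coker f} M'$. The surjection $\tilde N \twoheadrightarrow N$ has kernel $\Im f$, which is a quotient of $M \in \calT$, so extension-closure of $\calT$ gives $\tilde N \in \calT$. Applying the $\alpha(\calT)$-defining property of $M'$ to the canonical map $\tilde N \to M'$ then identifies its kernel with $\Ker g$, and quotient-closure of $\calT$ completes the verification. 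Coordinating the pullback with the defining property of $\alpha(\calT)$ so that every intermediate object stays in $\calT$ is where the main bookkeeping care is required.
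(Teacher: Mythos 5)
The paper does not prove this result itself; it is cited directly from \cite[Proposition 3.3]{MS}, so there is no in-house argument for comparison. Your proof is correct and self-contained. The easy direction is right, and the reduction of the forward direction to showing $\calW \subset \alpha(\calT)$ is exactly the correct move. The induction on the $\Fac\calW$-filtration length of $N$, together with the identification $\sfT(\calW)=\Filt(\Fac\calW)$, goes through: the base case via $\Ker f=\pi(\Ker(f\pi))$ uses only wideness of $\calW$ and quotient-closure of $\calT$, and the inductive step via the snake lemma works because $\Im(f|_{N_1})$, being the image of a map $W_1\to M$ between objects of $\calW$, lies in $\calW$, so the new codomain $M/\Im(f|_{N_1})$ stays in $\calW$ and the inductive hypothesis applies. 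Your verification that $\alpha(\calT)$ is wide is also correct: closure under kernels and under extensions follow from the compositions you indicate, and the pullback argument for cokernels is the right one. One very small remark there: once you identify $\Ker(\tilde N\to M')$ with $\Ker g$, the defining property of $\alpha(\calT)$ applied to $\tilde N\to M'$ (using $\tilde N\in\calT$) already yields $\Ker g\in\calT$ directly, so the closing appeal to quotient-closure of $\calT$ is superfluous; this does not affect the validity of the argument.
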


We aim to show the following property on the subset
\begin{align*}
K_0(\proj A)_\Q &:= \{ a_1[P_1]+\cdots+a_n[P_n] \mid a_1,\ldots,a_n \in \Q \} \subset 
K_0(\proj A)_\R.
\end{align*}

\begin{Thm}\label{Thm_widely_gen}
Let $\theta \in K_0(\proj A)_\Q$.
Then, $\ovcalT_\theta$ is a widely generated torsion class, and
$\ovcalF_\theta$ is a widely generated torsion-free class. 
\end{Thm}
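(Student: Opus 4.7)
The plan is to verify the Marks--Stovicek criterion from Proposition \ref{Prop_MS}: it suffices to show $\ovcalT_\theta = \sfT(\alpha(\ovcalT_\theta))$, the torsion-free statement following by a dual argument. The inclusion $\supset$ is automatic, so the task is to place every $M \in \ovcalT_\theta$ inside $\sfT(\alpha(\ovcalT_\theta))$.

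The first (rationality-free) step is to show $\calW_\theta \subset \alpha(\ovcalT_\theta)$. Given $M \in \calW_\theta$ and $f \colon N \to M$ with $N \in \ovcalT_\theta$, the image $I := \Im f$ is simultaneously a quotient of $N \in \ovcalT_\theta$, giving $\theta(I) \ge 0$, and a submodule of $M \in \ovcalF_\theta$, giving $\theta(I) \le 0$; hence $\theta(I) = 0$ and $I \in \calW_\theta$. For any quotient $X$ of $K := \Ker f$, the pushout of $0 \to K \to N \to I \to 0$ along $K \twoheadrightarrow X$ yields a short exact sequence $0 \to X \to N' \to I \to 0$ with $N'$ a quotient of $N$; since $N' \in \ovcalT_\theta$ forces $\theta(N') \ge 0$ and $\theta(I) = 0$, additivity gives $\theta(X) \ge 0$, so $K \in \ovcalT_\theta$ and $M \in \alpha(\ovcalT_\theta)$. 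Decomposing any $M \in \ovcalT_\theta$ via the torsion pair $(\calT_\theta, \ovcalF_\theta)$ then gives $0 \to M' \to M \to M'' \to 0$ with $M' \in \calT_\theta$ and $M'' \in \ovcalT_\theta \cap \ovcalF_\theta = \calW_\theta \subset \alpha(\ovcalT_\theta)$, reducing the task to placing every $M' \in \calT_\theta$ inside $\sfT(\alpha(\ovcalT_\theta))$.

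Rationality of $\theta$ must be used here, via the structural identification from the second main theorem of the paper: when $\theta$ lies in some cone $C^+(U)$ for a 2-term presilting object $U$---automatic for $\theta \notin \Wall$ by Theorem \ref{Thm_chamber_cone_intro}---the description $\calT_\theta = \Fac H^0(U)$ makes $\calT_\theta$ functorially finite, and a standard result (Ingalls--Thomas, Marks--Stovicek) exhibits a wide subcategory $\calW \subset \calT_\theta$ with $\sfT(\calW) = \calT_\theta$. For rational $\theta$ on walls---and, for $\tau$-tilting infinite $A$, for rational points outside $\bigcup_T C(T)$---I would proceed by induction on $n$ using the $\tau$-tilting reduction of Section 4: pick a 2-term presilting $U$ with $\theta \in N_U$, project via $\pi$ to $\pi(\theta) \in K_0(\proj B)_\Q$ for the reduced algebra $B$ of strictly smaller rank, apply the inductive hypothesis for $\ovcalT_{\pi(\theta)}$, and transport the widely generating wide subcategory back through the equivalence $\phi \colon \calW_U \to \mod B$.

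The main obstacles are twofold. First, one must establish the compatibility $\calW \subset \alpha(\ovcalT_\theta)$ rather than merely $\calW \subset \alpha(\calT_\theta)$: for $f \colon N \to M$ with $N \in \ovcalT_\theta \setminus \calT_\theta$ and $M \in \calW \subset \calT_\theta$, the naive pushout argument used in the $\calW_\theta$-step does not close because $\theta(\Im f)$ can be strictly positive, so an additional argument exploiting $M \in \calT_\theta$ is needed to control $\Ker f$. Second, the existence of a suitable $U$ with $\theta \in C^+(U)$---or at least $\theta \in N_U$ allowing $\tau$-tilting reduction---for every rational $\theta$ must be carefully verified, particularly in the $\tau$-tilting infinite setting where the cones $C(T)$ do not cover all of $K_0(\proj A)_\R$.
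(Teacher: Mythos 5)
Your proposal takes a genuinely different — and, as outlined, incomplete — route from the paper. The paper argues by contradiction via Lemma \ref{Lem_infin_seq}: if $\ovcalT_\theta$ is not widely generated, one constructs an infinite sequence $\cdots \xrightarrow{f_1} M_1 \xrightarrow{f_0} M_0$ with every $M_i \in \ovcalT_\theta$ and $0 \neq \Ker f_i \in \calF_\theta$. From the four-term exact sequence $0 \to \Ker f_i \to M_{i+1} \to M_i \to \Coker f_i \to 0$ one gets $\theta(M_{i+1}) = \theta(\Ker f_i) + \theta(M_i) - \theta(\Coker f_i) < \theta(M_i)$, while $\theta(M_i) \ge 0$ throughout; rationality of $\theta$ forces the values $\theta(M_i)$ into a discrete subgroup of $\R$ bounded below, which is incompatible with an infinite strict decrease. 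Rationality enters only at this final step, and the argument is entirely elementary — no silting theory, no reduction.

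Your first step is correct and nicely argued: $\calW_\theta \subset \alpha(\ovcalT_\theta)$ follows from the pushout computation, and the torsion decomposition of any $M \in \ovcalT_\theta$ along $(\calT_\theta, \ovcalF_\theta)$ does reduce the problem to placing $\calT_\theta$ inside $\sfT(\alpha(\ovcalT_\theta))$. But the remaining strategy has a genuine gap, and it is exactly the second obstacle you flag. The plan requires, for every rational $\theta$, a nonzero $U \in \twopresilt A$ with $\theta \in N_U$. The paper's own Proposition \ref{Prop_cover_finite} shows this is impossible in the $\tau$-tilting infinite case: the covering $K_0(\proj A)_\R \setminus \{0\} = \bigcup_{U \in \twoipresilt A} N_U$ forces $\#(\twoipresilt A) < \infty$ by a compactness argument, so when $A$ is $\tau$-tilting infinite there are (dense) rational $\theta$ not lying in any $N_U$ with $U \neq 0$. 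For such $\theta$, the $\tau$-tilting reduction step has no foothold and the induction cannot start. The first obstacle ($\calW \subset \alpha(\ovcalT_\theta)$ versus $\alpha(\calT_\theta)$) is also real, but it is the covering issue that kills the approach outright. The infinite-descent mechanism of Lemma \ref{Lem_infin_seq} is precisely the device the paper introduces to bypass both difficulties: it applies to an arbitrary torsion pair, defers rationality to the very last line, and requires no control over where $\theta$ sits relative to the silting fan.
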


\begin{Rem}
For any $\theta \in K_0(\proj A)_\R$,
\begin{align*}
\calW_\theta=\{ X \in \alpha(\ovcalT_\theta) \mid \theta(X)=0 \}
\end{align*}
holds, but in general, $\calW_\theta$ does not coincide 
with $\alpha(\ovcalT_\theta)$.
\end{Rem}

By Proposition \ref{Prop_MS}, 
if $\calT$ is a torsion class which is not widely generated,
then $\sfT(\alpha(\calT)) \subsetneq \sfT$,
so we get $\calT \cap \alpha(\calT)^\perp \ne \{0\}$.
We need the following technical property.

\begin{Lem}\label{Lem_infin_seq}
Assume that $(\calT,\calF)$ is a torsion pair in $\mod A$ and that
$\calT$ is not a widely generated torsion class.
Then, there exists an infinite sequence
\begin{align*}
\cdots \xrightarrow{f_2} M_2 \xrightarrow{f_1} M_1 \xrightarrow{f_0} M_0=M
\end{align*}
of homomorphisms such that 
$M_i \in \calT$ and $0 \ne \Ker f_i \in \calF$ for each $i \ge 0$.
\end{Lem}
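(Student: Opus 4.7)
The plan is to use Proposition \ref{Prop_MS}: the failure of $\calT$ to be widely generated is exactly $\calT' := \sfT(\alpha(\calT)) \subsetneq \calT$. Let $\calF'$ denote the torsion-free class of $\calT'$; then $\calF \subsetneq \calF'$, and $\calT \cap \calF'$ contains nonzero modules, for instance the $\calF'$-part of any $M \in \calT \setminus \calT'$. I would build the sequence so that each $M_i$ lies in $\calT \cap \calF'$ and is nonzero; this automatically keeps $M_i$ outside $\alpha(\calT) \subset \calT'$, because $\calT' \cap \calF' = \{0\}$.

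The core inductive step to prove and iterate is: for any $0 \ne M \in \calT \cap \calF'$, there exist $0 \ne N \in \calT \cap \calF'$ and $f \colon N \to M$ with $0 \ne \Ker f \in \calF$. Starting from $M \notin \alpha(\calT)$, choose $g \colon L \to M$ with $L \in \calT$ and $K := \Ker g \notin \calT$. Decomposing $K$ via $(\calT,\calF)$ gives $0 \to tK \to K \to fK \to 0$ with $0 \ne fK \in \calF$, so the induced $g' \colon L' := L/tK \to M$ has kernel $fK$. Set $N := L'/tL'$, where $tL'$ is the $\calT'$-torsion of $L'$; then $N \in \calT \cap \calF'$, and since $M \in \calF'$ annihilates $\Hom(\calT',M)$, $g'$ factors as $f \colon N \to M$.

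The chief obstacle is verifying that $\Ker f$ is both nonzero and in $\calF$. For nonvanishing, suppose instead $fK \subset tL'$, so $f$ is injective; then the identity $g = f \circ \pi' \circ \pi$ forces $\Ker g = \pi^{-1}(tL')$, where $\pi \colon L \twoheadrightarrow L'$ and $\pi' \colon L' \twoheadrightarrow N$, so $fK = \pi(K) = tL'$ as submodules of $L'$. But $fK \in \calF \subset \calF'$ (the inclusion follows from $\calT' \subset \calT$), hence $fK = tL' \in \calF' \cap \calT' = \{0\}$, contradicting $fK \ne 0$. A parallel check rules out $N = 0$: that would force $L' \in \calT'$ and thus $g' = 0$, giving $fK = L' \in \calF \cap \calT' = \{0\}$. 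Membership $\Ker f \in \calF$ is the more delicate point, because $\calF$ need not be closed under quotients; one handles any residual $\calT$-torsion by replacing $N$ with $N/t(\Ker f)$ and the map accordingly, using a dimension argument to see the refinement terminates inside $\calT \cap \calF'$. Once the core step is secured, iterating from any $M_0 \in \calT \cap \calF'$ nonzero produces the desired infinite sequence.
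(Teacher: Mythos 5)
Your setup is actually the same as the paper's, even though it looks different: since $\calF'$ consists of modules $X$ with $\Hom_A(\alpha(\calT),X)=0$, we have $\calF'=\alpha(\calT)^\perp$, so your ambient category $\calT\cap\calF'$ is exactly the paper's $\calC=\calT\cap\alpha(\calT)^\perp$. Where you diverge is in the construction of the map: the paper picks, among all $g\colon L\to M$ with $L\in\calT$ and $\Ker g\notin\calT$, one whose source has minimal $K$-dimension, and that minimality does all the work in both verifications ($\Ker f_0\in\calF$ and source in $\calC$). You instead take an arbitrary such $g$, split off the torsion part of the kernel, and then quotient by the $\calT'$-torsion. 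Your checks that $N\ne 0$ and $\Ker f\ne 0$ are correct (both reduce to $\calF\cap\calT'=\{0\}$, which holds since $\calT'\subset\calT$).

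The gap is exactly where you flag it, and I don't think the sketched fix closes it. We have $\Ker f\cong fK/(fK\cap tL')$, a quotient of the torsion-free module $fK$, and torsion-free classes are not closed under quotients, so there is no reason $\Ker f\in\calF$. Replacing $N$ by $N/t(\Ker f)$ (where $t$ now means $\calT$-torsion) fixes the kernel but destroys $\calF'$-membership of the source: quotients of $\calT'$-torsion-free modules need not be $\calT'$-torsion-free, since $\calF'$ is also not closed under quotients. Then quotienting by the $\calT'$-torsion to restore $\calF'$-membership again breaks $\calF$-membership of the kernel, and so on. A raw "dimensions decrease so it stops" argument doesn't obviously land in a state where the source is in $\calT\cap\calF'$, the kernel is in $\calF$, and the kernel is nonzero, all at once; indeed each step can kill either the kernel or the source, and you would need a separate argument that some step stabilizes with everything intact. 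This is precisely the place where the paper's minimal-dimension choice is load-bearing: the pushout diagram shows that if $\Ker f_0\notin\calF$ then one could replace $M_1$ by a proper quotient $M'_1$ still in $\calT$ with kernel still outside $\calT$, contradicting minimality; and a second minimality argument shows $\Im g=0$ for any $g\colon W\to M_1$ with $W\in\alpha(\calT)$, giving $M_1\in\calC$. Without a substitute for that mechanism the proof is not complete.
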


\begin{proof}
Set $\calC:=\calT \cap \alpha(\calT)^\perp$,
which is not $\{0\}$ by the observation above.
We can take a nonzero module $M \in \calC \setminus \{0\}$.

Since $M \notin \alpha(\calT)$, there exists some $M_1 \in \calT$ and 
a homomorphism $f_0 \colon M_1 \to M_0$ such that $\Ker f_0 \notin \calT$.
We may choose $M_1$ and $f_0$ so that $\dim M_1$ is the smallest possible.

Under this condition, we show that $\Ker f_0$ must belong to $\calF$.
We can take an exact sequence 
$0 \to L' \to \Ker f_0 \xrightarrow{\pi} L \to 0$ with $L' \in \calT$ and $L \in \calF$.
Since $\Ker f_0 \notin \calT$, we get $L \ne 0$.
By considering the push-out, we have the following commutative diagram
with the rows exact:
\begin{align*}
\begin{xy}
( 0, 8)*+{0}="1",
(24, 8)*+{\Ker f_0}="2",
(48, 8)*+{M_1}="3",
(72, 8)*+{\Im f_0}="4",
(96, 8)*+{0}="5",
( 0,-8)*+{0}="6",
(24,-8)*+{L}="7",
(48,-8)*+{M'_1}="8",
(72,-8)*+{\Im f_0}="9",
(96,-8)*+{0}="10"
\ar "1";"2"
\ar "2";"3"
\ar "3";"4"
\ar "4";"5"
\ar "6";"7"
\ar "7";"8"
\ar^{f'_0} "8";"9"
\ar "9";"10"
\ar_{\pi} "2";"7"
\ar_{\pi'} "3";"8"
\ar@{=} "4";"9"
\end{xy}.
\end{align*}
The morphism $f'_0 \colon M'_1 \to \Im f_0$ $(\subset M_0)$ satisfies 
$\Ker f'_0 \cong L \notin \calT$, since $L \ne 0$ and $L \in \calF$.
By the choice of $M_1$, we obtain $\dim M'_1 \ge \dim M_1$.
On the other hand, 
the homomorphism $\pi' \colon M_1 \to M'_1$ above is surjective.
Thus, $M'_1 \cong M_1$, and $\pi \colon \Ker f_0 \cong L \in \calF$ as desired.

Next, we prove that $M_1$ also belongs to $\calC$.
Let $W \in \alpha(\calT)$ and $g \colon W \to M_1$.
It suffices to show $g=0$.
Since $M_0 \in \alpha(\calT)^\perp$, the composite $f_0 g \colon W \to M_0$ must be zero.
Thus, $f_0$ induces $f''_0 \colon M_1/\Im g \to M_0$,
and there exists a short exact sequence
$0 \to \Im g \to \Ker f_0 \to \Ker f''_0 \to 0$.
Since $\alpha(\calT) \subset \calT$, we have $\Im g \in \calT$.
On the other hand, since $\Ker f_0 \notin \calT$,
so $\Ker f''_0$ cannot belong to $\calT$.
By the choice of $M_1$ again, we have $\dim(M_1/\Im g) \ge \dim M_1$.
This implies that $\Im g=0$, or equivalently, $g=0$.

Now, we have proved that $M_1 \in \calC$,
so we can apply the process above again,
namely, we have some $f_1 \colon M_2 \to M_1$ 
with $\Ker f_1 \in \calF$ and $M_2 \in \calC$.
By repeating this, we obtain a family $(f_i \colon M_{i+1} \to M_i)_{i=0}^\infty$
of homomorphisms such that $M_i \in \calC$ and $0 \ne \Ker f_i \in \calF$.
\end{proof}

By using Lemma \ref{Lem_infin_seq}, Theorem \ref{Thm_widely_gen} can be proved.

\begin{proof}
We only prove the assertion for $\ovcalT_\theta$.
The other one can be similarly proved.

We assume that $\ovcalT_\theta$ is not widely generated, and deduce a contradiction.
From Lemma \ref{Lem_infin_seq}, take an infinite sequence
\begin{align*}
\cdots \xrightarrow{f_2} M_2 \xrightarrow{f_1} M_1 \xrightarrow{f_0} M_0=M
\end{align*}
of homomorphisms such that 
$M_i \in \ovcalT_\theta$ and $0 \ne \Ker f_i \in \calF_\theta$ for each $i \ge 0$.
Then, for every $i \ge 0$, we have the short exact sequence
\begin{align*}
0 \to \Ker f_i \to M_{i+1} \to M_i \to \Coker f_i \to 0,
\end{align*}
so $\theta(M_{i+1})=\theta(\Ker f_i)+\theta(M_i)-\theta(\Coker f_i)$.
Because $\Ker f_i \in \calF_\theta$, we obtain $\theta(\Ker f_i)<0$.
Moreover, since $M_i \in \ovcalT_\theta$, we get $\theta(\Coker f_i) \ge 0$.
Thus, $\theta(M_{i+1})<\theta(M_i)$ holds for every $i \ge 0$.
On the other hand, $M_i \in \ovcalT_\theta$ implies that $\theta(M_i) \ge 0$.

As a consequence, we have 
$0 \le \cdots < \theta(M_2) < \theta(M_1) < \theta(M_0)$,
but it contradicts to the assumption $\theta \in K_0(\proj A)_\Q$.
Therefore, the torsion class $\ovcalT_\theta$ must be widely generated.
\end{proof}

\section{The wall-chamber structures and the Koenig--Yang correspondences}

\subsection{Preparations on the Koenig--Yang correspondences}

In this section, we study the relationship between stability conditions
and the Koenig--Yang correspondences established in \cite{KY, BY}.
The Koenig--Yang correspondences are a collection of bijections
between many important notions in the perfect derived category $\sfK^\rmb(\proj A)$
and the bounded derived category $\sfD^\rmb(\mod A)$,
such as silting objects in $\sfK^\rmb(\proj A)$,
bounded t-structures with length heart in $\sfD^\rmb(\mod A)$,
and 2-term simple-minded collections in $\sfD^\rmb(\mod A)$. 

Before explaining the detail, we recall some notions here.

Let $\calC$ be a triangulated category.
We say that a triangulated subcategory $\calC' \subset \calC$ is 
\textit{thick} if $\calC'$ is closed under taking direct summands.

For every $U \in \sfK^\rmb(\proj A)$, 
we define the full subcategory $\add U \subset \sfK^\rmb(\proj A)$
as the additive closure of $U$,
that is, $U' \in \add U$ holds if and only if there exists some $s \in \Z_{\ge 0}$
such that $U'$ is isomorphic to a direct summand of $U^{\oplus s}$.

Every $U \in \sfK^\rmb(\proj A)$ admits a decomposition 
$U \cong \bigoplus_{i=1}^m U_i^{s_i}$ with 
all $U_i$ indecomposable, $s_i \ge 1$, and $U_i \not \cong U_j$ for any $i \ne j$.
We set $|U|:=m$, and say that $U$ is \textit{basic}
if $s_i=1$ for all $i$.

Now, we recall the definition of silting objects in $\sfK^\rmb(\proj A)$.

\begin{Def}\label{Def_silt}
We define the following notions.
\begin{itemize}
\item[(1)]
An object $U \in \sfK^\rmb(\proj A)$ is said to be \textit{presilting}
if $\Hom_{\sfK^\rmb(\proj A)}(U,U[{>}0])=0$.
\item[(2)]
An object $T \in \sfK^\rmb(\proj A)$ is said to be \textit{silting}
if $T$ is a presilting object and 
the smallest thick subcategory of $\sfK^\rmb(\proj A)$ containing $U$ is 
$\sfK^\rmb(\proj A)$ itself.
\end{itemize}
We write $\silt A$ for the set of isomorphism classes of basic silting objects in 
$\sfK^\rmb(\proj A)$.
\end{Def}

In this paper, we mainly focus on the 2-term versions of these notions.

\begin{Def}\label{Def_twosilt}
An object $U$ in $\sfK^\rmb(\proj A)$ is said to be
\textit{2-term} if $U$ is isomorphic 
to some complex $(P^{-1} \to P^0)$ whose terms except $-1$st and $0$th ones vanish.
We write $\twopresilt A$ (resp.~$\twosilt A$) for the set of isomorphism classes of 
basic 2-term presilting objects (resp.~2-term silting objects) in $\sfK^\rmb(\proj A)$.
\end{Def}

2-term presilting and silting objects satisfy many nice properties as below;
see also \cite[Corollary 5.1]{KY} for (4).

\begin{Prop}\label{Prop_presilt_silt}
Let $U \in \twopresilt A$.
Then the following assertions hold.
\begin{itemize}
\item[(1)]
\cite[Proposition 2.16]{Aihara}
There exists some $T \in \twosilt A$ such that $U \in \add T$.
\item[(2)]
\cite[Proposition 3.3]{AIR}
The condition $|U|=n$ is equivalent to $U \in \twosilt A$.
\item[(3)]
\cite[Corollary 3.8]{AIR}
If $|U|=n-1$, then $\# \{ T \in \twosilt A \mid U \in \add T \}=2$.
\item[(4)]
\cite[Theorem 2.27, Corollary 2.28]{AI}
The indecomposable direct summands of each $T \in \twosilt A$ gives a $\Z$-basis of 
$K_0(\proj A)$.
\end{itemize}
\end{Prop}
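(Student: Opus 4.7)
The plan is to establish the four parts of the proposition using Bongartz completion and silting mutation, combined with the interplay between silting objects and the Grothendieck group via the Euler form. For part (1), my approach is Bongartz completion. Writing $U = (U^{-1} \to U^0) \in \twopresilt A$, I would form a minimal left $\add U$-approximation $A \to U'$ of the free module viewed as a stalk complex in degree $0$, and complete it to a triangle $A \to U' \to V \to A[1]$ in $\sfK^\rmb(\proj A)$. Because $U'$ has terms in $\add U$ and $A$ is concentrated in degree $0$, the cone $V$ is again 2-term. The presilting hypothesis on $U$ combined with the approximation property yields $\Hom(U \oplus V, (U \oplus V)[{>}0]) = 0$, so $T := U \oplus V$ is 2-term presilting; and $T$ thickly generates $\sfK^\rmb(\proj A)$ because $A$ sits in a triangle with terms in $\add T$.

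Part (2) reduces to (4): if $T \in \twosilt A$, then (4) forces $|T| = n$. Conversely, if $U \in \twopresilt A$ has $|U| = n$, then (1) embeds $U \in \add T$ for some $T \in \twosilt A$, and comparing ranks gives $|T| \ge |U| = n = |T|$, forcing $U = T$. For part (3), I would use silting mutation: given almost-complete $U$ with $|U| = n-1$, fix any completion $T = U \oplus X \in \twosilt A$ provided by (1). The left mutation $U \oplus X'$ defined via a minimal left $\add U$-approximation of $X$ produces a second silting completion with $X' \not\cong X$. The subtler direction is the ``at most two'' bound: translating through the $\tau$-rigid pair correspondence of Adachi--Iyama--Reiten, this reduces to the analogous fact that an almost-complete support $\tau$-tilting pair admits exactly two completions, analyzed via the two canonical exchange sequences.

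Finally, for part (4), given basic $T = T_1 \oplus \cdots \oplus T_m \in \silt A$, the classes $[T_i]$ generate $K_0(\sfK^\rmb(\proj A)) \cong K_0(\proj A)$ because the smallest thick subcategory containing $T$ is all of $\sfK^\rmb(\proj A)$. For linear independence, I would invoke the simple-minded collection $\{X_1, \ldots, X_m\}$ in $\sfD^\rmb(\mod A)$ dual to $T$ under the Euler form, constructed as the simples in the heart of the bounded t-structure associated to $T$; this yields $\ang{T_i}{X_j} = \delta_{ij} \dim_K \End(X_j) \ne 0$, and the resulting non-degenerate pairing forces linear independence and $m = n$. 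The main obstacle is the ``exactly two'' count in (3): the lower bound is a direct mutation construction, but the upper bound does not follow formally and requires either careful comparison of candidate completions via their exchange triangles or, more cleanly, the passage to $\tau$-rigid pairs where the count becomes transparent; the bijection with $\tau$-rigid pairs itself relies on (1) and (2), so these parts must be organized in the right logical order.
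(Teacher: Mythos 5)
The paper provides no argument for this proposition: it is stated as a catalogue of known results, with each item attributed to the literature ([Aihara, Prop.\ 2.16], [AIR, Prop.\ 3.3, Cor.\ 3.8], [AI, Thm.\ 2.27, Cor.\ 2.28]), and those citations constitute the entire ``proof.'' Your proposal instead sketches actual arguments, and these are essentially faithful outlines of what the cited sources do: Bongartz completion via the cone over an $\add U$-approximation of $A$ for (1), rank comparison for (2), mutation plus reduction to almost-complete support $\tau$-tilting pairs for (3), and duality with simple-minded collections for (4). You are also right to flag that the ``exactly two'' bound in (3) is the genuinely delicate step and that the logical dependence on (1) and (2) through the $\tau$-rigid correspondence needs to be set up in order.

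One step in your sketch of (4) is not quite watertight. You claim the $[T_i]$ generate $K_0(\sfK^\rmb(\proj A))$ ``because the smallest thick subcategory containing $T$ is all of $\sfK^\rmb(\proj A)$.'' But thick generation of a triangulated category does not formally imply that the classes of the generator's summands span $K_0$: the full subcategory of objects whose class lies in the subgroup generated by $[T_1],\dots,[T_m]$ is closed under shifts and cones, but not obviously under direct summands, so one cannot directly conclude it is thick. The clean way to run the argument you have in mind is dual: the bounded $t$-structure associated to $T$ has length heart with simples $X_1,\dots,X_m$, and every object of $\sfD^\rmb(\mod A)$ has a finite filtration by shifts of these simples, so $[X_1],\dots,[X_m]$ generate $K_0(\mod A)\cong\Z^n$, giving $m\ge n$; the pairing $\ang{T_i}{X_j}=\delta_{ij}\dim_K\End(X_j)$ then shows the $[X_j]$ are linearly independent, so $m\le n$ and hence $m=n$, and nondegeneracy of the Euler pairing finally forces $[T_1],\dots,[T_n]$ to be the dual basis of $K_0(\proj A)$. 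With that adjustment your outline is correct; note also that the actual proof in [AI, Thm.\ 2.27, Cor.\ 2.28] proceeds by a different route (comparing arbitrary silting objects to $A$ via silting mutation rather than through simple-minded collections), so your (4) is a genuinely alternative argument relying on the Koenig--Yang machinery.
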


In the setting of Proposition \ref{Prop_presilt_silt} (3),
let $T,T'$ be the two distinct elements with $U \in \add T$ and $U \in \add T'$.
Then, $T'$ is called the \textit{mutation} of $T$
at the indecomposable direct summand $T/U$. 

Next, we recall the notion of t-structures, 
which is defined in the following way.

\begin{Def}\label{Def_tstr}
Let $\calU, \calV$ be full subcategories of $\sfD^\rmb(\mod A)$.
Then, the pair $(\calU, \calV)$ is called a \textit{t-structure} 
if the following conditions hold:
\begin{itemize}
\item[(a)]
$\calU[1] \subset \calU$ and $\calV[-1] \subset \calV$;
\item[(b)]
$\Hom_{\sfD^\rmb(\mod A)}(\calU,\calV[-1])=0$;
\item[(c)]
for any $X \in \sfD^\rmb(\mod A)$, there exists a triangle
$X' \to X \to X'' \to X'[1]$ with $X' \in \calU$ and $X'' \in \calV[-1]$.
\end{itemize}
A t-structure $(\calU,\calV)$ is said to be \textit{bounded} if
\begin{align*}
\bigcup_{i \in \Z} \calU[i] = \sfD^\rmb(\mod A) = \bigcup_{i \in \Z} \calV[i].
\end{align*}
For a t-structure $(\calU,\calV)$,
the intersection $\calU \cap \calV$ is called the \textit{heart}, 
which is an abelian category \cite{BBD}.
If the heart $\calU \cap \calV$ is an abelian length category, 
then the t-structure $(\calU,\calV)$ is said to be \textit{with length heart}.
We define $\tstr A$ as the set of bounded t-structures in $\sfD^\rmb(\mod A)$
with length heart.

Moreover, a t-structure $(\calU, \calV)$ is said to be \textit{intermediate}
if $\calD^{\le -1} \subset \calU \subset \calD^{\le 0}$, where
\begin{align*}
\calD^{\le k} := \{X \in \sfD^\rmb(\mod A) \mid \text{$H^i(X)=0$ for $i>k$}\}.
\end{align*}
An intermediate t-structure in $\sfD^\rmb(\mod A)$ is always bounded,
and we write $\inttstr A$ for the set of intermediate t-structures in $\sfD^\rmb(\mod A)$
with length heart.
\end{Def}

We also use 2-term simple-minded collections,
which are defined as follows.

\begin{Def}\label{Def_smc}
A set $\calX$ of isomorphism classes of objects in $\sfD^\rmb(\mod A)$ 
is called a \textit{simple-minded collection} in $\sfD^\rmb(\mod A)$ 
if the following conditions are satisfied:
\begin{itemize}
\item[(a)]
for any $X \in \calX$, 
the endomorphism ring $\End_{\sfD^\rmb(\mod A)}(X)$ is a division ring;
\item[(b)]
for any $X_1,X_2 \in \calX$ with $X_1 \ne X_2$, 
we have $\Hom_{\sfD^\rmb(\mod A)}(X_1,X_2)=0$;
\item[(c)]
for any $X_1,X_2 \in \calX$ and $k \in \Z_{<0}$, 
we have $\Hom_{\sfD^\rmb(\mod A)}(X_1,X_2[k])=0$;
\item[(d)]
the smallest thick subcategory of $\sfD^\rmb(\mod A)$ containing $\calX$ is 
$\sfD^\rmb(\mod A)$ itself.
\end{itemize} 
We write $\smc A$ for the set of simple-minded collections in $\sfD^\rmb(\mod A)$.

Moreover, a simple-minded collection $\calX$ in $\sfD^\rmb(\mod A)$ is called 
a \textit{2-term simple-minded collection} in $\sfD^\rmb(\mod A)$ if 
the $i$th cohomology $H^i(X)$ vanishes 
for any $X \in \calX$ and $i \in \Z\setminus \{-1,0\}$.
We write $\twosmc A$ for the set of 2-term simple-minded collections 
in $\sfD^\rmb(\mod A)$.
\end{Def}

Each simple-minded collection $\calX$ in $\sfD^\rmb(\mod A)$
has exactly $n$ elements \cite[Corollary 5.5]{KY},
which is equal to the number of 
indecomposable direct summands of a silting object in $\sfK^\rmb(\proj A)$.

The following bijections between $\silt A$, $\tstr A$ and $\smc A$ are
included in the Koenig--Yang correspondences.

\begin{Prop}\label{Prop_silt_smc}
The following assertions hold.
\begin{itemize}
\item[(1)]\cite[Theorem 6.1]{KY}
There exist the following bijections:
\begin{itemize}
\item[(a)]
$\silt A \to \tstr A$ sending 
$T \in \silt A$ to the t-structure $(T[{<}0]^\perp,T[{>}0]^\perp)$, where 
\begin{align*}
T[{<}0]^\perp &:= \{ U \in \sfK^\rmb(\proj A) \mid 
\text{$\Hom_{\sfD^\rmb(\mod A)}(T[k],U)=0$ holds for any $k < 0$} \}, \\
T[{>}0]^\perp &:= \{ U \in \sfK^\rmb(\proj A) \mid 
\text{$\Hom_{\sfD^\rmb(\mod A)}(T[k],U)=0$ holds for any $k > 0$} \};
\end{align*}
\item[(b)]
$\tstr A \to \smc A$ sending $(\calU, \calV) \in \tstr A$
to the set of isomorphism classes of simple objects of the heart $\calU \cap \calV$.
\end{itemize}
\item[(2)]\cite[Lemma 5.3]{KY}
Let $\calX \in \smc A$ correspond to $T \in \silt A$ under the bijections in (1).
Then, there exist families $(T_i)_{i=1}^n$ and $(X_i)_{i=1}^n$ satisfying
the following conditions:
\begin{itemize}
\item[(a)]
$T=\bigoplus_{i=1}^n T_i$;
\item[(b)]
$\calX=\{ X_i \}_{i=1}^n$;
\item[(c)]
set $R_j:=\End_{\sfD^\rmb(\mod A)}(X_j)$, then
\begin{align*}
\Hom_{\sfD^\rmb(\mod A)}(P_i,X_j) \cong \begin{cases} 
R_j \text{ as left $R_j$-modules}& (i=j) \\
0 & (i \ne j)
\end{cases}
\end{align*}
\end{itemize}
\end{itemize}
\end{Prop}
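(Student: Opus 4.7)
The strategy is to verify the three pieces of the statement separately and then match them up.

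For (1)(a), first I would check that $(T[{<}0]^\perp, T[{>}0]^\perp)$ is a bounded t-structure for every silting $T$. Stability under shifts is immediate from the definition, and the Hom-vanishing axiom $\Hom_{\sfD^\rmb(\mod A)}(T[{<}0]^\perp, T[{>}0]^\perp[-1]) = 0$ reduces, via a d\'evissage exploiting that $T$ generates $\sfK^\rmb(\proj A)$ as a thick subcategory, to the presilting property $\Hom(T, T[{>}0]) = 0$. The existence of truncation triangles would be built by iteratively taking right $\add T$-approximations: starting from any $X \in \sfD^\rmb(\mod A)$, one constructs a morphism from a complex of objects in $\add T$ realising $X' \in T[{<}0]^\perp$ and whose cone lies in $T[{>}0]^\perp[-1]$; boundedness of $X$ ensures the procedure terminates. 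To see that the heart $\calH_T := T[{<}0]^\perp \cap T[{>}0]^\perp$ is a length category, I would show that $\End_{\sfD^\rmb(\mod A)}(T)$ acts on $\calH_T$ through a finite-dimensional quotient, so that objects of $\calH_T$ become modules over a finite-dimensional algebra.

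For (1)(b), let $(\calU,\calV)$ be a bounded t-structure with length heart $\calH$, and let $\calX$ be the set of isomorphism classes of simples of $\calH$. Conditions (a) and (b) in the definition of a simple-minded collection are Schur's lemma in $\calH$; condition (c) follows from the t-structure axiom $\Hom(\calU, \calV[-1])=0$ since $\calH \subset \calU \cap \calV$; and condition (d) is verified by filtering an arbitrary $Y \in \sfD^\rmb(\mod A)$ first by shifts of $\calH$ using boundedness, and then by simples of $\calH$ using the length hypothesis. For the inverse, given $\calX \in \smc A$, one declares $\calU$ and $\calV$ to be the extension closures of $\{X[k] : X \in \calX, k \ge 0\}$ and $\{X[k] : X \in \calX, k \le 0\}$ respectively, and the length heart is $\Filt(\calX)$; the verification that this is a bounded t-structure reduces to the defining orthogonality properties of $\calX$.

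For (2), with the bijections of (1) in hand, write $\calX = \{X_1, \ldots, X_n\}$ and decompose $T = \bigoplus T_i^{s_i}$ with $T_i$ pairwise non-isomorphic indecomposables. Proposition~\ref{Prop_presilt_silt}(4) together with the fact that $\calX$ has exactly $n$ elements give $|T| = n$, so $s_i = 1$ and we may label the summands $T_1, \ldots, T_n$ in the same order as $X_1, \ldots, X_n$. The duality (c) would then be established by identifying each $T_i$ with the ``projective cover in $\calH_T$'' of a distinguished $X_j$: one checks that $T_i \mapsto (\text{simple top of }T_i\text{ in }\calH_T)$ gives a bijection $\{T_i\} \leftrightarrow \calX$, and the Hom-computation reduces to computing $\End$ of a simple object in $\calH_T$, yielding $R_j$ for the diagonal case and vanishing off the diagonal.

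The main obstacle is the construction and verification of the inverse bijections, especially the proof that the t-structure attached to a silting object has a \emph{length} heart and the proof that an arbitrary simple-minded collection really does generate a bounded t-structure. Both require delicate truncation and filtration arguments which make essential use of the boundedness of $\sfD^\rmb(\mod A)$ and the finite-dimensionality of $A$; these are precisely the technical steps carried out in \cite{KY}, and in a self-contained treatment I would devote most of the work to them.
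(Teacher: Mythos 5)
The paper does not prove Proposition~\ref{Prop_silt_smc}; it simply cites Theorem~6.1 and Lemma~5.3 of \cite{KY} and takes them as black boxes. Your sketch is therefore not comparable with ``the paper's proof'' in any direct sense --- rather, it is an attempt to reconstruct the argument of \cite{KY} itself. As a high-level outline of that argument it is reasonable: the passage from silting to t-structure via the co-aisle $T[{>}0]^\perp$, the truncation by iterated $\add T$-approximations, the equivalence $\calH_T \simeq \mod\End(T)$ (which is the clean way to say your heart is length), the identification of the simple objects in the heart as a simple-minded collection, and the projective-cover pairing of silting summands with simples are exactly the ingredients of \cite{KY}. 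You are also right to flag that the genuinely hard steps --- termination of the truncation procedure and, especially, the reconstruction of a bounded t-structure from an abstract simple-minded collection --- require careful d\'evissage that you have only gestured at.

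Two small points worth noting. First, your claim that $\End(T)$ ``acts through a finite-dimensional quotient'' is a slightly imprecise paraphrase of the cleaner statement that $\Hom_{\sfD^\rmb(\mod A)}(T,-)$ induces an equivalence $\calH_T \simeq \mod\End_{\sfK^\rmb(\proj A)}(T)$ and $\End_{\sfK^\rmb(\proj A)}(T)$ is finite-dimensional; the latter is what directly gives the length property. Second, the displayed formula in item (2)(c) of the statement as printed involves $\Hom_{\sfD^\rmb(\mod A)}(P_i, X_j)$, but the intended formula --- and the one your argument correctly addresses --- is $\Hom_{\sfD^\rmb(\mod A)}(T_i, X_j)$, as in \cite[Lemma 5.3]{KY} and in the reformulation given by Definition~\ref{Def_setting_T_X}(c) in terms of the Euler pairing $\ang{T_i}{X_j}$; the appearance of $P_i$ is a typo that only coincides with the intended statement when $T=A$.
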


We also need the 2-term restrictions of the Koenig--Yang correspondences.

\begin{Prop}\label{Prop_twosilt_twosmc}\cite[Corollary 4.3]{BY}
The bijections $\silt A \to \tstr A$ and $\tstr A \to \smc A$ 
given in Proposition \ref{Prop_silt_smc} 
are restricted to bijections 
$\twosilt A \to \inttstr A$ and $\inttstr A \to \twosmc A$.
\end{Prop}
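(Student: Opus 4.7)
The plan is to verify the two restrictions separately. Since Proposition \ref{Prop_silt_smc} already provides the unrestricted bijections, the entire task reduces to matching the 2-term conditions on the three sides. I would handle the restriction $\inttstr A \leftrightarrow \twosmc A$ first, then use Proposition \ref{Prop_silt_smc}(2) to bootstrap to the restriction $\twosilt A \leftrightarrow \inttstr A$.

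For the restriction of the bijection $\tstr A \to \smc A$ to $\inttstr A \to \twosmc A$: suppose $(\calU,\calV) \in \tstr A$ corresponds to $\calX \in \smc A$, so that $\calX$ is the set of simple objects of the length heart $\calH := \calU \cap \calV$. Every object of $\calH$ is obtained by iterated extensions of elements of $\calX$. Since the standard aisles $\calD^{\le 0}$ and $\calD^{\ge -1}$ are both closed under extensions in $\sfD^\rmb(\mod A)$, the containment $\calH \subset \calD^{\le 0} \cap \calD^{\ge -1}$ holds if and only if $\calX \subset \calD^{\le 0} \cap \calD^{\ge -1}$, which is exactly the condition that each $X \in \calX$ has $H^i(X)=0$ for $i \notin \{-1,0\}$. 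A short argument with truncation triangles shows that the intermediate condition $\calD^{\le -1} \subset \calU \subset \calD^{\le 0}$ is equivalent to $\calH \subset \calD^{\le 0} \cap \calD^{\ge -1}$, which finishes this step.

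For the restriction of the bijection $\silt A \to \tstr A$ to $\twosilt A \to \inttstr A$: in the forward direction, let $T = (P^{-1} \to P^0) \in \twosilt A$. For any $X \in \calD^{\le -1}$ and $i > 0$, compute $\Hom_{\sfD^\rmb(\mod A)}(T, X[i])$ via the Hom complex of $T$ (a complex of projectives concentrated in degrees $-1,0$) and a representative of $X[i]$ concentrated in degrees $\le -1-i \le -2$; all relevant components of the Hom complex vanish in nonpositive degrees, so $\Hom(T, X[i]) = 0$ and hence $X \in \calU$, giving $\calD^{\le -1} \subset \calU$. A dual computation shows $\calD^{\ge 1} \subset \calV$, which combined with the t-structure axiom $\Hom(\calU,\calV[-1])=0$ forces $\calU \subset \calD^{\le 0}$. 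Conversely, if $(\calU,\calV) \in \inttstr A$, the corresponding $\calX$ is 2-term by the previous paragraph; I would then use Proposition \ref{Prop_silt_smc}(2) and the duality $\Hom_{\sfD^\rmb(\mod A)}(P_i, X_j) \cong \delta_{ij} R_j$ to constrain each indecomposable summand $T_i$ of $T$: the 2-termness of the $X_j$ together with the generating property of $\calX$ yields enough vanishing to show that $T_i$ is quasi-isomorphic to a complex in degrees $-1,0$ after minimizing its projective resolution.

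The main obstacle will be this converse for the first restriction, namely passing from a condition on the t-structure (being intermediate) back to a structural statement on the silting object $T$ itself. The direct cohomological argument applies easily in one direction, but recovering 2-termness of $T$ requires either a careful Koszul-type duality between $T$ and $\calX$, or an appeal to the minimality of the representative of $T$ in $\sfK^\rmb(\proj A)$; in both cases one must check that no cancellation of summands occurs that might allow $T$ to have terms outside degrees $-1,0$ while still giving an intermediate t-structure. This is where the hypothesis that $T$ is basic and the dual basis property of Proposition \ref{Prop_silt_smc}(2) do the real work.
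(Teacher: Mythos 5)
The paper does not prove this proposition: it is cited directly from Br\"ustle--Yang \cite[Corollary 4.3]{BY}, so there is no proof in the paper to compare against. What follows is an assessment of your argument on its own terms.

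Your reduction of the restriction $\inttstr A \leftrightarrow \twosmc A$ to the claim ``$(\calU,\calV)$ is intermediate iff the heart lies in $\calD^{\le 0}\cap\calD^{\ge -1}$ iff the simples lie there'' is correct; the bridge between the first two conditions uses the orthogonality $\calV[-1]=\calU^\perp$ together with the fact that for a \emph{bounded} t-structure the aisle is the extension closure of the nonnegative shifts of the heart, and you should say explicitly that boundedness is what makes the heart ``generate'' both $\calU$ and $\calV$.

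In the forward direction of $\twosilt A \to \inttstr A$ there is an off-by-one slip: from $\calD^{\ge 1}\subset\calV$ and $\Hom(\calU,\calV[-1])=0$ you only get $\Hom(\calU,\calD^{\ge 2})=0$, hence $\calU\subset\calD^{\le 1}$, not $\calU\subset\calD^{\le 0}$. What you actually need, and what the same degree computation does deliver, is $\calD^{\ge 0}\subset\calV$: for $X$ with cohomology in degrees $\ge 0$ and $k\ge 1$, $T[k]$ sits in degrees $\le -1$, so $\Hom(T[k],X)=0$. Then $\calV[-1]\supset\calD^{\ge 1}$ and $\calU\subset\calD^{\le 0}$ follows.

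The genuine gap is the converse ``intermediate $\Rightarrow$ 2-term,'' which you rightly flag as the main obstacle but do not close. The sketch via the dual basis property of Proposition~\ref{Prop_silt_smc}(2) is not enough as written: that proposition constrains $\Hom(P_i,X_j)$, not $\Hom(T_i,X_j[k])$, and even with the correct statement you still have to rule out summands of $T_i$ in degrees outside $\{-1,0\}$ of the minimal complex, which requires an extra argument. The standard tool here, used in \cite{BY} and implicit in \cite{AI}, is the silting partial order: the assignment $T\mapsto\calU_T$ is an order isomorphism between $\silt A$ and aisles, with $\calU_A=\calD^{\le 0}$ and $\calU_{A[1]}=\calD^{\le -1}$, so intermediacy of the t-structure is literally the condition $A[1]\le T\le A$; one then shows, using minimality of the projective complex, that this order condition forces the terms of $T$ into degrees $-1,0$. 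Without invoking the poset structure (or an equivalent ``two-step tilting'' argument), the converse remains open in your write-up.
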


Thus, we shall use the following notation in the rest of this paper.

\begin{Def}\label{Def_setting_T_X}
Let $\calX \in \twosmc A$ correspond to $T \in \twosilt A$ in the bijections above.
We take families $(T_i)_{i=1}^n$ and $(X_i)_{i=1}^n$ satisfying
\begin{itemize}
\item[(a)]
$T=\bigoplus_{i=1}^n T_i$;
\item[(b)]
$\calX=\{ X_i \}_{i=1}^n$;
\item[(c)]
$(T_i)_{i=1}^n$ and $(X_i)_{i=1}^n$ give dual bases
of $K_0(\proj A)$ and $K_0(\mod A)$; more precisely,
\begin{align*}
\ang{T_i}{X_j} = \begin{cases}
\dim_K \End_{\sfD^\rmb(\mod A)}(X_j) & (i=j) \\
0 & (i \ne j)
\end{cases}.
\end{align*}
\end{itemize}
\end{Def}

\subsection{Cones of presilting objects}

Now, we define \textit{cones} $C(U), C^+(U) \subset K_0(\proj A)_\R$ 
for each object $U \in \sfK^\rmb(\proj A)$.
Decompose $U$ as $\bigoplus_{i=1}^m U_i \in \sfK^\rmb(\proj A)$ 
with $U_i$ indecomposable, and then set 
\begin{align*}
C(U)   &:= \{ a_1[U_1]+\cdots+a_m[U_m] \mid a_1,\ldots,a_m \in \R_{\ge 0} \}, \\
C^+(U) &:= \{ a_1[U_1]+\cdots+a_m[U_m] \mid a_1,\ldots,a_m \in \R_{> 0} \}.
\end{align*}
In particular, $C(0)=C^+(0)=\{0\}$ for $0 \in \sfK^\rmb(\proj A)$.

We mainly deal with $C(U)$ and $C^+(U)$ for $U \in \twopresilt A$.
In this case, $C^+(U)$ is a relative interior of the cone $C(U)$,
since the indecomposable direct summands of $U$ are linearly independent
in $K_0(\proj A)$ by Proposition \ref{Prop_presilt_silt}.
If $T \in \twosilt A$ and its indecomposable direct summands are $T_1,\ldots,T_n$,
then the cone $C(T)$ has exactly $n$ walls $C(T/T_i)$ with $i \in \{1,\ldots,n\}$,
and each wall $C(T/T_i)$ corresponds to the mutation of $T$ at $T_i$.

When we consider the intersection of cones for 2-term presilting objects, 
the following properties on uniqueness of presilting objects 
by \cite{DIJ} is crucial. 
We remark that (1) is an analogue of \cite[2.3, Theorem]{DK}.

\begin{Prop}\label{Prop_cone_summand}
Let $U, U'$ be (not necessarily basic) 2-term presilting objects 
in $\sfK^\rmb(\proj A)$.
Then, we have the following assertions.
\begin{itemize}
\item[(1)]
\cite[Theorem 6.5]{DIJ}
If $[U]=[U']$ in $K_0(\proj A)$, then $U \cong U'$ in $\sfK^\rmb(\proj A)$.
\item[(2)]
The following conditions are equivalent:
\begin{itemize}
\item[(a)]
$\add U=\add U'$,
\item[(b)] 
$C(U)=C(U')$,
\item[(c)]
$C^+(U)=C^+(U')$.
\end{itemize}
\item[(3)]
If $U'' \in \sfK^\rmb(\proj A)$ satisfies $\add U \cap \add U' = \add U''$,
then $C(U) \cap C(U') =C(U'')$.
\end{itemize}
\end{Prop}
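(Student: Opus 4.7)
The plan is to build both assertions on top of part (1), using the following input from Proposition~\ref{Prop_presilt_silt}: every 2-term presilting object is a direct summand of a 2-term silting object, and the indecomposable summands of a 2-term silting object form a $\Z$-basis of $K_0(\proj A)$. Consequently, the pairwise non-isomorphic indecomposable summands of any 2-term presilting $U$ are $\Z$-linearly independent and, being part of a lattice basis, give primitive classes in $K_0(\proj A)$.

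For (2), I will first observe that this linear independence makes $C(U)$ a simplicial cone whose relative interior in its linear span is $C^+(U)$, and whose closure is $C(U)=\overline{C^+(U)}$. The implication (a) $\Rightarrow$ (b) and the equivalence (b) $\Leftrightarrow$ (c) then follow at once from these topological identifications. The only real content is (c) $\Rightarrow$ (a): from $C^+(U) = C^+(U')$ one gets $C(U) = C(U')$, so the families of extremal rays match, and after reindexing $\R_{\ge 0}[V_i] = \R_{\ge 0}[W_i]$ for the distinct indecomposable summands $V_i$ of $U$ and $W_i$ of $U'$. Because $[V_i]$ and $[W_i]$ are both primitive lattice vectors, the equality of rays forces $[V_i] = [W_i]$, and part (1) then upgrades this to $V_i \cong W_i$ in $\sfK^\rmb(\proj A)$; hence $\add U = \add U'$.

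For (3), the inclusion $C(U'') \subseteq C(U) \cap C(U')$ is immediate. For the converse, since both sides are closed rational polyhedral cones, it suffices to handle a rational point $\theta \in C(U) \cap C(U')$. Writing $\theta = \sum_i a_i[V_i] = \sum_j b_j[W_j]$ with non-negative rationals and clearing denominators to some $N \in \Z_{>0}$, the objects $M := \bigoplus_i V_i^{Na_i}$ and $M' := \bigoplus_j W_j^{Nb_j}$ are 2-term presilting (each is a direct summand of a sufficiently high power of $U$ or of $U'$, and summands of presilting objects are presilting) with $[M] = [M'] = N\theta$. Part (1) therefore gives $M \cong M'$ in $\sfK^\rmb(\proj A)$, and the Krull--Schmidt theorem forces every $V_i$ with $a_i > 0$ to appear as a summand of $M'$, so $V_i \in \add U \cap \add U' = \add U''$. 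This shows $\theta \in C(U'')$.

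The main obstacle in both parts is the conversion of a geometric coincidence (of rays in (2), of rational cone points in (3)) into an algebraic matching of indecomposable summands. In each case this is carried out by combining primitivity of $[V]$ in the lattice $K_0(\proj A)$, which comes from Proposition~\ref{Prop_presilt_silt}(4), with the uniqueness statement (1), which upgrades equalities of $K_0$-classes to isomorphisms of presilting objects.
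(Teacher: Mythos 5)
Your proof is correct and fills in exactly the argument the paper cites from \cite[Corollary 6.7]{DIJ}: both (2) and (3) are reduced to the uniqueness statement (1) by exploiting the fact that the classes of indecomposable summands of a 2-term presilting object extend to a $\Z$-basis of $K_0(\proj A)$ (so they are linearly independent and primitive), together with Krull--Schmidt in $\sfK^\rmb(\proj A)$ and density of rational points in rational polyhedral cones. This is the same approach as the paper, spelled out in detail.
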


\begin{proof}
Parts (2) and (3) immediately follow from (1) as in \cite[Corollary 6.7]{DIJ}.
\end{proof}

In particular, if $T' \in \twosilt A$ is not isomorphic to $T \in \twosilt A$, 
then $C^+(T) \cap C(T') = \emptyset$.

We here prepare some symbols.
For each $M \in \mod A$,
we define the following subcategories of $\mod A$:
\begin{itemize}
\item $M^\perp:=\{X \in \mod A \mid \Hom_A(M,X)=0\}$,
\item ${^\perp M} := \{X \in \mod A \mid \Hom_A(X,M)=0\}$,
\item $\Fac M:=
\{X \in \mod A \mid \text{there exists a surjection $M^{\oplus s} \to X$}\}$,
\item $\Sub M:=
\{X \in \mod A \mid \text{there exists an injection $X \to M^{\oplus s}$}\}$.
\end{itemize}
We write $\inj A$ for the category of finite-dimensional injective $A$-modules,
and let $\nu$ denote the Nakayama functor $\sfK^\rmb(\proj A) \to \sfK^\rmb(\inj A)$. 

Let $U \in \twopresilt A$.
First, $H^0(U)$ is $\tau$-rigid and $H^{-1}(\nu U)$ is 
$\tau^{-1}$-rigid by \cite[Lemma 3.4]{AIR}.
Thus, the torsion classes and torsion-free classes 
\begin{align*}
\ovcalT_U :={{}^\perp H^{-1}(\nu U)}, \quad
\calF_U := \Sub H^{-1}(\nu U), \quad
\calT_U :=\Fac H^0(U), \quad
\ovcalF_U := H^0(U)^\perp
\end{align*}
are functorially finite by \cite[Theorem 5.10]{AS}
(see \cite[Section 2]{MS} for the definition of functorially finite
subcategories in $\mod A$).
It is easy to see that $\calT_U \subset \ovcalT_U$ and 
that $\calF_U \subset \ovcalF_U$.

If $T \in \twosilt A$, then $H^0(T)$ is a support $\tau$-tilting module in $\mod A$
by \cite[Proposition 3.6]{AIR}, 
so we have $\ovcalT_T=\calT_T$ and $\ovcalF_T=\calF_T$ from \cite[Proposition 2.16]{AIR}.
Conversely, if $\ovcalT_U=\calT_U$ and $\ovcalF_U=\calF_U$ hold,
then $U$ is 2-term silting by \cite[Theorems 2.12, 3.2]{AIR}.

One of the main results of \cite{AIR} is that the set $\twosilt A$ has bijections to 
the set $\ftors A$ of functorially finite torsion classes
and the set $\ftorf A$ of functorially finite torsion-free classes in $\mod A$.

\begin{Prop}\label{Prop_silt_ftors}\cite[Theorems 2.7, 3.2]{AIR}
There exist bijections 
\begin{align*}
\twosilt A &\to \ftors A, & T &\mapsto \ovcalT_T=\calT_T;\\
\twosilt A &\to \ftorf A, & T &\mapsto \ovcalF_T=\calF_T.
\end{align*}
\end{Prop}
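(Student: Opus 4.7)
The plan is to establish the bijection $\twosilt A \to \ftors A$ first, and then deduce the analogous statement for torsion-free classes by a parallel argument using the Nakayama functor $\nu$. The construction of the map is essentially given in the text: for $T \in \twosilt A$, the module $H^0(T)$ is support $\tau$-tilting, and the equalities $\ovcalT_T = \calT_T = \Fac H^0(T)$ already noted reduce well-definedness of the map to checking that $\Fac M$ is functorially finite whenever $M$ is support $\tau$-tilting, which is a standard consequence of $\tau$-rigidity via the Auslander--Smalø theorem.

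For injectivity, suppose $T, T' \in \twosilt A$ satisfy $\Fac H^0(T) = \Fac H^0(T')$. One characterises $H^0(T)$ (up to multiplicity) inside $\Fac H^0(T)$ as a maximal Ext-projective object in the torsion class, which gives $\add H^0(T) = \add H^0(T')$. Comparing the idempotents picked out by the zeroth terms of minimal 2-term presentations of $T$ and $T'$ then forces agreement of the supports, and a uniqueness statement for minimal 2-term presilting lifts (in the spirit of Proposition \ref{Prop_cone_summand}(1)) upgrades this to $T \cong T'$ in $\sfK^\rmb(\proj A)$.

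For surjectivity, given $\calT \in \ftors A$, I would produce a support $\tau$-tilting module $M$ with $\Fac M = \calT$ by taking an Ext-projective generator of $\calT$ and removing superfluous indecomposable summands; this is the content of \cite[Theorem 2.7]{AIR}. Then I would lift $M$ to a 2-term complex $T$ by taking a minimal projective presentation $P^{-1} \to P^0 \to M \to 0$ and adjoining, in degree $-1$, the indecomposable projectives $P_i$ corresponding to simples $S_i$ annihilated by the support idempotent of $M$. The main obstacle here is to verify that the resulting $T$ is a 2-term silting object, namely that $\Hom_{\sfK^\rmb(\proj A)}(T, T[1]) = 0$ and that $T$ generates $\sfK^\rmb(\proj A)$ as a thick subcategory; both reduce to the $\tau$-rigidity of $M$ and the fact that the extra summands account exactly for the simples outside $\supp M$, so that the indecomposable direct summands of $T$ number $n$ and give a $\Z$-basis of $K_0(\proj A)$ as in Proposition \ref{Prop_presilt_silt}(2),(4).

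Finally, for the bijection $\twosilt A \to \ftorf A$, I would run the same argument with $H^{-1}(\nu T)$ in place of $H^0(T)$ and $\Sub$ in place of $\Fac$, using that $H^{-1}(\nu T)$ is support $\tau^{-1}$-tilting and that $\nu$ provides a duality between the $\tau$-tilting and $\tau^{-1}$-tilting pictures. This reduces the torsion-free statement to the torsion one and requires no independent input beyond the functoriality of $\nu$ on $\sfK^\rmb(\proj A)$.
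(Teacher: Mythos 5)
This proposition is stated in the paper purely as a citation of Adachi--Iyama--Reiten \cite{AIR} (Theorems~2.7 and~3.2); the paper supplies no proof of its own. Your sketch is a faithful reconstruction of the cited argument: it factors the map through support $\tau$-tilting pairs in both directions ($T \mapsto (H^0(T),P)$ on the silting side, with $P[1]$ the maximal summand of $T$ in $(\proj A)[1]$, and $(M,P)\mapsto \Fac M$ on the torsion-class side), uses the Auslander--Smal\o\ criterion for functorial finiteness, and deduces the torsion-free statement via $\nu$-duality, exactly as in \cite{AIR}. One small imprecision: the support idempotent is read off from the degree~$-1$ summand $P[1]$ of $T$ (equivalently, determined from $M$ as the maximal projective $P$ with $\Hom_A(P,M)=0$), not from the degree-$0$ term of a minimal presentation; this does not affect the substance of your injectivity argument.
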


Therefore, for $U \in \twopresilt A$, there uniquely exists 
$T \in \twosilt A$ satisfying $U \in \add T$ and $\ovcalT_T=\ovcalT_U$.
We call this $T$ the \textit{Bongartz completion} of $U$.
Similarly, we can uniquely take 
$T' \in \twosilt A$ satisfying $U \in \add T$ and  
$\calT_{T'}=\calT_U$,
and such $T'$ is called the \textit{co-Bongartz completion} of $U$.

Now, we can state one of the main results of this section.

\begin{Thm}\label{Thm_presilt_TF}
Let $U \in \twopresilt A$.
Then, the subset $C^+(U) \subset K_0(\proj A)_\R$ is a TF equivalence class
with
\begin{align*}
C^+(U) &= \{\theta \in K_0(\proj A)_\R \mid
\ovcalT_\theta=\ovcalT_U, \ \ovcalF_\theta=\ovcalF_U\} \\
&= \{\theta \in K_0(\proj A)_\R \mid
\calT_\theta=\calT_U, \ \calF_\theta=\calF_U\}.
\end{align*}
\end{Thm}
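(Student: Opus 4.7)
The plan is to reduce to the 2-term silting case via the Bongartz completion and to exploit the duality with 2-term simple-minded collections from Proposition \ref{Prop_twosilt_twosmc}. Let $T = U \oplus U'$ be the Bongartz completion of $U$, so $T \in \twosilt A$ with $\ovcalT_T = \ovcalT_U$ (whence also $\calF_T = \calF_U$ as these come from the same torsion pair), and let $T'' = U \oplus U''$ be the co-Bongartz completion, giving $\calT_{T''} = \calT_U$ and $\ovcalF_{T''} = \ovcalF_U$. Decompose $T = \bigoplus_{i=1}^n T_i$ so that $T_1, \ldots, T_k$ are the indecomposable summands of $U$, and let $\{X_i\}_{i=1}^n$ be the dual 2-term simple-minded collection as in Definition \ref{Def_setting_T_X}.

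For the forward inclusion, any $\theta \in C^+(U)$ has the form $\theta = \sum_{i=1}^k a_i [T_i]$ with $a_i > 0$, so by duality
\begin{align*}
\theta(X_i) = a_i \dim_K \End_{\sfD^\rmb(\mod A)}(X_i) > 0 \text{ for } i \le k, \quad \theta(X_i) = 0 \text{ for } i > k.
\end{align*}
The key computation, adapted from Yurikusa's argument in the silting case, is to filter an arbitrary $M \in \mod A$ using the simples $\{X_i\}$ of the heart of the t-structure attached to $T$ and to read off signs. This yields $\calT_\theta = \Fac H^0(U)$ and $\ovcalF_\theta = H^0(U)^\perp$, and symmetrically $\ovcalT_\theta = {{}^\perp H^{-1}(\nu U)}$ and $\calF_\theta = \Sub H^{-1}(\nu U)$, where one must carefully distinguish those $X_i$ lying in $\mod A$ from those lying in $(\mod A)[1]$ via the sign of their cohomology.

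For the converse, suppose $\theta'$ satisfies $\ovcalT_{\theta'} = \ovcalT_U$ and $\ovcalF_{\theta'} = \ovcalF_U$. Then $\calW_{\theta'} = \ovcalT_U \cap \ovcalF_U$, and the simple objects of this wide subcategory are exactly the (possibly shifted) $X_i$ for $i > k$, forcing $\theta'(X_i) = 0$ for $i > k$. For $i \le k$, membership $X_i \in \ovcalT_{\theta'}$ (or $X_i[-1] \in \ovcalF_{\theta'}$) gives $\theta'(X_i) \ge 0$, with strict positivity because $X_i \notin \calW_{\theta'}$ for such $i$. Expanding $\theta'$ in the basis dual to $\{[X_i]\}$ then forces $\theta' \in C^+(U)$; the second equality follows from the symmetric argument using the co-Bongartz completion $T''$. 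TF equivalence of $C^+(U)$ is then immediate: both numerical torsion pairs are constant on $C^+(U)$, and any $\theta'$ sharing these pairs with some $\theta \in C^+(U)$ lies in $C^+(U)$ by the converse direction.

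The main obstacle is the sign analysis in the forward inclusion: converting the sign pattern of $\theta$ on the simple-minded collection into the four concrete equalities of numerical torsion (co)classes requires correctly identifying each $X_i$ as either a module or a shifted module and tracking through filtrations how this pattern captures $\Fac H^0(U)$ and $H^0(U)^\perp$ (and their duals via $\nu$). Once this identification is nailed down, the converse direction and the TF equivalence assertion are essentially straightforward linear-algebraic bookkeeping using Proposition \ref{Prop_silt_smc}(2).
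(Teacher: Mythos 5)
Your converse direction is valid but takes a genuinely different route from the paper's. You aim to expand $\theta'$ in the basis dual to $\{[X_i]\}$ and prove strict positivity of the coefficients on the summands of $U$ directly, by arguing that $X_i \notin \calW_{\theta'}$ forces the strict inequality. The paper instead first establishes only the weak inequalities $a_i \ge 0$ for \emph{all} $i$ (using that $X_i \in \ovcalT_\theta$ when $X_i \in \mod A$, and that $X_i[-1] \in \calF_\theta$ otherwise), which places $\theta$ in the closed cone $C(T)$; it then pins down which relatively open face of $C(T)$ contains $\theta$ by combining Proposition \ref{Prop_Yurikusa} with Lemma \ref{Lem_two_Bongartz_inj}(2) (uniqueness of the 2-term presilting object with prescribed Bongartz and co-Bongartz completions). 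Your direct approach can be made to work, but the strictness step needs more care than you give it: when $X_i \in (\mod A)[1]$, the relevant condition is $X_i[-1] \notin \calW_{\theta'}$ (the assertion ``$X_i \notin \calW_{\theta'}$'' is vacuous there), and establishing it requires noting that $X_i[-1] \in \calF_U$, which is disjoint from $\calW_U = \ovcalT_U \cap \ovcalF_U$ because $(\ovcalT_U,\calF_U)$ is a torsion pair. Similarly, when $X_i \in \mod A$ one needs the observation that a brick $X_i$ lying in $\ovcalT_{\theta'}$ with $\theta'(X_i)=0$ would have to be semistable, hence a simple of $\calW_U$, which is ruled out by Proposition \ref{Prop_Jasso_wide}. (The parenthetical ``possibly shifted'' for the simples of $\calW_U$ is also misplaced: Proposition \ref{Prop_Jasso_wide} shows these $X_i$ with $i > k$ are genuine modules.) The paper's two-step argument lets you avoid all of these strictness verifications and is somewhat cleaner.

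For the forward inclusion, there is a more serious issue. The paper simply cites Proposition \ref{Prop_Yurikusa}, which is stated for arbitrary $U \in \twopresilt A$, not only silting $U$. Your phrase ``adapted from Yurikusa's argument in the silting case'' suggests you intend to re-derive this by filtering an arbitrary $M \in \mod A$ by the simples $\{X_i\}$ of the heart attached to $T$. That step does not go through as described: such a filtration is only available for objects of the heart $T[{<}0]^\perp \cap T[{>}0]^\perp$, not for arbitrary modules $M$. The correct fix is simply to invoke Proposition \ref{Prop_Yurikusa} as stated; it already covers the presilting case and makes the forward inclusion immediate.
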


%

We will prove this in the rest of this subsection.
First, the torsion classes $\calT_U \subset \ovcalT_U$ satisfy the following property.

\begin{Lem}\label{Lem_summand_silt}
\cite[Proposition 2.9]{AIR}
Let $U \in \twopresilt A$ and $T \in \twosilt A$.
Then, $U \in \add T$ if and only if
$\calT_U \subset \ovcalT_T \subset \ovcalT_U$.
\end{Lem}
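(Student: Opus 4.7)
My plan is to prove the two implications separately: the forward one by a direct additivity computation, and the backward one by showing that $U \oplus T$ is 2-term presilting and then invoking the maximality $|T| = n$.

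For the forward direction, if $U \in \add T$, write $T \cong U \oplus V$ with $V \in \twopresilt A$. Since $H^0$ and $H^{-1}(\nu(-))$ are additive, $H^0(U)$ is a direct summand of $H^0(T)$ and $H^{-1}(\nu U)$ is a direct summand of $H^{-1}(\nu T)$. This gives $\calT_U = \Fac H^0(U) \subset \Fac H^0(T) = \calT_T$ and $\ovcalT_T = {}^\perp H^{-1}(\nu T) \subset {}^\perp H^{-1}(\nu U) = \ovcalT_U$. Since $T$ is 2-term silting, $\calT_T = \ovcalT_T$ as noted in the paragraph preceding the lemma, concluding $\calT_U \subset \ovcalT_T \subset \ovcalT_U$.

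For the backward direction, assume $\calT_U \subset \ovcalT_T \subset \ovcalT_U$. The plan is to show $U \oplus T \in \twopresilt A$: since a 2-term silting object has exactly $n$ indecomposable summands whose classes are linearly independent in $K_0(\proj A)$ (Proposition \ref{Prop_presilt_silt} (2), (4)), no 2-term presilting object can properly enlarge $T$, so $U \oplus T \in \twopresilt A$ would force $\add U \subset \add T$. Verifying that $U \oplus T$ is 2-term presilting reduces to the $\Hom$-vanishings $\Hom_{\sfK^\rmb(\proj A)}(T, U[1]) = 0$ and $\Hom_{\sfK^\rmb(\proj A)}(U, T[1]) = 0$. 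Using the standard translation between such $\Hom$-spaces for 2-term complexes and conditions in $\mod A$ from \cite{AIR}, these vanishings become equivalent to $H^0(U) \in \ovcalT_T$ and $H^{-1}(\nu U) \in \ovcalF_T$. The first is immediate from $H^0(U) \in \Fac H^0(U) = \calT_U \subset \ovcalT_T$. For the second, taking right perpendiculars of the inclusion $\ovcalT_T \subset \ovcalT_U$ between torsion classes yields the reversed inclusion $\calF_U \subset \calF_T = \ovcalF_T$ on the corresponding torsion-free classes (using $\calF_T = \ovcalF_T$ since $T$ is silting), and hence $H^{-1}(\nu U) \in \Sub H^{-1}(\nu U) = \calF_U \subset \ovcalF_T$.

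The main obstacle is the precise translation between the $\Hom$-vanishings in $\sfK^\rmb(\proj A)$ and the torsion-class membership conditions on $H^0(U)$ and $H^{-1}(\nu U)$; rather than re-deriving this technical bookkeeping for 2-term complexes, I would cite the relevant AIR computation and focus the body of the proof on the torsion-class inclusions actually needed.
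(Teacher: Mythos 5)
The paper does not prove this lemma; it cites it as \cite[Proposition 2.9]{AIR}, where the result is stated and proved in the language of $\tau$-rigid pairs and support $\tau$-tilting pairs. Your argument is a correct, self-contained proof directly in the silting category, so there is no gap — you have effectively re-proved the imported result rather than translated it.

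A few remarks comparing the two. Your forward direction is the standard additivity argument and matches the AIR one in spirit. In the backward direction, the translation you invoke — for 2-term presilting $V,W$, one has $\Hom_{\sfK^\rmb(\proj A)}(V,W[1])=0$ iff $H^0(W)\in {}^\perp H^{-1}(\nu V)=\ovcalT_V$, equivalently iff $H^{-1}(\nu V)\in H^0(W)^\perp=\ovcalF_W$ — is indeed the AIR dictionary ($\tau$-rigidity of the pair $(H^0,\,$extra projective summand$)$ together with $\Hom(P,\nu Q)\cong D\Hom(Q,P)$), and both of your applications of it are correct. One small simplification: for the vanishing $\Hom(U,T[1])=0$ you could use the first form directly, since $H^0(T)\in \calT_T=\ovcalT_T\subset \ovcalT_U$ is immediate from the hypothesis; the detour through $\calF_U\subset\ovcalF_T$ is correct but a step longer. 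Finally, the maximality step — that a basic 2-term presilting object having a 2-term silting object as a direct summand must actually equal it — is exactly what Proposition \ref{Prop_presilt_silt} (1)--(2) gives, as you say. Overall your route (verify $U\oplus T$ is presilting, then use maximality) is essentially the same mechanism as the AIR proof, just expressed on the $\sfK^\rmb(\proj A)$ side rather than in $\mod A$; the advantage is that it avoids unpacking the $\tau$-rigid pair formalism inside this paper.
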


%
%

By using this, we have the following properties.

\begin{Lem}\label{Lem_two_Bongartz_inj}
Let $U \in \twopresilt A$ and $T,T'$ be the Bongartz completion and 
the co-Bongartz completion of $U$, respectively.
\begin{itemize}
\item[(1)]
We have $\add T \cap \add T'=\add U$.
\item[(2)]
If $U' \in \twopresilt A$ satisfies $\ovcalT_{U'}=\ovcalT_U$ and $\calT_{U'}=\calT_U$,
then $U \cong U'$.
\item[(3)]
Let $V \in \twopresilt A$, then $V \in \add U$
if and only if $\calT_V \subset \calT_U \subset \ovcalT_U \subset \ovcalT_V$.
\end{itemize}
\end{Lem}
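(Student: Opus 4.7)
The plan is to prove the three parts by combining Lemma \ref{Lem_summand_silt}, the bijection $\twosilt A \to \ftors A$ of Proposition \ref{Prop_silt_ftors}, and the structural description of Bongartz and co-Bongartz completions afforded by the support $\tau$-tilting correspondence of \cite{AIR}. The forward direction of (3) is a direct additivity argument: if $V$ is a direct summand of $U$ in $\sfK^\rmb(\proj A)$, then $H^0(V)$ is a summand of $H^0(U)$, giving $\calT_V = \Fac H^0(V) \subset \Fac H^0(U) = \calT_U$, and dually $H^{-1}(\nu V)$ is a summand of $H^{-1}(\nu U)$, giving $\ovcalT_U = {{}^\perp H^{-1}(\nu U)} \subset {{}^\perp H^{-1}(\nu V)} = \ovcalT_V$; the middle inclusion $\calT_U \subset \ovcalT_U$ is the standard consequence of $\tau$-rigidity of $H^0(U)$.

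For (1), the inclusion $\add U \subset \add T \cap \add T'$ is immediate. For the reverse, let $W$ be an indecomposable direct summand of both $T$ and $T'$. Two applications of Lemma \ref{Lem_summand_silt}, to the pair $(W,T)$ with $\ovcalT_T = \ovcalT_U$ and to $(W,T')$ with $\ovcalT_{T'} = \calT_{T'} = \calT_U$, yield the torsion-theoretic sandwich $\calT_W \subset \calT_U \subset \ovcalT_U \subset \ovcalT_W$. To close the argument one invokes the support $\tau$-tilting description of \cite[Theorem 3.2]{AIR}: the additional indecomposable summands of the Bongartz completion of $U$ have nonzero $H^0$ (they enlarge the $\tau$-rigid module part of the associated pair), whereas those of the co-Bongartz completion have vanishing $H^0$ (they enlarge the projective part). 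These two families are disjoint, so any common indecomposable summand of $T$ and $T'$ must already lie in $\add U$.

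For (2), since $\ovcalT_{U'} = \ovcalT_U$ the bijection of Proposition \ref{Prop_silt_ftors} forces the Bongartz completion of $U'$ to equal $T$, hence $U' \in \add T$; symmetrically $\calT_{U'} = \calT_U$ gives $U' \in \add T'$, and (1) then delivers $U' \in \add U$. The symmetric argument gives $U \in \add U'$, so $U \cong U'$. The backward direction of (3) is handled the same way: the sandwich hypothesis together with Lemma \ref{Lem_summand_silt} places $V$ in both $\add T$ and $\add T'$, hence in $\add U$ by (1). The main obstacle is the direction $\add T \cap \add T' \subset \add U$ of (1); the torsion-theoretic sandwich obtained from the silting characterization is not by itself enough to pin $W$ down as a summand of $U$, and one really needs to exploit the structural asymmetry between the Bongartz and co-Bongartz completions given by the support $\tau$-tilting correspondence.
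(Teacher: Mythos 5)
Your derivation of the sandwich $\calT_W \subset \calT_U \subset \ovcalT_U \subset \ovcalT_W$ for a common indecomposable summand $W$ of $T$ and $T'$ is correct, and you rightly observe that the sandwich alone cannot deliver $W \in \add U$ — that equivalence is exactly part~(3), which in turn depends on part~(1), so invoking it here would be circular. The trouble is with the structural claim you use to close the gap. It is not true that the extra indecomposable summands of the co-Bongartz completion have vanishing $H^0$. Take $A = K(1 \to 2)$ and $U = P_1$. Then $\calT_U = \Fac P_1 = \add\{P_1,S_1\}$, $\ovcalT_U = {}^\perp 0 = \mod A$, the Bongartz completion is $T = P_1 \oplus P_2$, and the co-Bongartz completion is $T' = P_1 \oplus S_1$, where $S_1$ denotes the $2$-term complex $(P_2 \hookrightarrow P_1)$. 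Here the extra summand of $T'$ is $S_1$, with $H^0(S_1) = S_1 \ne 0$. So the two families you hope are disjoint need not be, and your argument for $\add T \cap \add T' \subset \add U$ does not go through. (In this particular example $\add T \cap \add T' = \add P_1 = \add U$ anyway, but the reason is not the one you give.)

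The paper instead finishes the argument with a mutation trick rather than a structural classification. Assume $V \in (\add T \cap \add T') \setminus \add U$ is indecomposable and let $T''$ be the mutation of $T$ at $V$. Since $V \notin \add U$, the object $U$ is a direct summand of $T''$, so Lemma~\ref{Lem_summand_silt} gives $\calT_U \subset \ovcalT_{T''} \subset \ovcalT_U$. Combining this with your sandwich $\calT_V \subset \calT_U$ and $\ovcalT_U \subset \ovcalT_V$ yields $\calT_V \subset \ovcalT_{T''} \subset \ovcalT_V$, whence Lemma~\ref{Lem_summand_silt} again forces $V \in \add T''$ — contradicting that $T''$ is the mutation of $T$ at $V$. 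Once (1) is established this way, your arguments for (2) and for both directions of (3) are fine, and essentially agree with the paper's; in fact your explicit check of the forward direction of (3) via the additivity of $H^0$ and $H^{-1}(\nu\,\cdot\,)$ is a welcome expansion of the paper's one-line ``the only if part is easy.''
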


\begin{proof}
(1)
Clearly, $\add T \cap \add T' \supset \add U$,
so we let $V \in (\add T \cap \add T') \setminus \add U$ be indecomposable
and deduce a contradiction.
We consider the mutation $T''$ of $T$ at $V$,
then $U$ is a direct summand of $T''$, since $V \notin \add U$.
Thus, $\calT_U \subset \ovcalT_{T''} \subset \ovcalT_U$ 
by Lemma \ref{Lem_summand_silt}.
On the other hand, due to $V \in \add T \cap \add T'$ and the choices of $T$ and $T'$,
we have $\calT_V \subset \calT_{T'} = \calT_U$ and  
$\ovcalT_U = \ovcalT_T \subset \ovcalT_V$.
Therefore, $\calT_V \subset \ovcalT_{T''} \subset \ovcalT_V$,
so Lemma \ref{Lem_summand_silt} implies that $V$ is a direct summand of $T''$.
This contradicts that $T''$ is the mutation of $T$ at $V$.
Therefore, $\add T \cap \add T'=\add U$.

(2)
By assumption, $T$ and $T'$ are 
the Bongartz completion and the co-Bongartz completion also of $U'$.
Then, (1) implies $\add U=\add U'$.
Since $U$ and $U'$ are basic, we get $U \cong U'$.

(3)
The ``only if'' part is easy.
For the ``if'' part, assume $\calT_V \subset \calT_U \subset \ovcalT_U \subset \ovcalT_V$.
This implies $\calT_V \subset \calT_{T'} \subset \ovcalT_T \subset \ovcalT_V$,
so we obtain that $V \in \add T \cap \add T'$
from Lemma \ref{Lem_summand_silt}.
Then, (1) tells us that $V \in \add U$.
\end{proof}

In order to connect numerical torsion pairs and functorially finite torsion pairs,
the following result by Yurikusa \cite{Yurikusa} is important.

\begin{Prop}\label{Prop_Yurikusa}\cite[Proposition 3.3]{Yurikusa}
Let $U \in \twopresilt A$ and $\theta \in C^+(U)$.
Then, 
\begin{align*}
(\ovcalT_\theta,\calF_\theta)=(\ovcalT_U,\calF_U), \quad
(\calT_\theta,\ovcalF_\theta)=(\calT_U,\ovcalF_U), \quad
\calW_\theta = \ovcalT_U \cap \ovcalF_U.
\end{align*}
\end{Prop}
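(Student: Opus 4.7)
The plan is to establish both torsion pair equalities by reducing each to a pair of triviality conditions supplied by Lemma~\ref{Lem_TF_coincide}, and then to verify those conditions through an explicit Nakayama-duality formula for the Euler pairing.

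Write $U = \bigoplus_{i=1}^{m} U_{i}$ with each $U_{i}$ indecomposable and correspondingly $\theta = \sum_{i=1}^{m} a_{i}[U_{i}]$ with all $a_{i} > 0$. The key input is the identity
\begin{align*}
\ang{U_{i}}{M} = \dim_{K} \Hom_{A}(H^{0}(U_{i}),M) - \dim_{K} \Hom_{A}(M,H^{-1}(\nu U_{i}))
\end{align*}
for every $M \in \mod A$. The first term computes $\Hom_{\sfD^{\rmb}(\mod A)}(U_{i},M)$ directly from the 2-term structure of $U_i$, while the second arises from Nakayama--Serre duality applied to $\Hom_{\sfD^{\rmb}(\mod A)}(U_{i},M[1])$. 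Summing with the positive weights yields the corresponding formula for $\theta(M)$.

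From this I would extract four sign inequalities. First, if $M \in \ovcalT_U = {}^{\perp}H^{-1}(\nu U)$, then every summand $\Hom_A(M,H^{-1}(\nu U_{i}))$ vanishes and $\theta(M) \ge 0$; moreover, if $M \in \calT_U = \Fac H^{0}(U)$ is nonzero, then $M$ is a nonzero quotient of a power of $H^{0}(U)$, forcing $\Hom_A(H^{0}(U_{i}),M) \ne 0$ for some $i$ and hence the strict inequality $\theta(M) > 0$. Dually, $M \in \ovcalF_U = H^{0}(U)^{\perp}$ implies $\theta(M) \le 0$, and $M \in \calF_U = \Sub H^{-1}(\nu U)$ with $M \ne 0$ implies $\theta(M) < 0$.

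With these in hand, I would apply Lemma~\ref{Lem_TF_coincide} to each torsion pair. For $(\calT_{\theta}, \ovcalF_{\theta}) = (\calT_{U}, \ovcalF_{U})$ one needs $\calT_{U} \cap \ovcalF_{\theta} = \{0\} = \calT_{\theta} \cap \ovcalF_{U}$: a nonzero $M \in \calT_{U}$ satisfies $\theta(M) > 0$ by the third inequality and so cannot lie in $\ovcalF_{\theta}$ (which demands $\theta(N) \le 0$ for every submodule, in particular $N=M$); a nonzero $M \in \ovcalF_{U}$ satisfies $\theta(M) \le 0$ by the second, exhibiting $M$ as its own nonzero quotient with $\theta \le 0$ and hence $M \notin \calT_{\theta}$. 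The other equality $(\ovcalT_{\theta}, \calF_{\theta}) = (\ovcalT_{U}, \calF_{U})$ follows symmetrically using the first and fourth inequalities. Finally $\calW_{\theta} = \ovcalT_{\theta} \cap \ovcalF_{\theta} = \ovcalT_{U} \cap \ovcalF_{U}$ is immediate.

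The main obstacle is justifying the Euler-pairing identity itself, in particular the identification $\dim_{K} \Hom_{\sfD^{\rmb}(\mod A)}(U_i, M[1]) = \dim_{K} \Hom_A(M, H^{-1}(\nu U_i))$ for a general 2-term presilting $U_i$ that need not be a minimal projective presentation of $H^{0}(U_i)$. Summands of $U_i$ of the form $(P \to 0)$ contribute $\nu P$ to $H^{-1}(\nu U_i)$ and must be reconciled with the presilting condition $\Hom_{\sfK^{\rmb}(\proj A)}(U,U[1]) = 0$, which forces such $P$ to share no indecomposable summand with $U^{0}$; this disjointness is what keeps the four inequalities consistent with the $\tau$-rigidity of $H^{0}(U)$ supplied by \cite[Lemma 3.4]{AIR} and the dual $\tau^{-1}$-rigidity of $H^{-1}(\nu U)$.
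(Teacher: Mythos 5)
The paper does not give its own proof here; it simply cites Yurikusa~\cite[Proposition~3.3]{Yurikusa}. So the comparison is with Yurikusa's argument (which is essentially the route you take), not with a proof inside this paper. Your argument is correct. The Euler-pairing identity
\begin{align*}
\ang{U_i}{M} = \dim_K \Hom_A(H^0(U_i),M) - \dim_K \Hom_A(M,H^{-1}(\nu U_i))
\end{align*}
is valid for \emph{any} two-term complex of projectives $U_i=(P^{-1}\xrightarrow{d}P^0)$, minimal or not: one has $\Hom_{\sfK^\rmb(\proj A)}(U_i,M)=\Hom_A(H^0(U_i),M)$ and $\Hom_{\sfK^\rmb(\proj A)}(U_i,M[1])=\Coker\bigl(\Hom_A(d,M)\bigr)$, and applying $K$-duality together with the natural isomorphism $D\Hom_A(P,M)\cong\Hom_A(M,\nu P)$ turns this cokernel into $\Ker\bigl(\Hom_A(M,\nu d)\bigr)=\Hom_A(M,H^{-1}(\nu U_i))$. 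In particular the special summands of shape $(P\to 0)$ cause no trouble: they contribute $-\dim_K\Hom_A(M,\nu P)$, which is exactly what the formula predicts. Once the formula is in hand, your four inequalities (using $\calT_U\subset\ovcalT_U$ and $\calF_U\subset\ovcalF_U$ to make both terms one-sided when needed) plus Lemma~\ref{Lem_TF_coincide} applied to the torsion pairs $(\ovcalT_\theta,\calF_\theta)$, $(\ovcalT_U,\calF_U)$ and $(\calT_\theta,\ovcalF_\theta)$, $(\calT_U,\ovcalF_U)$ give the two equalities, and $\calW_\theta=\ovcalT_\theta\cap\ovcalF_\theta=\ovcalT_U\cap\ovcalF_U$ follows. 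The only place $\tau$-rigidity of $H^0(U)$ and $\tau^{-1}$-rigidity of $H^{-1}(\nu U)$ enter is in guaranteeing that $(\calT_U,\ovcalF_U)$ and $(\ovcalT_U,\calF_U)$ really are torsion pairs, which the paper records just before the statement; no further reconciliation is needed. In short, the last paragraph's worry is unfounded, and the rest of the argument is a clean and complete proof.
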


This implies that $C^+(U)$ is contained in a TF equivalence class.
Thus, to prove Theorem \ref{Thm_presilt_TF}, it remains to show the converse.
For this purpose, we recall the following result
on 2-term simple-minded collections by Br\"{u}stle--Yang \cite{BY}.

\begin{Lem}\label{Lem_modA_[1]}\cite[Remark 4.11]{BY}
Let $\calX \in \twosmc A$, then
every $X \in \calX$ satisfies $X \in \brick A$ or $X \in (\brick A)[1]$
up to isomorphisms in $\sfD^\rmb(\mod A)$.
\end{Lem}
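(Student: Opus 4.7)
The plan is to leverage the Koenig--Yang correspondence in the form of Proposition \ref{Prop_twosilt_twosmc}: under the bijection $\twosmc A \leftrightarrow \inttstr A$, the 2-term simple-minded collection $\calX$ is realized as the set of simple objects in the length heart $\calH$ of an intermediate t-structure $(\calU,\calV)$ in $\sfD^\rmb(\mod A)$. Thus it suffices to show that every simple object of such a heart is, up to isomorphism, a brick concentrated in degree $0$ or $-1$.

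First, I will describe $\calH$ explicitly via HRS-tilting. Because $(\calU,\calV)$ is intermediate, i.e.\ $\calD^{\le -1} \subset \calU \subset \calD^{\le 0}$ (and correspondingly $\calD^{\ge 0} \subset \calV \subset \calD^{\ge -1}$), the pair
\begin{align*}
\calT := \calU \cap \mod A, \qquad \calF := \calV[-1] \cap \mod A
\end{align*}
forms a torsion pair in $\mod A$, and the heart admits the description
\begin{align*}
\calH = \{ Y \in \sfD^\rmb(\mod A) \mid H^{-1}(Y) \in \calF,\ H^0(Y) \in \calT,\ H^i(Y)=0\ \text{for}\ i \notin \{-1,0\}\}.
\end{align*}
This comes out of the t-structure axioms by applying the standard truncation triangles to $Y$, combined with the sandwich inclusions for $\calU$ and $\calV$. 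A direct consequence is that both $H^0(Y)$ (as a module in $\calT$) and $H^{-1}(Y)[1]$ (as the shift of a module in $\calF$) again lie in $\calH$ for every $Y \in \calH$.

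Second, for $X \in \calX$, the standard truncation triangle
\begin{align*}
H^{-1}(X)[1] \longrightarrow X \longrightarrow H^0(X) \longrightarrow H^{-1}(X)[2]
\end{align*}
has all three outer terms inside $\calH$, so it descends to a short exact sequence $0 \to H^{-1}(X)[1] \to X \to H^0(X) \to 0$ in the abelian category $\calH$. Simplicity of $X$ in $\calH$ forces either $H^{-1}(X)=0$, whence $X \cong H^0(X) \in \mod A$, or $H^0(X)=0$, whence $X \cong H^{-1}(X)[1] \in (\mod A)[1]$. In either case, the endomorphism ring of the underlying module coincides with $\End_{\sfD^\rmb(\mod A)}(X)$, which is a division ring by the simple-minded axiom; hence the underlying module is a brick, giving $X \in \brick A$ or $X \in (\brick A)[1]$ as required.

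The main obstacle will be establishing the explicit HRS-tilt description of $\calH$, and in particular verifying that the induced pair $(\calT,\calF)$ really is a torsion pair in $\mod A$ and that every $Y \in \calH$ automatically satisfies $H^{-1}(Y)\in\calF$ and $H^0(Y)\in\calT$. This is a standard calculation with t-structure truncations, but it is where most of the bookkeeping lives; once it is in hand, the conclusion is essentially immediate from the simplicity of $X$ in an abelian category together with the division-ring condition on $\End_{\sfD^\rmb(\mod A)}(X)$.
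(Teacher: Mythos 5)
The paper does not actually prove this lemma; it imports it verbatim from \cite[Remark 4.11]{BY}, so there is no in-text argument to compare against. Your proposal is a correct and self-contained derivation, and it is the argument one would expect to stand behind that remark: pass via Proposition \ref{Prop_twosilt_twosmc} to the heart $\calH$ of the intermediate $t$-structure corresponding to $\calX$, describe $\calH$ as a Happel--Reiten--Smal\o{} tilt so that each object is concentrated in cohomological degrees $-1,0$ and the truncation triangle becomes a short exact sequence $0 \to H^{-1}(X)[1] \to X \to H^0(X) \to 0$ inside $\calH$, invoke simplicity of $X \in \calX$ in $\calH$ to force one of the outer terms to vanish, and then use condition (a) of Definition \ref{Def_smc} to see that the surviving cohomology module is a brick (the full faithfulness of $\mod A \hookrightarrow \sfD^\rmb(\mod A)$ and of $[1]$ gives $\End_A(H^0(X)) \cong \End_{\sfD^\rmb(\mod A)}(X)$ or $\End_A(H^{-1}(X)) \cong \End_{\sfD^\rmb(\mod A)}(X)$, which you implicitly note).

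Two minor remarks. First, the exact shift you use to extract a torsion-free class from $\calV$ (your $\calV[-1]\cap\mod A$, versus the $\calV\cap\mod A$ appearing in the paper's HRS paragraph) is a bookkeeping matter tied to the $t$-structure indexing conventions of \cite{KY,BY}, and does not affect your argument; all you really need is the sandwich $\calD^{\ge 0} \subset \calV \subset \calD^{\ge -1}$, from which $\calH \subset \calD^{\le 0}\cap\calD^{\ge -1}$ and the membership of both $H^{-1}(X)[1]$ and $H^0(X)$ in $\calH$ follow by standard truncation arguments, exactly as you outline. Second, the cohomological concentration of each $X\in\calX$ in degrees $-1,0$ is already built into Definition \ref{Def_smc}, so the role of the heart in your proof is only to ensure that the two truncation pieces again lie in $\calH$ and that the truncation triangle is a short exact sequence there; this is the genuine content, and your proof supplies it correctly.
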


Therefore, it is natural to consider the intersections
$\calX \cap \mod A$ and $\calX[-1] \cap \mod A$ for $\calX \in \twosmc A$.
We call a subset $\calS$ of $\brick A$ a \textit{semibrick} in $\mod A$
if $\Hom_A(S,S')=0$ holds for any two different (hence, non-isomorphic) 
elements $S,S' \in \calS$, and write $\sbrick A$ for the set of semibricks in $\mod A$.
By definition, 
the sets $\calX \cap \mod A$ and $\calX[-1] \cap \mod A$ are semibricks.

In \cite{Asai}, we introduced the notions of left-finiteness and right-finiteness 
of semibricks:
a semibrick $\calS$ is said to be \textit{left finite} (resp.~\textit{right finite}) if 
the smallest torsion (resp.~torsion-free) class $\sfT(\calS)$ (resp.~$\sfF(\calS)$) 
in $\mod A$ 
containing $\calS$ is functorially finite in $\mod A$.
We write $\fLsbrick A$ (resp.~$\fRsbrick A$) for the set of 
left finite (resp.~right finite) semibricks.
In that paper, we obtained the following bijections.

\begin{Prop}\label{Prop_smc_sbrick}\cite[Theorem 2.3]{Asai}
There exist bijections
\begin{align*}
\twosmc A &\to \fLsbrick A, & \calX &\mapsto \calX \cap \mod A, \\
\twosmc A &\to \fRsbrick A, & \calX &\mapsto \calX[-1] \cap \mod A.
\end{align*}
Moreover, if $\calX \in \twosmc A$ corresponds to $T \in \twosilt A$
in the bijections in Proposition \ref{Prop_twosilt_twosmc},
then $(\sfT(\calX \cap \mod A),\sfF(\calX[-1] \cap \mod A))
=(\ovcalT_T, \calF_T)=(\calT_T, \ovcalF_T)$.
\end{Prop}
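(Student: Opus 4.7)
The plan is to build the two claimed bijections by factoring the known chain
$\twosmc A \xleftrightarrow{\ref{Prop_twosilt_twosmc}} \twosilt A \xleftrightarrow{\ref{Prop_silt_ftors}} \ftors A$
(and similarly through $\ftorf A$) through $\fLsbrick A$ and $\fRsbrick A$, using the general correspondence between wide subcategories of $\mod A$ and semibricks (send a wide subcategory to its set of simples, with inverse given by $\Filt$). The key observation is that the heart associated to $T$ by Proposition \ref{Prop_silt_smc} is an HRS-tilt of $\mod A$ at $(\calT_T, \calF_T)$, and its simples lying in $\mod A$ (respectively in $(\mod A)[1]$) will be identified with the simples of a canonical wide subcategory attached to $\calT_T$ (respectively to $\calF_T$).

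Fix $\calX \in \twosmc A$ corresponding to $T \in \twosilt A$, and set $\calS_L := \calX \cap \mod A$ and $\calS_R := \calX[-1] \cap \mod A$. By Lemma \ref{Lem_modA_[1]} we have a disjoint decomposition $\calX = \calS_L \sqcup \calS_R[1]$; conditions (a) and (b) of Definition \ref{Def_smc} then force $\calS_L, \calS_R$ to be semibricks. By HRS tilting theory, the heart $\calH$ associated to $T$ admits a torsion pair $(\calF_T[1], \calT_T)$ on $\calH$, and a careful analysis—chasing short exact sequences in $\calH$ through the long exact sequence of $H^\bullet$—identifies the simples of $\calH$ lying in $\mod A$ with the simples of the wide subcategory $\alpha(\calT_T)$ from Proposition \ref{Prop_MS}, and dually the simples of $\calH$ in $(\mod A)[1]$ with those of the wide subcategory canonically attached to $\calF_T$. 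Since $\calT_T \in \ftors A$ is functorially finite it is widely generated, so $\sfT(\calS_L) = \sfT(\alpha(\calT_T)) = \calT_T = \ovcalT_T$, and dually $\sfF(\calS_R) = \calF_T$. This yields $\calS_L \in \fLsbrick A$, $\calS_R \in \fRsbrick A$, and the compatibility at the end of the statement.

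For the inverse, given $\calS \in \fLsbrick A$, the torsion class $\calT := \sfT(\calS)$ is functorially finite, yielding a unique $T \in \twosilt A$ with $\calT_T = \calT$ by Proposition \ref{Prop_silt_ftors}, and hence a unique $\calX \in \twosmc A$. By the previous paragraph, $\calX \cap \mod A$ is the set of simples of $\alpha(\calT) = \alpha(\sfT(\calS))$, which coincides with $\Filt(\calS)$ for the semibrick $\calS$; since the set of simples of $\Filt(\calS)$ is exactly $\calS$, we recover $\calX \cap \mod A = \calS$. The dual argument gives the right-hand bijection. The main technical obstacle is the HRS identification in the middle paragraph—proving that an object $M \in \calT_T$ is simple in $\calH$ if and only if it lies in $\alpha(\calT_T)$ and is simple there—which requires handling morphisms in $\calH$ between objects of $\calT_T$ via their kernels in $\mod A$ and using that $(\calF_T[1], \calT_T)$ is a torsion pair on $\calH$; once this is done, the remaining assertions are formal consequences of the bijections already available.
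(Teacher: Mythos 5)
The paper does not prove this proposition: it cites \cite[Theorem 2.3]{Asai}, i.e., the author's own \emph{Semibricks} paper, whose argument is anchored directly in $\tau$-tilting theory and support $\tau$-tilting pairs rather than in HRS tilting and the Marks--\v{S}\v{t}ov\'{\i}\v{c}ek operator $\alpha$. Your route is therefore genuinely different from the cited source, and it is worth saying both what it buys and where it falls short.

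The part you flag as the ``main technical obstacle'' does work out, and you should be more confident about it. For $M \in \calT_T$ one can indeed prove that $M$ is simple in the tilted heart $\calH$ if and only if $M \in \alpha(\calT_T)$ and $M$ is simple there. The key observations are: there are no nonzero $\calH$-subobjects of $M$ lying in $\calF_T[1]$, so all $\calH$-subobjects of $M$ lie in $\calT_T$; a morphism $f \colon Y \to M$ with $Y \in \calT_T$ is a monomorphism in $\calH$ exactly when the $\mod A$-kernel $\Ker f$ lies in $\calF_T$; and the defining condition of $\alpha(\calT_T)$ then forces $\Ker f \in \calT_T \cap \calF_T = \{0\}$ whenever $M \in \alpha(\calT_T)$, so that the only $\calH$-monomorphisms into an $\alpha(\calT_T)$-simple $M$ from nonzero objects are isomorphisms. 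The converse direction is the straightforward check you indicate. With this in hand, the forward map $\calX \mapsto \calX \cap \mod A$ lands in $\fLsbrick A$ and satisfies $\sfT(\calX \cap \mod A)=\sfT(\alpha(\calT_T))=\calT_T=\ovcalT_T$ by Proposition \ref{Prop_MS}, giving the ``moreover'' clause.

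The genuine gap is in the inverse direction. You assert that $\alpha(\sfT(\calS))$ ``coincides with $\Filt(\calS)$'' for a left finite semibrick $\calS$, but this is precisely the injectivity of the map $\calS \mapsto \sfT(\calS)$ on $\fLsbrick A$ --- a substantive part of what the cited theorem establishes --- and it does not follow from the references you invoke. Proposition \ref{Prop_MS} gives the composition the other way around ($\sfT(\alpha(\calT))=\calT$ for widely generated $\calT$), and Ringel's bijection between semibricks and wide subcategories only identifies $\Filt(\calS)$ as \emph{a} wide subcategory with $\sfT(\Filt(\calS))=\sfT(\calS)$, not as equal to $\alpha(\sfT(\calS))$: in general several wide subcategories of $\calT$ can generate $\calT$ as a torsion class. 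Without a proof that the simples of $\alpha(\sfT(\calS))$ are exactly $\calS$, your argument produces only an injection $\twosmc A \hookrightarrow \fLsbrick A$, and the surjectivity --- hence the bijection --- remains unproved. To close the gap you would need to show directly that $\Filt(\calS)$ satisfies the defining condition of $\alpha(\sfT(\calS))$ when $\calS$ is left finite (which is not a routine verification), or else fall back on the $\tau$-tilting machinery of the cited source.
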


Now, we can prove Theorem \ref{Thm_presilt_TF}.

\begin{proof}[Proof of Theorem \ref{Thm_presilt_TF}]
It suffices to prove that $\theta \in C^+(U)$ holds if and only if
$(\ovcalT_\theta,\calF_\theta)=(\ovcalT_U,\calF_U)$ and
$(\calT_\theta,\ovcalF_\theta)=(\calT_U,\ovcalF_U)$.

The ``only if'' part is nothing but Proposition \ref{Prop_Yurikusa}.

Thus, it remains to show the ``if'' part.
We assume $(\ovcalT_\theta,\calF_\theta)=(\ovcalT_U,\calF_U)$ and
$(\calT_\theta,\ovcalF_\theta)=(\calT_U,\ovcalF_U)$.
We take the Bongartz completion $T=\bigoplus_{i=1}^n T_i$ of $U$ with $T_i$ indecomposable.

If $\theta \in C(T)$, then there exists some direct summand $U'$ of $T$
such that $\theta \in C^+(U')$.
Then, $\ovcalT_{U'}=\ovcalT_\theta=\ovcalT_U$ and $\calT_{U'}=\calT_\theta=\calT_U$ 
follow from Proposition \ref{Prop_Yurikusa} and the assumption,
and we get $U' \cong U$ by Lemma \ref{Lem_two_Bongartz_inj} (2).
Thus, it is sufficient to prove $\theta \in C(T)$.

Since $[T_1],\ldots,[T_n]$ is a basis of $K_0(\proj A)_\R$,
there exist $a_1,\ldots,a_n \in \R$ such that $\theta = \sum_{i=1}^n a_i[T_i]$.
Thus, we prove $a_i \ge 0$ for all $i$.
We take $\calX=\{ X_i \}_{i=1}^n \in \twosmc A$ corresponding 
to $T \in \twosilt A$ as in Definition \ref{Def_setting_T_X}.
For each $i$, we have 
$\theta(X_i)=a_i \dim_K \End_{\sfD^\rmb(\mod A)}(X_i)$.

By Proposition \ref{Prop_smc_sbrick}, 
if $X_i \in \mod A$, then $X_i$ belongs to $\ovcalT_T=\ovcalT_U=\ovcalT_\theta$,
so $\theta(X_i) \ge 0$ holds;
and otherwise, $X_i$ belongs to $\mod A[1]$ by Lemma \ref{Lem_modA_[1]}, and
we get $X_i[-1] \in \calF_T=\calF_U=\calF_\theta$ and $\theta(X_i[-1])<0$,
which impiles $\theta(X_i)>0$.
Therefore, $a_i \ge 0$ holds for all $i$,
and we obtain $\theta \in C(T)$.
\end{proof}

\subsection{All chambers come from silting objects}

We conclude this section by results on the chambers
of the wall-chamber structure of $K_0(\proj A)_\R$.
Recall from Corollary \ref{Cor_TF_chamber} that there exists a bijection
\begin{align*}
\Chamber(A) \to \TF_n(A), \qquad C \mapsto 
\text{(the TF equivalence class containing $C$)}.
\end{align*}
In this subsection,
we would like to show that 
\begin{align*}
\Chamber(A) = \TF_n(A).
\end{align*}
This is nothing but to say that any chamber itself is a TF equivalence class.
To prove it, it is sufficient to check that
any TF equivalence class $[\theta]$ of dimension $n$ admits 
some $T \in \twosilt A$ such that $[\theta]=C^+(T)$,
since $C^+(T) \subset K_0(\proj A)_\R$ is clearly an open set.
For this purpose, we aim to get the following properties on cones and walls
(recall that $\Wall$ is the union $\bigcup_{M \in \mod A \setminus \{0\}} \Theta_M$).

\begin{Thm}\label{Thm_chamber_cone}
We have equations
\begin{align*}
\coprod_{T \in \twosilt A} C^+(T) &= K_0(\proj A) \setminus \overline{\Wall}, \\
\coprod_{T \in \twosilt A} C^+(T)_\Q &= K_0(\proj A)_\Q \setminus \Wall.
\end{align*}
In particular, there exists a bijection
\begin{align*}
\twosilt A \to \Chamber(A), \qquad T \mapsto C^+(T). 
\end{align*}
Therefore, all chambers are TF equivalence classes, so $\Chamber(A)=\TF_n(A)$.
\end{Thm}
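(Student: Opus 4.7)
The strategy is to prove the two partition identities first; the bijection $\twosilt A \to \Chamber(A)$ and the equality $\Chamber(A) = \TF_n(A)$ then follow formally using Corollary \ref{Cor_TF_chamber}. Disjointness of the family $\{C^+(T)\}_{T \in \twosilt A}$ is immediate from Proposition \ref{Prop_cone_summand}(3): for distinct $T,T'$, the intersection $C(T) \cap C(T')$ is the cone of a proper summand of $T$, so it cannot meet the relative interior $C^+(T)$. For the inclusion $\bigsqcup_T C^+(T) \subseteq K_0(\proj A)_\R \setminus \overline{\Wall}$ I would invoke Theorem \ref{Thm_presilt_TF}: since $T$ is silting one has $\ovcalT_T = \calT_T$ and $\ovcalF_T = \calF_T$, whence $\calW_\theta = \ovcalT_\theta \cap \ovcalF_\theta = \{0\}$ for every $\theta \in C^+(T)$; thus $C^+(T) \cap \Wall = \emptyset$, and because $C^+(T)$ is open (the indecomposable summands of $T$ being a basis of $K_0(\proj A)_\R$), this upgrades to $C^+(T) \cap \overline{\Wall} = \emptyset$.

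The core of the proof is the reverse inclusion, which I would establish first over the rationals. Take $\theta \in K_0(\proj A)_\Q \setminus \Wall$; then $\calW_\theta = \{0\}$ forces $\ovcalT_\theta = \calT_\theta$ and $\ovcalF_\theta = \calF_\theta$, so $(\ovcalT_\theta, \ovcalF_\theta)$ is a torsion pair, and Theorem \ref{Thm_widely_gen} says that both sides are widely generated. The goal is to produce $T \in \twosilt A$ with $\ovcalT_T = \ovcalT_\theta$; Theorem \ref{Thm_presilt_TF} then places $\theta$ in $C^+(T)_\Q$. To construct $T$, I would examine the wide subcategory $\calW_L := \alpha(\ovcalT_\theta)$ (and the torsion-free analogue $\calW_R$); combining the Remark after Theorem \ref{Thm_widely_gen} with $\calW_\theta = \{0\}$ shows that $\theta$ is strictly positive on $\calW_L \setminus \{0\}$ and strictly negative on $\calW_R \setminus \{0\}$. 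Rationality of $\theta$ together with a minimality argument analogous to the one used in Lemma \ref{Lem_infin_seq} should then force $\calW_L$ and $\calW_R$ to have only finitely many simple objects and, more importantly, turn the resulting semibricks $\calS_L, \calS_R$ into members of $\fLsbrick A$ and $\fRsbrick A$ respectively. Proposition \ref{Prop_smc_sbrick} and Proposition \ref{Prop_twosilt_twosmc} would then deliver the desired $T$.

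The real case is obtained by density. Given $\theta \in K_0(\proj A)_\R \setminus \overline{\Wall}$, pick $\theta' \in K_0(\proj A)_\Q$ inside a small open ball around $\theta$ disjoint from $\Wall$; the rational case places $\theta' \in C^+(T)$ for some $T \in \twosilt A$. The segment $[\theta, \theta']$ stays inside the ball and thus avoids every wall, so Theorem \ref{Thm_W_constant} gives that $\theta$ is TF equivalent to $\theta'$; since $C^+(T)$ is a TF equivalence class (Theorem \ref{Thm_presilt_TF}), we conclude $\theta \in C^+(T)$. The bijection $\twosilt A \to \Chamber(A)$ is then forced: each $C^+(T)$ is open, convex (hence connected) and non-empty, and the disjoint partition identity implies that every connected component of $K_0(\proj A)_\R \setminus \overline{\Wall}$ must coincide with exactly one $C^+(T)$. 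Combined with Corollary \ref{Cor_TF_chamber}, this yields $\Chamber(A) = \TF_n(A)$.

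The decisive difficulty lies in the rational step: one must bridge the purely abelian conclusion of Theorem \ref{Thm_widely_gen} (widely generated) with the functorial finiteness needed to invoke the Koenig--Yang correspondences and obtain a genuine 2-term silting object. The rationality hypothesis on $\theta$ is essential, since it is what forces the strict-positivity argument on $\alpha(\ovcalT_\theta)$ to terminate and produce the finite left- and right-finite semibricks whose existence drives the whole proof; everything else is either formal (disjointness, the $\subseteq$ inclusion) or a topological density argument (passage from $\Q$ to $\R$).
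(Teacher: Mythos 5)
Your proposal takes a genuinely different route for the crucial rational step, and it is in fact the route the author alludes to in the Remark immediately after Theorem \ref{Thm_chamber_cone} as the ``first proof'': build semibricks $\calS_L,\calS_R$ out of $\alpha(\ovcalT_\theta)$ and its torsion-free analogue, prove left-/right-finiteness, and feed them into Proposition \ref{Prop_smc_sbrick}. The paper's written proof goes instead through Proposition \ref{Prop_Bridgeland}: form the Happel--Reiten--Smal{\o} tilt $(\calU,\calV)$ of the torsion pair $(\ovcalT_\theta,\calF_\theta)=(\calT_\theta,\ovcalF_\theta)$, use the rationality of $\theta$ to show directly that every nonzero object $X$ of the heart $\calH$ satisfies $\theta(X)\ge q$ for a fixed $q\in\Q_{>0}$ (so $\calH$ is a length category), and then read off the desired $T\in\twosilt A$ from Propositions \ref{Prop_twosilt_twosmc} and \ref{Prop_inttstr_ftors}. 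The HRS-tilt route is cleaner because it never has to verify the simple-minded-collection axioms or functorial finiteness directly -- these come packaged in the Koenig--Yang correspondence for intermediate t-structures with length heart. Everything else in your plan (disjointness via Proposition \ref{Prop_cone_summand}, the $\subseteq$ direction via Theorem \ref{Thm_presilt_TF} and openness of $C^+(T)$, and the real case by density, convexity of the neighborhood, Theorem \ref{Thm_W_constant} and Theorem \ref{Thm_presilt_TF}) matches the paper's argument essentially verbatim.

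That said, as written your sketch leaves a genuine gap precisely at the step you yourself flag as decisive. You assert that ``rationality of $\theta$ together with a minimality argument analogous to the one used in Lemma \ref{Lem_infin_seq} should force \ldots\ the resulting semibricks $\calS_L, \calS_R$ into members of $\fLsbrick A$ and $\fRsbrick A$.'' But the descent argument in Lemma \ref{Lem_infin_seq} only rules out failure of wide generation; it does not produce functorial finiteness of $\sfT(\calS_L)$, which is what membership in $\fLsbrick A$ means and is a strictly stronger conclusion. Concretely, Theorem \ref{Thm_widely_gen} already gives you $\ovcalT_\theta=\sfT(\alpha(\ovcalT_\theta))$, but you still need to show either that $\sfT(\calS_L)$ is functorially finite or (equivalently, by Proposition \ref{Prop_smc_sbrick}) that $\calS_L\cup\calS_R[1]$ satisfies the axioms (a)--(d) of Definition \ref{Def_smc}, including the negative-Ext vanishing and the thick-generation condition. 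None of this follows immediately from what you have written; it is exactly the content that the paper's detour through the HRS heart being a length category supplies essentially for free. To make your route rigorous you would need to carry out the verification $\calS_L\cup\calS_R[1]\in\twosmc A$ explicitly, which the Remark in the paper claims is doable but does not spell out.
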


We remark that the injectivity of (3) was proved by 
Br\"{u}stle--Smith--Treffinger \cite[Corollary 3.29]{BST}
in a different method.

To show this, 
we use the bijection entitled the \textit{Happel--Reiten--Smal{\o} tilt} \cite{HRS}
between the set of intermediate t-structures in $\sfD^\rmb(\mod A)$ and 
the set of torsion pairs in $\mod A$.
This bijection sends a torsion pair $(\calT,\calF)$ to 
the intermediate t-structure $(\calU,\calV)$ in $\sfD^\rmb(\mod A)$, where
\begin{align*}
\calU &:= \{X \in \sfD^\rmb(\mod A) \mid 
\text{$H^0(X)\in\calT$ and $H^i(X)=0$ for $i>0$}\}, \\
\calV &:= \{X \in \sfD^\rmb(\mod A) \mid 
\text{$H^0(X)\in\calF$ and $H^i(X)=0$ for $i<0$}\},
\end{align*}
and the inverse map sends an intermediate t-structure $(\calU,\calV)$ to 
the torsion pair $(\calU \cap \mod A, \calV \cap \mod A)$ in $\mod A$.

For our purpose,
it is important to know which torsion pairs in $\mod A$ correspond to 
intermediate t-structures with length heart. 
The answer is given by the following result of \cite{BY}.

\begin{Prop}\label{Prop_inttstr_ftors}\cite[Theorem 4.9]{BY}
There exist two bijections $\inttstr A \to \ftors A$ and $\inttstr A \to \ftorf A$
given by $(\calU, \calV) \mapsto \calU \cap \mod A$
and $(\calU, \calV) \mapsto \calV \cap \mod A$, respectively.
Moreover, for $T \in \twosilt A$,
we have $T[{<}0]^\perp \cap \mod A=\ovcalT_T$ and $T[{>}0]^\perp \cap \mod A=\calF_T$. 
\end{Prop}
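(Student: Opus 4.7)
The plan is to assemble three ingredients: the classical Happel--Reiten--Smal\o{} (HRS) correspondence between intermediate t-structures and torsion pairs, the Koenig--Yang bijection from Proposition \ref{Prop_twosilt_twosmc}, and the Adachi--Iyama--Reiten bijection from Proposition \ref{Prop_silt_ftors}. First, recall that HRS provides a bijection between intermediate t-structures in $\sfD^\rmb(\mod A)$ and torsion pairs in $\mod A$ whose inverse sends $(\calU, \calV)$ to $(\calU \cap \mod A, \calV \cap \mod A)$. Restricting the source to $\inttstr A$, we obtain a priori two injective maps $\inttstr A \to \tors A$ and $\inttstr A \to \torf A$ given by the first and second components of this torsion pair. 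The content of the proposition is to identify the images with $\ftors A$ and $\ftorf A$ and to describe them explicitly on $T \in \twosilt A$.

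For a fixed $T \in \twosilt A$ written as $T = (P^{-1} \xrightarrow{d} P^0)$, I would identify $T[{<}0]^\perp \cap \mod A$ with $\ovcalT_T$ by a direct computation. Apply $\Hom_{\sfD^\rmb(\mod A)}(-, M)$ for $M \in \mod A$ to the distinguished triangle $P^{-1} \to P^0 \to T \to P^{-1}[1]$, and use $\Ext^{\neq 0}(P^i, M) = 0$ to collapse the long exact sequence to
\begin{equation*}
0 \to \Hom_A(H^0(T), M) \to \Hom_A(P^0, M) \xrightarrow{d^*} \Hom_A(P^{-1}, M) \to \Hom_{\sfD^\rmb(\mod A)}(T[{-}1], M) \to 0.
\end{equation*}
Hence $M \in T[{<}0]^\perp$ iff $d^*$ is surjective. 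Via the Auslander--Reiten formula $\Coker(d^*) \cong D\Hom_A(M, \tau H^0(T))$ applied to a minimal projective presentation of $H^0(T)$, this becomes the $\tau$-tilting characterization of $\Fac H^0(T)$ due to \cite{AIR}; hence $M \in \ovcalT_T$. A symmetric argument, using the Nakayama-dual triangle $I^{-1} \to I^0 \to \nu T \to I^{-1}[1]$ in $\sfK^\rmb(\inj A)$ and the analogous computation of $\Hom_{\sfD^\rmb(\mod A)}(T[k], M)$ for $k > 0$ in terms of $H^{-1}(\nu T)$, identifies $T[{>}0]^\perp \cap \mod A$ with $\Sub H^{-1}(\nu T) = \calF_T$.

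To conclude, combine the identification of Step 2 with Propositions \ref{Prop_twosilt_twosmc} and \ref{Prop_silt_ftors}. Koenig--Yang gives a bijection $\twosilt A \xrightarrow{\sim} \inttstr A$ sending $T$ to $(T[{<}0]^\perp, T[{>}0]^\perp)$, while Proposition \ref{Prop_silt_ftors} gives bijections $\twosilt A \xrightarrow{\sim} \ftors A$ and $\twosilt A \xrightarrow{\sim} \ftorf A$ sending $T$ to $\ovcalT_T$ and $\calF_T$ respectively. Step 2 shows precisely that these three bijections fit into a commutative diagram with the HRS-restricted maps. Therefore the HRS composition $\inttstr A \to \tors A$ lands exactly in $\ftors A$ and is bijective onto it; the argument for $\ftorf A$ is symmetric. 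This simultaneously establishes both bijections and verifies the explicit formulas in the ``Moreover'' clause.

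The principal technical obstacle is the Auslander--Reiten identification in the second step, since the two-term presentation $d \colon P^{-1} \to P^0$ underlying a 2-term silting object is typically non-minimal: direct summands of the form $(P \xrightarrow{0} 0)$ and $(0 \xrightarrow{0} P)$ appear whenever $H^0(T)$ is a \emph{support} $\tau$-tilting module supported on a proper subset of the simples. Handling this cleanly requires reducing to a minimal projective presentation of $H^0(T)$, tracking separately the contribution of these trivial summands to $H^{-1}(\nu T)$, and then invoking the support-$\tau$-tilting refinement of the Auslander--Reiten formula to match $\ovcalT_T = {}^\perp H^{-1}(\nu T)$ rather than merely $\Fac H^0(T)$.
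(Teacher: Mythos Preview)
The paper does not give its own proof of this proposition: it is quoted verbatim from \cite[Theorem 4.9]{BY} and used as a black box. So there is nothing in the paper to compare your argument against directly. That said, two substantive comments on your write-up are in order.

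First, your route to $T[{<}0]^\perp \cap \mod A = \ovcalT_T$ through the Auslander--Reiten formula is needlessly indirect and creates exactly the minimality headache you flag at the end. A cleaner path is to apply Nakayama duality to the cokernel term in your four-term sequence: for any 2-term $T$ one has
\[
\Hom_{\sfD^\rmb(\mod A)}(T[-1],M)\;\cong\; D\Hom_{\sfD^\rmb(\mod A)}(M,(\nu T)[-1])\;\cong\; D\Hom_A\bigl(M,\,H^{-1}(\nu T)\bigr),
\]
the second isomorphism because $(\nu T)[-1]$ is a two-term complex of injectives concentrated in degrees $0,1$. This gives $T[{<}0]^\perp \cap \mod A = {}^\perp H^{-1}(\nu T) = \ovcalT_T$ on the nose, with no minimal-presentation hypothesis and no need to split off trivial summands.

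Second, your ``symmetric argument'' for the identity $T[{>}0]^\perp \cap \mod A = \calF_T$ cannot work as written, and in fact this identity is false as literally stated. For any 2-term $T$ and any $M \in \mod A$ one has $\Hom_{\sfD^\rmb(\mod A)}(T[k],M)=0$ for every $k \ge 1$: the complex $T[k]$ is a bounded complex of projectives concentrated in degrees $\le -1$, while $M$ sits in degree $0$, so there are no nonzero chain maps. Hence $T[{>}0]^\perp \cap \mod A = \mod A$ for \emph{every} 2-term silting $T$, whereas $\calF_T = \mod A$ only when $T \cong A[1]$. The genuinely dual computation is $\Hom_{\sfD^\rmb(\mod A)}(T,M) \cong \Hom_A(H^0(T),M)$, which vanishes iff $M \in H^0(T)^\perp = \calF_T$; thus the correct formula is $\calF_T = T[{\ge}0]^\perp \cap \mod A$, or equivalently $\calF_T = \calV[-1] \cap \mod A$ in the t-structure notation. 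The paper's phrasing here inherits a convention mismatch between its HRS description (where $\calV \cap \mod A$ is written for the torsion-free part) and its t-structure axioms (where $\calV[-1]$, not $\calV$, is the co-aisle); your ``symmetric'' sketch glosses over rather than resolves this, so when you fill in the details you will need to state precisely which perpendicular you are computing.
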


We also need the following observation.

\begin{Prop}\label{Prop_Bridgeland}
Let $\theta \in K_0(\proj A)_\Q$.
If $\calW_\theta=\{0\}$, then
there exists $T \in \twosilt A$ such that $\theta \in C^+(T)$.
\end{Prop}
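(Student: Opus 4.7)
The plan is to reduce the statement to the functorial finiteness of the torsion class $\ovcalT_\theta$, and then to deduce that from the rationality of $\theta$ via a Happel--Reiten--Smal{\o} tilt.

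By Theorem \ref{Thm_presilt_TF}, finding $T \in \twosilt A$ with $\theta \in C^+(T)$ amounts to finding $T$ satisfying $(\ovcalT_T,\calF_T)=(\ovcalT_\theta,\calF_\theta)$ and $(\calT_T,\ovcalF_T)=(\calT_\theta,\ovcalF_\theta)$. Because $T$ is 2-term silting we have $\ovcalT_T=\calT_T$ and $\ovcalF_T=\calF_T$, and because $\calW_\theta=\{0\}$ we have $\ovcalT_\theta=\calT_\theta$ and $\ovcalF_\theta=\calF_\theta$; hence both pairs collapse to a single pair and the problem reduces to finding $T \in \twosilt A$ with $\ovcalT_T=\ovcalT_\theta$. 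By Proposition \ref{Prop_silt_ftors} such a $T$ exists if and only if $\ovcalT_\theta$ is functorially finite.

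To prove functorial finiteness, I apply the Happel--Reiten--Smal{\o} tilt to the torsion pair $(\ovcalT_\theta,\calF_\theta)$; this produces an intermediate t-structure in $\sfD^\rmb(\mod A)$ whose heart $\calH$ consists (in the standard convention) of complexes $X$ with $H^0(X) \in \ovcalT_\theta$, $H^{-1}(X) \in \calF_\theta$, and other cohomologies zero. Short exact sequences in $\calH$ lift to triangles in $\sfD^\rmb(\mod A)$, so $\theta$ restricts to an additive function on $K_0(\calH)$ with $\theta(X)=\theta(H^0(X))-\theta(H^{-1}(X))$. By Proposition \ref{Prop_inttstr_ftors}, it suffices to prove that $\calH$ is a length category. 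Since all four numerical torsion(-free) classes and the cones $C^+(T)$ are invariant under positive scaling, I may rescale and assume $\theta \in K_0(\proj A)$, so that $\theta$ is $\Z$-valued on $K_0(\mod A)$. From $\ovcalT_\theta=\calT_\theta$ and $\ovcalF_\theta=\calF_\theta$, every nonzero $M \in \ovcalT_\theta$ satisfies $\theta(M) \ge 1$ and every nonzero $M \in \calF_\theta$ satisfies $\theta(M) \le -1$. Hence any nonzero $X \in \calH$ has $\theta(X) \ge 1$, and additivity bounds every strict chain $0=X_0 \subsetneq X_1 \subsetneq \cdots \subsetneq X_m=X$ in $\calH$ by $m \le \theta(X)$. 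Every object of $\calH$ therefore has finite length, so $\calH$ is a length category.

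The substantive input is the integrality-and-positivity computation in the previous paragraph, which uses the rationality of $\theta$ and the hypothesis $\calW_\theta=\{0\}$ in an essential way; together they upgrade the strict positivity guaranteed by $\calW_\theta=\{0\}$ into a uniform lower bound of $1$, and it is this lower bound that forces $\calH$ to be length. The main obstacle is organising the conventions correctly: checking that the HRS heart has the expected cohomological description and that $\theta$ is additive on $\calH$, and then running the extraction of $T \in \twosilt A$ from the functorially finite torsion class $\ovcalT_\theta$ through Proposition \ref{Prop_inttstr_ftors} and Proposition \ref{Prop_silt_ftors} before concluding $\theta \in C^+(T)$ via Theorem \ref{Thm_presilt_TF}.
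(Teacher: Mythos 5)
Your proposal is correct and follows essentially the same route as the paper: apply the Happel--Reiten--Smal{\o} tilt to the torsion pair $(\ovcalT_\theta,\calF_\theta)=(\calT_\theta,\ovcalF_\theta)$, use the rationality of $\theta$ together with $\calW_\theta=\{0\}$ to produce a uniform positive lower bound on $\theta$ over nonzero objects of the heart (the paper keeps $\theta$ rational and gets a bound $q>0$; you rescale to $\theta\in K_0(\proj A)$ and get the bound $1$ — a harmless cosmetic difference), conclude the heart is a length category, and then extract $T\in\twosilt A$ via the Koenig--Yang machinery and finish with Theorem \ref{Thm_presilt_TF}. The only organizational difference is that the paper goes $\inttstr A \to \twosilt A$ via Proposition \ref{Prop_twosilt_twosmc} and then reads off the torsion pair with Proposition \ref{Prop_inttstr_ftors}, whereas you go $\inttstr A \to \ftors A \to \twosilt A$ via Propositions \ref{Prop_inttstr_ftors} and \ref{Prop_silt_ftors}; both chains of bijections are valid and compatible, so the proofs are interchangeable.
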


\begin{proof}
Clearly,
$(\ovcalT_\theta,\calF_\theta)=(\calT_\theta,\ovcalF_\theta)$.

By a similar argument to the proof of \cite[Lemma 7.1]{Bridgeland},
we show that
the heart $\calH$ of the corresponding t-structure 
$(\calU,\calV)$ for the torsion pair 
$(\ovcalT_\theta,\calF_\theta)=(\calT_\theta,\ovcalF_\theta)$ is 
a length category.
In this case, we can check that
every nonzero $X \in \calH$ admits a triangle 
$X' \to X \to X'' \to X[1]$ with $X' \in \calF_\theta[1]$ and 
$X'' \in \ovcalT_\theta=\calT_\theta$.
Thus, we get $\theta(X)=\theta(X')+\theta(X'') > 0$,
since $X'$ or $X''$ is nonzero.
By the assumption $\theta \in K_0(\proj A)_\Q$,
there exists some positive rational number $q \in \Q_{>0}$ such that 
any nonzero $X \in \calH$ satisfies $\theta(X) \ge q$.
Therefore, any $X \in \calH$ must have a composition series in the abelian category $\calH$.
This means that $\calH$ is a length category.

Thus, there exists some $T \in \twosilt A$
such that $(T[{<}0]^\perp, T[{>}0]^\perp)=(\calU,\calV)$
by Proposition \ref{Prop_twosilt_twosmc},
and then, $(\ovcalT_\theta,\calF_\theta)=(\calT_\theta,\ovcalF_\theta)=(\ovcalT_T,\calF_T)$
by Proposition \ref{Prop_inttstr_ftors}.
Applying Theorem \ref{Thm_presilt_TF}, we get $\theta \in C^+(T)$.
\end{proof}

Now, we can show Theorem \ref{Thm_chamber_cone} as follows.

\begin{proof}[Proof of Theorem \ref{Thm_chamber_cone}]
We remark that $\coprod_{T \in \twosilt A} C^+(T)$ 
and $\coprod_{T \in \twosilt A} C^+(T)_\Q$
are disjoint unions by Proposition \ref{Prop_cone_summand}.

We first show the second equation.
It suffices to show that 
there exists some $T \in \twosilt A$ such that $\theta \in C^+(T)$
if and only if $\calW_\theta=\{0\}$ in the case that $\theta \in K_0(\proj A)_\Q$.
The ``only if'' part follows 
from Propositions \ref{Prop_silt_ftors} and \ref{Prop_Yurikusa},
and the ``if'' part has been proved in Proposition \ref{Prop_Bridgeland}.

Next we show the first equation.
We set $\Cone:=\coprod_{T \in \twosilt A} C^+(T)$.

As in the proof of the ``only if'' part above, 
$\Cone \subset K_0(\proj A)_\R \setminus \Wall$ follows,
and since the left-hand side is an open subset of $K_0(\proj A)_\R$, we have
$\Cone \subset K_0(\proj A)_\R \setminus \overline{\Wall}$.

On the other hand, if 
$\theta \in K_0(\proj A)_\R \setminus \overline{\Wall}$,
then there exists a convex neighborhood 
$N \subset K_0(\proj A)_\R \setminus \overline{\Wall}$ of $\theta$,
and we can take $\theta' \in N \cap K_0(\proj A)_\Q$.
Then, there exists $T \in \twosilt A$ such that $\theta' \in C^+(T)$ 
by the second equation.
Because $N$ is convex, the line segment $[\theta,\theta']$ is contained in 
$N \subset K_0(\proj A)_\R \setminus \overline{\Wall}$,
thus $\theta$ and $\theta'$ are TF equivalent by Theorem \ref{Thm_W_constant}.
Since $C^+(T)$ is a TF equivalence class by Theorem \ref{Thm_presilt_TF},
$\theta$ also belongs to $C^+(T)$.
Thus, $\theta \in \Cone$ as desired.
We have $K_0(\proj A)_\R \setminus \overline{\Wall} \subset \Cone$.
We get the first equation.

Clearly, $\Cone=\coprod_{T \in \twosilt A} C^+(T)$
is the decomposition into the connected components.
Therefore, the map 
$\twosilt A \to \Chamber(A)$, $T \mapsto C^+(T)$ is well-defined 
and bijective.
Since $C^+(T)$ for $T \in \twosilt A$ is a TF equivalence class
by Theorem \ref{Thm_presilt_TF}, all chambers are TF equivalence classes.
Thus, Corollary \ref{Cor_TF_chamber} gives the remaining statement.
\end{proof}

\begin{Rem}
If $\theta \in K_0(\proj A)$, then
there exist semibricks $\calS,\calS'$ such that
$\ovcalT_\theta=\sfT(\calS)$ and $\ovcalF_\theta=\sfF(\calS')$
by Theorem \ref{Thm_widely_gen},
since the set of simple objects of a wide subcategory is a semibrick.
If $\theta$ additionally satisfies $\calW_\theta=\{0\}$,
then we can show that $\calS \cup \calS'[1] \in \twosmc A$.
Our first proof of Theorem \ref{Thm_chamber_cone} was based on this fact.
\end{Rem}

\section{Reduction of the wall-chamber structures}

\subsection{$\tau$-tilting reduction and the local wall-chamber structures}

Recall that we obtained 
an injection from $\twopresilt A$ to the set of TF equivalence classes
sending $U$ to $C^+(U)$ in Theorem \ref{Thm_presilt_TF}.
For $\theta \in C^+(U)$, the $\theta$-semistable subcategory $\calW_\theta$ is 
a wide subcategory $\calW_U:=\ovcalT_U \cap \ovcalF_U$.
The wide subcategory $\calW_U$ was investigated by \cite{Jasso, DIRRT}
as $\tau$-tilting reduction in the context of $\tau$-rigid pairs, 
and they found that $\calW_U$ is equivalent to the module category $\mod B$
for an algebra $B$ constructed from $U$;
see \textup{\cite[Theorem 3.8]{Jasso} and \cite[Theorem 4.12]{DIRRT}}.

The corresponding result for 2-term presilting objects is given as follows,
and we write a direct proof by using \cite{Asai,IY}
for the convenience of the readers.
We can check that
our $\phi$ is compatible with their original equivalence
$\Hom_A(H^0(T),?) \colon \calW_U \to \mod B$ as in the proof of \cite[Theorem 3.16]{Asai}.

\begin{Prop}\label{Prop_Jasso_wide}
Let $U \in \twopresilt A$.
Define $T \in \twosilt A$ as its Bongartz completion,
and set an algebra $B$ as $\End_{\sfK^\rmb(\proj A)}(T)/\langle e \rangle$,
where $e$ is the idempotent $(T \to U \to T) \in \End_{\sfK^\rmb(\proj A)}(T)$.
Then, we have an equivalence
\begin{align*}
\phi:=\Hom_A(T,?) \colon \calW_U \to \mod B.
\end{align*}
Moreover, let 
$T=\bigoplus_{i=1}^n T_i$ and $U=\bigoplus_{i=m+1}^n T_i$ with
$T_i$ indecomposable, take $\calX \in \twosmc A$ corresponding to $T$,
and define $X_1,X_2,\ldots,X_n \in \calX$ as in Definition \ref{Def_setting_T_X}.
Then, $\{X_1,X_2,\ldots,X_m\}$ is the set of simple objects in $\calW_U$,
and sent to the set of simple $B$-modules.
\end{Prop}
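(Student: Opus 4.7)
The plan is to split the proof into two parts: first, establish that $\phi = \Hom_A(T, ?)$ takes $\calW_U$ into $\mod B$ and defines an equivalence; second, identify the simple objects of $\calW_U$ as exactly $X_1, \ldots, X_m$ and check that $\phi$ sends them to the complete set of simple $B$-modules. The first part I would handle through the Iyama--Yoshino reduction framework \cite{IY} as adapted to the 2-term setting in \cite{Asai}, and the second part through the duality between $\{T_i\}$ and $\{X_j\}$ in Definition \ref{Def_setting_T_X} combined with Lemma \ref{Lem_modA_[1]} and Proposition \ref{Prop_smc_sbrick}.

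For the well-definedness of $\phi$ on objects, I would observe that for $M \in \mod A$, the abelian group $\Hom_A(T, M) \cong \Hom_{\sfK^\rmb(\proj A)}(T, M)$ is naturally a right $\End_{\sfK^\rmb(\proj A)}(T)$-module, and that when $M \in H^0(U)^\perp$ the idempotent $e$ acts as zero, since $\Hom_{\sfK^\rmb(\proj A)}(U, M) \cong \Hom_A(H^0(U), M) = 0$; hence $\phi(M)$ descends to a right $B$-module. For the equivalence itself, I would use an Iyama--Yoshino--type reduction: the Verdier quotient of $\sfK^\rmb(\proj A)$ by $\thick U$ is equivalent to $\sfK^\rmb(\proj B)$, and under Proposition \ref{Prop_inttstr_ftors} the restricted $t$-structure coming from $T$ corresponds to the standard $t$-structure on $\sfK^\rmb(\proj B)$. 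Matching hearts yields an abstract equivalence between $\calW_U$ and $\mod B$, which one then identifies with $\phi$ by testing on the direct summands $T_1, \ldots, T_m$ of a complement of $U$ in $\add T$.

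For the identification of simple objects, I would use that by Lemma \ref{Lem_modA_[1]} each $X_i$ equals a brick $X'_i \in \brick A$ up to shift, and by Proposition \ref{Prop_smc_sbrick} the semibricks $\calX \cap \mod A$ and $\calX[-1] \cap \mod A$ generate $\ovcalT_T = \ovcalT_U$ and $\calF_T = \calF_U$, respectively. A symmetric statement for the co-Bongartz completion of $U$ gives the generators of $\calT_U$ and $\ovcalF_U$. Combined with the orthogonality $\ang{T_i}{X_j} = \delta_{ij} \dim_K \End_{\sfD^\rmb(\mod A)}(X_j)$, I can then check: for $i \le m$, the brick $X'_i$ lies in $\calW_U = \ovcalT_U \cap \ovcalF_U$ and is simple there, while for $i \ge m+1$ the nonzero pairing with the summand $T_i \in \add U$ forces $X'_i$ out of $\calW_U$. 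Finally, $\phi(X'_i)$ for $i \le m$ has its ``dimension vector'' over $B$ concentrated at the $i$-th indecomposable summand of a complement of $U$, hence is a simple $B$-module, and these $m$ bricks exhaust the simples of $\mod B$ up to isomorphism.

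The main obstacle, I expect, is the rigorous compatibility check between the concrete functor $\phi$ and the abstract reduction: although the equivalence $\calW_U \simeq \mod B$ can be extracted from \cite{Jasso, DIRRT} via the support $\tau$-tilting module $H^0(T)$ (using $\Hom_A(T, M) = \Hom_A(H^0(T), M)$ for $M \in \mod A$), the direct derived-categorical route requires careful tracking of how $\thick U \subset \sfK^\rmb(\proj A)$ corresponds to the two-sided ideal $\langle e \rangle \subset \End_{\sfK^\rmb(\proj A)}(T)$, paralleling the bookkeeping in the proof of \cite[Theorem 3.16]{Asai}.
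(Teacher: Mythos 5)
Your overall strategy — reduce via Iyama--Yang/Iyama--Yoshino and identify simples via the duality in Definition~\ref{Def_setting_T_X} and the semibrick correspondences — is in the same spirit as the paper's proof, but there is a concrete gap in the second half of the argument that makes it incomplete as written.

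You correctly observe (via Lemma~\ref{Lem_modA_[1]}) that each $X_i$ is either a brick in $\mod A$ or a shift of one, and you correctly argue that for $i \ge m+1$ the pairing $\ang{T_i}{X_i} \ne 0$ pushes the corresponding brick out of $\calW_U$ (for $\theta \in C^+(U)$ one has $\theta(X_i) \ne 0$). However, for $i \le m$ you assert that ``the brick $X'_i$ lies in $\calW_U$'' without establishing that $X_i$ actually lies in $\mod A$ rather than $(\mod A)[1]$. This is not a minor omission: if $X_i \in (\mod A)[1]$, then $X_i[-1] \in \calX[-1] \cap \mod A \subset \calF_T = \calF_U$ by Proposition~\ref{Prop_smc_sbrick}, and since $(\ovcalT_U,\calF_U)$ is a torsion pair, $\ovcalT_U \cap \calF_U = \{0\}$; so $X_i[-1]$ could \emph{not} lie in $\calW_U = \ovcalT_U \cap \ovcalF_U$. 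In other words, your claim requires $X_i \in \mod A$ for $i \le m$, and the orthogonality $\ang{T_i}{X_j} = \delta_{ij}d_j$ together with the semibrick generation statement does not deliver this. The paper closes exactly this gap by a mutation argument: since $T$ is the Bongartz completion of $U$ and $U \mid T/T_i$ for $i \le m$, the mutation $T'$ of $T$ at $T_i$ satisfies $\ovcalT_{T'} \subsetneq \ovcalT_T$ (via Lemma~\ref{Lem_two_Bongartz_inj}), and the proof of \cite[Theorem~3.12]{Asai} then shows this ``decreasing'' direction forces $X_i \in \mod A$ and in fact $X_i \in \ovcalT_T \cap \ovcalF_{T'} \subset \calW_U$. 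Your invocation of the co-Bongartz completion and its semibrick is a plausible ingredient, but you would still need an argument matching the two simple-minded collections on the common part, which is not supplied.

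A secondary issue: you assert without proof that $\phi(X'_i)$ is a simple $B$-module because its ``dimension vector is concentrated at the $i$-th summand.'' Being concentrated at one vertex of the ``dimension vector'' does not by itself imply simplicity over $B$. The paper sidesteps this by invoking \cite[Proposition~4.8]{IY}: the equivalence $\Hom_{\sfD^\rmb(\mod A)}(T,?)\colon T[{\ne}0]^\perp \to \mod\End_{\sfK^\rmb(\proj A)}(T)$ carries $\calX$ to the set of simple $\End(T)$-modules; restricting to the Serre subcategories $\calW_U$ and $\mod B$ (as in \cite[Theorem~3.15]{Asai}) then delivers both the equivalence and the identification of $\{X_1,\ldots,X_m\}$ with the simple $B$-modules in one stroke. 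Your proposal treats the equivalence and the simples separately, which forces you to prove more by hand and is where the gap arises.
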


\begin{proof}
First, we check that $X_i \in \calW_U$ for $i \in \{1,\ldots,m\}$.
Let $T'$ be the mutation of $T$ at $T_i$, then since $U$ is a direct summand of $T/T_i$,
we have $\ovcalT_{T'} \subset \ovcalT_{T/T_i} \subset \ovcalT_U = \ovcalT_T$
from Lemma \ref{Lem_two_Bongartz_inj}.
Then, from the proof of \cite[Theorem 3.12]{Asai},
$X_i \in \mod A$ follows,
and moreover, $X_i \in \ovcalT_T \cap \ovcalF_{T'}$.
By Lemma \ref{Lem_two_Bongartz_inj} again, 
we get $X_i \in \ovcalT_T \cap \ovcalF_{T'} \subset \ovcalT_U \cap \ovcalF_U = \calW_U$.

Now, we recall the equivalence
$\Hom_{\sfD^\rmb(\mod A)}(T,?)\colon T[{\ne}0]^\perp 
\to \mod \End_{\sfK^\rmb(\proj A)}(T)$
by Iyama--Yang \cite[Proposition 4.8]{IY},
which sends $\calX$ to the set of simple $\End_{\sfK^\rmb(\proj A)}(T)$-modules.
Then, the same strategy as the proof of \cite[Theorem 3.15]{Asai} yields that
the equivalence above is restricted to an equivalence
$\phi=\Hom_A(T,?) \colon \calW_U \to \mod B$ between their
Serre subcategories,
sending $\{X_1,X_2,\ldots,X_m\}$ to the simple objects in $\mod B$.
\end{proof}

Moreover, Jasso proved that the set $\twosilt B$ has a bijection from the subset 
$\twosilt_U A:=\{V \in \twosilt A \mid U \in \add V\}$ of $\twosilt A$
compatible with Proposition \ref{Prop_silt_ftors}.
Actually, this bijection can be extended to 
2-term presilting objects.

\begin{Prop}\label{Prop_Jasso_silt}
Let $U \in \twopresilt A$, and consider the functor
\begin{align*}
\red:=\Hom_{\sfK^\rmb(\proj A)}(T,?)/[U] \colon 
\sfK^\rmb(\proj A) \to \sfK^\rmb(\proj B), 
\end{align*}
where $[U]$ is the ideal in $\sfK^\rmb(\proj A)$ 
consisting of the maps factoring through
objects in $\add U$.
\begin{itemize}
\item[(1)]
\cite[Theorems 3.14, 3.16, 4.12]{Jasso}
The functor $\red$ induces a bijection 
$\red \colon \twosilt_U A \to \twosilt B$ satisfying
$\ovcalT_{\red(V)}=\phi(\ovcalT_V \cap \calW_U) \in \ftors B$
and $\ovcalF_{\red(V)}=\phi(\ovcalF_V \cap \calW_U) \in \ftorf B$.
\item[(2)]
Set $\twopresilt_U A:=\{V \in \twopresilt A \mid U \in \add V\}$.
Then, the functor $\red$ gives a bijection $\red \colon \twopresilt_U A \to \twopresilt B$.
\end{itemize}
\end{Prop}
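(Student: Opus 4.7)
The plan is to reduce (2) to the silting bijection in (1) by using that every 2-term presilting object is a direct summand of a 2-term silting one (Proposition \ref{Prop_presilt_silt}(1)). For well-definedness of $\red \colon \twopresilt_U A \to \twopresilt B$, I would take $V \in \twopresilt_U A$ and extend it to some $\tilde V \in \twosilt A$; since $U \in \add V \subset \add \tilde V$, this lands in $\twosilt_U A$, so $\red(\tilde V) \in \twosilt B$ by (1), and $\red(V) \in \add \red(\tilde V)$ is automatically 2-term presilting in $B$. Writing $\tilde V = U \oplus \tilde V'$ with $\tilde V'$ having no summand in $\add U$, a count via Proposition \ref{Prop_presilt_silt}(2) and (4) gives $|\tilde V'| = n - |U| = |\red(\tilde V)|$; the functor $\red$ then induces a bijection between the indecomposable summands of $\tilde V$ outside $\add U$ and those of $\red(\tilde V)$, so $\red(V)$ is basic. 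Surjectivity is handled symmetrically: extend $W \in \twopresilt B$ to $\tilde W \in \twosilt B$, pull back to $\tilde V \in \twosilt_U A$ via (1), and use the summand bijection to assemble the preimage.

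For injectivity, suppose $V_1, V_2 \in \twopresilt_U A$ with $\red(V_1) \cong \red(V_2)$. The plan is to compare Bongartz and co-Bongartz completions. Let $T_i, T_i' \in \twosilt_U A$ denote the Bongartz and co-Bongartz completions of $V_i$, so $\add V_i = \add T_i \cap \add T_i'$ by Lemma \ref{Lem_two_Bongartz_inj}(1). I claim that $\red$ carries the Bongartz completion of $V_i$ in $A$ to the Bongartz completion of $\red(V_i)$ in $B$, and dually for co-Bongartz. Granting this, $\red(V_1) \cong \red(V_2)$ would force $\red(T_1) \cong \red(T_2)$ and $\red(T_1') \cong \red(T_2')$, hence $T_1 \cong T_2$ and $T_1' \cong T_2'$ by (1), whence $\add V_1 = \add V_2$ and $V_1 \cong V_2$.

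The main obstacle is this Bongartz compatibility claim. The characterization that $T$ is the Bongartz completion of $V$ iff $V \in \add T$ and $\ovcalT_T = \ovcalT_V$ (Proposition \ref{Prop_silt_ftors}) combined with (1) gives $\ovcalT_{\red(T_i)} = \phi(\ovcalT_{T_i} \cap \calW_U) = \phi(\ovcalT_{V_i} \cap \calW_U)$, so the claim reduces to proving the presilting analogue $\ovcalT_{\red(V)} = \phi(\ovcalT_V \cap \calW_U)$ for every $V \in \twopresilt_U A$ (and its dual for $\ovcalF$). I would establish this by observing that the silting extensions of $V$ in $\twosilt A$ form the order interval $\{T \in \twosilt A \mid \calT_V \subset \ovcalT_T \subset \ovcalT_V\}$ by Lemma \ref{Lem_summand_silt}, whose endpoints are the Bongartz and co-Bongartz completions of $V$; transporting this interval through (1) and the equivalence $\phi$ yields an order interval of silting extensions of $\red(V)$ in $\twosilt B$, whose maximal element must coincide with both $\red(T)$ (by (1)) and the Bongartz completion of $\red(V)$ (by definition of Bongartz completion as maximum), forcing the desired identity.
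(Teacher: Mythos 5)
Your overall strategy agrees with the paper's hint: the one-line proof there also invokes Bongartz and co-Bongartz completions together with Lemma \ref{Lem_two_Bongartz_inj}. Your well-definedness and surjectivity steps are essentially sound. The difficulty lies entirely in the injectivity step, and there your argument has a genuine gap.

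The gap is in the claim that $\red$ sends the Bongartz completion $T_V$ of $V$ to the Bongartz completion of $\red(V)$. Your interval argument establishes that $\red(\twosilt_V A)$ is an order interval in $\twosilt B$ each of whose members contains $\red(V)$ as a direct summand, i.e.\ $\red(\twosilt_V A) \subset \twosilt_{\red(V)} B$. It does \emph{not} establish the reverse inclusion, so the maximal element $\red(T_V)$ of the image need not be the maximal element of $\twosilt_{\red(V)} B$. Unwinding via (1) and Lemma \ref{Lem_summand_silt}, the missing inclusion is equivalent to the presilting analogue of the torsion-class formula, namely $\ovcalT_{\red(V)}=\phi(\ovcalT_V\cap\calW_U)$ and $\calT_{\red(V)}=\phi(\calT_V\cap\calW_U)$ for \emph{presilting} $V$, which (1) only gives for silting objects. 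Every way I see of closing the interval argument runs into the same bottleneck: one needs to know that $\red(V)\in\add\red(T)$ implies $V\in\add T$ for $T\in\twosilt_U A$, and for this one needs to know that $\red$ is injective on indecomposable 2-term presilting objects extending $U$ --- which is precisely a piece of what you are trying to prove, so the argument is circular as written.

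One clean way to close the gap is a g-vector argument. The functor $\red$ induces a surjective group homomorphism $\overline{\red}\colon K_0(\proj A)\to K_0(\proj B)$ whose kernel is the $\Z$-span of the classes of the indecomposable summands $U_1,\ldots,U_r$ of $U$ (use the basis from the Bongartz completion of $U$ and Proposition \ref{Prop_presilt_silt}(4)). If $W_1,W_2$ are indecomposable, lie outside $\add U$, are summands of objects of $\twosilt_U A$, and satisfy $\red(W_1)\cong\red(W_2)$, then $[W_1]-[W_2]=\sum_k c_k[U_k]$ for some integers $c_k$. Choosing $m\gg 0$ so that all $c_k+m\ge 0$, the 2-term presilting objects $W_1\oplus U^{\oplus m}$ and $W_2\oplus\bigoplus_k U_k^{\oplus(c_k+m)}$ have equal classes in $K_0(\proj A)$, so by Proposition \ref{Prop_cone_summand}(1) they are isomorphic, and Krull--Schmidt (matching the summands outside $\add U$) forces $W_1\cong W_2$. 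This injectivity on indecomposables is what both the interval-transport step and your Bongartz-compatibility claim were really lacking.
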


\begin{proof}
The part (2) is verified by considering 
the Bongartz completion and the co-Bongartz completion of each $V$
and using Lemma \ref{Lem_two_Bongartz_inj}.
\end{proof}

Next, we will investigate the relationship between the Grothendieck groups
$K_0(\proj A)_\R$ and $K_0(\proj B)_\R$ in terms of the functor $\phi$.
For this purpose, we define a subset $N_U \subset K_0(\proj A)_\R$ by
\begin{align*}
N_U:=\{ \theta \in K_0(\proj A)_\R \mid 
\calT_U \subset \calT_\theta \subset \ovcalT_\theta \subset \ovcalT_U \}
\end{align*}
for each $U \in \twopresilt A$.
If $\theta \in N_U$, then $\ovcalF_\theta \subset \ovcalF_U$;
hence, $\calW_\theta \subset \calW_U$.
By definition, $N_U$ is a union of some TF equivalent classes in $K_0(\mod A)_\R$.

The following property is easy to deduce, but crucial.

\begin{Lem}\label{Lem_N_U_open}
Let $U \in \twopresilt A$.
Then, $N_U \subset K_0(\proj A)_\R$ is an open neighborhood of $C^+(U)$.
\end{Lem}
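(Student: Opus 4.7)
The plan is to verify two things separately: first, that $C^+(U) \subset N_U$, and second, that $N_U$ is open in $K_0(\proj A)_\R$. The inclusion is immediate from Theorem \ref{Thm_presilt_TF}: any $\theta \in C^+(U)$ satisfies $(\calT_\theta, \ovcalF_\theta) = (\calT_U, \ovcalF_U)$ and $(\ovcalT_\theta, \calF_\theta) = (\ovcalT_U, \calF_U)$, so the defining chain $\calT_U \subset \calT_\theta \subset \ovcalT_\theta \subset \ovcalT_U$ holds with equalities throughout.

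For openness, I first rewrite $N_U$ as the intersection of two conditions. Since $\calT_U = \Fac H^0(U)$ is the smallest torsion class containing $H^0(U)$, and $\calT_\theta$ is closed under quotients and direct sums, the inclusion $\calT_U \subset \calT_\theta$ is equivalent to $H^0(U) \in \calT_\theta$. Dually, from the torsion pair $(\ovcalT_U, \calF_U) = ({}^\perp H^{-1}(\nu U), \Sub H^{-1}(\nu U))$, we have $\ovcalT_\theta \subset \ovcalT_U$ iff $\calF_\theta \supset \calF_U = \Sub H^{-1}(\nu U)$, which (since $\calF_\theta$ is closed under submodules and direct sums) is in turn equivalent to $H^{-1}(\nu U) \in \calF_\theta$. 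Thus
\[
N_U = \{\theta \in K_0(\proj A)_\R \mid H^0(U) \in \calT_\theta \text{ and } H^{-1}(\nu U) \in \calF_\theta \},
\]
matching the description given in the introduction.

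Now the condition $H^0(U) \in \calT_\theta$ unwinds by definition to $\theta([X]) > 0$ for every nonzero quotient module $X$ of $H^0(U)$. As $X$ ranges over such quotients, the class $[X]$ lies in the finite set $\{v \in (\Z_{\ge 0})^n \setminus \{0\} \mid v \le [H^0(U)] \text{ componentwise in the basis } ([S_i])_{i=1}^n\}$, since the composition factors of $X$ form a submultiset of those of $H^0(U)$. Hence this condition cuts out a finite intersection of strict open half-spaces in $K_0(\proj A)_\R$, and is therefore open. The same argument applied to nonzero submodules of $H^{-1}(\nu U)$ shows $\{\theta \mid H^{-1}(\nu U) \in \calF_\theta\}$ is open, so $N_U$ is open as the intersection of the two.

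The only mildly nontrivial step is the equivalence of the two descriptions of $N_U$, which relies on the closure properties of $\calT_\theta$ and $\calF_\theta$ together with the already-recorded fact that $(\ovcalT_U, \calF_U)$ is a torsion pair; everything else reduces to the finite-dimensionality observation that bounds dimension vectors of sub- and quotient modules of a fixed module.
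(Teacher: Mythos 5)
Your proposal is correct and follows essentially the same route as the paper: rewrite $N_U$ as $\{\theta \mid H^0(U)\in\calT_\theta,\ H^{-1}(\nu U)\in\calF_\theta\}$, observe this is cut out by finitely many strict linear inequalities (the paper leaves this implicit, you spell out the bound on dimension vectors), and obtain $C^+(U)\subset N_U$ from Proposition \ref{Prop_Yurikusa}/Theorem \ref{Thm_presilt_TF}. The one small inaccuracy, which does not affect the argument, is the parenthetical claim that $\Fac H^0(U)$ is ``the smallest torsion class containing $H^0(U)$''; in general $\Fac M$ need not be a torsion class (it is here only because $H^0(U)$ is $\tau$-rigid), but your actual justification of the equivalence $\calT_U\subset\calT_\theta\iff H^0(U)\in\calT_\theta$ only uses that $\calT_\theta$ is closed under quotients and direct sums, which is fine.
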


\begin{proof}
We can check that 
$\calT_U \subset \calT_\theta \subset \ovcalT_\theta \subset \ovcalT_U$
if and only if both $H^0(U) \in \calT_\theta$ and $H^{-1}(\nu U) \in \calF_\theta$ hold.
The latter conditions can be written as 
a collection of finitely many strict linear inequalities on $\theta$,
so $N_U$ is an open subset of $K_0(\proj A)_\R$.
Moreover, Proposition \ref{Prop_Yurikusa} tells us that $C^+(U)$ is contained in $N_U$.
\end{proof}

In the setting of Proposition \ref{Prop_Jasso_wide},
$S^B_i:=\phi(X_i)$ is a simple $B$-module, 
so set $P^B_i \in \proj B$ as the projective cover of $S^B_i$.
We define a linear map $\pi \colon K_0(\proj A)_\R \to K_0(\proj B)_\R$ by 
\begin{align*}
\pi(\theta):=\sum_{i=1}^m \frac{\theta(X_i)}{d_i}[P^B_i],
\end{align*}
where $d_i:=\dim_K \End_A(X_i)=\dim_K \End_B(\phi(X_i))$.
Then, $\pi$ satisfies the following nice properties in the subset $N_U$.

\begin{Lem}\label{Lem_p(N_U)}
Let $U \in \twopresilt A$.
Then, the following assertions hold.
\begin{itemize}
\item[(1)]
The restriction $\pi|_{N_U} \colon N_U \to K_0(\proj B)_\R$ is surjective.
\item[(2)]
For any $\theta \in K_0(\proj A)_\R$ and $M \in \calW_U$, 
we have $\pi(\theta)(\phi(M))=\theta(M)$.
\item[(3)]
For any $\theta \in N_U$, we have the following equations in $\mod B$:
\begin{align*}
\phi(\ovcalT_\theta \cap \calW_U) &= \ovcalT_{\pi(\theta)}, &
\phi(\calF_\theta \cap \calW_U) &= \calF_{\pi(\theta)}, \\
\phi(\calT_\theta \cap \calW_U) &= \calT_{\pi(\theta)}, &
\phi(\ovcalF_\theta \cap \calW_U) &= \ovcalF_{\pi(\theta)}, &
\phi(\calW_\theta) &= \calW_{\pi(\theta)}.
\end{align*}
\item[(4)]
Let $\theta, \theta' \in N_U$.
The elements $\theta$ and $\theta'$ are TF equivalent in $K_0(\proj A)_\R$ if and only if 
$\pi(\theta)$ and $\pi(\theta')$ are TF equivalent in $K_0(\proj B)_\R$.
In particular, if $\pi(\theta)=\pi(\theta')$, 
then $\theta$ and $\theta'$ are TF equivalent.
\end{itemize}
\end{Lem}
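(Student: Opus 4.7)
The plan is to prove the parts in the order $(2), (1), (3), (4)$: part $(2)$ is a direct bookkeeping with the dual basis of Definition \ref{Def_setting_T_X} that will be used throughout, and part $(3)$ is the structural dictionary from which $(4)$ falls out.

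\textbf{Part (2).} Given $M \in \calW_U$, I would write $[\phi(M)] = \sum_{i=1}^m c_i [S^B_i]$ in $K_0(\mod B)$ using a composition series of the $B$-module $\phi(M)$, and pull it back through the equivalence $\phi$ of Proposition \ref{Prop_Jasso_wide} to a filtration of $M$ in $\calW_U$ with factors $X_1,\ldots,X_m$. Since the inclusion $\calW_U \hookrightarrow \mod A$ is exact, this filtration is also one in $\mod A$, whence $[M] = \sum c_i [X_i]$ in $K_0(\mod A)$ and $\theta(M) = \sum c_i \theta(X_i)$. The Euler pairing $\langle [P^B_i], [S^B_j] \rangle = d_i \delta_{ij}$ then immediately yields $\pi(\theta)(\phi(M)) = \sum_i (\theta(X_i)/d_i)\cdot c_i d_i = \theta(M)$.

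\textbf{Part (1).} Using the dual-basis property $\langle T_j, X_i \rangle = d_i \delta_{ij}$ from Definition \ref{Def_setting_T_X}, I get $\pi([T_j]) = [P^B_j]$ for $j \le m$ and $\pi([T_j]) = 0$ for $j > m$, so $\pi$ is surjective onto $K_0(\proj B)_\R$. For the restriction $\pi|_{N_U}$, fix $\theta_0 := \sum_{j > m}[T_j] \in C^+(U) \subset N_U$. Given $\eta = \sum_{i=1}^m b_i[P^B_i]$, set $\theta_\eta := \sum_{i=1}^m b_i[T_i]$, so that $\pi(\theta_\eta + r\theta_0) = \eta$ for every $r > 0$. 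Since $N_U$ is open (Lemma \ref{Lem_N_U_open}) and positively scale-invariant (the defining conditions on $\theta$ are sign-only), and since $r^{-1}(\theta_\eta + r\theta_0) \to \theta_0 \in N_U$ as $r \to \infty$, the element $\theta_\eta + r\theta_0$ lies in $N_U$ for all sufficiently large $r$.

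\textbf{Part (3).} I would show $\phi(\ovcalT_\theta \cap \calW_U) = \ovcalT_{\pi(\theta)}$; the other four equalities follow by the same method. The inclusion $\subseteq$ is immediate from $(2)$, since any quotient of $\phi(M)$ in $\mod B$ pulls back through $\phi$ to a quotient of $M$ in $\calW_U$, and this is also a quotient in $\mod A$ by exactness of $\calW_U \hookrightarrow \mod A$. For $\supseteq$, take $M \in \calW_U$ with $\phi(M) \in \ovcalT_{\pi(\theta)}$ and let $N$ be any quotient of $M$ in $\mod A$. Decompose $N$ along the torsion pair $(\calT_U, \ovcalF_U)$ as $0 \to N_t \to N \to N_f \to 0$ with $N_t \in \calT_U$ and $N_f \in \ovcalF_U$. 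Then $\theta(N_t) \ge 0$ because $\calT_U \subset \calT_\theta$ (the defining condition of $N_U$), and $N_f \in \ovcalT_U \cap \ovcalF_U = \calW_U$ because $N$ is a quotient of $M \in \ovcalT_U$, so $(2)$ applies to give $\theta(N_f) = \pi(\theta)(\phi(N_f)) \ge 0$. Hence $\theta(N) \ge 0$, so $M \in \ovcalT_\theta$.

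\textbf{Part (4).} The forward direction is immediate from $(3)$. For the converse, assume $\pi(\theta)$ and $\pi(\theta')$ are TF equivalent; by Lemma \ref{Lem_TF_coincide_num} it suffices to show that the four intersections $\ovcalT_\theta \cap \calF_{\theta'}$, $\ovcalT_{\theta'} \cap \calF_\theta$, $\calT_\theta \cap \ovcalF_{\theta'}$, $\calT_{\theta'} \cap \ovcalF_\theta$ all vanish. For $M$ in the first, decomposing $M$ along $(\calT_U, \ovcalF_U)$ shows the $\calT_U$-torsion part lies in $\calT_{\theta'} \cap \calF_{\theta'} = \{0\}$, so $M \in \ovcalF_U$; combined with $M \in \ovcalT_\theta \subset \ovcalT_U$ this forces $M \in \calW_U$, whereupon $(3)$ identifies $\phi(M) \in \ovcalT_{\pi(\theta)} \cap \calF_{\pi(\theta')} = \{0\}$, so $M = 0$. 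The other three cases are symmetric. The main obstacle throughout $(3)$ and $(4)$ will be the disciplined bookkeeping, since quotients and submodules in $\mod A$ are strictly finer than those in $\calW_U$; the sandwich relations $\calT_U \subset \calT_\theta \subset \ovcalT_\theta \subset \ovcalT_U$ and their torsion-free duals for $\theta \in N_U$ must be used systematically to peel off the external pieces before $(2)$ can be invoked.
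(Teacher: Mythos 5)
Your proof is correct, and for parts (3) and (4) it takes a route that differs from the paper's in a way worth noting. For (1) and (2) your argument is essentially the paper's (scale-invariance plus openness of $N_U$ for (1), and reduction to the simple objects $X_i$ via the dual basis for (2)). For (3), the paper proves the equalities by showing $(\ovcalT_\theta \cap \calW_U,\ \calF_\theta \cap \calW_U)$ is a torsion pair in $\calW_U$: it takes the canonical short exact sequence of $M \in \calW_U$ along $(\ovcalT_\theta,\calF_\theta)$ in $\mod A$, shows both pieces land back in $\calW_U$ (using $\theta \in N_U$), and then concludes by nesting. You instead prove the two inclusions directly, decomposing a quotient $N$ of $M$ in $\mod A$ along $(\calT_U,\ovcalF_U)$ (and dually along $(\ovcalT_U,\calF_U)$ for the torsion-free statements) so that the $\calW_U$-piece can be fed into (2). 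Both decompositions serve the same purpose of isolating the part of the module visible inside $\calW_U$; the paper's cut makes the torsion pair structure manifest, while yours handles each defining inequality by hand. One small point to tighten: when you write ``so (2) applies to give $\theta(N_f)=\pi(\theta)(\phi(N_f)) \ge 0$,'' the equality is from (2) but the $\ge 0$ comes separately from the fact that $\phi(N_f)$ is a quotient of $\phi(M)\in\ovcalT_{\pi(\theta)}$ in $\mod B$ (which in turn uses that a surjection $M \twoheadrightarrow N_f$ between objects of the wide subcategory $\calW_U$ is a surjection in $\calW_U$). For (4), the paper's converse direction simply invokes Jasso's bijection result (\cite[Theorem 3.12]{Jasso}) between torsion classes sandwiched in $[\calT_U,\ovcalT_U]$ and torsion classes in $\calW_U$, whereas you give a self-contained argument via Lemma \ref{Lem_TF_coincide_num}, peeling each $M$ in the four intersections down to an object of $\calW_U$ and applying (3). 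Your argument is more elementary and avoids the external reference, at the cost of four symmetric verifications; the paper's is shorter given the machinery on hand.
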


\begin{proof}
(1)
Since $N_U$ is an open neighborhood of $[U]$,
the image $\pi(N_U)$ is an open neighborhood of $0$ in $K_0(\proj B)_\R$.
Moreover, $N_U=\R_{>0} \cdot N_U$ follows from the definition,
so $\pi(N_U)=\R_{>0} \cdot \pi(N_U)$.
Thus, $\pi(N_U)$ must be $K_0(\proj B)_\R$.

(2)
Since $\theta$ and $\pi(\theta)$ are linear maps,
we may assume that $M$ is simple in $\calW_U$; in other words, 
$M=X_j$ for some $j \in \{1,2,\ldots,m\}$.
Then, $\phi(M)=S_j^B$.
Because $P_i^B$ is the projective cover of $S_i^B$ in $\mod B$ 
for $i \in \{1,2,\ldots,m\}$,
we have $\ang{P_j^B}{S_j^B}=d_j$ and $\ang{P_i^B}{S_j^B}=0$ for $i \ne j$.
Therefore, $\pi(\theta)(\phi(M))=\pi(\theta)(S_j^B)=\theta(X_j)$ as desired.

(3)
From (2), we have 
$\phi(\ovcalT_\theta \cap \calW_U) \subset \ovcalT_{\pi(\theta)}$ and 
$\phi(\calF_\theta \cap \calW_U) \subset \calF_{\pi(\theta)}$.
Thus, if 
$(\phi(\ovcalT_\theta \cap \calW_U),\phi(\calF_\theta \cap \calW_U))$ 
is a torsion pair in $\mod B$,
then we get
$\phi(\ovcalT_\theta \cap \calW_U) = \ovcalT_{\pi(\theta)}$ and 
$\phi(\calF_\theta \cap \calW_U) = \calF_{\pi(\theta)}$.

To show that $(\phi(\ovcalT_\theta \cap \calW_U),\phi(\calF_\theta \cap \calW_U))$ 
is a torsion pair in $\mod B$,
it suffices to check that $(\ovcalT_\theta \cap \calW_U, \calF_\theta \cap \calW_U)$
is a torsion pair in $\calW_U$.
Clearly, $\Hom_A(\ovcalT_\theta \cap \calW_U, \calF_\theta \cap \calW_U)=0$.
Moreover, for all $M \in \calW_U$, 
there exists a short exact sequence $0 \to M' \to M \to M'' \to 0$ 
with $M' \in \ovcalT_\theta$ and $M'' \in \calF_\theta$ in $\mod A$.
Since $\theta \in N_U$, we get $M' \in \ovcalT_\theta \subset \ovcalT_U$.
On the other hand, since $M \in \calW_U$, we get $M' \in \ovcalF_U$.
Thus, $M'$ also belongs to $\calW_U$, and so does $M''$.
Therefore, $0 \to M' \to M \to M'' \to 0$ is a short exact sequence
with $M' \in \ovcalT_\theta \cap \calW_U$ and 
$M'' \in \ovcalF_\theta \cap \calW_U$ in $\calW_U$.
We have proved that $(\ovcalT_\theta \cap \calW_U, \calF_\theta \cap \calW_U)$
is a torsion pair in $\calW_U$.

From the argument in the first paragraph, 
$\phi(\ovcalT_\theta \cap \calW_U) = \ovcalT_{\pi(\theta)}$ and 
$\phi(\calF_\theta \cap \calW_U) = \calF_{\pi(\theta)}$ hold.

Similarly, we obtain
$\phi(\calT_\theta \cap \calW_U) = \calT_{\pi(\theta)}$ and 
$\phi(\ovcalF_\theta \cap \calW_U) = \ovcalF_{\pi(\theta)}$.

Finally, since $\calW_\theta \subset \calW_U$ by $\theta \in N_U$,
\begin{align*}
\phi(\calW_\theta)
=\phi(\ovcalT_\theta \cap \ovcalF_\theta \cap \calW_U)
=\phi(\ovcalT_\theta \cap \calW_U) \cap \phi(\ovcalF_\theta \cap \calW_U)
=\ovcalT_{\pi(\theta)} \cap \ovcalF_{\pi(\theta)}
=\calW_{\pi(\theta)}.
\end{align*}

(4)
\cite[Theorem 3.12]{Jasso} tells us the following property: 
let $\calT,\calT'$ be two torsion classes in $\mod A$ satisfying 
$\calT_U \subset \calT,\calT' \subset \ovcalT_U$,
then $\calT \cap \calW_U=\calT' \cap \calW_U$ holds if and only if $\calT=\calT'$.
This fact and (3) imply the assertion.
\end{proof}

Therefore, the wall-chamber structure of $N_U \subset K_0(\proj A)_\R$ 
recovers that of $K_0(\proj B)_\R$ via $\pi$ as follows.

\begin{Thm}\label{Thm_N_U_TF_union}
Let $U \in \twopresilt A$. Then, we have the following properties.
\begin{itemize}
\item[(1)]
For any $\theta \in N_U$ and $M \in \calW_U$,
the wall $\Theta_{\phi(M)}$ coincides with $\pi(\Theta_M \cap N_U)$.
\item[(2)]
The linear map $\pi$ induces a bijection 
\begin{align*}
\{\textup{TF equivalence classes in $N_U$}\} & \to 
\{\textup{TF equivalence classes in $K_0(\proj B)_\R$}\}, \\
[\theta] &\mapsto \pi([\theta]).
\end{align*}
\item[(3)]
We have the following commutative diagram:
\begin{align*}
\begin{xy}
(  0,  8) *+{\twopresilt_U A}="1",
( 80,  8) *+{\twopresilt B}="2",
(  0, -8) *+{\{\textup{TF equivalence classes in $N_U$}\}}="3",
( 80, -8) *+{\{\textup{TF equivalence classes in $K_0(\proj B)_\R$}\}}="4".
\ar_{C^+} "1";"3"
\ar_{C^+} "2";"4"
\ar^{\red}_{\cong} "1";"2"
\ar^{\pi}_{\cong} "3";"4"
\end{xy}.
\end{align*}
\end{itemize}
\end{Thm}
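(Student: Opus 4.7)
The plan is to reduce all three parts to Lemma \ref{Lem_p(N_U)}, which already packages the interaction between $\pi$, $\phi$, and the (numerical) torsion pairs. The key observation is that for $\theta \in N_U$ the assignment $\phi$ restricts to an equivalence $\calW_\theta \to \calW_{\pi(\theta)}$ between wide subcategories, since $\calW_\theta \subset \calW_U$ and $\phi(\calW_\theta) = \calW_{\pi(\theta)}$ by Lemma \ref{Lem_p(N_U)}(3).

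For part (1), the inclusion $\pi(\Theta_M \cap N_U) \subset \Theta_{\phi(M)}$ is immediate: if $\theta \in N_U$ and $M \in \calW_\theta$, then $\phi(M) \in \phi(\calW_\theta) = \calW_{\pi(\theta)}$, hence $\pi(\theta) \in \Theta_{\phi(M)}$. For the reverse inclusion, take $\eta \in \Theta_{\phi(M)}$ and use the surjectivity of $\pi|_{N_U}$ from Lemma \ref{Lem_p(N_U)}(1) to write $\eta = \pi(\theta)$ with $\theta \in N_U$. Then $\phi(M) \in \calW_\eta = \phi(\calW_\theta)$, and since $\phi \colon \calW_U \to \mod B$ is an equivalence and $M \in \calW_U$, we conclude $M \in \calW_\theta$, i.e.\ $\theta \in \Theta_M \cap N_U$, so $\eta \in \pi(\Theta_M \cap N_U)$.

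For part (2), the map is well-defined because Lemma \ref{Lem_p(N_U)}(4) shows that TF equivalent elements of $N_U$ are sent by $\pi$ to TF equivalent elements of $K_0(\proj B)_\R$; combining this with the same lemma's converse yields $\pi([\theta]) = \pi(N_U) \cap [\pi(\theta)] = [\pi(\theta)]$, the last equality using surjectivity. Injectivity is again the ``only if'' part of Lemma \ref{Lem_p(N_U)}(4), and surjectivity is again Lemma \ref{Lem_p(N_U)}(1): any TF class $[\eta]$ in $K_0(\proj B)_\R$ has a lift $\eta = \pi(\theta)$ with $\theta \in N_U$, and $[\theta]$ maps to $[\eta]$.

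For part (3), given $V \in \twopresilt_U A$, first observe $C^+(V) \subset N_U$: for $\theta \in C^+(V)$, Theorem \ref{Thm_presilt_TF} gives $(\ovcalT_\theta, \calT_\theta) = (\ovcalT_V, \calT_V)$, and $U \in \add V$ forces $\calT_U \subset \calT_V \subset \ovcalT_V \subset \ovcalT_U$ via Lemma \ref{Lem_two_Bongartz_inj}(3). Then the task is to identify the TF class $[\pi(\theta)] = \pi(C^+(V))$ with $C^+(\red(V))$, which by Theorem \ref{Thm_presilt_TF} amounts to verifying $\ovcalT_{\pi(\theta)} = \ovcalT_{\red(V)}$ and $\calT_{\pi(\theta)} = \calT_{\red(V)}$ (the torsion-free sides being determined). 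By Lemma \ref{Lem_p(N_U)}(3), the left-hand sides equal $\phi(\ovcalT_V \cap \calW_U)$ and $\phi(\calT_V \cap \calW_U)$ respectively, so it suffices to identify the right-hand sides with these. The main obstacle is that Proposition \ref{Prop_Jasso_silt}(1) is stated only for silting objects, so the strategy is to pass to the Bongartz completion $T_V$ and co-Bongartz completion $T'_V$ of $V$, both of which lie in $\twosilt_U A$ since $U \in \add V$. Then $\ovcalT_V = \ovcalT_{T_V}$ and $\calT_V = \calT_{T'_V}$, and since silting objects give coincident $\calT$ and $\ovcalT$, the Jasso identities applied to $T_V$ and $T'_V$ yield $\ovcalT_{\red(T_V)} = \phi(\ovcalT_V \cap \calW_U)$ and $\calT_{\red(T'_V)} = \phi(\calT_V \cap \calW_U)$. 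It remains to check that $\red(T_V)$ and $\red(T'_V)$ are respectively the Bongartz and co-Bongartz completions of $\red(V)$ in $\twosilt B$, which follows from the fact that $\red$ preserves $\add$-containment together with Lemma \ref{Lem_two_Bongartz_inj}; this gives $\ovcalT_{\red(V)} = \ovcalT_{\red(T_V)}$ and $\calT_{\red(V)} = \calT_{\red(T'_V)}$, completing the square.
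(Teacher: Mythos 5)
Your proof is correct and follows the same skeleton as the paper's: parts (1) and (2) are unwound directly from Lemma \ref{Lem_p(N_U)}, exactly as the paper does (it just states ``this follows from Lemma \ref{Lem_p(N_U)}(3)'' and ``Lemma \ref{Lem_p(N_U)}(1), (4)''), and part (3) is proved by combining $C^+(V)\subset N_U$, Theorem \ref{Thm_presilt_TF}, Lemma \ref{Lem_p(N_U)}(3), and Proposition \ref{Prop_Jasso_silt} — the same ingredients the paper cites in one line.

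The one place where your write-up is thinner than what's actually needed is the last step of part (3). You assert that $\red(T_V)$ and $\red(T'_V)$ are the Bongartz and co-Bongartz completions of $\red(V)$ in $\twosilt B$ and attribute this to ``$\red$ preserves $\add$-containment together with Lemma \ref{Lem_two_Bongartz_inj}.'' Preservation of $\add$-containment only yields $\red(V)\in\add\red(T_V)$, and Lemma \ref{Lem_summand_silt} then gives $\calT_{\red(V)}\subset\ovcalT_{\red(T_V)}\subset\ovcalT_{\red(V)}$, which is weaker than the required equality $\ovcalT_{\red(T_V)}=\ovcalT_{\red(V)}$. To actually pin down the Bongartz completion you also need that $\red$ is an order-isomorphism at the level of torsion classes — this follows from the injectivity of $\calT\mapsto\calT\cap\calW_U$ on the interval $[\calT_U,\ovcalT_U]$ (Jasso's Theorem 3.12, already invoked in Lemma \ref{Lem_p(N_U)}(4)) combined with the identities in Proposition \ref{Prop_Jasso_silt}(1), or alternatively one can short-circuit the whole Bongartz detour by checking directly that $\pi([V])=[\red(V)]$ (both equal $\sum_{j\le m} a_j[P_j^B]$ when $[V]=\sum_j a_j[T_j]$), which forces the two TF classes $\pi(C^+(V))$ and $C^+(\red(V))$ to coincide. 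This is a small patch rather than a change of approach; the structure of your argument is sound and matches the paper.
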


\begin{proof}
(1)
This follows from Lemma \ref{Lem_p(N_U)} (3).

(2)
Note that $\pi|_{N_U} \colon N_U \to K_0(\proj B)_\R$ is surjective 
by Lemma \ref{Lem_p(N_U)} (1).
Then, Lemma \ref{Lem_p(N_U)} (4) yields that 
the linear map $\pi$ sends each TF equivalence class $[\theta]$ in $N_U$
to a TF equivalence class $\pi([\theta])$ in $K_0(\proj B)_\R$, 
and that this correspondence is bijective.

(3)
Let $V \in \twopresilt_U A$.
By Theorem \ref{Thm_presilt_TF}, $C^+(V)$ is a TF equivalence class in $K_0(\proj A)_\R$,
and it is contained in $N_U$ by Lemma \ref{Lem_two_Bongartz_inj}.
Then, the bijection in (2) sends $C^+(V)$ to 
the TF equivalence class $\pi(C^+(V))$ in $K_0(\proj B)_\R$,
which must be $C^+(\red(V))$ by Proposition \ref{Prop_Jasso_silt} and Lemma
\ref{Lem_p(N_U)} (3).
\end{proof}

We remark that $N_U$ itself is not very important 
to investigate the wall-chamber structure of $K_0(\proj B)_\R$.
By using Theorem \ref{Thm_N_U_TF_union},
we can obtain all information on walls, chambers and TF equivalence classes 
in $K_0(\proj B)_\R$
from a subset $N' \subset N_U$ and the restriction $\pi|_{N'} \colon N' \to \pi(N')$
as long as $0 \in K_0(\proj B)_\R$ is in the interior of $\pi(N')$ in $K_0(\proj B)_\R$.  

We conclude this subsection by giving an example.

\begin{Ex}
Let $A$ be the path algebra $K(1 \to 2 \to 3)$, and
take injections $f \colon P_2 \to P_1$ and $g \colon P_3 \to P_1$.
We can check that $U:=(P_2 \xrightarrow{f} P_1)$ 
and $V:=(P_3 \xrightarrow{g} P_1)$ belong to $\twopresilt A$.

Consider the $\tau$-tilting reduction at $U$.
We here use the setting of Proposition \ref{Prop_Jasso_wide}
for the Bongartz completion $T=T_1 \oplus T_2 \oplus T_3$ of $U$ 
as $T_1=P_1$, $T_2=P_3$, and $T_3=U$.

Then, the algebra $B$ is isomorphic to $K(1 \to 2)$,
and the simple objects of $\calW_U$ are $X_1=\Coker g=P_1/P_3$ and $X_2=S_3$.
The indecomposable objects of $\calW_U$ are $X_1$, $X_2$ and $P_1$,
which correspond to $S^B_1$, $S^B_2$ and $P^B_1$ in $\mod B$, respectively.
The walls for the indecomposable modules in $\calW_U$ are
\begin{align*}
&\Theta_{X_1}=\R_{\ge 0}([P_1]-[P_2]) \oplus \R[P_3], \quad
\Theta_{X_2}=\R[P_1] \oplus \R[P_2], \\
&\Theta_{P_1}=\R_{\ge 0}([P_1]-[P_2]) \oplus \R_{\ge 0}([P_2]-[P_3]).
\end{align*}
Since $H^0(U)=S_1$ and $H^{-1}(\nu U)=S_2$, 
we get $N_U=\R_{>0}[P_1] \oplus \R_{>0}(-[P_2]) \oplus \R[P_3]$,
so 
\begin{align*}
&\Theta_{X_1} \cap N_U=\R_{>0}([P_1]-[P_2]) \oplus \R[P_3], \quad
\Theta_{X_2} \cap N_U=\R_{>0}[P_1] \oplus \R_{>0}(-[P_2]), \\
&\Theta_{P_1} \cap N_U=\R_{>0}([P_1]-[P_2]) \oplus \R_{\ge 0}([P_1]-[P_3]).
\end{align*}

Since $[X_1]=[S_1]+[S_2]$ and $[X_2]=[S_3]$ in $K_0(\mod A)$,
the linear map $\pi$ sends $[P_1],[P_2],[P_3]$ to $[P^B_1],[P^B_1],[P^B_2]$, respectively.
Thus, 
\begin{align*}
\pi(\Theta_{X_1} \cap N_U)&=\R[P^B_2], &
\pi(\Theta_{X_2} \cap N_U)&=\R[P^B_1], &
\pi(\Theta_{P_1} \cap N_U)&=\R_{\ge 0}([P^B_1]-[P^B_2]).
\end{align*}

On the other hand, the set $\twosilt_U A$ has exactly five elements:
\begin{align*}
T^{(1)}&:=P_1 \oplus P_3 \oplus U, & 
T^{(2)}&:=P_1 \oplus V \oplus U, &
T^{(3)}&:=V \oplus P_3[1] \oplus U, \\
T^{(4)}&:=P_2[1] \oplus P_3 \oplus U, &
T^{(5)}&:=P_2[1] \oplus P_3[1] \oplus U, 
\end{align*}
and the functor $\red=\Hom_{\sfK^\rmb(\proj A)}(T,?)/[U]$ acts to 
their indecomposable direct summands as 
\begin{align*}
P_1 & \mapsto P^B_1, & P_3 & \mapsto P^B_2, & V& \mapsto (P^B_2 \to P^B_1), &
P_2[1] & \mapsto P^B_1[1], & P_3[1] & \mapsto P^B_2[1].
\end{align*}
The corresponding elements in $K_0(\proj A)_\R$ are sent by $\pi$ as 
\begin{align*}
[P_1] & \mapsto [P^B_1], & [P_3] & \mapsto [P^B_2], & [V] & \mapsto [P^B_1]-[P^B_2], &
-[P_2] & \mapsto -[P^B_1], & -[P_3] & \mapsto -[P^B_2].
\end{align*}
Thus, $\pi$ is compatible with the bijection $\red \colon \twosilt_U A \to \twosilt B$.

Therefore, the local wall-chamber structures around $[U] \in K_0(\proj A)_\R$ and 
$0 \in K_0(\proj B)_\R$ are depicted as in Figure \ref{figure_local} below.
\begin{figure}
\includegraphics[width=9.5cm]{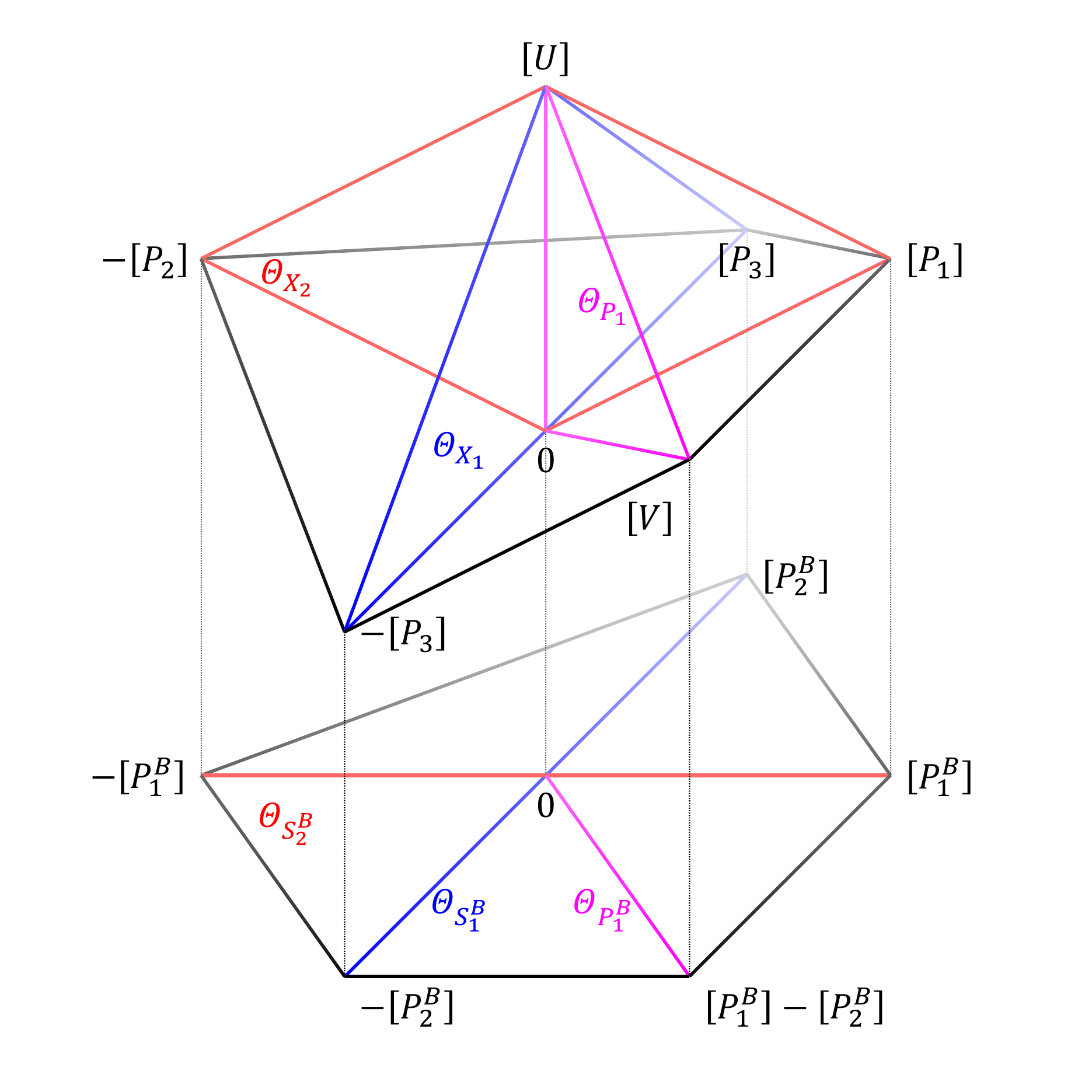}
\caption{The local wall-chamber structures around $[U] \in K_0(\proj A)_\R$ and 
$0 \in K_0(\proj B)_\R$}\label{figure_local}
\end{figure}
\end{Ex}

\subsection{Application to $\tau$-tilting finiteness}

By \cite[Theorem 3.8]{DIJ} and Proposition \ref{Prop_silt_ftors}, 
an algebra $A$ is said to be \textit{$\tau$-tilting finite} 
if $\twosilt A$ is a finite set,
or equivalently, if the set $\twoipresilt A$ of 2-term indecomposable presilting objects 
in $\sfK^\rmb(\proj A)$ is a finite set.
In this section, we give a proof of the following characterization of 
$\tau$-tilting finiteness in terms of the cones for 2-term silting objects
by using the subset $N_U$ for each $U \in \twopresilt A$.

\begin{Thm}\label{Thm_finite_cover_eq}
The algebra $A$ is $\tau$-tilting finite 
if and only if $K_0(\proj A)_\R=\bigcup_{T \in \twosilt A} C(T)$.
\end{Thm}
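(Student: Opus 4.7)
The plan is to prove the nontrivial ``if'' direction by induction on the number $n$ of simple modules, using $\tau$-tilting reduction together with the local wall-chamber structure described in Theorem \ref{Thm_N_U_TF_union}. The ``only if'' direction is already known from \cite{DIJ}.

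The base case $n=0$ is vacuous, so suppose $n \ge 1$, that the theorem holds for all algebras with strictly fewer simples, and (for contradiction) that $A$ is $\tau$-tilting infinite while $K_0(\proj A)_\R = \bigcup_{T \in \twosilt A} C(T)$. First I would fix a compact cross-section $\Sigma \subset K_0(\proj A)_\R$ avoiding the origin (e.g.\ a unit sphere), choose infinitely many pairwise distinct $T_i \in \twosilt A$, and pick $\theta_i \in C^+(T_i) \cap \Sigma$. By compactness, a subsequence converges to some $\theta_\infty \in \Sigma$, so $\theta_\infty \ne 0$. By the covering hypothesis, $\theta_\infty \in C(T)$ for some $T \in \twosilt A$; writing $\theta_\infty$ as a nonnegative combination of the indecomposable summands of $T$, the sum $U$ of those summands appearing with positive coefficient gives a nonzero $U \in \twopresilt A$ with $\theta_\infty \in C^+(U)$, so $1 \le |U| \le n$.

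By Lemma \ref{Lem_N_U_open}, $N_U$ is an open neighborhood of $\theta_\infty$, so $\theta_i \in N_U$ for all large $i$. Since $N_U$ is a union of TF equivalence classes (its defining conditions depend only on $(\calT_\theta,\ovcalT_\theta)$), and each $C^+(T_i)$ is a single TF class by Theorem \ref{Thm_presilt_TF}, I deduce $C^+(T_i) \subset N_U$ for all large $i$. Combining $\calT_{T_i} = \ovcalT_{T_i}$ (since $T_i$ is silting) with the inequalities defining $N_U$ yields $\calT_U \subset \ovcalT_{T_i} \subset \ovcalT_U$, so Lemma \ref{Lem_summand_silt} forces $U \in \add T_i$, i.e.\ $T_i \in \twosilt_U A$. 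The bijection $\red\colon \twosilt_U A \to \twosilt B$ of Proposition \ref{Prop_Jasso_silt} then sends the distinct $T_i$ to distinct elements of $\twosilt B$, so the reduction $B$, which has $n - |U| < n$ simple modules, is $\tau$-tilting infinite.

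By the inductive hypothesis applied to $B$, there exists $\phi \in K_0(\proj B)_\R \setminus \bigcup_{T' \in \twosilt B} C(T')$; equivalently, the TF class $E'$ of $\phi$ is not of the form $C^+(V')$ for any $V' \in \twopresilt B$. Pulling $E'$ back along the bijection of TF classes in Theorem \ref{Thm_N_U_TF_union}(2) gives a TF class $E \subset N_U$ with $\pi(E) = E'$, and the commutative diagram in Theorem \ref{Thm_N_U_TF_union}(3) shows $E \ne C^+(V)$ for any $V \in \twopresilt_U A$. Furthermore, if $E = C^+(V)$ for some $V \in \twopresilt A$, then $C^+(V) \subset N_U$ forces $\calT_U \subset \calT_V \subset \ovcalT_V \subset \ovcalT_U$, which by Lemma \ref{Lem_two_Bongartz_inj}(3) gives $V \in \twopresilt_U A$, a contradiction. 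Hence $E = C^+(V)$ holds for no $V \in \twopresilt A$, and since $\pi(E) = E'$ contains a nonzero element so does $E$; any nonzero $\theta \in E$ then lies in $K_0(\proj A)_\R \setminus \bigcup_T C(T)$, giving the desired contradiction. The main obstacle is making the inductive step genuinely reduce $n$, i.e.\ ensuring $U \ne 0$ so that $B$ has strictly fewer simples; this is precisely why the argument starts from a compact cross-section disjoint from the origin.
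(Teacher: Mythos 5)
Your proof is correct but takes a genuinely different route from the paper's. You argue by induction on $n$: assuming $A$ is $\tau$-tilting infinite while the covering holds, you take an accumulation point $\theta_\infty \ne 0$ of points $\theta_i \in C^+(T_i) \cap \Sigma$ for infinitely many distinct $T_i$, locate a nonzero $U \in \twopresilt A$ with $\theta_\infty \in C^+(U)$, conclude from $\theta_i \in N_U$ (for large $i$) and Lemma \ref{Lem_summand_silt} that $T_i \in \twosilt_U A$, hence via the reduction bijection that the algebra $B$ (with $n-|U| < n$ simples) is $\tau$-tilting infinite, and finally pull a TF equivalence class witnessing the failure of the covering for $B$ back along the bijection of Theorem \ref{Thm_N_U_TF_union}(2) to contradict the covering for $A$. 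The paper's Proposition \ref{Prop_cover_finite} avoids induction altogether: it combines the covering hypothesis with Lemma \ref{Lem_two_Bongartz_inj} and Proposition \ref{Prop_Yurikusa} to show $K_0(\proj A)_\R \setminus \{0\} = \bigcup_{U \in \twoipresilt A} N_U$, extracts a finite subcover $I \subset \twoipresilt A$ of the sphere, and then observes that every $V \in \twoipresilt A$ already equals some $U \in I$ because $[V] \in C^+(V) \cap N_U$ forces $U \in \add V$, hence $U \cong V$ for both indecomposable. Both arguments hinge on compactness of the unit sphere, but yours deploys the full reduction machinery of Section 4 (Theorem \ref{Thm_N_U_TF_union}, Proposition \ref{Prop_Jasso_silt}) and an induction, whereas the paper's stays at the level of $N_U$ and Lemma \ref{Lem_two_Bongartz_inj} and is noticeably shorter; on the other hand, your approach makes explicit how the covering statement for $A$ cascades down to every reduction $B$, which is structurally illuminating even if it does more work than strictly necessary.
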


The ``if'' part follows from \cite[Theorem 5.4, Corollary 6.7]{DIJ}.
We give a direct proof for the convenience of the readers.

\begin{Prop}\label{Prop_finite_cover}
If $A$ is $\tau$-tilting finite, 
then $K_0(\proj A)_\R=\bigcup_{T \in \twosilt A} C(T)$.
\end{Prop}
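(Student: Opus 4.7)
The plan is to combine three ingredients: the fact (due to Demonet--Iyama--Jasso) that $\tau$-tilting finiteness implies the finiteness of $\brick A$, the description of maximal walls from Proposition \ref{Prop_max_wall_brick}, and the chamber equation of Theorem \ref{Thm_chamber_cone}. Once the bricks are finite, $\Wall$ becomes a finite union of proper closed cones, so its complement is open dense; on the other hand, $\bigcup_T C(T)$ is a finite union of closed cones, hence closed; and since it contains the dense open set $\bigcup_T C^+(T)$, the equality follows for free.

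More precisely, the steps I would carry out are as follows. First, assuming $\tau$-tilting finiteness, I would invoke the Demonet--Iyama--Jasso characterization to conclude that $\brick A$ is finite. By Proposition \ref{Prop_max_wall_brick} every wall $\Theta_M$ is contained in some brick wall $\Theta_S$, so
\[
\Wall = \bigcup_{S \in \brick A} \Theta_S.
\]
This is a finite union of rational polyhedral cones, each sitting inside the hyperplane $\Ker\ang{?}{S}$ and therefore of dimension at most $n-1$. It follows that $\Wall$ is closed and nowhere dense in $K_0(\proj A)_\R$, so $\overline{\Wall}=\Wall$ and the open set $K_0(\proj A)_\R\setminus\overline{\Wall}$ is dense.

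Second, by Theorem \ref{Thm_chamber_cone} this dense open set coincides with $\bigcup_{T \in \twosilt A} C^+(T)$. Since $\twosilt A$ is finite and each $C(T)$ is a closed cone, the union $\bigcup_{T \in \twosilt A} C(T)$ is a closed subset of $K_0(\proj A)_\R$ containing the dense set $\bigcup_{T \in \twosilt A} C^+(T)$. Taking closures gives
\[
K_0(\proj A)_\R \;=\; \overline{\textstyle\bigcup_{T \in \twosilt A} C^+(T)} \;\subset\; \bigcup_{T \in \twosilt A} C(T) \;\subset\; K_0(\proj A)_\R,
\]
so all three sets are equal.

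The main obstacle is the input that $\tau$-tilting finiteness implies $\#\brick A <\infty$; everything else is elementary topology together with the previously established wall and chamber identities. If one wishes to avoid quoting this directly, one could instead argue by induction on $n$ using the $\tau$-tilting reduction of Theorem \ref{Thm_N_U_TF_union}: for each proper $U \in \twopresilt A$ the reduced algebra $B$ has fewer simples and is again $\tau$-tilting finite, so the inductive hypothesis covers every $N_U$; but the clean topological route above is shorter and seems preferable.
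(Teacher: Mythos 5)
Your proof is correct, but it takes a genuinely different route from the paper's. The paper's own argument avoids the Demonet--Iyama--Jasso brick-finiteness theorem entirely: it sets $F_1:=\bigcup_{T} C(T)$ and $F_2:=\bigcup_{|U|=n-2} C(U)$ (union over 2-term presilting $U$), and shows that $F_1 \setminus F_2$ is nonempty, open, and closed in the connected space $K_0(\proj A)_\R \setminus F_2$, whence $F_1\setminus F_2$ is all of it. Openness comes from mutation (Proposition~\ref{Prop_presilt_silt}(3): each codimension-one cone $C^+(U)$ with $|U|=n-1$ is flanked by exactly two $C^+(T),C^+(T')$); closedness and connectedness use only that $\twosilt A$ is finite and that $F_2$ has codimension at least two. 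Your argument instead imports the result that $\tau$-tilting finiteness forces $\brick A$ to be finite, concludes via Proposition~\ref{Prop_max_wall_brick} that $\Wall=\bigcup_{S\in\brick A}\Theta_S$ is a finite union of cones of dimension at most $n-1$, hence closed and nowhere dense, and then closes the gap by a density argument using Theorem~\ref{Thm_chamber_cone} and the closedness of the finite union $\bigcup_T C(T)$. Both approaches are sound; the trade-off is that yours is topologically cleaner and shorter to state, but it relies on a substantial external theorem that the paper otherwise does not need, whereas the paper's connectedness argument is self-contained within its own toolkit and in fact reproves, as a byproduct, a piece of what the brick-finiteness theorem encodes.

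One small caution worth noting: the density step really does need the brick-finiteness input as you use it. Without knowing $\Wall$ is a \emph{finite} union of walls, there is no a priori reason $\overline{\Wall}$ should have empty interior, and trying to deduce density of $\bigcup_T C^+(T)$ directly from Theorem~\ref{Thm_chamber_cone} would be circular (its closure being everything is precisely the statement you are trying to prove). You do handle this correctly by establishing nowhere-density of $\Wall$ first, so this is not a gap, just a point at which a naive shortcut would fail.
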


\begin{proof}
Set subsets $F_1,F_2 \subset K_0(\proj A)_\R$ by
\begin{align*}
F_1:=\bigcup_{T \in \twosilt A} C(T) \quad \text{and} \quad
F_2:=\bigcup_{
\begin{smallmatrix}
U \in \twopresilt A \\ |U|=n-2
\end{smallmatrix}} C(U).
\end{align*}
Since $F_2 \subset F_1$,
it suffices to show $F_1 \setminus F_2=K_0(\proj A)_\R \setminus F_2$.

For $U \in \twopresilt A$ with $|U|=n-1$,
we can take the two distinct elements $T,T'$ in $\twopresilt_U A$
by Proposition \ref{Prop_presilt_silt} (3),
and then, 
$C'(U):=C^+(T) \cup C^+(U) \cup C^+(T')$ is open in $K_0(\proj A)_\R \setminus F_2$.
Thus,
\begin{align*}
F_1 \setminus F_2=\bigcup_{\begin{smallmatrix}
U \in \twopresilt A \\ |U|=n-1,n
\end{smallmatrix}} C^+(U)=
\bigcup_{\begin{smallmatrix}
U \in \twopresilt A \\ |U|=n-1
\end{smallmatrix}} C'(U)
\end{align*}
is an open subset of $K_0(\proj A)_\R \setminus F_2$.
On the other hand, $F_1 \subset K_0(\proj A)_\R$ is closed since $\#(\twosilt A)<\infty$,
so $F_1 \setminus F_2$ is a closed subset of $K_0(\proj A)_\R \setminus F_2$.
Clearly, $F_1 \setminus F_2$ is nonempty. 

These three statements imply that $F_1 \setminus F_2 =K_0(\proj A)_\R \setminus F_2$,
since $K_0(\proj A)_\R \setminus F_2$ is connected.
Now, the assertion follows.
\end{proof}

We next prove the ``if'' part.
We remark that this was conjectured by Demonet \cite[Question 3.48]{Demonet}, 
and that Zimmermann--Zvonareva \cite{ZZ} have given a different proof.

\begin{Prop}\label{Prop_cover_finite}
If $K_0(\proj A)_\R=\bigcup_{T \in \twosilt A} C(T)$,
then $A$ is $\tau$-tilting finite.
\end{Prop}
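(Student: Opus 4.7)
The plan is to prove this by induction on $n$, the number of simple $A$-modules. For $n\le 1$ the set $\twosilt A$ has at most two elements (for $n=1$ the only torsion classes in $\mod A$ are $\{0\}$ and $\mod A$), so the claim is automatic. For the inductive step, assume the statement for all finite-dimensional algebras with fewer than $n$ simples, and suppose $A$ has $n\ge 2$ simples with $K_0(\proj A)_\R=\bigcup_{T \in \twosilt A} C(T)$.

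The first step is to transfer the covering along every $\tau$-tilting reduction. Fix a nonzero $U \in \twopresilt A$ and let $B=B_U$ be the reduced algebra of Proposition \ref{Prop_Jasso_wide}, which has $n-|U|<n$ simples. I claim the analogous covering $K_0(\proj B)_\R=\bigcup_{T^B \in \twosilt B} C(T^B)$ holds. Given $\alpha \in K_0(\proj B)_\R$, surjectivity of $\pi|_{N_U}$ (Lemma \ref{Lem_p(N_U)}(1)) yields $\theta \in N_U$ with $\pi(\theta)=\alpha$, and the hypothesis for $A$ provides $T \in \twosilt A$ with $\theta \in C(T)$, so $\theta \in C^+(U')$ for some summand $U'$ of $T$. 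Proposition \ref{Prop_Yurikusa} identifies $\ovcalT_\theta=\ovcalT_{U'}$ and $\calT_\theta=\calT_{U'}$; combining these with the defining inequalities $\calT_U\subset\calT_\theta\subset\ovcalT_\theta\subset\ovcalT_U$ of $N_U$ and with Lemma \ref{Lem_two_Bongartz_inj}(3) gives $U \in \add U' \subset \add T$, so $T \in \twosilt_U A$ and $\red(T) \in \twosilt B$. Theorem \ref{Thm_N_U_TF_union}(3) then yields $\alpha=\pi(\theta) \in \pi(C^+(U'))=C^+(\red(U'))\subset C(\red(T))$, as desired. By the inductive hypothesis $B$ is $\tau$-tilting finite, and by Proposition \ref{Prop_Jasso_silt}(1) the set $\twosilt_U A\cong\twosilt B$ is therefore finite.

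Next I would upgrade this to local finiteness on $K_0(\proj A)_\R\setminus\{0\}$. For $\theta\ne 0$ the hypothesis places $\theta \in C^+(U_\theta)$ for some nonzero $U_\theta \in \twopresilt A$ (unique by Proposition \ref{Prop_cone_summand}), and Lemma \ref{Lem_N_U_open} makes $N_{U_\theta}$ an open neighborhood of $\theta$. The argument of the previous paragraph shows that every $T' \in \twosilt A$ meeting $N_{U_\theta}$ lies in the finite set $\twosilt_{U_\theta} A$, so only finitely many cones $C(T')$ intersect $N_{U_\theta}$. To conclude, I invoke compactness: the unit sphere $S\subset K_0(\proj A)_\R$ (for any norm) is covered by $\{N_{U_\theta}:\theta\in S\}$, and a finite subcover meets only finitely many cones $C(T)$; since every $C(T)$ for $T\in\twosilt A$ contains a nonzero vector and hence meets $S$, the whole set $\twosilt A$ is finite.

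The main technical obstacle is the first step, specifically verifying that a silting object $T$ covering a lift $\theta \in N_U$ necessarily belongs to $\twosilt_U A$, so that $\red(T)$ is defined and the cone decomposition of $K_0(\proj B)_\R$ fits together correctly. A priori $\theta$ might lie in an arbitrary closed face of $C(T)$, but Proposition \ref{Prop_Yurikusa} ties the numerical torsion pairs at $\theta$ to the presilting summand $U'\in\add T$ singled out by $\theta$, and the inequalities defining $N_U$ together with Lemma \ref{Lem_two_Bongartz_inj}(3) then force $U\in\add U'\subset\add T$.
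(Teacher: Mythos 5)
Your proof is correct, but it takes a genuinely different route from the paper's. The paper's argument is a direct compactness argument: one shows $K_0(\proj A)_\R\setminus\{0\}\subset\bigcup_{U\in\twoipresilt A}N_U$ using Lemma~\ref{Lem_two_Bongartz_inj} and Proposition~\ref{Prop_Yurikusa}, extracts a finite subcover indexed by $I\subset\twoipresilt A$ from the compact sphere, and then argues that $I=\twoipresilt A$ by a uniqueness observation: if $V\in\twoipresilt A$, the $g$-vector $[V]$ is nonzero, so $[V]\in N_U$ for some $U\in I$, and since $[V]\in C^+(V)\cap N_U$ forces $V$ and $U$ to coincide (both being indecomposable), we get $V\in I$. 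No induction and no $\tau$-tilting reduction is needed. Your proof instead establishes beforehand that $\twosilt_U A$ is finite for every nonzero $U\in\twopresilt A$, by transferring the covering hypothesis through $\pi\colon N_U\to K_0(\proj B)_\R$ to the reduced algebra $B$ (a nice structural observation not stated in the paper) and applying an induction on the number of simples; the compactness of the sphere then yields global finiteness from this local finiteness. Both arguments share the same compactness engine via the open sets $N_U$, but the paper's last step is slicker, while yours reveals that the covering hypothesis is hereditary under $\tau$-tilting reduction. One small point you should spell out: when you claim that every $T'$ with $C(T')\cap N_{U_\theta}\ne\emptyset$ lies in $\twosilt_{U_\theta}A$, the argument only applies to a \emph{nonzero} intersection point, so you need $0\notin N_{U_\theta}$. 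This is indeed true for nonzero $U_\theta$: the criterion $\theta\in N_U\iff H^0(U)\in\calT_\theta,\ H^{-1}(\nu U)\in\calF_\theta$ from the proof of Lemma~\ref{Lem_N_U_open} shows $0\in N_U$ would force $H^0(U)=0=H^{-1}(\nu U)$, impossible for nonzero $U$. Alternatively, your final sentence already works with points on the sphere, which are nonzero, so the issue is implicitly avoided.
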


\begin{proof}
Clearly, any $\theta \in C(T) \setminus \{0\}$ admits 
some nonzero $V \in \twopresilt A$ such that
$\theta \in C^+(V)$.
Lemma \ref{Lem_two_Bongartz_inj} and Proposition \ref{Prop_Yurikusa} yield that 
if $U \in \twoipresilt A$ and $V \in \twopresilt A$ satisfy $U \in \add V$,
then $C^+(V) \subset N_U$.
Thus, the assumption $K_0(\proj A)_\R=\bigcup_{T \in \twosilt A} C(T)$ implies that
$K_0(\proj A)_\R \setminus \{0\}=\bigcup_{U \in \twoipresilt A} N_U$.

We consider the canonical sphere $\Sigma \subset K_0(\proj A)_\R$;
more precisely,
\begin{align*}
\Sigma:=\left\{ 
\sum_{i=1}^n a_i[P_i] \in K_0(\proj A)_\R \mid \sum_{i=1}^n a_i^2=1 \right\}.
\end{align*}
Then, $\Sigma=\bigcup_{U \in \twoipresilt A} (N_U \cap \Sigma)$.
Since $N_U \cap \Sigma$ is an open subset of 
the compact space $\Sigma$ for every $U \in \twoipresilt A$,
there exists a finite set $I \subset \twoipresilt A$ such that 
$\Sigma=\bigcup_{U \in I} (N_U \cap \Sigma)$.
Clearly, this implies that 
$K_0(\proj A)_\R \setminus \{0\}=\bigcup_{U \in I} N_U$.

It is sufficient to show that $\twoipresilt A=I$,
so let $V \in \twoipresilt A$.
Since $[V] \in K_0(\proj A)_\R \setminus \{0\}$, 
there exists some $U \in \twoipresilt A$ such that $[V] \in N_U$.
By Lemma \ref{Lem_two_Bongartz_inj} and Proposition \ref{Prop_Yurikusa}, 
we have $V \in \add U$,
and since $U$ and $V$ are indecomposable, we get $V \cong U$.
Thus, $V \in I$.
Now, we get that $\twoipresilt A$ coincides with the finite set $I$,
and this means that $A$ is $\tau$-tilting finite.
\end{proof}

Now, we can show Theorem \ref{Thm_finite_cover_eq}.

\begin{proof}[Proof of Theorem \ref{Thm_finite_cover_eq}]
It follows from
Propositions \ref{Prop_finite_cover} and \ref{Prop_cover_finite}.
\end{proof}

\section{The wall-chamber structures for path algebras}\label{Sec_path}

In this section, we give a combinatorial algorithm to obtain 
the wall-chamber structure of $K_0(\proj A)_\R$ 
in the case that $A$ is a path algebra.
Throughout this section, $K$ is an algebraically closed field,
$Q$ is an acyclic quiver with $\#Q_0=n$, and $A:=KQ$.
We use the symbol $X^{Q_0}$ for the set of maps from $Q_0$ to $X$.

We need some fundamental facts on module varieties.
Let $\vecd \in (\Z_{\ge 0})^{Q_0}$ be a dimension vector,
and write $d_i=\vecd(i)$ for each $i \in Q_0$.
Then, we set 
\begin{align*}
\mod (A, \vecd):=\prod_{(\alpha \colon i \to j) \in Q_1} 
\Mat_K(d_j,d_i).
\end{align*}
This is exactly the set of representations of the quiver $Q$
whose dimension vectors are $\vecd$,
and we can regard $\mod (A, \vecd)$
as the set of all $A$-modules $M$ with $\dimv M=\vecd$.
By considering the Zariski topology,
$\mod (A, \vecd)$ has a structure of an algebraic variety,
so we call $\mod (A, \vecd)$ the \textit{module variety}
of $A$ associated to the dimension vector $\vecd$.
The module variety $\mod (A, \vecd)$ is clearly irreducible.
In particular, any nonempty open subset is dense in $\mod (A, \vecd)$.
We use the following property,
where $\vecc \le \vecd$ means that $c_i \le d_i$ for all $i \in Q_0$.

\begin{Prop}\label{Prop_Schofield}\cite[Lemma 3.1]{Schofield}
Let $\vecc \le \vecd \in (\Z_{\ge 0})^{Q_0}$ be 
two dimension vectors.
We define $F_{\vecc} \subset \mod (A,\vecd)$
consisting of all $M \in \mod (A,\vecd)$
admitting a submodule $L \subset M$ with $\dimv L=\vecc$.
Then, $F_{\vecc}$ is closed in $\mod (A,\vecd)$.
\end{Prop}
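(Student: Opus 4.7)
The plan is to realize $F_{\vecc}$ as the image of a closed subvariety of a product of $\mod(A,\vecd)$ with a product of Grassmannians, and then use properness of Grassmannians to conclude that this image is closed.

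First, I would introduce the auxiliary variety
\begin{align*}
\mathsf{Gr}_{\vecc}:=\prod_{i \in Q_0} \mathsf{Gr}(c_i,K^{d_i}),
\end{align*}
where $\mathsf{Gr}(c_i,K^{d_i})$ denotes the Grassmannian of $c_i$-dimensional subspaces of $K^{d_i}$. Each factor is a projective variety, so $\mathsf{Gr}_{\vecc}$ is projective. Consider the incidence subset
\begin{align*}
Z:=\bigl\{(M,V) \in \mod(A,\vecd) \times \mathsf{Gr}_{\vecc} \,\bigl|\bigr. \, M_\alpha(V_i) \subset V_j \text{ for every arrow } \alpha \colon i \to j \text{ in } Q_1\bigr\},
\end{align*}
where $M=(M_\alpha)_{\alpha \in Q_1}$ and $V=(V_i)_{i \in Q_0}$.

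Next I would verify that $Z$ is closed in $\mod(A,\vecd) \times \mathsf{Gr}_{\vecc}$. The condition $M_\alpha(V_i) \subset V_j$ for a single arrow $\alpha$ can be checked locally on standard affine charts of the Grassmannians: expressing $V_i$ and $V_j$ as column spans of matrices with variable entries, the inclusion becomes a polynomial condition on the matrix entries of $V_i$, $V_j$ and $M_\alpha$ (e.g.\ by requiring the appropriate minors of the augmented matrix to vanish). Intersecting over all arrows in the finite set $Q_1$ then shows that $Z$ is cut out by finitely many closed conditions, hence is closed.

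Finally, I would apply the standard fact that, since $\mathsf{Gr}_{\vecc}$ is a complete (projective) variety, the projection
\begin{align*}
p \colon \mod(A,\vecd) \times \mathsf{Gr}_{\vecc} \to \mod(A,\vecd)
\end{align*}
is a closed map. Therefore $p(Z)$ is closed in $\mod(A,\vecd)$. It remains to identify $p(Z)$ with $F_{\vecc}$: a module $M$ belongs to $F_{\vecc}$ iff there is a tuple $V=(V_i)_{i \in Q_0}$ of subspaces with $\dim_K V_i=c_i$ that is stable under all $M_\alpha$, which is precisely the condition $(M,V) \in Z$. Hence $F_{\vecc}=p(Z)$ is closed, as desired.

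The only genuinely technical step is checking that the stability condition is a closed condition on the product with the Grassmannians; this is routine once one works in the standard affine charts of each $\mathsf{Gr}(c_i,K^{d_i})$, and everything else is a direct application of the completeness of projective varieties.
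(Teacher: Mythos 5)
Your proof is correct and is essentially the standard argument: the paper cites this result to Schofield (Lemma 3.1 of \cite{Schofield}) without reproving it, and Schofield's argument is exactly the incidence-variety (universal quiver Grassmannian) approach you describe, closedness of $Z$ plus properness of the product of Grassmannians. One could phrase the closedness of $Z$ slightly more invariantly by noting that $M_\alpha(V_i) \subset V_j$ is the vanishing of the induced bundle map $\mathcal{S}_i \to K^{d_j}/\mathcal{S}_j$ between the tautological sub- and quotient bundles, but your affine-chart verification is equally valid.
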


As in Theorem \ref{Thm_chamber_cone}, 
the union of the walls is more important than each wall itself, 
so we here define
\begin{align*}
\Theta_{\vecd}:=\bigcup_{M \in \mod (A,\vecd)} \Theta_M
\end{align*}
for every nonzero dimension vector $\vecd \in (\Z_{\ge 0})^{Q_0}$.
Actually, $\Theta_{\vecd}$ is realized 
as the wall of some module in $\mod (A,\vecd)$.

\begin{Lem}\label{Lem_max_wall}
Let $\vecd \in (\Z_{\ge 0})^{Q_0}$ be a nonzero dimension vector.
Then, there exists $M \in \mod (A, \vecd)$
such that $\Theta_M=\Theta_{\vecd}$;
hence $\Theta_{\vecd}$ is a rational polyhedral cone in $K_0(\proj A)_\R$.
\end{Lem}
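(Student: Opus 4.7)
The plan is to translate the condition $\theta \in \Theta_M$ into a condition purely on the dimension vectors of submodules of $M$, then exhibit a generic $M \in \mod(A,\vecd)$ whose set of ``submodule dimension vectors'' is as small as possible; its wall will then contain $\Theta_{M'}$ for every other $M' \in \mod(A,\vecd)$.

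First, I would reformulate the wall. Since every quotient of $M$ has the form $M/N$ and $\theta(M/N) = \theta(\vecd) - \theta(\dimv N)$, one sees that
\begin{align*}
\Theta_M = \bigl\{ \theta \in K_0(\proj A)_\R \,\bigm|\, \theta(\vecd) = 0,\ \theta(\dimv N) \le 0 \text{ for every submodule } N \subseteq M \bigr\}.
\end{align*}
Hence $\Theta_M$ depends on $M$ only through the finite set $\Delta(M) := \{\dimv N \mid N \subseteq M\} \subseteq \{\vecc \in (\Z_{\ge 0})^{Q_0} \mid 0 \le \vecc \le \vecd\}$, and crucially $\Delta(M) \subseteq \Delta(M')$ implies $\Theta_{M'} \subseteq \Theta_M$. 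So I want to minimise $\Delta(M)$.

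Next, let $\Delta_{\min} := \{\vecc \le \vecd \mid F_\vecc = \mod(A,\vecd)\}$ be the set of dimension vectors occurring as a submodule of \emph{every} module in $\mod(A,\vecd)$. For each $\vecc \le \vecd$ with $\vecc \notin \Delta_{\min}$, Proposition \ref{Prop_Schofield} says that $\mod(A,\vecd) \setminus F_\vecc$ is a nonempty open subset, and since $\mod(A,\vecd)$ is irreducible it is dense. Because only finitely many $\vecc$ satisfy $0 \le \vecc \le \vecd$, the finite intersection
\begin{align*}
V := \bigcap_{\vecc \le \vecd,\ \vecc \notin \Delta_{\min}} \bigl( \mod(A,\vecd) \setminus F_\vecc \bigr)
\end{align*}
is still a nonempty open subset. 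Any $M \in V$ then satisfies $\Delta(M) = \Delta_{\min}$.

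Finally, for every $M' \in \mod(A,\vecd)$ one has $\Delta(M') \supseteq \Delta_{\min} = \Delta(M)$, hence $\Theta_{M'} \subseteq \Theta_M$, which gives $\Theta_\vecd = \bigcup_{M'} \Theta_{M'} \subseteq \Theta_M$; the reverse inclusion is immediate from the definition of $\Theta_\vecd$. Thus $\Theta_M = \Theta_\vecd$, and since $\Theta_M$ is cut out from $K_0(\proj A)_\R$ by one linear equality and finitely many linear inequalities with integer coefficients, it is automatically a rational polyhedral cone. I do not foresee a real obstacle: the only substantive ingredients are the submodule reformulation of the wall and the standard irreducibility/genericity argument in the module variety powered by Proposition \ref{Prop_Schofield}.
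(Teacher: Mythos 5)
Your proof is correct and is essentially the same as the paper's: your generic open set $V$ is exactly the paper's $G = \bigcap_{\vecc\le\vecd,\ G_\vecc\ne\emptyset} G_\vecc$, and you simply make explicit the reformulation of $\Theta_M$ via submodule dimension vectors and the resulting monotonicity $\Delta(M)\subseteq\Delta(M')\Rightarrow\Theta_{M'}\subseteq\Theta_M$, which the paper compresses into the phrase ``by definition.''
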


\begin{proof}
We define $G_{\vecc}$ as the complement of
$F_{\vecc} \subset \mod (A,\vecd)$
for each dimension vector $\vecc \le \vecd$,
and set 
\begin{align*}
G:=\bigcap_{\vecc \le \vecd, \ G_{\vecc} \ne \emptyset} G_{\vecc}.
\end{align*}
By Proposition \ref{Prop_Schofield}, $G$ is the intersection of 
finitely many open dense subsets, so $G$ is also open and dense.
In particular, $G$ is nonempty.

We take $M \in G$.
Then, we have $\Theta_M \supset \Theta_{M'}$
for all $M' \in \mod (A, \vecd)$ by definition,
so $\Theta_M=\Theta_{\vecd}$ follows.
Since $\Theta_M$ is a rational polyhedral cone, 
so is $\Theta_{\vecd}$.
\end{proof}

A dimension vector $\vecd$ is called a \textit{Schur root}
if there exists an open dense subset $G$ of $\mod(A,\vecd)$
such that every $M \in G$ is indecomposable.

Since $A$ is hereditary, the homomorphism $K_0(\proj A) \to K_0(\mod A)$
sending $[P_i] \in K_0(\proj A)$ to $[P_i] \in K_0(\mod A)$ is isomorphic,
so from now on, we regard the Euler form $\langle ?,! \rangle$ as
a bilinear form $K_0(\mod A) \times K_0(\mod A) \to \Z$.

In this setting, a Schur root $\vecd$ is said to be 
\begin{itemize}
\item 
\textit{real} if $\ang{\vecd}{\vecd} = 1$,
\item 
\textit{imaginary} if $\ang{\vecd}{\vecd} \le 0$,
\item 
\textit{isotropic} if $\ang{\vecd}{\vecd} = 0$.
\end{itemize}

We next show that we can determine whether $\vecd$ is a Schur root
from the dimension of the wall $\Theta_d$ and 
the value $\ang{\vecd}{\vecd}$ of the Euler form.

\begin{Prop}\label{Prop_Schur_criterion}
Let $\vecd \in (\Z_{\ge 0})^{Q_0}$ be a nonzero dimension vector.
\begin{itemize}
\item[(1)]
Assume $\ang{\vecd}{\vecd} \ge 0$.
\begin{itemize}
\item[(a)]
The dimension vector $\vecd$ is a Schur root of $Q$
if and only if $\vecd$ is indivisible and $\dim \Theta_{\vecd}=n-1$.
\item[(b)]
There exist an integer $k \in \Z_{\ge 1}$ and 
a Schur root $\vecd'$ of $Q$ such that $\vecd=k\vecd'$ 
if and only if $\dim \Theta_{\vecd}=n-1$.
\end{itemize}
\item[(2)]
Assume $\ang{\vecd}{\vecd} < 0$.
Then, $\vecd$ is a Schur root of $Q$ if and only if 
$\dim \Theta_{\vecd}=n-1$.
\end{itemize}
\end{Prop}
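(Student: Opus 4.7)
The plan combines Lemma \ref{Lem_max_wall} with the criterion stated right after Lemma \ref{Lem_face}---namely, $\dim \Theta_M = n-1$ iff there exists $\theta \in \Theta_M$ with every $X \in \supp_\theta M$ satisfying $[X] \in \Q[M]$---together with two classical inputs: Schofield's characterization that $\vecd$ is a Schur root iff the generic module in $\mod(A, \vecd)$ is $\theta$-stable for some stability $\theta$, and the brick bound $\ang{S}{S} \le \dim_K \End_A(S) = 1$ valid for a brick $S$ over the hereditary algebra $A$.

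For the forward direction, if $\vecd$ is a Schur root, I take $\theta$ given by Schofield's theorem and, using irreducibility of $\mod(A, \vecd)$, a generic $M$ lying both in the $\theta$-stable locus and in the open locus $\{\Theta_M = \Theta_\vecd\}$ of Lemma \ref{Lem_max_wall}. Then $M$ is simple in $\calW_\theta$, so $\supp_\theta M = \{M\}$; Lemma \ref{Lem_face}(1) places $\theta$ in the interior of $\Theta_M$ within $\Ker \ang{?}{M}$, and $\dim \Theta_\vecd = n-1$ follows. For the indivisibility in (1)(a), I write $\vecd = k\vecd'$ with $\vecd'$ indivisible; the brick bound applied to $M$ gives $k^2 \ang{\vecd'}{\vecd'} \le 1$, forcing $k = 1$ in the real case $\ang{\vecd'}{\vecd'} \ge 1$, and the isotropic subcase is excluded since for $k \ge 2$ the generic module of $\mod(A, k\vecd')$ decomposes into regular summands. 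The ``$\Leftarrow$'' direction of (1)(b) follows from $\Theta_{M^{\oplus k}} = \Theta_M$, which holds because every wide subcategory $\calW_\theta$ is closed under direct summands.

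For the converse, assume $\dim \Theta_\vecd = n-1$. The criterion yields $\theta \in \Theta_\vecd$ and a brick $S$ simple in $\calW_\theta$ with $[S] = q[\vecd]$ for some $q \in \Q_{>0}$. Writing $\vecd = k\vecd'$ with $\vecd'$ indivisible, integrality forces $\dimv S = c\vecd'$ for some $c \in \Z_{\ge 1}$, and the existence of $S$ makes $c\vecd'$ a Schur root (by openness of the brick locus in $\mod(A, c\vecd')$). In the real case $\ang{\vecd'}{\vecd'} = 1$, the brick bound $c^2 \le 1$ gives $c = 1$; in the isotropic case $\ang{\vecd'}{\vecd'} = 0$, the classical indivisibility of isotropic Schur roots forces $c = 1$; in both subcases $\vecd'$ is a Schur root, establishing (1)(a) and (1)(b). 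In the imaginary case of (2), I will invoke Kac's theorem that any positive multiple of an imaginary Schur root is again a Schur root to deduce that $\vecd$ itself is Schur from $c\vecd'$ being Schur.

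The main obstacle I anticipate is the imaginary case in (2): here the brick bound $c^2 \ang{\vecd'}{\vecd'} \le 1$ imposes no constraint on the multiplicity $c$, so one cannot directly force $c$ small. The resolution is either to refine the choice of $\theta$ so that the selected composition factor $S$ has minimal dimension (yielding $c = 1$ and $\dimv S = \vecd'$), or to use Kac's theorem on multiples of imaginary Schur roots; the latter approach additionally requires verifying that $\vecd$ itself (not merely $\vecd'$) can be recovered as a Schur root, which will require careful bookkeeping with the choice of $\theta \in \Theta_\vecd$ and the associated support $\supp_\theta M$.
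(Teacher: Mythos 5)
Your proposal takes a genuinely different route from the paper's in the converse direction, and the forward direction also differs at the indivisibility step. The paper deduces indivisibility for Schur roots with $\ang{\vecd}{\vecd}\ge 0$ by citing Schofield's Theorem~3.8 directly, whereas you use the brick bound $\ang{\vecd}{\vecd}\le 1$ and treat the real and isotropic subcases separately. For the converse, the paper passes to the canonical decomposition $\vecd=\bigoplus_{i=1}^m\vecc_i$ and applies Proposition~\ref{Prop_Schofield} to show that \emph{every} $M\in\mod(A,\vecd)$ has submodules of dimension $\vecc_i$, so $\Theta_\vecd\subset\Ker\ang{?}{\vecc_i}$ for every $i$ and hence $\vecc_i\in\Q\vecd$ when $\dim\Theta_\vecd=n-1$; Schofield's Theorem~3.8 then pins down both indivisibility of the $\vecc_i$ and the multiplicity $m$. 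Your approach instead applies the composition-factor criterion following Lemma~\ref{Lem_face} to the generic module from Lemma~\ref{Lem_max_wall}, obtaining a brick $S$ with $[S]\in\Q[\vecd]$, then infers that $\dimv S$ is a Schur root via openness of the brick locus. Both approaches rest on Schofield's Theorem~6.1 for the forward direction; the paper's converse is arguably cleaner because the canonical decomposition handles all three sign cases uniformly through a single invocation of Theorem~3.8, whereas you have to case on $\ang{\vecd'}{\vecd'}$ being real, isotropic, or negative.

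The gap you flag in case (2) is a real one, and the two fixes you sketch are not equally easy. The minimality idea (choosing $\theta$ so that the selected factor $S$ has smallest dimension) does not obviously force $c=1$: nothing in Lemma~\ref{Lem_face} gives you control over which composition factor is returned, and the smallest $c$ realised across all admissible $\theta$ need not be $1$. The Kac-theoretic route works but needs a stronger statement than ``multiples of imaginary Schur roots are Schur roots'': from $c\vecd'$ being a non-isotropic imaginary Schur root you must first descend to $\vecd'$ (this uses that the fundamental domain for imaginary roots is a cone, so $c\vecd'$ in a Weyl translate of it forces $\vecd'$ there too), and only then multiply back up to $k\vecd'=\vecd$. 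As stated, your argument would only yield that the multiples of $c\vecd'$ are Schur roots, which recovers $\vecd$ only when $c\mid k$. I would also note that your isotropic subcase quietly invokes the same package of classical facts about canonical decompositions (isotropic Schur roots cannot be divisible and multiples of them are not Schur) that the paper packages as a single citation of Schofield's Theorem~3.8, so the apparent economy of avoiding that theorem is partly illusory. The overall strategy is sound and the forward direction and the real/isotropic backward cases are correct, but the imaginary case needs the descent step spelled out.
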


\begin{proof}
(1)(a)
We first let $\vecd$ be a Schur root with 
$\ang{\vecd}{\vecd} \ge 0$.
In this case, $\vecd$ must be indivisible by \cite[Theorem 3.8]{Schofield}. 
Also, we can construct $\theta \in K_0(\proj A)$ such that 
there exists an open subset $G \subset \mod(A,\vecd)$ satisfying that
every $M \in G$ is $\theta$-stable from \cite[Theorem 6.1]{Schofield}.
This clearly implies that $\dim \Theta_{\vecd}=n-1$.

On the other hand, 
assume that $\vecd$ is indivisible and 
that $\dim \Theta_{\vecd}=n-1$.
Take the canonical decomposition $\vecd=\bigoplus_{i=1}^m \vecc_i$
(see \cite{Kac1}).
By Proposition \ref{Prop_Schofield}, this canonical decomposition implies that
every $M \in \mod(A,\vecd)$ has submodules $L_1,L_2$
such that $\dimv L_1=\vecc_i$ and $\dimv L_2=\vecd-\vecc_i$
for all $i$,
so any $\theta \in \Theta_{\vecd}$ must satisfy
$\theta(\vecc_i)=\theta(\vecd-\vecc_i)=0$.
Therefore, $\vecc_i \in \Q \vecd$ holds for every $i$,
since $\dim \Theta_{\vecd}=n-1$.
Since $\vecd$ is indivisible, we have $m=1$ and $\vecc_1=\vecd$.
Thus, $\vecd$ itself is a canonical decomposition,
so $\vecd$ is a Schur root.

(b)
Let $\vecd=k\vecd'$ with $k \in \Z_{\ge 1}$ and $\vecd'$ be a Schur root of $Q$.
Part (a) implies that $\dim \Theta_{\vecd'}=n-1$,
so in particular, there exists $M' \in \mod(A,\vecd')$ such that 
$\dim \Theta_{M'}=n-1$ by Lemma \ref{Lem_max_wall}.
Set $M:=(M')^{\oplus d}$, then $\Theta_M=\Theta_{M'}$, and $\dim \Theta_{M}=n-1$.
Therefore, $\dim \Theta_{\vecd}=n-1$.

Conversely, assume that $\dim \Theta_{\vecd} = n-1$.
As in the proof of (a),
we have $\vecc_i \in \Q \vecd$ for every $i$
in the canonical decomposition $\vecd=\bigoplus_{i=1}^m \vecc_i$,
and since $\ang{\vecd}{\vecd} \ge 0$,
every $\vecc_i$ must be indivisible by \cite[Theorem 3.8]{Schofield}.
Thus, $\vecd':=\vecc_1=\vecc_2=\cdots=\vecc_m$ is a Schur root and $\vecd=m\vecd'$.

(2)
The ``only if'' part follows from the same argument as (1)(a).
For the ``if'' part, 
we can show that $\vecc_i \in \Q \vecd$ holds for every $i$ in
the canonical decomposition $\vecd=\bigoplus_{i=1}^m \vecc_i$
in a similar way to (1)(a).
Since $\ang{\vecd}{\vecd} < 0$,
the root $\vecc_i$ is imaginary and non-isotropic for all $i$.
Therefore, \cite[Theorem 3.8]{Schofield} tells us that $m=1$.
\end{proof}

Now, we explicitly give the walls in the case that $\#Q_0=2$.

\begin{Ex}\label{Ex_Kronecker}
Assume that $Q$ is the $m$-Kronecker quiver:
\begin{align*}
\begin{xy}
( 0, 0)*+{1}="1",
(20, 0)*+{2}="2",
(10, 0)*+{\vdots},
\ar@<4mm> "1";"2"
\ar@<2mm> "1";"2"
\ar@<-4mm> "1";"2"
\end{xy}
\quad \text{($m$ arrows)}.
\end{align*}
By using \cite{Kac1}, we know all Schur roots. 
Then, for each Schur root $\vecd$, 
\cite[Theorem 6.1]{Schofield} guarantees the existence of 
an open dense subset $G \subset \mod(A,\vecd)$ and 
a stability condition $\theta$ such that every module in $G$ 
is $\theta$-stable.
Therefore, the wall-chamber structures of $K_0(\proj A)_\R$
for $m=0,1,2,3$ are given as follows (see also \cite[Figures 1--3]{Bridgeland}):
\begin{align*}
\includegraphics[width=3.8cm]{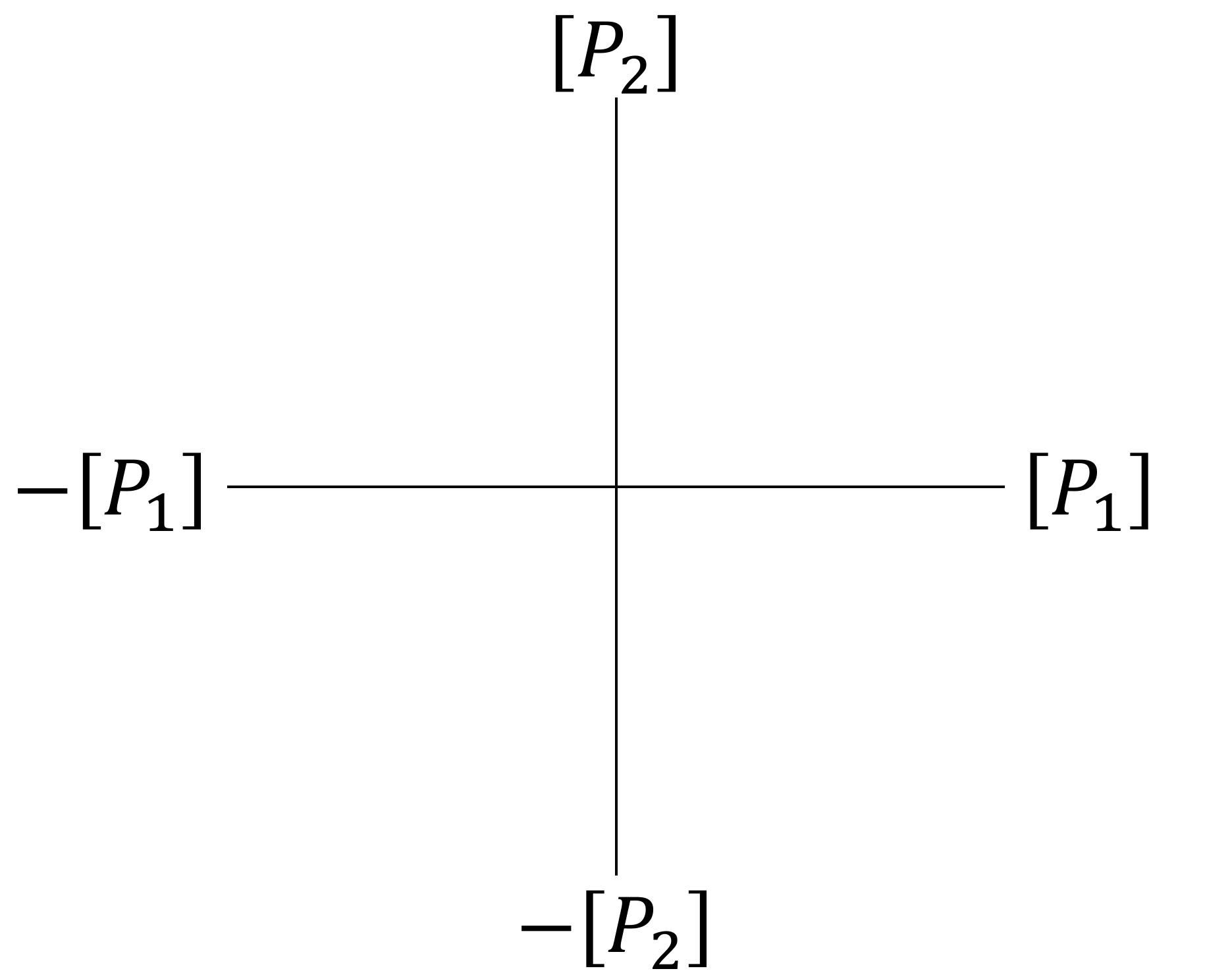} \ 
\includegraphics[width=3.8cm]{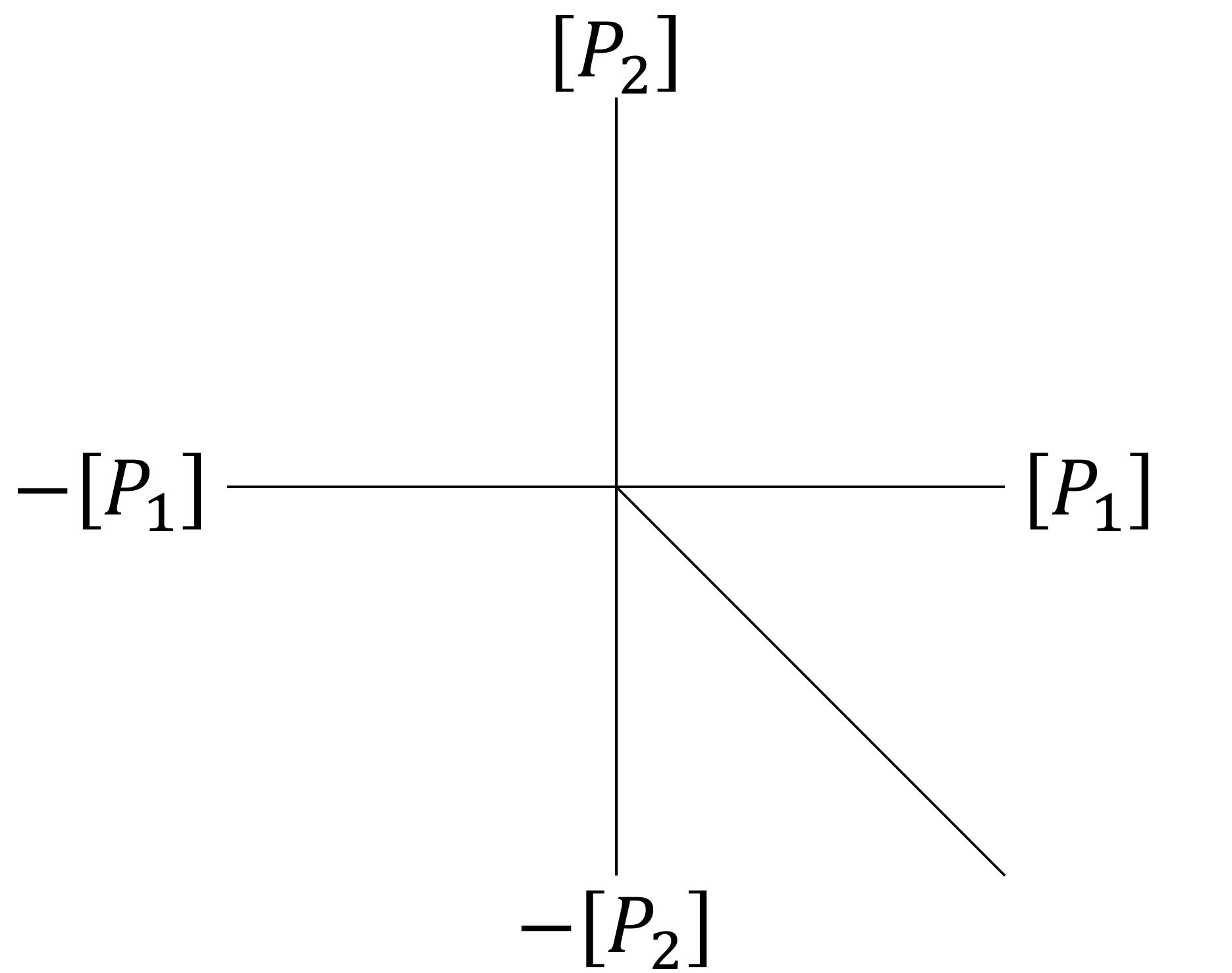} \ 
\includegraphics[width=3.8cm]{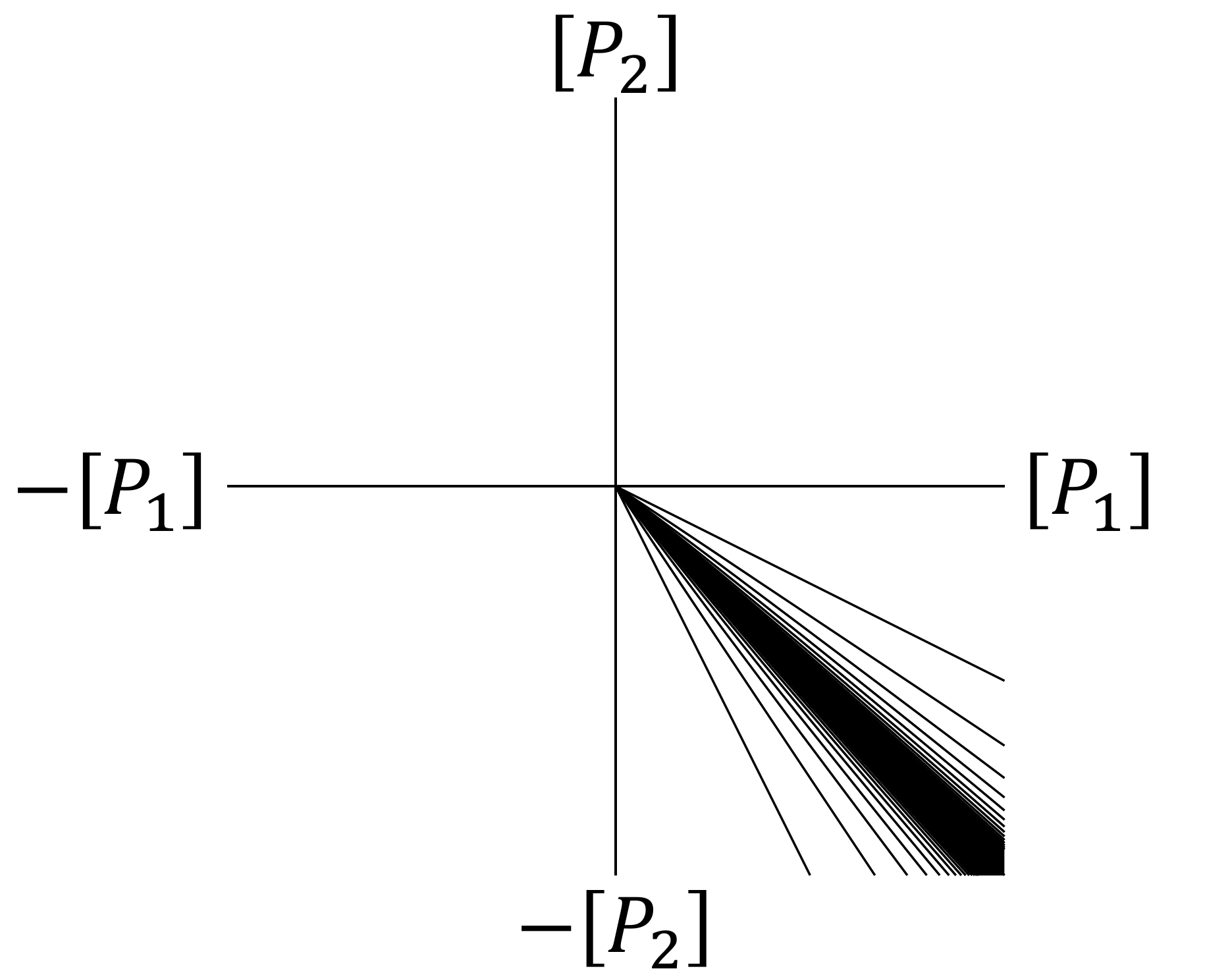} \ 
\includegraphics[width=3.8cm]{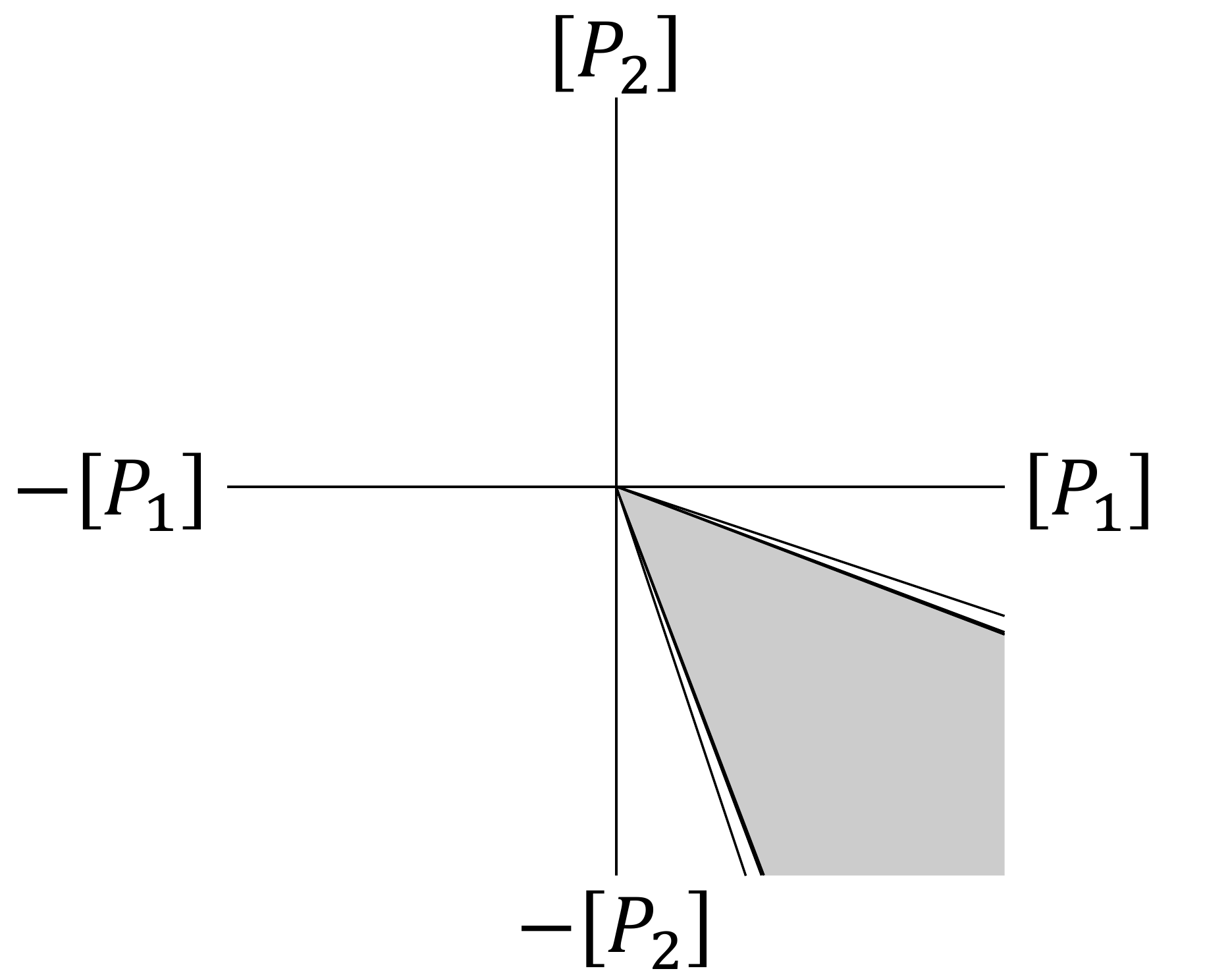}, 
\end{align*}
where there exists a wall $\R_{\ge 0}\cdot \theta$
for each rational point $\theta$ in the gray domain in the picture for $m=3$.
We write the wall $\Theta_{\vecd}$ for each 
$\vecd=(a,b) \in (\Z_{\ge 0})^2 \setminus \{0\}$ 
more explicitly below.

First, if $m=0$, then the real roots of $Q$ are $(0,1)$ and $(1,0)$,
and $Q$ admits no imaginary roots; hence,
\begin{align*}
\Theta_{\vecd}=\begin{cases}
\R[P_1] & (a=0) \\
\R[P_2] & (b=0) \\
\{0\} & (\text{otherwise})
\end{cases}.
\end{align*}

Second, consider the case that $m=1$.
In this case, the real roots of $Q$ are $(0,1)$, $(1,1)$ and $(1,0)$,
and no imaginary roots of $Q$ exist; hence, 
\begin{align*}
\Theta_{\vecd}=\begin{cases}
\R[P_1] & (a=0) \\
\R_{\ge 0}([P_1]-[P_2]) & (a=b) \\
\R[P_2] & (b=0) \\
\{0\} & (\text{otherwise})
\end{cases}.
\end{align*}

Next, assume that $m=2$.
Then, the real roots of $Q$ are $(i,i+1)$ and $(i+1,i)$ 
for all $i \in \Z_{\ge 0}$.
The unique imaginary root of $Q$ is $(1,1)$.
Thus, 
\begin{align*}
\Theta_{\vecd}=\begin{cases}
\R[P_1] & (a=0) \\
\R_{\ge 0}((i+1)[P_1]-i[P_2]) & ((a,b) \in \Z_{\ge 1} (i,i+1), \ i \in \Z_{\ge 1}) \\
\R_{\ge 0}([P_1]-[P_2]) & (a=b) \\
\R_{\ge 0}(i[P_1]-(i+1)[P_2]) & ((a,b) \in \Z_{\ge 1} (i+1,i), \ i \in \Z_{\ge 1}) \\
\R[P_2] & (b=0) \\
\{0\} & (\text{otherwise})
\end{cases}.
\end{align*}

We finally consider the case that $m \ge 3$.
In this case, the real roots of $Q$ are $(s_i,s_{i+1})$ and $(s_{i+1},s_i)$ 
for all $i \in \Z_{\ge 0}$, where the sequence $(s_i)_{i=0}^\infty$ is defined by
$s_0:=0$, $s_1:=1$, and $s_{i+2}:=ms_{i+1}-s_i$.
The imaginary roots of $Q$ are all $(a,b)$ satisfying $a^2+b^2-mab<0$.
Thus, 
\begin{align*}
\Theta_{\vecd}=\begin{cases}
\R[P_1] & (a=0) \\
\R_{\ge 0}(s_{i+1}[P_1]-s_i[P_2]) & ((a,b) \in \Z_{\ge 1}(s_i,s_{i+1}), \ i\in\Z_{\ge 1}) \\
\R_{\ge 0}(b[P_1]-a[P_2]) & (a^2+b^2-mab<0) \\
\R_{\ge 0}(s_i[P_1]-s_{i+1}[P_2]) & ((a,b) \in \Z_{\ge 1} (s_{i+1},s_i), \ i\in\Z_{\ge 1}) \\
\R[P_2] & (b=0) \\
\{0\} & (\text{otherwise})
\end{cases}.
\end{align*}
\end{Ex}

We set $\supp \vecd:=\#\{i \in Q_0 \mid d_i > 0\}$
for each dimension vector $\vecd \in (\Z_{\ge 0})^{Q_0}$.
Generalizing the example above by applying Lemma \ref{Lem_strongly_convex}, 
we can determine $\Theta_{\vecd}$ in the case
$1 \le \# \supp \vecd \le 2$.

\begin{Lem}\label{Lem_wall_init}
Suppose $\vecd \in (\Z_{\ge 0})^{Q_0}$ is a dimension vector
with $\#\supp \vecd \in \{1,2\}$.
\begin{itemize}
\item[(1)]
If $\# \supp \vecd = 1$ and $k \in \supp {\vecd}$,
then $\Theta_{\vecd}=\bigoplus_{i \ne k} \R[P_i]$.
\item[(2)]
Assume that $\# \supp \vecd = 2$
and that the full subquiver of $Q$ corresponding to $\supp \vecd \subset Q$ is 
\begin{align*}
\begin{xy}
( 0, 0)*+{k}="1",
(20, 0)*+{l}="2",
(10, 0)*+{\vdots},
\ar@<4mm> "1";"2"
\ar@<2mm> "1";"2"
\ar@<-4mm> "1";"2"
\end{xy}
\quad \textup{($m$ arrows)}
\end{align*} 
with $k,l \in \supp \vecd$ and $m \in \Z_{\ge 0}$.
We define $a,b \in \Z_{\ge 0}$ by $a:=d_k/\gcd(d_k,d_l)$ and $b:=d_l/\gcd(d_k,d_l)$.
Then, 
\begin{align*}
\Theta_{\vecd}=\begin{cases}
(\bigoplus_{i \ne k,l} \R[P_i]) \oplus \R_{\ge 0}(b[P_k]-a[P_l]) 
& (a^2+b^2-mab \le 1) \\
(\bigoplus_{i \ne k,l} \R[P_i])
& (\textup{otherwise})
\end{cases}.
\end{align*}
\end{itemize}
\end{Lem}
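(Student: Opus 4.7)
Part (1) is a direct computation: when $\supp \vecd = \{k\}$, the only module in $\mod(A, \vecd)$ is $M = S_k^{\oplus d_k}$, so $\Theta_{\vecd} = \Theta_{S_k}$; and since the only quotient modules of $S_k$ are $0$ and $S_k$ itself, we have $\Theta_{S_k} = \{\theta \mid \theta(S_k) = 0\} = \bigoplus_{i \ne k} \R[P_i]$ by the duality of the bases $([P_i])_i$ and $([S_j])_j$ under the Euler form (recall $\End_A(S_k) = K$ since $K$ is algebraically closed and $A = KQ$).

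For (2), set $H_1 := \R[P_k] \oplus \R[P_l]$ and $H_2 := \bigoplus_{i \ne k,l} \R[P_i]$. Every $M \in \mod(A, \vecd)$ has $\supp M = \{k,l\}$, so Lemma \ref{Lem_strongly_convex}(3) yields $\Theta_M = (\Theta_M \cap H_1) \oplus H_2$, and hence $\Theta_{\vecd} = (\Theta_{\vecd} \cap H_1) \oplus H_2$. The task therefore reduces to computing the two-dimensional slice $\Theta_{\vecd} \cap H_1$.

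Let $B$ be the path algebra of the full subquiver on $\{k,l\}$, which is exactly the $m$-Kronecker quiver. Restriction of scalars identifies $\mod(A, \vecd)$ with $\mod(B, (d_k, d_l))$. For $\theta = \sum_i a_i [P_i] \in K_0(\proj A)_\R$ and any $X \in \mod A$ supported on $\{k,l\}$, the value $\theta(X) = a_k \dim X_k + a_l \dim X_l$ depends only on $\theta|_{H_1}$ and coincides with the pairing in $K_0(\proj B)_\R$ under the canonical identification $H_1 \cong K_0(\proj B)_\R$, $[P_i^A] \mapsto [P_i^B]$ (which matches the Euler-form dual bases on simples). Consequently $\Theta_M^A \cap H_1 = \Theta_M^B$ for every such $M$, and taking the union over $M$ gives $\Theta_{\vecd} \cap H_1 = \Theta_{(d_k, d_l)}^B$.

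It remains to invoke Example \ref{Ex_Kronecker}. A case analysis on $m \in \{0, 1, 2\}$ and on $m \ge 3$ shows that, for coprime $a, b \ge 1$, the inequality $a^2 + b^2 - mab \le 1$ holds precisely when $(a, b)$ is either a real Schur root ($= 1$) or a primitive imaginary Schur root ($\le 0$) of the $m$-Kronecker, and in every such case Example \ref{Ex_Kronecker} yields $\Theta_{(ga, gb)}^B = \R_{\ge 0}(b[P_k^B] - a[P_l^B])$; otherwise $(a,b)$ is a positive non-root and $\Theta_{(d_k, d_l)}^B = \{0\}$. The main obstacle is the bookkeeping around scaling: one must verify that multiplying by $g = \gcd(d_k, d_l)$ does not change the resulting ray, which amounts to observing that the coefficients $(s_i, s_{i+1})$ appearing in Example \ref{Ex_Kronecker} for real roots coincide with $(a, b)$ after cancelling $g$, while for imaginary roots the ray $\R_{\ge 0}(d_l[P_k^B] - d_k[P_l^B])$ from the example equals $\R_{\ge 0}(b[P_k^B] - a[P_l^B])$ after dividing by $g$. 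Reinstating $H_2$ via the decomposition of the second paragraph then gives the claimed formula.
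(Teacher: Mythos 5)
Your proposal is correct and takes essentially the same route the paper sketches: the paper's Lemma \ref{Lem_wall_init} is stated with only the remark that it follows by "generalizing [Example \ref{Ex_Kronecker}] by applying Lemma \ref{Lem_strongly_convex}," and your argument is precisely this reduction, with the details filled in. Part (1) is the direct computation $\Theta_{\vecd}=\Theta_{S_k}=\bigoplus_{i\ne k}\R[P_i]$; for part (2) you correctly use Lemma \ref{Lem_strongly_convex}(3) to split off $H_2=\bigoplus_{i\ne k,l}\R[P_i]$, identify modules supported on $\{k,l\}$ with modules over the Kronecker subalgebra $B$ (so that $\Theta_{\vecd}\cap H_1=\Theta^B_{(d_k,d_l)}$ under the Euler-form identification $[P^A_i]\leftrightarrow[P^B_i]$), and then read off $\Theta^B_{(d_k,d_l)}$ from Example \ref{Ex_Kronecker}, checking case by case that $a^2+b^2-mab\le 1$ for the reduced pair $(a,b)$ is exactly the condition under which the Kronecker wall is the ray $\R_{\ge 0}(b[P_k]-a[P_l])$ rather than $\{0\}$, with the gcd bookkeeping handled by the primitivity of $(s_i,s_{i+1})$ and the fact that rays are scale-invariant.
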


On the other hand, if $\# \supp \vecd \ge 3$, then we can apply Proposition \ref{Prop_wall_ext} to
obtain a recurrence relation on $\Theta_{\vecd}$.

\begin{Prop}\label{Prop_wall_recurrence}
Suppose $\vecd \in (\Z_{\ge 0})^{Q_0}$ is a dimension vector
with $\# \supp \vecd \ge 3$.
Then, $\Theta_{\vecd}$ is the smallest polyhedral cone of 
$K_0(\proj A)_\R$ containing
\begin{align*}
\bigcup_{0 < \vecc < \vecd} 
(\Theta_{\vecc} \cap \Theta_{\vecd-\vecc}).
\end{align*}
\end{Prop}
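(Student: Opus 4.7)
The plan is to reduce the statement to Proposition \ref{Prop_wall_ext} by choosing a generic module of dimension vector $\vecd$. By Lemma \ref{Lem_max_wall}, pick $M \in \mod(A,\vecd)$ with $\Theta_M = \Theta_{\vecd}$. Since $\#\supp M = \#\supp\vecd \ge 3$, Proposition \ref{Prop_wall_ext} applies and tells us that $\Theta_{\vecd}$ equals the smallest polyhedral cone in $K_0(\proj A)_\R$ containing
\[
U_M := \bigcup_{\substack{M',M'' \in \mod A \setminus\{0\} \\ M \in M' * M''}} (\Theta_{M'} \cap \Theta_{M''}).
\]

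Writing $U := \bigcup_{0<\vecc<\vecd}(\Theta_{\vecc} \cap \Theta_{\vecd-\vecc})$, I would next establish the chain $U_M \subset U \subset \Theta_{\vecd}$. The inclusion $U_M \subset U$ is immediate: if $M \in M' * M''$ with $M', M''$ nonzero, then $\vecc := \dimv M'$ satisfies $0 < \vecc < \vecd$ and $\Theta_{M'} \cap \Theta_{M''} \subset \Theta_{\vecc} \cap \Theta_{\vecd - \vecc}$. For $U \subset \Theta_{\vecd}$, given $\theta \in \Theta_{\vecc} \cap \Theta_{\vecd - \vecc}$ I would choose representatives $N' \in \mod(A,\vecc)$ and $N'' \in \mod(A,\vecd-\vecc)$ with $\theta \in \Theta_{N'} \cap \Theta_{N''}$; then $N', N'' \in \calW_\theta$, so $N' \oplus N'' \in \calW_\theta$ (the wide subcategory is closed under extensions, hence under finite direct sums), and since $\dimv(N' \oplus N'') = \vecd$ we get $\theta \in \Theta_{N' \oplus N''} \subset \Theta_{\vecd}$.

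Finally, $\Theta_{\vecd}$ is itself a polyhedral cone by Lemma \ref{Lem_max_wall}, and it contains $U$; moreover the smallest polyhedral cone containing the smaller set $U_M$ is already $\Theta_{\vecd}$. Sandwiching gives that the smallest polyhedral cone containing $U$ is precisely $\Theta_{\vecd}$, which is the desired assertion. There is essentially no obstacle once one passes to a generic module: the real content is packaged inside Proposition \ref{Prop_wall_ext}, and the only step worth verifying carefully is the passage from a fixed $M$ realizing $\Theta_{\vecd}$ to the dimension-vector union, which is handled by the direct-sum argument above together with the fact that $\Theta_{\vecc}$ is by definition the union of the walls $\Theta_{M'}$ over all $M' \in \mod(A,\vecc)$.
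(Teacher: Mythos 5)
Your proof is correct and takes essentially the same approach as the paper: both pass to a generic module $M$ via Lemma \ref{Lem_max_wall} so that $\Theta_M = \Theta_{\vecd}$, apply Proposition \ref{Prop_wall_ext} to that $M$, and then identify the resulting generating set with the dimension-vector union. Your explicit verification of the inclusion $U \subset \Theta_{\vecd}$ via the direct sum $N'\oplus N'' \in \calW_\theta$ makes precise a step the paper compresses, but the ingredients and overall structure are the same.
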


\begin{proof}
By Lemma \ref{Lem_max_wall}, $\Theta_{\vecd}$ itself is a polyhedral cone.
Proposition \ref{Prop_wall_ext} tells us that $\Theta_{\vecd}$ is 
the polyhedral cone of $K_0(\proj A)_\R$ generated by
\begin{align*}
\bigcup_{\begin{smallmatrix}
M',M'' \in \mod A \setminus \{0\}\\
(M'*M'') \cap \mod (A, \vecd) \ne \emptyset
\end{smallmatrix}} 
(\Theta_{M'} \cap \Theta_{M''})=
\bigcup_{0 < \vecc < \vecd}
\bigcup_{\begin{smallmatrix}
M' \in \mod(A,\vecc)\\
M'' \in \mod(A,\vecd-\vecc)
\end{smallmatrix}} 
(\Theta_{M'} \cap \Theta_{M''})
=\bigcup_{0 < \vecc < \vecd}
(\Theta_{\vecc} \cap \Theta_{\vecd-\vecc}),
\end{align*}
where we use Lemma \ref{Lem_max_wall} again for the latter equality.
\end{proof}

As a consequence, we can determine the wall-chamber structure of $K_0(\proj A)_\R$
for any path algebra $A$.

\begin{Thm}\label{Thm_wall_path}
Let $\vecd \in (\Z_{\ge 0})^{Q_0}$ be a nonzero dimension vector.
Then, $\Theta_{\vecd}$ is determined by Lemma \ref{Lem_wall_init} and 
Proposition \ref{Prop_wall_recurrence}.
\end{Thm}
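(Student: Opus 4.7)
The plan is to proceed by induction on the total dimension $|\vecd| := \sum_{i \in Q_0} d_i \in \Z_{\ge 1}$, using Lemma \ref{Lem_wall_init} for small support and Proposition \ref{Prop_wall_recurrence} as the recursive step.

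For the base case $|\vecd| = 1$, the support $\#\supp \vecd$ is automatically $1$, so Lemma \ref{Lem_wall_init}(1) computes $\Theta_{\vecd}$ explicitly. For the inductive step, I would fix $\vecd$ with $|\vecd| \ge 2$ and assume that $\Theta_{\vecc}$ is already determined for every nonzero $\vecc \in (\Z_{\ge 0})^{Q_0}$ with $|\vecc| < |\vecd|$. If $\#\supp \vecd \le 2$, then Lemma \ref{Lem_wall_init} computes $\Theta_{\vecd}$ directly, with no appeal to the inductive hypothesis. If instead $\#\supp \vecd \ge 3$, then Proposition \ref{Prop_wall_recurrence} expresses $\Theta_{\vecd}$ as the smallest polyhedral cone in $K_0(\proj A)_\R$ containing
\begin{align*}
\bigcup_{0 < \vecc < \vecd} (\Theta_{\vecc} \cap \Theta_{\vecd - \vecc}),
\end{align*}
and since $|\vecc|, |\vecd-\vecc| < |\vecd|$ for every $\vecc$ appearing in the union, both factors in each intersection are already known, so $\Theta_{\vecd}$ is determined.

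The only delicate point is the choice of induction parameter. Splitting $\vecd = \vecc + (\vecd - \vecc)$ may produce summands whose supports are as large as that of $\vecd$ itself, so one cannot induct on $\#\supp \vecd$; inducting on $|\vecd|$ instead works because this quantity strictly decreases under the recursion in Proposition \ref{Prop_wall_recurrence}. Beyond this bookkeeping there is no real obstacle, since all of the substantive geometric content has already been absorbed into Lemma \ref{Lem_wall_init} and Proposition \ref{Prop_wall_recurrence} (the latter via Proposition \ref{Prop_wall_ext} and Lemma \ref{Lem_max_wall}).
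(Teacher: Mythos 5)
Your proof is correct, and it supplies the induction that the paper treats as immediate (the paper gives no explicit proof, presenting the theorem as a direct consequence of Lemma \ref{Lem_wall_init} and Proposition \ref{Prop_wall_recurrence}). Your observation that one must induct on $|\vecd|$ rather than on $\#\supp\vecd$, because the recursion in Proposition \ref{Prop_wall_recurrence} strictly decreases the former but not necessarily the latter, is exactly the right bookkeeping point and correctly justifies why the algorithm terminates.
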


We end this paper by giving an example of Theorem \ref{Thm_wall_path}.

\begin{Ex}\label{Ex_pptx1}
Let $Q$ be a quiver $1 \rightrightarrows 2 \to 3$.
The following picture is the wall-chamber structure of $K_0(\proj A)_\R$ on
the subset 
\begin{align*}
\{ a_1[P_1]-a_2[P_2]-a_3[P_3] \mid a_1,a_2, a_3 \ge 0, \ a_1+a_2+a_3=1 \}.
\end{align*}

\begin{align*}
\includegraphics[width=5.7cm]{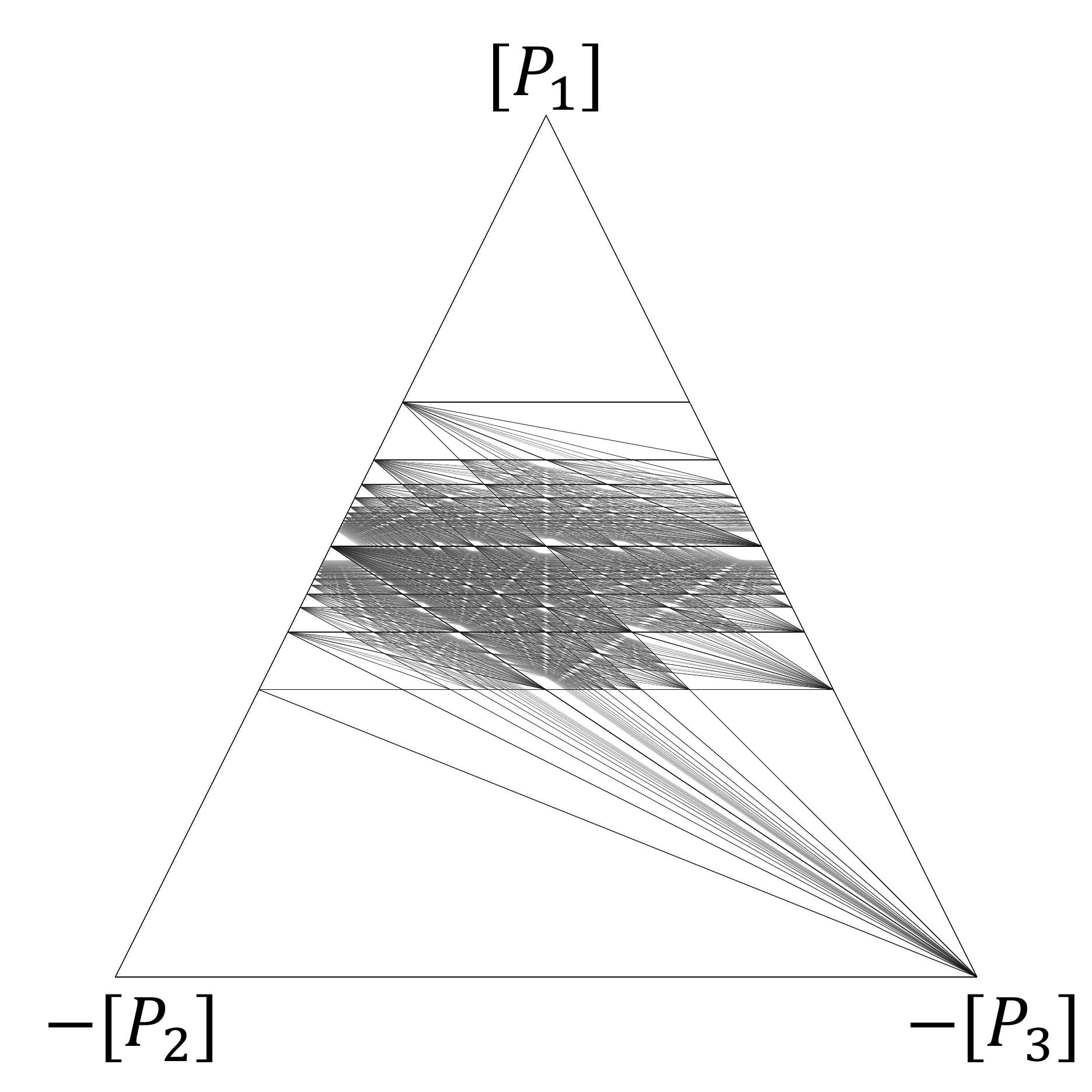}
\end{align*}

Compare our figure in $K_0(\proj A)_\R$ with the diagram in $K_0(\mod A)_\R$ in \cite[Example 11.3.9]{DW},
then we find that the chambers in our figure are sent to the triangles 
expressing tilting modules in their diagram
under the linear map $f \colon K_0(\proj A) \to K_0(\mod A)$
sending $[P_i] \in K_0(\proj A)$ to $[P_i] \in K_0(\mod A)$.
\end{Ex}

\section*{Funding}

This work was supported by Japan Society for the Promotion of Science KAKENHI JP16J02249
and JP19K14500.

\section*{Acknowledgement}

The author thanks to Osamu Iyama for thorough instruction.
For Theorem \ref{Thm_chamber_cone}, 
he appreciates Aaron Chan improving his proof.
Proposition \ref{Prop_cover_finite} was originally a conjecture of Laurent Demonet, 
and has already been proved by Alexander Zimmermann and Alexandra Zvonareva,
so the author is grateful to them for admitting him to include it in this paper.
The author is also thankful to Gustavo Jasso and Jan Schr\"{o}er 
for fruitful discussion in his visits of the University of Bonn.

\end{document}